\documentclass[12pt,twoside]{amsart}
\usepackage{amssymb}
\usepackage{amscd}
\usepackage[abbrev,alphabetic]{amsrefs}
\usepackage{hyperref}
\usepackage{comment}
\usepackage{array,multirow,tabularx,longtable}
\usepackage{tikz}
\usetikzlibrary{cd}
\usepackage{here}
\usepackage{multirow}
\usepackage[margin=1.25in]{geometry}

\usepackage{framed}
\usepackage{fancybox}
\usepackage{ascmac}

\title[Prime Fano threefolds of genus 8 in positive characteristic]
{Prime Fano threefolds of genus 8 in positive characteristic} 

\author{Akihiro Kanemitsu}
\address{Department of Mathematical Sciences, Graduate School of Science, Tokyo Metropolitan University, 1-1 Minami-Osawa, Hachioji-shi, Tokyo 192-0397, Japan}
\email{kanemitsu@tmu.ac.jp}
\thanks{The first author is supported by JSPS KAKENHI Grant Number JP23K12948. The second author is supported by JSPS KAKENHI Grant Numbers JP22H01112 and JP23K03028. 
}

\author{Hiromu Tanaka} 
\subjclass[2020]{14J45, 
14J30, 
14G17
}
\keywords{Fano threefolds, genus 8, positive characteristic.}
\address{Department of Mathematics, 
Graduate School of Science, 
Kyoto University, 
Kyoto 606-8502, JAPAN} 
\email{tanaka.hiromu.7z@kyoto-u.ac.jp}

\newcommand{\CH}[0]{{\operatorname{CH}}}
\newcommand{\ol}{\overline}
\newcommand{\wt}{\widetilde}

\newcommand{\pr}[0]{{\operatorname{pr}}}

\newcommand{\Bl}[0]{{\operatorname{Bl}}}

\newcommand{\rank}[0]{{\operatorname{rank}}}

\newcommand{\NE}[0]{{\operatorname{NE}}}
\newcommand{\red}[0]{{\operatorname{red}}}
\newcommand{\Coker}[0]{\operatorname{Coker}}
\newcommand{\Ker}[0]{\operatorname{Ker}}

\renewcommand{\Im}[0]{\operatorname{Im}}

\newcommand{\Proj}[0]{\operatorname{Proj}}
\newcommand{\Spec}[0]{\operatorname{Spec}}

\newcommand{\Hom}[0]{{\operatorname{Hom}}}

\newcommand{\Supp}[0]{\operatorname{Supp}}
\newcommand{\Pic}[0]{\operatorname{Pic}}

\newcommand{\Ex}[0]{{\operatorname{Ex}}}

\newcommand{\univ}[0]{{\operatorname{univ}}}

\newcommand{\Frac}[0]{{\operatorname{Frac}}}
\newcommand{\Aut}[0]{{\operatorname{Aut}}}

\newcommand{\Sing}[0]{\operatorname{Sing}}

\newcommand{\Gr}[0]{{\operatorname{Gr}}}

\newcommand{\PGL}[0]{{\operatorname{PGL}}}
\newcommand{\GL}[0]{{\operatorname{GL}}}
\newcommand{\SL}[0]{{\operatorname{SL}}}

\newcommand{\Gal}[0]{{\operatorname{Gal}}}

\newtheorem{thm}{Theorem}[section]
\newtheorem{lem}[thm]{Lemma}
\newtheorem{cor}[thm]{Corollary}
\newtheorem{prop}[thm]{Proposition}

\newtheorem*{claim*}{Claim}         
\newtheorem{step}{Step}

\theoremstyle{definition}
\newtheorem{ex}[thm]{Example}

\newtheorem{dfn}[thm]{Definition}

\newtheorem{rem}[thm]{Remark}
      
\newtheorem{nota}[thm]{Notation}         
\newtheorem{nasi}[thm]{}

\makeatletter
  
  \@addtoreset{equation}{thm}
  \makeatother

\newcommand{\cred}{\color{red}}
\newcommand{\cyan}{\color{cyan}}

\newcommand{\MO}{\mathcal{O}}

\newcommand{\R}{\mathbb{R}}
\newcommand{\Q}{\mathbb{Q}}
\newcommand{\Z}{\mathbb{Z}}
\newcommand{\F}{\mathbb{F}}

\renewcommand{\P}{\mathbb{P}}

\newcommand{\cQ}{\mathcal{Q}}
\newcommand{\cC}{\mathcal{C}}
\newcommand{\cK}{\mathcal{K}}

\DeclareMathOperator{\td}{td}
\DeclareMathOperator{\ch}{ch}
\DeclareMathOperator{\Fl}{Fl}

\usepackage{listings}
\begin{document}

\maketitle

\begin{abstract}
We prove that a prime Fano threefold of genus 8 over an algebraically closed field of positive characteristic 
is isomorphic to a linear section of the Grassmannian variety Gr(2, 6). 
As applications, it is shown that a prime Fano threefold of genus 8 is irrational and, if the characteristic is larger than two, then it is globally F-regular. 
\end{abstract}

\tableofcontents




\section{Introduction}

The classification of Fano varieties has been an interesting topic in algebraic geometry. 
The projective line $\P^1$ is the unique one-dimensional Fano variety. 
Two-dimensional Fano varieties are called 
del Pezzo surfaces. 
It is classically known that a del Pezzo surface is isomorphic to 
either $\P^1 \times \P^1$ or the blowup of $\P^2$ along at most general eight points. 
Study of three-dimensional Fano varieties, called Fano threefolds, 
was initiated by Fano himself. 
A 
classification of Fano threefolds in characteristic zero was given by Mori--Mukai \cite{MM81}, \cite{MM83}, \cite{MM03} 
based on earlier works by Iskovskih 
and Shokurov \cite{Isk77}, \cite{Isk78}, \cite{Sho79a}, \cite{Sho79b} (cf. \cite{IP99}, \cite{Tak89}). 
After Mori--Mukai's classification, 
decisive descriptions for prime Fano threefolds had been announced by Mukai \cite{Muk89}, which was later confirmed by Bayer--Kuznetsov--Macri \cite{BKM24}, \cite{BKM25}. 

\begin{dfn}
Let $k$ be an algebraically closed field. 
We say that $X$ is a {\em prime Fano threefold} over $k$ 
if $X$ is a smooth projective threefold over $k$ 
such that $-K_X$ is ample and $\Pic X$ is generated by $\omega_X$. 
The positive integer $g$ defined by $(-K_X)^3 = 2g-2$ is called the {\em genus} of $X$. 
\end{dfn}


Recently, Mori--Mukai's birational classification was extended to the case of positive characteristic by Asai and the second author 
\cite{FanoI}, \cite{FanoII}, \cite{FanoIII}, \cite{FanoIV}. 
Then it is natural to seek a positive-characteristic analogue of Mukai's description for prime Fano threefolds. 
The purpose of this paper is to settle 
the case of genus $8$. 
Specifically, the main result of this paper is as follows. 


\begin{thm}[Theorem \ref{t main}]\label{intro main}
Let $X$ be a prime Fano threefold of genus $8$ over 
 an algebraically closed field $k$ of characteristic $p>0$. 
Then there exists a $9$-dimensional linear subvariety $L$ on $\P^{14}_k$ such that 
$X$ is isomorphic to the scheme-theoretic intersection $\Gr(6, 2) \cap L$, 
where $\Gr(6, 2) \subset \P^{14}$ is the Pl\"{u}cker embedding 
of the Grassmannian variety $\Gr(6, 2)$. 
\end{thm}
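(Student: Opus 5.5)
We follow Mukai's vector bundle method, adapted to characteristic $p$ via the results of Asai and the second author \cite{FanoI}--\cite{FanoIV}. Let $H=-K_X$. By \cite{FanoI}--\cite{FanoIV}, $H$ is very ample and $X\subset \P^{9}$ is an anticanonical model of degree $14$ cut out by quadrics. A conic on $X$ is a curve of degree two, and such conics exist.

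\textbf{Step 1: a rank two vector bundle.} From a conic, or from a general point of $X$ together with the double projection, we build a stable rank two vector bundle $E$ with $c_1(E)=H$, $c_2(E)\cdot H=5$, $h^0(E)=6$ and $H^i(E)=0$ for $i>0$. We plan the Serre construction along an elliptic quintic curve $C\subset X$, i.e.\ a curve $C$ with $\omega_C\simeq\MO_C$ and $\deg_H C=5$. The extension
\[
0\to\MO_X\to E\to I_C(H)\to 0
\]
exists because $\omega_C\simeq (\omega_X\otimes\det E)|_C$. Riemann--Roch and Kodaira-type vanishing give $\chi(E)=6$. Kodaira vanishing may fail in characteristic $p$, so here we use $H^1(X,\MO_X(-H))=0$ for Fano threefolds, which is available from \cite{FanoI}--\cite{FanoIV}. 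We also need $H^1(I_C(H))=0$, which follows from the projective normality of $C$. For global generation of $E$, it suffices that $I_C(H)$ is globally generated. This in turn follows from $C$ being cut out by quadrics in its linear span $\P^4$ together with $X$ being cut out by quadrics.

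\textbf{Step 2: the morphism to $\Gr$.} Global generation of $E$ with $h^0(E)=6$ gives a morphism $\varphi\colon X\to \Gr(6,2)$ satisfying $\varphi^*\MO(1)=\det E=H$. The Plücker composite $X\to\P^{14}$ is then induced by the map $\wedge^2H^0(E)\to H^0(H)$. We show this map is surjective, so that the composite is the anticanonical embedding followed by a linear embedding $\P^9\hookrightarrow\P^{14}$. Surjectivity is checked by restricting to a hyperplane section $S$, which is a K3 surface of genus $8$, and then to a curve section. There we apply the corresponding statements for $E|_S$, which is the Lazarsfeld--Mukai bundle attached to a $g^1_5$ on the curve. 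Vanishing of $H^1$ on K3 surfaces holds in every characteristic.

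\textbf{Step 3: comparison with the linear section.} Let $L=\P^9\subset\P^{14}$. Then $X\subset \Gr(6,2)\cap L$. Both sides have degree $14$. The right side has dimension at least $3$, and it is cut out by the Plücker quadrics. We show that $\Gr(6,2)\cap L$ has expected dimension $3$. Granting this, it is a Cohen--Macaulay scheme of degree $14$, since $\Gr$ is Cohen--Macaulay and $L$ cuts it in the expected codimension. A degree-$14$ Cohen--Macaulay scheme containing the irreducible reduced threefold $X$ of the same degree, with no embedded components, must equal $X$. For the dimension statement, if the intersection had a component of dimension at least $4$, then it would meet any other hyperplane section in dimension at least $3$. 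We compare with the curve section instead: a canonical genus-$8$ curve $\Gamma=X\cap\P^7$. We show $\Gr(6,2)\cap\P^7=\Gamma$ by checking that the quadrics vanishing on $\Gamma$ are exactly the restrictions of the Plücker quadrics. Both spaces of quadrics have dimension $15$, and the restriction map is injective by projective normality.

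\textbf{Main obstacle.} The main obstacle is Step 1: constructing $C$, or equivalently $E$, together with the required vanishings without characteristic-zero tools such as Kodaira vanishing, Bertini for general members, and generic smoothness. We would first try to obtain $E$ on a general K3 hyperplane section $S$, taking $E|_S$ as the Lazarsfeld--Mukai bundle of a $g^1_5$ on a smooth curve section. Brill--Noether theory for curves on K3 surfaces of Picard rank one (Lazarsfeld's argument) is characteristic-free. We would then extend $E|_S$ to $X$ using the vanishing $H^1(\mathcal{E}nd\,E|_S(-nS))=0$ and the rigidity of $E|_S$. If the Bertini-type choices fail in small characteristic, the fallback is to run the Serre construction directly from a line or conic on $X$, using the double projection descriptions from \cite{FanoIII}, \cite{FanoIV}.
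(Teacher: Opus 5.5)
Your proposal has a genuine gap at the point you flag yourself: it never actually produces the curve $C$, or equivalently the bundle $E$, in characteristic $p$, and this is the core of the paper.

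\textbf{Constructing $C$.} Your Hartshorne--Serre step needs a Gorenstein genus-one quintic $C$ with $\langle C\rangle\cap X=C$ scheme-theoretically, because that condition is exactly the global generation of $I_C(H)$, hence of $E$. Your argument for it does not work. That $C$ is cut out by quadrics in $\P^4$ and $X$ by quadrics in $\P^9$ does not mean the quadrics through $X$ restrict to generators of the ideal of $C$ in $\langle C\rangle$. For example, a line of $X$ bisecant to $C$ lies in $X\cap\langle C\rangle$.

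The routes you propose for finding $C$ or $E$ also fail:
\begin{itemize}
\item Over $k=\overline{\F}_p$ every K3 surface has even, hence $\geq 2$, geometric Picard number, so Lazarsfeld-type arguments on Picard-rank-one K3 sections are unavailable.
\item The vanishing of $H^1(\mathcal{E}nd(E|_S)(-nS))$ controls uniqueness, not existence, of an extension to $X$. The obstructions lie in $H^2(\mathcal{E}nd(E|_S)(-nS))\simeq H^0(\mathcal{E}nd(E|_S)(nS))^\vee\neq 0$. That $H^1$-vanishing is also not a Kodaira-type statement you can invoke in characteristic $p$.
\item Lines and conics cannot be zero loci of sections of a rank-two bundle with $\det E\simeq\omega_X^{-1}$, since their dualising sheaves are not trivial.
\end{itemize}

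The paper obtains $C$ as follows.
\begin{itemize}
\item Blow up a point $P$ not on a line. Then $|-K_Y|$ maps birationally onto $Z=H_2\cap H_3\subset\P^5$; the double-cover case is excluded via varieties of minimal degree.
\item $H_2$ is smooth, and the Veronese surface $E_Z$ equals $V+3V'$ in $\CH^2(H_2)$.
\item For the \emph{generic} plane $V$ of its $\P^3$-family, $H_3\cap V$ is a regular plane cubic over $\kappa=K(\P^3_k)$ meeting $E_Z$ in one reduced point. Its transform is a regular genus-one quintic on $X_\kappa$.
\item $\langle C\rangle\cap X_\kappa=C$ is proved separately (Lemma \ref{l ell linear intersection}) by projecting from general points of $C$.
\item The bundle is built over $\kappa$, then spread out and specialised to $X$.
\end{itemize}
Once such a $C$ is available, your Hartshorne--Serre construction is a legitimate shortcut for the paper's elementary transformation along a K3-like surface, since $H^2(X,\MO_X(K_X))=0$ and $\omega_C\simeq\MO_C$.

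\textbf{Steps 2--3.} These have a second gap. Projective normality of $\Gamma$ only gives $h^0(I_\Gamma(2))=15$. Injectivity of the restriction of the Pl\"ucker quadrics means that no quadric containing $\Gr(6,2)$ contains $\langle\Gamma\rangle$; that is the real content, and you do not address it. Likewise, surjectivity of $\wedge^2H^0(E)\to H^0(X,-K_X)$ cannot be imported from Petri or Brill--Noether properties of a curve section, which you cannot guarantee in characteristic $p$.

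The paper argues as follows instead.
\begin{itemize}
\item It proves $\Gr(2,U_6)\cap\P((\Ker\lambda_X)^\vee)=\emptyset$ by a Schubert-divisor argument using $\Pic X=\Z K_X$ and $h^0(X,-nK_X)>h^0(X,E)=6$ for $n>0$ (Lemma \ref{l Gr P(Ker) empty}).
\item From this it deduces that $\lambda_T$ is surjective and $\dim(\Gr(2,U_6)\cap\P((\Ker\lambda_T)^\vee))\leq 0$ for a general K3 section $T$.
\item It then uses Mukai's description of quadrics through $\Gr(6,2)$ (singular locus of dimension $-\infty$, $4$ or $8$) to show no such quadric contains $\langle T\rangle$. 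This gives $T=\Gr(U_6,2)\cap\langle T\rangle$ (Lemma \ref{l K3 linear section}), from which surjectivity of $\lambda_X$ and $X=\Gr(U_6,2)\cap\langle X\rangle$ follow.
\end{itemize}
Your Cohen--Macaulay degree argument for the final comparison is fine, but only once such a lower-dimensional section statement is established.
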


\noindent
In characteristic zero, the above result was established  
by Gushel and Mukai \cite{Gus83}, \cite{Gus92}, \cite{Muk89}. 


\begin{rem}
Let $X \subset \P^{g+1}$ be a prime Fano threefold of genus $g$ such that $|-K_X|$ is very ample. 
Then it is known that  $3 \leq g \leq 12$ and $g \neq 11$ \cite[Theorem 1.1]{FanoII}. 
If $g \leq 5$, then $X$ is a complete intersection in $\P^{g+1}$ \cite[Proposition 2.8]{FanoII}. 
Mukai's description for the case $g=6$ is given in \cite[Theorem 5.4 and Theorem 5.5]{KTLift1}. 
Theorem \ref{intro main} settles the case when $g=8$. 
Thus the remaining case is  when $g \in \{ 7, 9, 10, 12\}$. 
\end{rem}

\subsection{Applications}

\subsubsection{Irrationality}


It is known that a prime Fano threefold $X$ of genus $8$ 
in characteristic zero is birational to a smooth cubic threefold, and hence is irrational. 
We shall extend this result to the case of positive characteristic.  


\begin{thm}[Theorem \ref{thm:birational to cubic 3-fold}, Corollary \ref{c irrational}]\label{intro irrational}
Let $X$ be a prime Fano threefold of genus $8$ over an algebraically closed field of characteristic $p>0$. 
Then $X$ is birational to a smooth cubic threefold. Moreover, $X$ is not rational. 
\end{thm}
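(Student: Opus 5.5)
The plan is to realize the classical \emph{double projection from a line} as a Sarkisov link in positive characteristic, and then transfer irrationality from the cubic threefold.

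\textbf{Step 1: the link from a line.} Take a line $\ell \subset X$ with respect to $-K_X$; existence of lines on prime Fano threefolds in positive characteristic is available from \cite{FanoII}. Choose $\ell$ general. Let $\sigma\colon X' = \Bl_\ell X \to X$. I would check that $-K_{X'} = \sigma^*(-K_X) - E$ is nef and big. Its anticanonical model is given by the linear system $|-K_X - 2\ell|$ of hyperplane sections singular along $\ell$, so the morphism is "double projection". Running the two-ray game on $X'$ (as in \cite{FanoIII}, \cite{FanoIV}) should give a flop $X' \dashrightarrow X^+$ of the finitely many curves meeting $\ell$ with negative $-K_{X'}$-degree, followed by a $K$-negative extremal contraction $\tau\colon X^+ \to Y$. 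A numerical computation with $(-K_X)^3 = 14$, $(-K_{X'})^3 = 14 - 2\cdot 2 - 2 = 8$ and the intersection table of $E$ should force $\tau$ to be the blowup of a smooth Fano threefold $Y$ with index $2$ and degree $3$, i.e.\ a smooth cubic threefold $Y \subset \P^4$, along a smooth curve (classically a rational normal quintic). I would cross-check this against the classification table of links in \cite{FanoIV} rather than recompute it from scratch. In the characteristic-zero literature the link is stated to be unique for $g = 8$. Alternatively, the linear-section description of Theorem~\ref{intro main} lets one verify directly, via the $\Gr(2,6)$ geometry, that the image of $|-K_X - 2\ell|$ is a cubic in $\P^4$.

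\textbf{Step 2: smoothness of $Y$.} This is where positive characteristic enters. The issue is that $Y$ arises as the target of an extremal divisorial contraction from a smooth threefold $X^+$ (the flop of a smooth threefold along $K$-trivial curves). I would invoke the positive-characteristic classification of extremal divisorial contractions from \cite{FanoI}: in the case where the exceptional divisor maps to a curve, $Y$ is smooth and $\tau$ is the blowup along a smooth curve. Smoothness of $X^+$ would come from the description of flops of such links in \cite{FanoIII}. I expect the main obstacle to be excluding pathological behaviour of the flopping locus, for instance inseparability or wild configurations of lines meeting $\ell$ in small characteristic. For these I would use the generality of $\ell$ together with the explicit embedding $X = \Gr(2,6) \cap L$.

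\textbf{Step 3: irrationality.} Being birational to a smooth cubic threefold reduces Corollary~\ref{c irrational} to the irrationality of smooth cubic threefolds over $k$. For $p \neq 2$ this is Murre's theorem, which uses the algebraic representative of codimension-$2$ cycles, i.e.\ the Prym variety of the conic bundle obtained by projecting from a line, as a substitute for the intermediate Jacobian; the method extends to $p = 2$ with suitable care about the discriminant curve. Concretely, the algebraic representative $\mathrm{Ab}^2$ is a birational invariant up to products of Jacobians of curves, it is a $5$-dimensional principally polarized abelian variety, and it is not a product of Jacobians because its theta divisor has an isolated singular point of multiplicity $3$. Hence $Y$, and therefore $X$, is not rational.
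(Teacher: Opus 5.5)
\textbf{The gap is in Step 1.} For $g=8$, the link that starts from a line does not end at a cubic threefold.

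Your numerics are off. A line has $-K_X\cdot\ell=1$. With $h:=\sigma^*(-K_X)$ on $X'=\Bl_\ell X$ one has $h^3=14$, $h^2\cdot E=0$, $h\cdot E^2=-1$ and $E^3=-\deg N_{\ell/X}=1$. Hence $(-K_{X'})^3=(h-E)^3=10$, not $8$.

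Your identification of the anticanonical model is also wrong. The system $|-K_{X'}|=|h-E|$ is the \emph{single} projection from $\ell$ (hyperplane sections containing $\ell$). The double projection $|h-2E|=|-K_X-2\ell|$ is what becomes the second extremal contraction $\tau$ after the flop. Intersection numbers of the form $(-K)\cdot D_1\cdot D_2$ are preserved by the flop, because a general anticanonical member misses the flopping curves. So you may compute on $X'$: $(h-E)\cdot(h-2E)^2=2$ and $(h-E)^2\cdot(h-2E)=7$.

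These numbers rule out a cubic. If $\tau$ were the blowup of a cubic threefold $Y$ with $\tau^*H_Y=h-2E$, the first number would have to be $(-K_Y)\cdot H_Y^2=6$. Independently, $(-K_{X^+})^3=24-4d+2g(C)-2=10$ forces $2d-g(C)=6$, so a rational quintic centre is impossible. In fact $h^0(X',h-2E)=g-5=3$, and the numbers above say that $\tau$ is a conic bundle over $\P^2$ with discriminant of degree $12-7=5$. This is the $g=8$ entry of the classical table of double projections from a line \cite{IP99}. Your alternative claim, that the image of $|-K_X-2\ell|$ is a cubic in $\P^4$, fails for the same reason: the image is $\P^2$.

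So Step 2 studies a contraction that does not occur, and Step 3 has nothing to apply to. Getting from this conic bundle to a cubic threefold would need a genuinely new argument, for instance matching the double cover of the quintic discriminant with one coming from a line on a cubic threefold. That argument is absent, and it is delicate in characteristic $2$.

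\textbf{How the paper proceeds.} The paper avoids Sarkisov links and works directly from Theorem~\ref{intro main}.
\begin{itemize}
\item Writing $X=\Gr(U_6,2)\cap\P(\Im\lambda_X)$, it defines $Y=\cC\cap\P((\Ker\lambda_X)^\vee)$, the section of the Pfaffian cubic by the orthogonal $\P^4$.
\item Smoothness of $Y$ comes from the Grassmannian geometry, $\theta(\Sing Y)=\theta(Y)\cap X=\Sing X=\emptyset$ (Proposition~\ref{p cubic of X is smooth}). This replaces your Step 2 entirely.
\item Both $\P_X(E)$ and $\P_Y(\theta^*\cQ)$ map birationally onto the same Palatini quartic in $\P(U_6)$ (Proposition~\ref{p Palatini quartic}). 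A general hyperplane section of that quartic is therefore birational to both $X$ and $Y$.
\end{itemize}
If you want a two-ray-game proof instead, the centre that leads to the cubic threefold is a quintic genus-one curve, as sketched in Subsection~\ref{ss 7.2}, not a line.

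\textbf{Step 3.} Murre's argument covers only $p\neq 2$, and ``extends with suitable care'' is not a proof for $p=2$. The paper invokes \cite[Theorem A]{Ciu}, which gives irrationality of smooth cubic threefolds in every characteristic.
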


We now overview the proof of Theorem \ref{intro irrational}, 
although it is quite similar to that in characteristic zero (\cite{Fan30}, \cite{Isk80}, \cite{Put82}, \cite{Kuz04}). 
Let $X$ be a prime Fano threefold of genus $8$ over an algebraically closed field of characteristic $p>0$. 
By using  Theorem \ref{intro main}, 
we obtain a suitable smooth cubic threefold $Y$, 
which is defined as a hyperplane section of the Pfaffian cubic hypersurface $\mathcal C \subset \P^{14}$ (Definition \ref{d cubic of X}, Proposition \ref{p cubic of X is smooth}). 
Then it is enough to show that $X$ is birational to $Y$, 
because  an arbitrary smooth cubic threefold is known to be irrational \cite[Theorem A]{Ciu} (cf.\ \cite{CG72}, \cite{Mur73}). 

By construction, we can find locally free sheaves $E$  and  $F$   of rank two 
on $X$ and $Y$, respectively, 
and the following diagram: 
\[
\begin{tikzcd}
\P_X(E) \arrow[rd,"\alpha"] \arrow[d, "\P^1\text{-bundle}"']  &   & \P_Y(F) \arrow[ld,"\beta"'] \arrow[d, "\P^1\text{-bundle}"] \\
X&  \P^5 & Y.
\end{tikzcd}
\]
It holds that $\Im(\alpha) = \Im(\beta)$, which is a quartic fourfold in $\P^5$  (Proposition \ref{p Palatini quartic}). 
For a general hyperplane section $Z$ of 
this quartic fourfold  $\Im(\alpha) = \Im(\beta)$, 
the resulting diagram 
\[
\begin{tikzcd}
\alpha^{-1}(Z) \arrow[rd] 
\arrow[d]  &   & \beta^{-1}(Z) \arrow[ld] \arrow[d] \\
X&  Z & Y
\end{tikzcd}
\]
consists of birational morphisms. 
Therefore, $X$ is birational to $Y$. 
For more details, see Section \ref{s irrational}.



\subsubsection{F-splitting}

Many classes of Fano (type) varieties in positive characteristic are known to be $F$-split. 
For example, toric varieties and Schubert varieties   are $F$-split 
\cite[3.4 and the proof of Theorem 3]{BTLM97}, \cite[Theorem 2 in page 38]{MR85}. 
Then it is natural to ask whether lower dimensional smooth Fano varieties are $F$-split. 
In this direction, the one-dimensional Fano variety (i.e., $\P^1$) is $F$-split, 
whilst smooth del Pezzo surfaces are $F$-split when $p>5$ \cite[Example 5.5]{Har98}. 
Concerning smooth Fano threefolds, 
there exists an integer $p_0 >0$ such that 
an arbitrary smooth Fano threefold of characteristic $p>p_0$ 
is $F$-split (\cite[Theorem 1.2]{SS10}, \cite[Theorem 1.1]{FanoIV}). 
On the other hand,  any explicit lower bound $p_0$ is not known. 
Based on the description as in Theorem \ref{intro main}, 
we give the optimal lower bound that works for prime Fano threefolds of genus $8$.

\begin{thm}[Theorem \ref{t Fsplit optimal}, Remark \ref{intro p=2 non-F-split}]\label{intro F-split}
Let $X$ be a prime Fano threefold of genus $8$ 
over an algebraically closed field of characteristic $p>0$. 
Then the following hold. 
\begin{enumerate}
\item $X$ is quasi-$F$-regular. 
\item If $p>2$, then $X$ is globally $F$-regular. 
\end{enumerate}
\end{thm}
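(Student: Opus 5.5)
The plan is to use Theorem \ref{intro main}: $X \cong \Gr(2,6)\cap L$ is a smooth codimension-five linear section of the Grassmannian. Global $F$-regularity of $X$ will be checked on the anticanonical cone, or equivalently via Fedder-type criteria on the section ring $R=\bigoplus_{m\ge 0}H^0(X,\mathcal{O}_X(m))$. Since $\mathcal{O}_X(1)=\mathcal{O}_X(-K_X)$ is projectively normal here (it is the restriction of the Plücker polarisation and $X$ is projectively normal as a linear section), $R$ is the homogeneous coordinate ring $S/I$ of $X\subset \P^9$. Here $I$ is generated by the $15$ Plücker quadrics, the $4\times4$ Pfaffians of a generic skew $6\times 6$ matrix, restricted to $L$. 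Thus $X$ is globally $F$-regular if and only if $R$ is strongly $F$-regular, and globally $F$-split if and only if $R$ is $F$-pure. Similarly, quasi-$F$-regularity is equivalent to the quasi-$F$-regularity of the cone, by the known cone criteria.

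For (2), with $p>2$, I would argue as follows. By Fedder's criterion it suffices to find a polynomial $f\in (I^{[p]}:I)\setminus \mathfrak m^{[p]}$; strong $F$-regularity then follows by additionally checking that the punctured spectrum is regular and that $R$ is Gorenstein. The cone over $X$ is Gorenstein with an isolated singularity, so a single splitting that is compatible with a test element suffices. Concretely, since $\Gr(2,6)\cap L$ is a complete-intersection-like section of a Gorenstein ideal, I would take $f=(\mathrm{Pf}\, \Delta)^{p-1}$ for a suitable product $\Delta$ of linear forms and Pfaffians whose degree equals $\dim S=10$. The standard choice is a product of the cubic Pfaffian hypersurface $\mathcal C$ restricted to $L$ and further hyperplanes, cutting out an anticanonical-type divisor of simple normal crossing type. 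The containment $\Delta^{p-1}I\subset I^{[p]}$ follows from the Gorenstein linkage argument: $\Delta$ generates the socle of the appropriate complete intersection modulo $I$. One then checks $\Delta^{p-1}\notin\mathfrak m^{[p]}$ via a monomial with all exponents $p-1$ after a coordinate choice adapted to $L$. To upgrade to global $F$-regularity, I would use the criterion of Schwede--Smith: $X$ is globally $F$-regular if there is an effective divisor $D$ such that $X\setminus D$ is globally $F$-regular (e.g.\ affine and regular) and $(X,\epsilon D)$ is globally $F$-split. I would choose $D$ as a hyperplane section through which the splitting above factors.

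For (1), including $p=2$, I would use the quasi-$F$-split height machinery together with the fact that $X$ is a smooth Fano threefold with vanishing $H^i(X,\mathcal O_X)$ and with a lifting to $W_2(k)$. Linear sections of $\Gr(2,6)$ lift to characteristic zero, since the Grassmannian and $L$ lift, and smoothness is open. Known results then say that a Fano variety liftable to $W_2$ with suitable vanishing (Kodaira-type vanishing for $\Omega^1_X(-K_X)$ or similar Bott-type vanishing on Grassmannian sections) is quasi-$F$-regular. Alternatively, I would run the same Fedder computation in $W_2$ via the quasi-Fedder criterion of KTTWYY.

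The main obstacle will be the explicit Fedder-type verification in (2): choosing coordinates on $L$ uniformly for every smooth $L$, so that the degree-$10$ element $\Delta$ has a squarefree monomial. I would first try to use a Schubert-type degeneration, i.e.\ Frobenius-split compatibly with Schubert subvarieties of $\Gr(2,6)$, and choose $L$ in general position relative to a flag. If that fails, I would exploit the $\PGL_6$-action to normalize $L$.
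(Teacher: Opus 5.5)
\paragraph{The gap.} The step you flag as ``the main obstacle'' is the entire content of the theorem, and the tools you propose cannot supply it.

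\emph{Uniformity in $L$.} A Schubert-type degeneration, or choosing $L$ in general position with respect to a flag, only transports a Fedder certificate to nearby fibres. Since $F$-splitting is an open condition in families, this proves $F$-splitting for \emph{general} $L$, whereas the statement concerns every smooth section. Nor can $\PGL_6$ normalise $L$: the $9$-planes in $\P^{14}$ form a $50$-dimensional family, while $\dim\PGL_6=35$.

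\emph{Where $p>2$ enters.} More decisively, nothing in your argument for (2) uses $p>2$. Yet the paper exhibits a smooth section, attached to the Klein cubic, which is not $F$-split in characteristic $2$. So an argument of the shape ``$\Delta^{p-1}I\subset I^{[p]}$ and $\Delta^{p-1}\notin\mathfrak m^{[p]}$ for every smooth $L$'' must fail somewhere, and you never say where $p\neq 2$ would be used.

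\emph{The Fedder step itself.} The key containment $\Delta^{p-1}I\subset I^{[p]}$ is only asserted. Here $X$ has codimension six and is not a complete intersection, and a product of one cubic through $X$ with hyperplanes has no reason to satisfy it. Already for the point $\{x=y=0\}\subset\P^2$ and the line $\{x=0\}$ through it, $x^{p-1}y\notin(x^p,y^p)$. Note also that the paper's $\mathcal C$ lives in the dual space $\P(\wedge^2U_6^\vee)$, not in the ambient space of $X$.

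\emph{Part (1).} $W_2$-liftability does not by itself give quasi-$F$-splitting: supersingular K3 surfaces lift to $W_2(k)$ but are not quasi-$F$-split. The vanishing actually needed is $H^2(X,\Omega^1_X(-p^rK_X))=0$ for all $r\geq 1$. This has $i+j=3=\dim X$, outside the Akizuki--Nakano range, so Deligne--Illusie does not provide it.

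\paragraph{The missing idea.} Make both statements cohomological and push them to the Grassmannian, where $L$ disappears.

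First, since $H^1(X,\Omega^1_X(K_X))=0$ and $H^0(X,\Omega^2_X)=0$, the Cartier-operator criteria give:
\begin{itemize}
\item $X$ is quasi-$F$-split if $H^2(X,\Omega^1_X(-p^rK_X))=0$ for all $r>0$;
\item $X$ is $F$-split if $H^2(X,\Omega^1_X(-pK_X))=H^1(X,\Omega^2_X(-pK_X))=0$.
\end{itemize}

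Next, set $W=\Gr(6,2)$. Use the conormal sequence $0\to\MO_X(-1)^{\oplus 5}\to\Omega^1_W|_X\to\Omega^1_X\to 0$, its exterior square, and the restriction sequences along the five successive hyperplane sections. These reduce the above to vanishings $H^i(W,\Omega^j_W(m))=0$ on $W$ itself, for $j\in\{1,2\}$, suitable $i$, and $m\geq p^r-5$ (resp.\ $m\geq p-6$).

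For $m>0$, these follow from Kodaira vanishing on the $F$-split varieties $\P_W(\cK)\times_W\P_W(\cQ^\vee)$, $\P_W(\cK)$ and $\P_W(\wedge^2\cK)\times_W\P_W(\cQ^\vee)$. One writes $\Omega^1_W\cong\cK\otimes\cQ^\vee$ and
$\Omega^2_W\cong S^2\cK\otimes\wedge^2\cQ^\vee\oplus\wedge^2\cK\otimes S^2\cQ^\vee$. This splitting of $\Omega^2_W$ is exactly where $p\neq 2$ enters.

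The finitely many remaining cases with $m\leq 0$, which force $p\in\{2,3,5\}$, are checked by Macaulay2. Finally, for smooth Fano varieties, $F$-split (resp.\ quasi-$F$-split) is equivalent to globally $F$-regular (resp.\ quasi-$F$-regular).
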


\begin{rem}\label{intro p=2 non-F-split}
\begin{enumerate}
\item 
By definition, globally $F$-regular (resp. quasi-$F$-regular) 
varieties are $F$-split (resp. quasi-$F$-split). 
\item Given a smooth Fano variety $X$ over an algebraically closed field $k$ of positive characteristic, $X$ is $F$-split (resp. quasi-$F$-split) 
if and only if $X$ is globally $F$-regular (resp. quasi-$F$-regular) 
\cite[Theorem 3.9 and Theorem 4.3]{SS10}, \cite[Corollary 7.6]{KTTWYY3}. 
\item 
The bound $p>2$ in Theorem \ref{intro F-split}(2) is optimal, that is, 
there exists a prime Fano threefold of genus $8$ in characteristic two 
which is not $F$-split (Proposition \ref{p p=2 non-F-split}).
This Fano threefold of genus 8 is the reduction modulo $2$ of a Fano threefold in \cite{GP01}, which is a birational model of the moduli space of $(1,11)$-polarized abelian surfaces with level structure of canonical type.
\end{enumerate}
\end{rem}


In what follows, we  overview only the proof of Theorem \ref{intro F-split}(1), 
as that of  Theorem \ref{intro F-split}(2) is similar but technically more complicated. 
By the Cartier operator criterion for quasi-$F$-splitting \cite[Theorem F]{KTTWYY1}, 
it is enough to prove the following  (cf. Lemma \ref{l GFS criterion}(1)): 
\[
\text{$H^2(X, \Omega_X^1(k))=0$ for every $k\geq p$.}
\]
Using the conormal exact sequence with respect to the closed embedding $X \subset \Gr(6, 2)$ as in Theorem \ref{intro main}, 
the problem is reduced to showing 
\[\text{
$H^i(\Gr(6, 2), \Omega^1_{\Gr(6, 2)}(k))=0$ if $i \geq 2$ and $k\geq p-5$.
}
\]
The proof of this equality is divided into the following two cases: 
\[
\text{(I)}\,\,k >0
\hspace{10mm} 
\text{(II)}\,\,k \leq 0. 
\]

(I) Assume $k>0$. 
For the universal subbundle $\cK$ (resp.\ the quotient bundle $\cQ$) on $\Gr(6, 2)$, 
we have 
\[
\Omega^1_{\Gr(6, 2)} \simeq \cK \otimes \cQ^{\vee} \simeq \pi_*\mathcal L
\]
for a suitable invertible sheaf $\mathcal L$ on $F := \P_{\Gr(6, 2)}(\cK) \times_{\Gr(6, 2)}\P_{\Gr(6, 2)}(\cQ^{\vee})$ and 
the induced morphism $\pi : F \to \Gr(6, 2)$. 
Then we can show 
\[
H^i(\Gr(6, 2), \Omega^1_{\Gr(6, 2)}(k)) \simeq H^i(F, \mathcal L \otimes \pi^*\MO_{\Gr(6, 2)}(k)) \overset{(\star)}{=}0, 
\]
where 
$(\star)$ follows from the Kodaira vanishing, which is applicable by $k>0$ and the fact that the variety $F$ is $F$-split. 
For more details, see Subsection \ref{ss QFS}. 

\medskip

(II) Assume $k\leq 0$. 
This, together with $k \geq p-5$, implies  $p-5 \leq k \leq 0$. 
Hence there are only finitely many possibilities for 
the pair $(p, k)$. 
Then the dimension  $h^i(\Gr(6, 2), \Omega^1_{\Gr(6, 2)}(k))$ is determined by using 
computer algebra systems, e.g., 
Macaulay2 \cite{M2}. 
In particular, our proof of Theorem \ref{intro F-split} is computer-assisted. 
Moreover, the non-$F$-split example of characteristic two mentioned in Remark \ref{intro p=2 non-F-split} will be  also confirmed by using Macaulay2. 
For more details, see Subsection \ref{ss Macaulay2}. 







\subsection{Overview of the proof of Theorem \ref{intro main}}

We start by recalling the strategy in characteristic zero 
(cf.\ \cite{Gus83}, \cite{Gus92}). 
Let $X \subset \P^9$ be a prime Fano threefold of genus $8$ over an algebraically closed field of characteristic zero. 
The main part is to construct a suitable vector bundle $E$ on $X$ of rank $2$. 
The argument consists of the following three steps. 
\begin{enumerate}
\item[(0-a)] Construction of a quintic elliptic curve $C$ on $X$. 
\item[(0-b)] Existence of K3 surfaces $S \in |-K_X|$ with $C \subset S$. 
\item[(0-c)] Construction of a vector bundle $E$ via elementary transform by using $C \subset S \subset X$. 
\end{enumerate}

(0-a) 
Take a general point $P$ and let $\sigma : Y \to X$ be the blowup at $P$. 
Then it is easy to see that $|-K_Y|$ is base point free and 
the image $Z := \varphi_{|-K_Y|}(Y) \subset \P^5$ is a complete intersection $Z = H_2 \cap H_3$, where 
each $H_d$ is a hypersurface on $\P^5$ of degree $d$. 
Moreover, $H_2$ is smooth and $\psi : Y \to Z$ is a flopping contraction. 
For $E := \Ex(\sigma) (\simeq \P^2)$ and $E_Z := \psi(E)$, 
we see that $\psi|_E : E \xrightarrow{\simeq} E_Z$ and $E_Z \subset \P^5$ is a Veronese surface. 
By an explicit computation on $\CH(H_2)$, we see that $E_Z = V+3V'$ in $\CH(H_2)$ for some planes $V$ and $V'$ on $H_2$. 
After replacing $V$ by a general member of its deformation family, 
we may assume that the intersection $C_Z := H_3 \cap V$ is a smooth cubic curve on the plane $V =\P^2$. 
For the proper transform $C_Y \subset Y$ of $C_Z$ and $C := \sigma(C_Y) \subset X$, 
we have $C_Z \xleftarrow{\simeq} C_Y \xrightarrow{\simeq} C$ and $C$ is a quintic elliptic curve on $X$. 

\medskip

(0-b) Let $\tau : \wt{X} \to X$ be the blowup along $C$. 
We can check that $\langle C \rangle \cap X = C$, 
which implies that $|-K_{\wt{X}}|$ is base point free. 
Take a general member $T \in |-K_{\wt{X}}|$ and set $S := \tau(T) \in |-K_X|$. 
By using the fact that $T|_{\Ex(\tau)}$ is smooth, 
we see that also $S$ is a smooth projective surface. 
By construction, we have $C \subset S$. 
It follows from the adjunction formula that $S$ is a K3 surface.

\medskip

(0-c) 
Since the complete linear system $|C|$ on $S$ is base point free, 
we have a surjection $V \otimes \MO_S \to \MO_S(C)$ for $V := H^0(S, \MO_S(C))$. 
By using $\dim_k V = 2$, we get the following exact sequence: 
\[
0 \to \MO_S(-C) \to V \otimes \MO_S \to \MO_S(C) \to 0. 
\]
Taking the tensor product with $\MO_S(S) := \MO_X(S)|_S$, we get the following homomorphism between exact sequences: 
\[
\begin{tikzcd}
0 \arrow[r]  &
E \arrow[r] \arrow[d] & 
V \otimes_k \MO_X(S) \arrow[d] \arrow[r] &
\MO_S(S+C) \arrow[d, equal] \arrow[r] & 0\\
0 \arrow[r]  &
\MO_S(S-C) \arrow[r] &
V \otimes_k \MO_S(S) \arrow[r] &
\MO_S(S+C) \arrow[r]  &0,
\end{tikzcd}
\]
where 
the right square consists of the natural arrows, 
$E$ is defined as the kernel, and the left vertical arrow is the induced one. 
Then it is well known that  $E$ is a locally free sheaf on $X$ of rank $2$. 

\medskip

We now switch to the situation of characteristic $p>0$. 
Let $X \subset \P^9$ be a prime Fano threefold of genus $8$ over an algebraically closed field of characteristic $p>0$. 
As in characteristic zero, we construct a suitable vector bundle $E$ on $X$ of rank $2$. 
Corresponding to (0-a), (0-b), (0-c), the proof consists of the following three steps. 
\begin{enumerate}
\item[(p-a)] Construction of a regular plane cubic curve $C$ on $X_{\kappa} := X \times_k \kappa$ for some field extension $k \subset \kappa$. 
\item[(p-b)] Existence of K3-like surfaces 
$S \in |-K_{X_{\kappa'}}|$ with $C_{\kappa'} \subset S$ 
for some further field extension $k \subset \kappa \subset \kappa'$. 
\item[(p-c)] Construction of a vector bundle $E$ on $X$ via elementary transform by using 
$C_{\kappa'} \subset S \subset X_{\kappa'}$. 
\end{enumerate}

(p-a) 
We apply the same argument as in (0-a). 
Then we do not see whether the intersection $H_3 \cap V$ (which was the definition of $C_Z$ in characteristic zero) is actually smooth. 
By taking the generic member instead of taking a general member $V$, we can find a 
(possibly non-smooth) regular 
plane cubic curve $C_Z$ on $X_{\kappa} := X \times_k \kappa$ for a suitable field extension $k \subset \kappa$. 
Here a general member $V$ is a general fibre of the suitable flat family 
\[
\mathcal V \hookrightarrow H_2 \times_k \P^3_k \to \P^3_k, 
\]
whilst the generic member is nothing but the generic fibre of 
this family $\mathcal V \to \P^3_k$. 
Hence the new base field $\kappa$ is the function field of $\P^3_k$. 
As in (0-a), $C \subset X_{\kappa}$ is defined as the proper transform of $C_Z$.

\medskip

(p-b) 
Let $\tau : \wt{X} \to X_{\kappa}$ be the blowup along a regular curve $C$. 
As in (0-b), we see that $|-K_{\wt{X}}|$ is base point free. 
The generic member $T$ of $|-K_{\wt{X}}|$ is a regular prime divisor on $\wt{X}_{\kappa'} := \wt{X} \times_{\kappa} \kappa'$ for some field extension $\kappa \subset \kappa'$. 
By the same argument as in (0-b), we see that $S := \tau(T)$ is still a regular projective surface on $X_{\kappa'} := X \times_k \kappa'$ such that $S \sim -K_{X_{\kappa'}}$. 
Such a surface $S$ is called a K3-like surface, as we have $K_S \sim 0$ and $H^1(S, \MO_S)=0$. 

\medskip

(p-c) 
Applying the same argument as in (0-c), we can find a vector bundle $E'$ on $X_{\kappa'}$ 
of rank $2$ satisfying suitable properties. 
This vector bundle descends to $X_R := X \times_k R$ for some intermediate ring $k \subset R \subset \kappa'$ which is finitely generated over $k$. 
Then taking the fibre of $X_R \to \Spec R$ over a general closed point of $\Spec R$, 
we get a required vector bundle $E$ on $X$.

\begin{rem}
Replacing $S$ by a general member of $|-K_{X_\kappa}|$, we may avoid using the second field extension $\kappa \subset \kappa'$ in (p-c) 
(cf.\ Subsection \ref{ss construction vb}). 
\end{rem}

\section{Preliminaries}

\subsection{Notation}\label{ss-notation}

In this subsection, we summarise notation used in this paper. 

\begin{enumerate}
\item We will freely use the notation and terminology in \cite{Har77} and \cite{KM98}. 
In particular, $D_1 \sim D_2$ means linear equivalence of Weil divisors. 
\item 
Throughout this paper, 
we work over an algebraically closed field $k$ 
of characteristic $p>0$ unless otherwise specified. 
\item For an integral scheme $X$, 
we define the {\em function field} $K(X)$ of $X$ 
as the local ring $\MO_{X, \xi}$ at the generic point $\xi$ of $X$. 
For an integral domain $A$, $K(A)$ denotes the function field of $\Spec A$. 
\item 
For a scheme $X$, its {\em reduced structure} $X_{\red}$ 
is the reduced closed subscheme of $X$ such that the induced closed immersion 
$X_{\red} \to X$ is surjective. 
\item 
Our notation will not distinguish between invertible sheaves and 
Cartier divisors. For example, $\Pic X= \Z K_X$ 
means that $\Pic X$ is generated by $\omega_X$ as an abelian group. 
\item We say that $X$ is a {\em variety} (over $k$) if 
$X$ is a separated integral scheme which is of finite type over $k$. 
We say that $X$ is a {\em curve} (resp. a {\em surface}, resp. a {\em threefold})  
if $X$ is a variety over $k$ of dimension one (resp. {\em two}, resp. {\em three}). 
\item 
Given a variety $Y$ and a closed subscheme $Z$ of $Y$, 
$\Bl_Z\,Y$ denotes the blowup of $Y$ along $Z$. 
In this case, $Z$ is called {\em the (blowup) centre} of the induced blowup $\Bl_Z\,Y \to Y$. 
Let  $\Ex(f)$ be the exceptional divisor 
equipped with reduced scheme structure. 
In particular, if $Y$ is a smooth threefold and $Z$ is a smooth curve on $Y$, 
then we have $K_X \sim f^*K_Y +\Ex(f)$ for $X := \Bl_Z\,Y$.
\item We say that $f: X \to Y$ is a {\em contraction} if $f$ is a morphism of schemes satisfying $f_*\MO_X = \MO_Y$. 
Here the equality $f_*\MO_X = \MO_Y$ means that the induced ring homomorphism $\MO_Y \to f_*\MO_X$ is an isomorphism. 

\item 
For the definition of types of extremal rays for smooth projective threefolds, 
we refer to \cite[Definition 3.3]{FanoIII}. 
\item Given a variety $X$, $\Sing X$ denotes the singular locus of $X$. 
\item We say that $T \subset \P^5$ is a {\em Veronese surface} if 
$T \subset \P^5$ is isomorphic to the image of $\P^2$ by the morphism of the complete linear system $|\MO_{\P^2}(2)|$. 
Note that Veronese surfaces are unique up to $\Aut(\P^5)$. 
\item 
Given a closed subscheme $Z$ on a projective space $\P^N_k$, 
$\langle Z \rangle$ denotes the smallest linear subvariety of $\P^N_k$ containing $Z$. 
\item 
Take integers $n$ and $r$ satisfying $n > r >0$. 
For an $n$-dimensional $k$-vector space $V$, 
let $\Gr(r, V)$ (resp. $\Gr(V, r)$) be the Grassmannian variety 
parametrising $r$-dimensional $k$-vector subspaces (resp. quotients). 
Set $\Gr(r, n) := \Gr(r, k^n)$ and $\Gr(n, r) := \Gr(k^n, r)$.

Set $\P(V) = \Gr(V,1)$, which is the Grothendieck projectivisation $\Proj(S(V))$ of the vector space $V$. 
\item Similarly, for integers $n>k_1>\cdots >k_m>0$, $\Fl(V;k_1,\dots,k_m)$ denotes the flag variety parametrising flags of quotients $V \to V_1 \to \cdots \to V_m$ with $\dim V_i = k_i$.
Set $\Fl(n;k_1,\dots,k_m) := \Fl(k^n;k_1,\dots,k_m)$.
\item 
A {\em hypersurface} $H$ of degree $d$ on $\P^N_k$ is a prime divisor on $\P^N_k$ satisfying 
$\MO_{\P^N}(H) \simeq \MO_{\P^N}(d)$. 
In particular, a hypersurface is an integral scheme under our terminologies. 
A {\em quadric} (resp.\ {\em cubic}) hypersurface is a hypersurface of degree $2$ (resp. $3$). 
\item 
We say that $X \subset \P^N_k$ is an {\em intersection of quadrics} if 
the equality 
${\displaystyle X = \bigcap_Q Q}$ of closed subschemes on $\P^N_k$ holds, 
where ${\displaystyle \bigcap_Q Q}$ denotes the scheme-theoretic intersection and $Q$ runs over all the quadric hypersurfaces on $\P^N_k$ containing $X$. 
\item 
We set $\dim \emptyset := -\infty$ when the empty set $\emptyset$ is considered as a topological space. 
\end{enumerate}

\subsection{Fano threefolds}

We say that $X$ is a {\em Fano threefold} (over $k$) 
if $X$ is a smooth projective threefold over $k$ such that $-K_X$ is ample. 
We say that $X$ is a {\em prime Fano threefold} 
if $X$ is a Fano threefold such that $\Pic X  = \Z K_X$. 
For a prime Fano threefold $X$, 
the integer $g$ defined by $(-K_X)^3 = 2g -2$ is called the {\em genus} of $X$. 
It is well known that the following hold  \cite[Theorem 1.1]{FanoI}, \cite[Theorem 1.2]{FanoII}: 
\begin{enumerate}
\item $2 \leq g \leq 12$ and $g \neq 11$. 
\item If $g \geq 4$, then $|-K_X|$ is very ample. 
\end{enumerate}
If $X$ is a prime Fano threefold and $|-K_X|$ is very ample, 
then we have the closed embedding $X \subset \P^{g+1}$ induced by $|-K_X|$. 
In this case, $X \subset \P^{g+1}$ is also called a prime Fano threefold of genus $g$. 
More rigorously, we say that $X \subset \P^{g+1}$ is a 
{\em prime Fano threefold of genus $g$}  
if $X$ is a prime Fano threefold of genus $g$, 
$X$ is a closed subscheme of $\P^{g+1}$, $\MO_{\P^{g+1}}(1) \simeq \MO_X(-K_X)$, and the induced map 
\[
H^0(\P^{g+1}, \MO_{\P^{g+1}}(1)) \to H^0(X, \MO_{\P^{g+1}}(1)|_X)
\]
is an isomorphism.


\begin{rem}
Let $X$ be a prime Fano threefold of genus $8$, that is, 
a Fano threefold satisfying $\Pic X = \Z K_X$ and $(-K_X)^3 =14$. 
Then the following hold. 
\begin{enumerate}
\item $|-K_X|$ is very ample \cite[Theorem 1.1]{FanoI} and 
we have 
the closed embedding $X \subset \P^9$ induced by $|-K_X|$. 
\item 
For integers $m \geq 0$ and $i>0$, we have $H^i(X, -mK_X) = 0$  and 
the following  \cite[Corollary 4.5]{FanoI}: 
\[
h^0(X, -mK_X)= \frac{7}{6}m(m+1)(2m+1) + (2m+1).  
\]
\item 
The graded $k$-algebra $\bigoplus_{d \geq 0} H^0(X, -dK_X)$ is generated by $H^0(X, -K_X)$ as a $k$-algebra \cite[Theorem 6.2]{FanoI}. 
\item $X \subset \P^9$ is an intersection of quadrics \cite[Theorem 1.2]{FanoI}. 
\end{enumerate}

\end{rem}


\subsection{Quadric hypersurfaces}

\begin{prop}\label{p quadric CH}
Let $Q \subset \P^{n+1}$ be a smooth quadric hypersurface. 
Set $H \in \CH^1(Q)$ to be the class of a hyperplane section. 
For every $0 \leq i < n/2$, 
let $L_i$ be the class of an $i$-dimensional linear subvariety on $Q$. 
Then $\CH^i(Q)$ is a free $\Z$-module. 
Moreover, the following hold. 
\begin{enumerate}
\item 
If $0 \leq  i < n/2$, then $\CH^i(Q) = \Z H^i$. 
\item 
If $n/2 < i \leq n$, then $\CH^i(Q) = \Z L_{n-i}$. 
\item 
Assume $m := n/2 \in \Z$. 
Then there exist exactly two classes $L$ and $L'$ of $m$-dimensional linear subvarieties on $Q$. 
Furthermore, $H^m = L + L'$ 
and $\CH^m(Q) = \Z L \oplus \Z L'$. 
\end{enumerate}
\end{prop}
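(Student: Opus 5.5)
The plan is to use that a smooth quadric $Q$ has a cellular decomposition, so its Chow groups are free, with one generator for each cell.

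\textbf{Cell decomposition.} Since $k$ is algebraically closed, $Q$ has a $k$-point. Pick a point $x\in Q$ and project from $x$. Let $T_xQ\cap Q$ be the tangent cone; it is the cone over a smooth quadric $Q'\subset\P^{n-1}$ with vertex $x$. Then $Q\setminus (T_xQ\cap Q)\simeq\mathbb{A}^n$, and $T_xQ\cap Q\setminus\{x\}$ is a line bundle over $Q'$.

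\textbf{Induction.} Run induction on $n$. The localisation sequence
\[
\CH_*(Z)\to\CH_*(Q)\to\CH_*(Q\setminus Z)\to0
\]
and the homotopy invariance $\CH_*(\text{line bundle over }Q')\simeq\CH_{*-1}(Q')$ show that $Q$ has an affine stratification, with cells whose closures are well understood. For linear spaces on $Q$ of dimension $<n/2$, the cone over a linear subspace of $Q'$ is again a linear subspace. For $n=0$, $Q$ is two points. For $n=1$, $Q$ is a conic $\simeq\P^1$. These give the base cases.

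\textbf{Freeness and generators.} For a variety with an affine stratification, the classes of the closures of the cells generate $\CH_*$. To see that they are free, I would compare with a cohomology theory: use the cycle map to $\ell$-adic cohomology, or intersection numbers. The count of cells is exactly one in each dimension, except two in dimension $m$ when $n=2m$. Hence $\CH^i(Q)$ is generated by:
\begin{itemize}
\item $H^i$ for $i<n/2$;
\item $L_{n-i}$ for $i>n/2$;
\item $L$ and $L'$ in the middle codimension.
\end{itemize}
Freeness follows from a nondegenerate intersection pairing:
\begin{itemize}
\item $H^i\cdot L_i=1$, so neither class is torsion and each spans a rank-one group;
\item in the middle, $L\cdot L$ and $L\cdot L'$ form a unimodular matrix, namely $\begin{pmatrix}1&0\\0&1\end{pmatrix}$ or $\begin{pmatrix}0&1\\1&0\end{pmatrix}$ depending on the parity of $m$.
\end{itemize}

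\textbf{The relation $H^m=L+L'$.} Take a hyperplane tangent at $x$ and iterate the cone description. Alternatively, intersect with a general codimension-$m$ linear space tangent along an $m$-plane, so that it cuts $Q$ in the union of two $m$-planes from the two families.

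\textbf{Two families.} That there are exactly two classes comes from the two connected components of the orthogonal Grassmannian of maximal isotropic subspaces. This holds in every characteristic, including $p=2$, where one works with the quadratic form rather than the bilinear form.

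\textbf{Main obstacle.} Everything above is standard, e.g.\ Fulton's \emph{Intersection Theory}, Example 19.1.11 and Chapter 1. The main obstacle is characteristic $2$. There the identification of the two rulings and the intersection numbers $L\cdot L'$ must be done with quadratic forms, via the Arf invariant or Dickson invariant, rather than with symmetric bilinear forms. I would cite Elman--Karpenko--Merkurjev for this characteristic-free treatment.
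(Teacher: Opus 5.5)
Your proposal is correct and follows essentially the same route as the paper. The paper's proof just cites \cite[Propositions 68.1 and 68.2]{EKM08}, where the Chow groups of a split quadric are computed from the same cellular decomposition you describe (projecting from a point, with $Q\cap T_xQ$ a cone over a smaller smooth quadric), and the middle intersection numbers you quote are \cite[Exercise 68.3]{EKM08}. One small slip: the linear $\P^{m+1}$ realising $H^m=L+L'$ is tangent to $Q$ along an $(m-1)$-plane, not an $m$-plane, and it cuts $Q$ in two $m$-planes from opposite families meeting along that $(m-1)$-plane.
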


\begin{proof}
All the assertions 
follow from \cite[Proposition 68.1 and Proposition 68.2]{EKM08}. 
\end{proof}

\begin{rem}\label{r quad LL' symmetry}
Let $Q \subset \P^{n+1}$ be a smooth quadric hypersurface with $n \in 2\Z$. 
For $m := n/2$, let $L$ and $L'$ be the two different classes of 
$m$-dimensional linear subvarieties on $Q$. 
Then there exists an automorphism $\iota: Q \xrightarrow{\simeq} Q$ such that $\iota^2 = {\rm id}$ and $\iota_*(L) = L'$. 

Indeed, we may assume 
\[
Q = \{ x_0x_{2m+1} +x_1x_{2m}+ x_2 x_{2m-1}+\cdots + x_{m}x_{m+1} =0\},\
\]
\[
L= \{ x_0=x_1= \cdots = x_{m}=0\}, \qquad 
L'= \{ x_{2m+1}  = x_{2m} = \cdots = x_{m+1}=0\}. 
\]
Then the automorphism $\iota: Q \to Q, x_i \mapsto x_{2m+1-i}$ satisfies 
$\iota^2 = {\rm id}$ and $\iota(L)=L'$. 
\end{rem}

\begin{lem}\label{l quad Vero Sing neq pt}
Let $Q \subset \P^5$ be a quadric hypersurface. 
Assume that there exists a Veronese surface $T \subset \P^5$ satisfying $T \subset Q$. 
Then $\dim(\Sing Q)$ is equal to $-\infty, 1$, or $2$.  
\end{lem}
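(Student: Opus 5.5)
The plan is to classify all quadrics containing a fixed Veronese surface and read off their singular loci directly. Since Veronese surfaces are unique up to $\Aut(\P^5)$, we may assume $T$ is the image of $\P^2$ under $[y_0:y_1:y_2]\mapsto [y_iy_j]_{i\le j}$. Write the coordinates of $\P^5$ as the entries $m_{ij}=m_{ji}$ of a symmetric $3\times 3$ matrix $M$, so that $T=\{\rank M\le 1\}$.

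First we determine $H^0(\P^5,I_T(2))$. Since $h^0(\P^5,\MO(2))=21$ and $h^0(\P^2,\MO(4))=15$, and $T\subset\P^5$ is projectively normal (the restriction map is surjective because $\operatorname{Sym}^2$ of the degree-$2$ monomials spans all quartics), we get $h^0(I_T(2))=6$. This space is spanned by the six $2\times2$ minors of $M$, that is, by the entries of $\operatorname{adj}(M)$. Hence every quadric containing $T$ has the form
\[Q_A=\{\operatorname{tr}(A\cdot \operatorname{adj}(M))=0\}\]
for a nonzero symmetric matrix $A$, up to the harmless normalisation of off-diagonal coefficients. The group $\GL_3$ acts on $\P^2$, hence on $\P^5$ preserving $T$, via $M\mapsto gMg^{t}$. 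Since $\operatorname{adj}(gMg^t)=\det(g)^2(g^{t})^{-1}\operatorname{adj}(M)g^{-1}$, this acts on $A$ by congruence $A\mapsto g^{-1}A(g^{-1})^t$, up to scalar. So it suffices to check one representative $A$ for each congruence class of nonzero symmetric $3\times3$ matrices.

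Next we list the normal forms. If $p\neq 2$, or if $p=2$ and $A$ is not alternating, then $A$ is congruent to $\operatorname{diag}(1,0,0)$, $\operatorname{diag}(1,1,0)$ or $I_3$. If $p=2$ and $A$ is alternating (zero diagonal), it is congruent to the rank-two form with $a_{12}=a_{21}=1$ and all other entries zero. For each representative we write down $Q_A$ explicitly:
\begin{itemize}
\item $\operatorname{diag}(1,0,0)$ gives $m_{22}m_{33}-m_{23}^2$, a rank-$3$ quadric whose singular locus is a plane.
\item $\operatorname{diag}(1,1,0)$ gives $m_{33}(m_{11}+m_{22})-m_{13}^2-m_{23}^2$, whose singular locus is a line.
\item $I_3$ gives $\sum_{i<j}(m_{ii}m_{jj}-m_{ij}^2)$, which we expect to be smooth.
\item The alternating form gives (up to sign) $m_{13}m_{23}-m_{12}m_{33}$, a cone over a smooth quadric surface, whose singular locus is the line $\{m_{13}=m_{23}=m_{12}=m_{33}=0\}$.
\end{itemize}
In every case $\dim \Sing Q\in\{-\infty,1,2\}$, and the statement follows. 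Note that $\dim\Sing Q\le 2$ is automatic in any case, since $Q$ is integral and therefore has rank at least $3$.

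The main obstacle is computing $\Sing Q_A$ correctly in characteristic $2$. There the rank of the associated bilinear form does not detect smoothness, and we must compute $\Sing Q_A$ as the common zero locus of $Q_A$ and all of its partial derivatives. For $A=I_3$ and $p=2$ the partials are $\partial/\partial m_{ij}=0$ for $i<j$ and $\partial/\partial m_{ii}=m_{jj}+m_{kk}$. Their common zeros force $m_{11}=m_{22}=m_{33}$, and substituting into $Q_A$ gives $3m_{11}^2+\sum_{i<j}m_{ij}^2=(m_{11}+m_{12}+m_{13}+m_{23})^2$. The singular locus is therefore a single point. This is exactly the case the lemma must exclude, so I would double-check it carefully. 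The likely resolution is that the off-diagonal coefficients of $\operatorname{tr}(A\operatorname{adj}M)$ carry a factor $2$. In characteristic $2$ the correct span of minors must then be parametrised directly by six independent coefficients $c_{ij}$ rather than through $\operatorname{tr}(A\operatorname{adj}M)$, and the normal-form analysis redone for the pair of a symmetric $A$ together with this modified pairing.

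If that direct classification becomes messy, I would fall back on the projection argument. Suppose $\Sing Q=\{v\}$. Project from $v$ onto a smooth quadric threefold $Q'\subset\P^4$ and compare the class of the image of $T$ with $\CH^1(Q')=\Z H$ using Proposition \ref{p quadric CH}. The degree-$2$ case is impossible, since then $T$ would lie in a hyperplane. The remaining case, a birational image of degree $4$, would be excluded by showing that the image is a complete intersection of two quadrics, which cannot be the image of $\P^2$ under a linear projection of the Veronese surface.
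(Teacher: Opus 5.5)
For $p\neq 2$ your argument is complete, and there it is essentially the paper's proof. In both, $H^0(\P^5,I_T(2))$ is identified with a $6$-dimensional $\GL_3$-representation, the problem is reduced to orbit representatives, and the singular loci are computed; the $I_3$ case is indeed smooth when $p\neq 2$. The lemma, however, is needed for every $p>0$, and \textbf{for $p=2$ your reduction breaks down}.

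Since $\operatorname{adj}(M)$ is symmetric,
$\operatorname{tr}(A\operatorname{adj}M)=\sum_i a_{ii}\operatorname{adj}(M)_{ii}+2\sum_{i<j}a_{ij}\operatorname{adj}(M)_{ij}$.
So in characteristic $2$ the $Q_A$ span only the three diagonal minors, and no classification is obtained:
\begin{itemize}
\item the alternating $A$ gives the zero polynomial, so the quadric $m_{13}m_{23}-m_{12}m_{33}$ you attach to it is not $Q_A$;
\item quadrics such as $\operatorname{adj}(M)_{12}+\operatorname{adj}(M)_{33}$ are never examined.
\end{itemize}
Your alarming computation is also wrong. For $A=I_3$ and $p=2$, the conditions $m_{11}=m_{22}=m_{33}$ and $m_{11}+m_{12}+m_{13}+m_{23}=0$ are three independent linear equations. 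Hence $\Sing Q_{I_3}$ is a plane, not a point. Likewise $\operatorname{diag}(1,1,0)$ gives a plane, not a line. In characteristic $2$ every $Q_A$ corresponds to a quadratic form that is a square (see below), so all of them are singular along a plane.

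The missing idea is to parametrise by quadratic forms, not by symmetric matrices under congruence. To $\sum_{i\le j}c_{ij}\operatorname{adj}(M)_{ij}$ attach $q(y)=\sum_{i\le j}c_{ij}y_iy_j$, i.e.\ the functional $\ell$ on symmetric matrices with $\ell(yy^t)=q(y)$. Two facts make this equivariant for $q\mapsto q\circ h$ in every characteristic:
\begin{itemize}
\item the rank-one matrices $yy^t$ span all symmetric matrices;
\item $\operatorname{adj}(gMg^t)=\det(g)^2\,h\operatorname{adj}(M)h^t$ with $h=(g^t)^{-1}$.
\end{itemize}
This is what the paper does via $S^2(\wedge^2V)\simeq S^2(V^*)$. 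In every characteristic the nonzero ternary quadratic forms form exactly three orbits, represented by $X_0X_1+X_2^2$, $X_0X_1$ and $X_2^2$. In your coordinates these give $\operatorname{adj}(M)_{12}+\operatorname{adj}(M)_{33}$, $\operatorname{adj}(M)_{12}$ and $\operatorname{adj}(M)_{33}$. Their singular loci, computed from partial derivatives (also valid for $p=2$), are empty, a line and a plane.

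Your fallback does not close the gap either, because its key claim is false as stated. Project $T$ from $v=\operatorname{diag}(1,1,0)\notin T$. The image is a birational quartic lying on the pencil spanned by $\operatorname{adj}(M)_{12}$ and $\operatorname{adj}(M)_{11}-\operatorname{adj}(M)_{22}$, both singular at $v$. It is therefore a complete intersection of two quadrics in $\P^4$. So the smoothness of $Q'$ would have to be used in an essential way. The case $v\in T$ is also not addressed, though it follows from the parity of degrees in $\CH^1(Q')=\Z H$.
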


\begin{proof}
Fix homogeneous coodinates on $\P^2_k$ and $\P^5_k$ as follows: 
\[
\P^2_k = \Proj k[x_0, x_1, x_2], \qquad 
\P^5_k = \Proj k[y_0, y_1, y_2, z_0, z_1, z_2]. 
\]
We may assume that $T =i(\P^2)$ for the closed immersion 
\[
i: \P^2_k \hookrightarrow \P^5_k, \qquad 
[x_0, x_1, x_2] \mapsto [x_0^2 : x_1^2 : x_2^2: x_1x_2: x_2x_0: x_0x_1],  
\]
and hence 
\[
T = \{ y_0y_1 = z_2^2, y_1y_2 = z_0^2, y_2y_0 = z_1^2, 
z_0z_1 = z_2y_2, z_1z_2=z_0y_0, z_2z_0 = z_1y_1\}. 
\]

For $V := H^0(\P^2, \MO_{\P^2}(1))$, 
we now construct the following commutative diagram of $k$-linear maps in which each horizontal sequence is exact:  
\[
\begin{tikzcd}
0 \arrow[r] 
  & H^0(\P^5, I_T \otimes \MO_{\P^5}(2)) \arrow[r, "\varphi"] \arrow[d, "\alpha", "\simeq"'] 
  & H^0(\P^5, \MO_{\P^5}(2)) \arrow[r, "\psi"] \arrow[d, "\beta", "\simeq"'] 
  & H^0(T, \MO_{\P^5}(2)|_T) \arrow[r] \arrow[d, "\gamma", "\simeq"'] 
  & 0 \\
0 \arrow[r] 
  & S^2(\wedge^2 V) \arrow[r, "\varphi'"] 
  & S^2(S^2(V)) \arrow[r, "\psi'"] 
  & S^4(V) \arrow[r] 
  & 0. 
\end{tikzcd}
\]
The upper sequence is the standard one and the  lower one is given
as follows.  
\begin{itemize}
\item $\varphi'((a\wedge b)(c\wedge d)) = (ac)(bd)-(ad)(bc)$. 
\item $\psi'((ab)(cd)) = abcd$. 
\end{itemize}
Here $\gamma$ is the isomorphism given  by 
\[
H^0(T, \MO_{\P^5}(2)|_T) \simeq H^0(\P^2, \MO_{\P^2}(4)) \simeq S^4(V),  
\]
and $\beta$ is the isomorphism induced by 
\[
H^0(\P^5, \MO_{\P^5}(2)) \simeq S^2(H^0(\P^5, \MO_{\P^5}(1))) 
\xrightarrow{\simeq} S^2( H^0(\P^2, \MO_{\P^2}(2))) 
\simeq S^2( S^2(V)). 
\]
It is easy to check that the right square is commutative, i.e., 
$\gamma \circ \psi = \psi' \circ \beta$. 
Note that $\dim_k(S^2(\wedge^2 V)) =6$, $\dim_k(S^2(S^2(V))) = 21,$ and 
$\dim_k(S^4(V)) = 15$. 
Then the lower horizontal sequence is exact, 
because $\varphi'$ is injective, $\psi'$ is surjective, and $\psi' \circ \varphi' =0$. 
Hence also the upper horizontal sequence is exact. 
Let $\alpha$ be the induced isomorphism between the kernels. 

By using the $k$-linear basis $V = H^0(\P^2, \MO_{\P^2}(1)) 
= kx_0 \oplus k x_1 \oplus kx_2$, 
we have $\SL_3 := \SL_3(k) \simeq \SL(V)$, and hence 
$V$ is a $k[\SL_3]$-module. 
Then the lower horizontal sequence is an exact sequence 
of  $k[\SL_3]$-modules. 
Via the vertical isomorphisms, 
we may assume that 
also the upper  sequence is an exact sequence 
of  $k[\SL_3]$-modules. 
Since $\SL_3$ is a semisimple algebraic group, 
there exists a unique one-dimensional $k[\SL_3]$-module up to isomorphisms, and hence we have a $k[\SL_3]$-module isomorphism  $\theta: \bigwedge^3 V \xrightarrow{\simeq} k$, where $k$ is equipped with the trivial $\SL_3$-action. 
Then we get the following $k[\SL_3]$-module isomorphisms:  
\[
\zeta: \bigwedge^2 V \xrightarrow{\simeq} \Hom_k(V, \bigwedge^3 V) 
\xrightarrow{\simeq, \theta \circ} \Hom_k(V, k) =:V^*, 
\]
which induces another $k[\SL_3]$-module isomorphism 
\[
\xi := S^2(\zeta) : S^2(\bigwedge^2 V) \xrightarrow{\simeq} S^2(V^*). 
\]
Since $S^2(V^*)$ is the space of quadratic forms on $V$, 
there exist exactly three nonzero $\SL_3$-orbits 
represented by $X_0X_1 + X_2^2, X_0X_1, X_2^2$, 
where 
$X_0, X_1, X_2$ are the $k$-linear basis of $V^*$ defined by 
$\zeta(x_0 \wedge x_1) =:X_2, 
\zeta(x_1 \wedge x_2)=: X_0,
\zeta(x_2 \wedge x_0)=: X_1$. 
We have 
\begin{align*}
\beta^{-1} \circ \varphi' \circ \xi^{-1}(X_0X_1) 
&=\beta^{-1} \circ \varphi'( (x_1 \wedge x_2)(x_2 \wedge x_0))\\
&= 
\beta^{-1}( (x_1x_2)(x_2x_0)  - (x_1x_0)(x_2^2))\\
&= z_0z_1 - z_2y_2,
\end{align*}
\begin{align*}
\beta^{-1} \circ \varphi' \circ \xi^{-1}(X_2^2) 
&=\beta^{-1} \circ \varphi'( (x_0\wedge x_1)(x_0\wedge x_1)) )\\
&= 
\beta^{-1}( (x_0^2)(x_1^2)  - (x_0x_1)(x_0x_1))\\
&= y_0y_1 - z_2^2,
\end{align*}
\[
\beta^{-1} \circ \varphi' \circ \xi^{-1}(X_0X_1+X_2^2) =  
z_0z_1 - z_2y_2 + y_0y_1 - z_2^2. 
\]
Moreover, the $\SL_3$-action on $H^0(\P^5, \MO_{\P^5}(2))$ preserves the isomorphism class of $Q$. 
Therefore, $\dim(\Sing Q)$ is equal to 
$1$, $2$, or $-\infty$. 
\qedhere


\end{proof}

\begin{lem}\label{l Veronese V+3V'}
Let $T \subset \P^5$ be a Veronese surface and let $Q \subset \P^5$ be a 
smooth quadric hypersurface such that $T \subset Q$. 
Then there exist planes $V$ and $V'$ on $Q$ such that 
$\CH^2(Q) = \Z V \oplus \Z V'$, 
$V^2 =V'^2 =1$, $V \cdot V' =0$, and $T = V + 3V'$. 
\end{lem}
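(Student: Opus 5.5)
By Proposition \ref{p quad CH} applied with $n=4$, $m=2$, there are exactly two classes $L, L'$ of planes on $Q$, and $\CH^2(Q) = \Z L \oplus \Z L'$ with $H^2 = L + L'$. The plan is to compute the intersection form on $\CH^2(Q)$ and then determine the class of $T$ from two numerical invariants: its degree and its self-intersection.

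\textbf{Intersection form.} Since $H^4 = \deg Q = 2$ and $H^2\cdot L = H^2 \cdot L' = 1$ (planes have degree one), we get $L^2 + L\cdot L' = 1$ and $L'^2 + L\cdot L' = 1$. Two general planes in the same family on a smooth four-dimensional quadric meet in a point, while planes from different families are disjoint. This can be checked directly in the coordinates of Remark \ref{r quad LL' symmetry}, or by moving one plane with a translate under $\mathrm{SO}$. It gives $L^2 = L'^2 = 1$ and $L \cdot L' = 0$. Alternatively, the involution $\iota$ of Remark \ref{r quad LL' symmetry} gives $L^2 = L'^2$, and an explicit check of one intersection (for example, $L$ and $L'$ as written are disjoint) suffices.

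\textbf{Class of $T$.} Write $T = aL + bL'$. The degree of the Veronese surface gives $a + b = T\cdot H^2 = 4$. For the self-intersection $T^2 = a^2 + b^2$, I would use the self-intersection formula $T^2 = c_2(N_{T/Q})$ and compute $c(N_{T/Q}) = c(T_Q|_T)/c(T_T)$. Let $h$ be the line class on $\P^2 \simeq T$, so that $\MO(1)|_T = 2h$. Then $c(T_{\P^5}|_T) = (1+2h)^6$ and $c(N_{Q/\P^5}|_T) = 1+4h$, which gives $c(T_Q|_T) = (1+2h)^6/(1+4h)$. Dividing by $c(T_T) = (1+h)^3$ and expanding modulo $h^3$:
\[
(1+12h+60h^2)(1-4h+16h^2)(1-3h+6h^2).
\]
The first two factors multiply to $1 + 8h + 28h^2$, and multiplying by the third gives $1 + 5h + 10h^2$. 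Hence $c_2(N) = 10$. This is characteristic-free, since Chern class computations in Chow rings hold over any field.

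\textbf{Conclusion and main obstacle.} From $a+b=4$ and $a^2+b^2=10$ we get $\{a,b\}=\{1,3\}$. Naming the planes so that $T = V + 3V'$, we set $\{V,V'\} = \{L,L'\}$. The main point needing care is justifying $L^2=1$ and $L\cdot L'=0$ in arbitrary characteristic. I would handle this with the explicit planes in Remark \ref{r quad LL' symmetry}: $L\cap L'=\emptyset$ is immediate from the equations. Then $L\cdot L'=0$, and $L^2 = 1 - L\cdot L' = 1$ follows from $H^2 = L+L'$.
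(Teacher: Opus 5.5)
Your proof is correct, and it takes a genuinely different route from the paper.

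The paper works entirely in coordinates. It uses the fact, from the proof of Lemma \ref{l quad Vero Sing neq pt}, that the smooth quadrics containing the standard Veronese surface form a single $\SL_3$-orbit. This lets it put $Q$ in the normal form $z_0z_1 - z_2y_2 + y_0y_1 - z_2^2 = 0$. It quotes $V^2 = V'^2 = 1$ from Elman--Karpenko--Merkurjev and then gets $V\cdot V' = 0$ from $H^4 = 2$. Finally it exhibits the explicit plane $V' = \{y_0 = z_0 = z_2 = 0\} \subset Q$ and checks that $T \cap V'$ is a length-$3$ scheme (a reduced point plus a double point). So $T\cdot V' = 3$, and together with $\deg T = 4$ this gives $T = V + 3V'$.

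You instead determine the class of $T$ from two intrinsic invariants: the degree and the self-intersection $T^2 = \deg c_2(N_{T/Q}) = 10$. Your Chern class expansion is right.

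What each route buys:
\begin{itemize}
\item Your route needs no normal form for the pair $(T,Q)$. It also avoids the local computation, where the paper tacitly uses that $T$ and $V'$ meet properly and are Cohen--Macaulay, so that length equals intersection multiplicity.
\item Your price is the self-intersection formula. You also only get $\{a,b\} = \{1,3\}$ up to swapping the two rulings. That is harmless, since the statement lets you name $V$ and $V'$.
\item The paper's computation, by contrast, says concretely which ruling carries the coefficient $3$ relative to an explicit plane.
\end{itemize}

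Your numerics have one small bonus. You do not actually need to know that the two disjoint planes of Remark \ref{r quad LL' symmetry} lie in different classes. Suppose both lay in one class $A$, with $B$ the other class. Disjointness would force $A^2 = 0$, and $H^2 = A+B$ with $H^2\cdot A = H^2\cdot B = 1$ would give $A\cdot B = 1$ and $B^2 = 0$. Then $a + b = 4$ and $T^2 = 2ab = 10$ would have no integer solution. So your computation by itself forces $L\cdot L' = 0$ and $L^2 = L'^2 = 1$.
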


Here a plane means a two-dimensional linear subvariety. 

\begin{proof}
We use the same notation as in the proof of Lemma \ref{l quad Vero Sing neq pt}. 
In particular, 
$T$ is the image of 
\[
i: \P^2_k \hookrightarrow \P^5_k, \qquad 
[x_0, x_1, x_2] \mapsto [x_0^2 : x_1^2 : x_2^2: x_1x_2: x_2x_0: x_0x_1],  
\]
and 
hence 
\[
T = \{ y_0y_1 = z_2^2, y_1y_2 = z_0^2, y_2y_0 = z_1^2, 
z_0z_1 = z_2y_2, z_1z_2=z_0y_0, z_2z_0 = z_1y_1\}. 
\]
Since 
all the smooth quadric hypersurfaces in $\P^5$ containing $T$ 
form a single $\SL_3$-orbit in $H^0(\P^5, I_T \otimes \MO_{\P^5}(2))$ (cf. the proof of  Lemma \ref{l quad Vero Sing neq pt}), 
we may assume that  
\[
Q = \{ z_0z_1 - z_2y_2 + y_0y_1 -z_2^2=0\} \subset \Proj k[y_0 y_1, y_2, z_0, z_1, z_2] = \P^5. 
\]

There exist planes $V$ and $V'$ on $Q$ such that 
$\CH^2(Q) = \Z V \oplus \Z V'$ (Proposition \ref{p quadric CH}). 
Moreover, $V^2 = V'^2 =1$ (cf. \cite[Exercise 68.3]{EKM08}). 
By $2 = H^4 = (V+V')^2 =V^2 + 2L\cdot V' + V'^2$, we have $V \cdot V' = 0$. 
We may assume that 
\[
V' = \{ y_0=z_0 = z_2 =0\}, 
\]
because the classes of $V$ and $V'$ can be switched (Remark \ref{r quad LL' symmetry}) and 
any plane on $Q$ is rationally equivalent to either $V$ or $V'$. 
We then get the following scheme-theoretic isomorphism 
\[
T \cap V' \simeq i^{-1}(V') = \{ x_0^2 = x_1x_2 =x_2x_0=0\}. 
\]
The rightmost term is set-theoretically equal to 
the two-point set $\{[0:0:1], [0: 1:0]\} (\subset \P^2)$. 
On the other hand, it is scheme-theoretically isomorphic to $\Spec k \amalg \Spec k[x_0]/(x_0)^2$, which implies $T \cdot V' =3$. 

We can write  $T = a V + a'V'$ for some integers $a$ and $a'$. 
Since $T \subset \P^5$ is of degree $4$, 
it holds that $4 = T \cdot H^2 = (aV+a'V') \cdot H^2 =a+a'$. 
By $T \cdot V' =3$, we get $3 = T \cdot V' = (aV+a'V') \cdot V' = a'$, and hence $T = V+3V'$. 
\end{proof}

\subsection{Grassmannian varieties}

In this subsection, we summarise some known results on 
Grassmannian varieties for later usage. 


\begin{lem}\label{l univ bdl det}
Take integers $n > m \geq 1$. 
Let $\mathcal Q$ (resp. $\cK$) be the universal quotient bundle (resp. subbundle) on $\Gr(n,m)$. 
Then $\det \cQ \simeq \MO_{\Gr(n,m)}(1)$ and $\det \cK \simeq  \MO_{\Gr(n,m)}(-1)$, 
where $\MO_{\Gr(n,m)}(1)$ denotes the ample generater of $\Pic (\Gr(n,m)) (\simeq \Z)$. 
\end{lem}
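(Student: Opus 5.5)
Note that $\Gr(n,m)$ parametrises $m$-dimensional quotients $k^n \to W$. Over $G := \Gr(n,m)$ there is therefore a universal exact sequence
\[
0 \to \cK \to \MO_G^{\oplus n} \to \cQ \to 0,
\]
with $\rank \cQ = m$ and $\rank \cK = n-m$. Taking determinants gives $\det \cK \otimes \det \cQ \simeq \det(\MO_G^{\oplus n}) \simeq \MO_G$. Hence $\det \cK \simeq (\det\cQ)^{-1}$, and it suffices to prove $\det \cQ \simeq \MO_G(1)$.

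To identify $\det \cQ$, I will use the Plücker embedding. Taking the $m$-th exterior power of the surjection $\MO_G^{\oplus n} \to \cQ$ gives a surjection
\[
\wedge^m k^n \otimes \MO_G \to \wedge^m \cQ = \det \cQ .
\]
By the universal property of $\P(\wedge^m k^n) = \Gr(\wedge^m k^n, 1)$, this corresponds to a morphism $G \to \P(\wedge^m k^n)$ under which $\MO(1)$ pulls back to $\det \cQ$. This morphism is exactly the Plücker morphism, which sends a quotient $k^n \to W$ to $\wedge^m k^n \to \wedge^m W$, and it is a closed immersion by the standard theory. Consequently $\det\cQ$ is very ample, and in particular ample.

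It remains to show that $\det\cQ$ generates $\Pic G \simeq \Z$, and not merely that it is some positive multiple of the generator. I would test it against a line. Fix a flag of quotients $k^n \to W_{m+1} \to W_{m-1}$ with $\dim W_{m+1} = m+1$ and $\dim W_{m-1} = m-1$. The quotients $W$ satisfying $k^n \to W_{m+1} \to W \to W_{m-1}$ form a curve $\ell \simeq \P^1$. Restricted to $\ell$, the bundle $\cQ$ sits in an exact sequence
\[
0 \to \MO_{\P^1}(1) \to \cQ|_\ell \to W_{m-1}\otimes \MO_{\P^1} \to 0 ,
\]
where the sub is the tautological quotient on $\P(\ker(W_{m+1}\to W_{m-1}))$. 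This gives $\deg(\det\cQ|_\ell) = 1$. A line bundle of degree one on some curve cannot be a multiple $d\cdot\MO_G(1)$ with $d \geq 2$, so $\det \cQ$ is the ample generator. Alternatively, the Plücker image of $\ell$ is a line, which gives the same conclusion.

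The main point needing care is the identification of $\Pic G \simeq \Z$ together with this degree-one computation. I will take $\Pic G \simeq \Z$ as known, for instance from the Schubert cell decomposition, which gives $\CH^1(G) = \Z$ generated by a Schubert divisor; the lemma itself presupposes this fact. The rest of the argument is formal.
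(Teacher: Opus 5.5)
Your proof is correct, but it takes a different route from the paper. The paper gives no argument of its own. It only cites Eisenbud--Harris (Section 5.6.2) and Fulton (14.6--14.7), where the Chern classes of the tautological bundles are computed through degeneracy loci. That computation gives $c_1(\cQ)=\sigma_1=-c_1(\cK)$, the special Schubert divisor class, which is a Pl\"{u}cker hyperplane section and generates $\CH^1(\Gr(n,m))=\Pic(\Gr(n,m))$ by the cell decomposition.

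You replace this Schubert-calculus input with two elementary observations:
\begin{enumerate}
\item the determinant of the universal sequence gives $\det\cK\simeq(\det\cQ)^{-1}$;
\item the explicit line $\ell$ gives $\deg(\det\cQ|_\ell)=1$.
\end{enumerate}
You verify the second correctly from the extension $0\to\MO_{\P^1}(1)\to\cQ|_\ell\to W_{m-1}\otimes\MO_{\P^1}\to 0$, and the construction of $\ell$ works for every $n>m\geq 1$, including $m=1$ and $m=n-1$.

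Your Pl\"{u}cker step is actually superfluous. Write $\det\cQ\simeq\MO_G(d)$. Then $1=d\cdot\deg(\MO_G(1)|_\ell)$, and $\deg(\MO_G(1)|_\ell)\geq 1$ by ampleness, so $d=1$ directly.

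The trade-off: your approach is self-contained, needing only $\Pic G\simeq\Z$, which the statement already presupposes. The cited approach yields the full computation $c_i(\cQ)=\sigma_i$, which is more than this lemma needs.
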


\begin{proof}
See \cite[Section 5.6.2]{EH16} or \cite[14.6 and 14.7]{Ful98}.
\end{proof}

\begin{lem}\label{l h0 compute}
It holds that $h^0(\Gr(6, 2), \MO_{\Gr(6, 2)}(2)) = 105$. 
\end{lem}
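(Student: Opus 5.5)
We compute $h^0(\Gr(6,2),\MO(2))$ by identifying the homogeneous coordinate ring of $\Gr(6,2)$ in degree $2$ via the Plücker embedding. Set $W := k^6$, so $\Gr(6,2) \subset \P(\bigwedge^2 W) = \P^{14}$. Since the Plücker embedding is projectively normal in every characteristic (standard monomial theory, or Borel--Weil--Bott in the characteristic-free form of Kempf vanishing), we have
\[
H^0(\Gr(6,2),\MO(2)) \simeq S^2(\bigwedge^2 W^{\vee}) / I_2,
\]
where $I_2$ is the space of quadrics vanishing on $\Gr(6,2)$. The plan is therefore to compute $\dim S^2(\bigwedge^2 W^\vee) = \binom{16}{2} = 120$ and show $\dim I_2 = 15$. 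This gives $120-15 = 105$.

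\emph{Step 1: the quadrics.} The Plücker quadrics are indexed by $4$-element subsets of $\{1,\dots,6\}$: for $i<j<k<l$ we have $p_{ij}p_{kl}-p_{ik}p_{jl}+p_{il}p_{jk}$. These give $\binom{6}{4}=15$ quadrics. They are linearly independent, since they involve pairwise disjoint sets of monomials, and monomials with different supports $\{i,j,k,l\}$ are distinct. They vanish on $\Gr(6,2)$ in any characteristic. Hence $\dim I_2 \geq 15$.

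\emph{Step 2: the upper bound and the main obstacle.} To get equality in a characteristic-free way, I would use standard monomial theory. Degree-$2$ standard monomials $p_{ab}p_{cd}$ with $a\le c$, $b\le d$ (semistandard tableaux of shape $2\times 2$ with entries in $\{1,\dots,6\}$) restrict to a basis of $H^0(\Gr(6,2),\MO(2))$. Their number is given by the hook-content formula for shape $(2,2)$ and $n=6$:
\[
\frac{6\cdot 7\cdot 5\cdot 6}{3\cdot 2\cdot 2\cdot 1} = 105.
\]
Equivalently, $H^0$ is the Weyl/Schur module of that shape, whose dimension is independent of $p$ by Kempf vanishing. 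This already yields the claim directly.

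As a cross-check, one could compute $\chi(\MO(2))$ on $\Gr(2,6)$ via Hilbert polynomial or Riemann--Roch, using the vanishing of higher cohomology from Kodaira-type vanishing on flag varieties, which holds in positive characteristic by Frobenius splitting. The main obstacle is justifying the characteristic-independence (projective normality or Kempf vanishing). For that I would cite the standard references on Frobenius splitting of Schubert varieties already invoked in the introduction.
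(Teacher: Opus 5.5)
Your argument is correct, but it takes a different route from the paper. The paper computes $105$ only in characteristic zero, via Weyl's character formula written as $\prod_{j=3}^{6}\binom{j+1}{2}/\binom{j-1}{2}$. It then transfers the number to characteristic $p$ cohomologically: $\Gr(6,2)$ is $F$-split, so Kodaira vanishing gives $h^0=\chi$. Since $\Gr(6,2)$ together with $\MO(1)$ lifts to $W(k)$, $\chi$ equals its characteristic-zero value. Your ``cross-check'' paragraph is essentially this proof.

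Your main argument instead uses a characteristic-free structural theorem: standard monomial theory, or Kempf vanishing for the relevant dominant weight. Only that dominant-weight part of Borel--Weil--Bott survives in characteristic $p$, and it is all you need. You then count semistandard tableaux of shape $(2,2)$ with entries in $\{1,\dots,6\}$ by the hook-content formula, which indeed gives $105$.

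What each approach buys:
\begin{itemize}
\item The paper's route is shorter, given tools it uses anyway ($F$-splitting, liftability), and it needs only the characteristic-zero dimension.
\item Yours gives an explicit basis over any field. Combined with your Step~1, it shows more: the $15$ independent Pl\"ucker quadrics correspond exactly to the $15$ nonstandard monomials $p_{il}p_{jk}$ with $i<j<k<l$, and $120-15=105$. Hence $H^0(\P^{14},\MO(2))\to H^0(\Gr(6,2),\MO(2))$ is surjective, with kernel spanned by the Pl\"ucker quadrics.
\end{itemize}
That last statement is precisely the form in which the lemma is used in the proof of Lemma~\ref{l K3 linear section}. There the paper deduces $h^0(I_{\Gr(U_6,2)}(2))=15$ from the value $105$, leaving the required projective normality implicit.
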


\begin{proof}
If the base field $k$ is of characteristic zero, then 
the assertion follows from 
\[
h^0(\Gr(6, 2), \MO_{\Gr(6, 2)}(2)) = 
\prod_{j=3}^6 \frac{ \binom{j+1}{2}}{\binom{j-1}{2}}
=
\frac{\binom{4}{2} \binom{5}{2} \binom{6}{2} \binom{7}{2} }{\binom{2}{2} \binom{3}{2} \binom{4}{2} \binom{5}{2} } = 105,
\]  
where the first equality follows from Weyl's character formula. 

Assume that $k$ is of characteristic $p>0$. 
Since $\Gr(6, 2)$ is $F$-split, it satisfies the Kodaira vanishing, and hence 
\[
h^0(\Gr(6, 2), \MO_{\Gr(6, 2)}(2)) = \chi(\Gr(6, 2), \MO_{\Gr(6, 2)}(2)). 
\]
As $\Gr(6, 2)$ lifts to $W(k)$, 
the number $\chi(\Gr(6, 2), \MO_{\Gr(6, 2)}(2))$ is equal to the one of  characteristic zero. 
\end{proof}




\section{Birationality of $|-K_Y|$: point blowup}\label{s birationality}

\begin{nota}\label{n g=8 1}
Let $X \subset \P^{9}$ be a prime  Fano threefold of genus $8$. 
Fix a closed point $P$ on $X$ which is not contained in any line on $X$, 
whose existence is guaranteed by \cite[Proposition 5.4]{FanoII}. 
Let $\sigma: Y \to X$ be the blowup at $P$. 
Recall that $|-K_Y|$ is base point free and $-K_Y$ is big 
\cite[Proposition 2.1(1)]{KTLift1}. 
For the induced morphism $\varphi_{|-K_Y|}: Y \to \P^5$ 
(Proposition \ref{p FanoY cont}(1)) 
and its image $\ol{Z} := \varphi_{|-K_Y|}(Y) \subset \P^5$, 
take the Stein factorisation of the induced morphism $\ol{\psi} : Y \to \overline{Z}$: 
\[
\ol{\psi} : Y \xrightarrow{\psi} Z \xrightarrow{\theta} \overline{Z}. 
\]
Set $A_{\overline Z} := \MO_{\P^5}(1)|_{\overline Z}$. 
Note that $\dim \Ex(\psi)=1$ \cite[Proposition 6.2, Corollary 6.5]{FanoII}.  
Let $\psi^+ : Y^+ \to Z$ be the flop of $\psi : Y \to Z$ 
and let $\tau : Y^+ \to W$ be the contraction of the  $K_{Y^+}$-negative extremal ray. 
Note that $\tau$ is of type $E_1$ and 
$W$ is a smooth quadric or cubic hypersurface on $\P^4$ \cite[Theorem 6.3]{FanoII}. 

\end{nota}

The purpose of this section is to prove that, in the above situation, 
$\theta$ is an isomorphism (i.e., $Z = \overline Z$) and 
$Z  \subset \P^5$ is a complete intersection of a quadric and a cubic (Theorem \ref{t g=8 Z 2cap3}). 

\begin{prop}\label{p FanoY cont}
We use Notation \ref{n g=8 1}. 
Then the following hold. 
\begin{enumerate}
\item $h^0(Y, -K_Y) = 6$ and the restriction map 
$H^0(Y, -K_Y)  \to H^0(E, -K_Y|_E)$ is an isomorphism. 
\item $\deg \theta =1$ or $\deg \theta =2$. 
\item If $\deg \theta =1$, then $\theta$ is an isomorphism  and 
$Z= \overline{Z}$ is a complete intersection $H_2 \cap H_3$ in $\P^5$, 
where each $H_d$ is a prime divisor on $\P^5$ of degree $d$. 
\item If $\deg \theta =2$, then $\Delta(\overline{Z}, A_{\overline Z})=0$, 
where $\Delta(\overline{Z}, A_{\overline Z}) := \dim \overline{Z} + A_{\overline Z}^3 - h^0(\overline Z, A_{\overline Z})$. 
\end{enumerate}
\end{prop}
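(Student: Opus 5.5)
We start from the formula $-K_Y = \sigma^*(-K_X) - 2E$, where $E \simeq \P^2$ is the exceptional divisor. This gives $(-K_Y)^3 = 14 - 8 = 6$ and $-K_Y|_E \simeq \MO_{\P^2}(2)$.

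For (1), we consider the exact sequence
\[
0 \to \MO_Y(\sigma^*(-K_X) - 3E) \to \MO_Y(-K_Y) \to \MO_E(2) \to 0.
\]
The map $H^0(Y,-K_Y) \to H^0(X,-K_X) \simeq k^{10}$ identifies sections of $-K_Y$ with hyperplane sections singular at $P$. Since $X \subset \P^9$ is cut out by quadrics and $P$ lies on no line, we expect the tangent-space condition to impose exactly $4$ independent conditions. This gives $h^0(Y,-K_Y) = 10 - 4 = 6$.

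The restriction map to $H^0(E, \MO_E(2)) \simeq k^6$ is then a map between $6$-dimensional spaces. Its kernel is the space of hyperplane sections having multiplicity at least $3$ at $P$, that is, $H^0(\sigma^*(-K_X) - 3E)$. We show this kernel vanishes: a hyperplane $H$ with $\mathrm{mult}_P(H \cap X) \ge 3$ contains the tangent cone of $X$ at $P$ to second order. Because $X$ is an intersection of quadrics and the quadrics through $X$ restrict to $E$ via their second fundamental form, every line through $P$ in $T_PX$ would then lie in $X$. This contradicts the choice of $P$. Hence the restriction is injective, and therefore an isomorphism. I expect this vanishing $H^0(Y, \sigma^*(-K_X) - 3E) = 0$ to be the main obstacle. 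If the quadric argument is delicate, I would instead use the fact that $|-K_Y|$ is base point free and big (so $\psi$ is birational onto a threefold) together with Riemann--Roch and $H^i(Y,-K_Y) = 0$, which gives $h^0 = (-K_Y)^3/2 + 3 = 6$.

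For (2), the morphism $\ol\psi$ has degree $\deg\theta$, and $\ol Z$ is a nondegenerate threefold in $\P^5$. Hence
\[
6 = (-K_Y)^3 = \deg\theta \cdot \deg \ol Z \quad\text{and}\quad \deg \ol Z \ge \mathrm{codim} + 1 = 3.
\]
This forces $\deg\theta \in \{1,2\}$. If $\deg\theta = 2$, then $\deg \ol Z = 3 = \mathrm{codim} + 1$, so $\ol Z$ is a variety of minimal degree. This gives $\Delta(\ol Z, A_{\ol Z}) = 3 + 3 - 6 = 0$, proving (4).

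For (3), suppose $\deg\theta = 1$. Then $\theta$ is finite and birational. Since $\psi_*\MO_Y = \MO_Z$, the variety $Z$ is anticanonically normal, with $R := \bigoplus_m H^0(Y, -mK_Y)$. We compute $h^0(Y,-mK_Y)$ by Riemann--Roch, using Kawamata--Viehweg-type vanishing for the big and nef divisor $-K_Y$ on the weak Fano threefold $Y$ (available from \cite{FanoII}). This gives $h^0(Y, -2K_Y) = 20$, while $h^0(\P^5, \MO(2)) = 21$, so $\ol Z$ lies on a quadric $H_2$. Similarly, in degree $3$ there is a cubic $H_3$ not containing $H_2$. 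A degree count shows $\ol Z$ equals the complete intersection $H_2 \cap H_3$ of degree $6$. Complete intersections are normal, since they are $S_2$ and singular only along the images of finitely many flopping curves. Hence $\theta$ is an isomorphism. In particular $H_2$ and $H_3$ are prime, because $\ol Z$ is integral.
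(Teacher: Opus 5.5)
Your arguments for (2) and (4) are fine. In (4) you should add that $h^0(\ol Z,A_{\ol Z})=6$: it is $\ge 6$ by nondegeneracy and $\le \dim \ol Z+\deg \ol Z=6$ by $\Delta\ge 0$. Also, $h^0(Y,-K_Y)=10-4$ follows directly from very ampleness of $-K_X$; quadrics and lines play no role there.

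The genuine gap is the isomorphism in (1), which the paper takes from \cite[Proposition 6.6]{FanoII}. Your claim that a hyperplane section with a triple point at $P$ forces every line through $P$ in $T_PX$ into $X$ is false. Write $X$ near $P$ as a graph $x_j=q_j(t)+O(t^3)$, $4\le j\le 9$. A nonzero element of $H^0(Y,-K_Y-E)$ is exactly one linear relation $\sum_j a_jq_j=0$ among the six conics of the second fundamental form. By contrast, since $X$ is cut out by quadrics, lines through $P$ correspond to common zeros of all the $q_j$ on $\P(T_PX)=E$. One relation does not create a common zero, because a general hyperplane in the space of plane conics is base point free. So ``no line through $P$'' only shows that the restriction of $|-K_Y|$ to $E$ is base point free, not that it is complete. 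Your Riemann--Roch fallback only reproduces $h^0=6$.

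One way to close this gap is as follows. Suppose $0\neq s\in H^0(Y,-K_Y-E)$. Its divisor is $D=\sigma^{-1}_*S+(m-3)E$ with $S\in|-K_X|$ prime and $m=\mathrm{mult}_PS\ge 3$. Since $(-K_Y)^2\cdot D=6-4=2$ and $(-K_Y)^2\cdot E=4$, nefness of $-K_Y$ forces $m=3$. Hence $D$ is prime and $\ol\psi(D)$ is a surface of degree at most $2$, so it spans at most a $\P^3$. The linear forms vanishing on $\ol\psi(D)$ then form a space of dimension at least $2$. But each of them pulls back into $s\cdot H^0(Y,\MO_Y(E))$, which is one-dimensional because $\sigma_*\MO_Y(E)=\MO_X$. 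This contradicts $H^0(\P^5,\MO(1))\simeq H^0(Y,-K_Y)$.

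In (3) there is a second gap: the normality of $\ol Z$. Being singular only at the images of finitely many flopping curves is a property of $Z$, not of $\ol Z$. A priori the finite birational morphism $\theta$ may fail to be an isomorphism along a curve or a surface, and $R_1$ for $\ol Z$ is exactly what has to be proved. You can repair this by comparing Hilbert polynomials. Using $R^{>0}\psi_*\MO_Y=0$ \cite[Proposition 6.10]{Tan-elliptic}, one has $\chi(Z,\MO_Z(m))=\chi(Y,-mK_Y)=\frac{m(m+1)(2m+1)}{2}+2m+1$. This is the Hilbert polynomial of a $(2,3)$ complete intersection in $\P^5$, so $\theta_*\MO_Z/\MO_{\ol Z}$ has zero Hilbert polynomial and therefore vanishes. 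The paper instead shows, as in \cite[Theorem 6.2]{FanoI}, that the anticanonical ring of $Z$ is generated in degree one, and then invokes the argument of \cite[Proposition 2.8]{FanoII}.

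Finally, Kawamata--Viehweg vanishing is not available as a black box in characteristic $p$. Your values $h^0(Y,-2K_Y)=20$ and $h^0(Y,-3K_Y)=49$ therefore need the argument the paper actually gives. It proves $H^{>0}(Z,-mK_Z)=0$ by induction on $m$, restricting to the generic member of $|-K_Z|$, which is a regular geometrically integral K3-like surface. It then uses $R^{>0}\psi_*\MO_Y=0$ to pass between $Z$ and $Y$.
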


\begin{proof}
Let us show (1). 
By \cite[Proposition 6.6]{FanoII}, 
the restriction map 
$H^0(Y, -K_Y)  \to H^0(E, -K_Y|_E)$ is an isomorphism. 
It holds that  
\[
h^0(Y, -K_Y) = h^0(E, -K_Y|_E) = 6, 
\]
where the latter equality follows from 
\cite[Proposition 6.1(3)]{FanoII}. 
Thus (1) holds. 
It follows from \cite[Proposition 2.1(2)(4)]{KTLift1} that (2) and (4) holds.

Let us show (3). 
Assume $\deg \theta =1$. 
By the same argument as in \cite[Theorem 6.2]{FanoI}, 
\[
\bigoplus_{m =0}^{\infty} H^0(Z, \MO_Z(-mK_Z))
\]
 is generated by $H^0(Z, \MO_Z(-K_Z))$ as a $k$-algebra. 
 In particular, $\theta$ is an isomorphism. 
It is enough to show that $Z= \overline{Z}$ is a complete intersection $H_2 \cap H_3$ in $\P^5$. 
We apply the same argument as in \cite[Proposition 2.8]{FanoII}. 
Then it suffices to prove $H^i(Z, -mK_Z) =0$ for every $i>0$ and $m \geq 0$. 
The $m=0$ case (i.e., $H^i(Z, \MO_Z) =0$ for $i>0$) 
follows from 
$H^{>0}(Y, \MO_Y)=0$ and $R^{>0}\psi_*\MO_Y = 0$ \cite[Proposition 6.10]{Tan-elliptic}. 

Fix $i>0$ and assume $m>0$. 
We prove $H^i(Z, -mK_Z)=0$ by induction on $m$. 
Since $Z$ is smooth outside finitely many closed points, 
the generic member $S$ of $|-K_Z|$ is regular. 
Let $\kappa$ be the function field of $\P(H^0(Z, -K_Z))$. 
We set $(-)_{\kappa} := (-) \times_k \kappa$ (e.g., $Z_{\kappa} := Z \times_k \kappa$). 
It follows from the adjunction formula that  $K_S \sim (K_{Z_{\kappa}} + (-K_{Z_{\kappa}})) =0$. 
By the exact sequence 
\[
0 \to \MO_{Z_{\kappa}}(-S) \to \MO_{Z_{\kappa}} \to \MO_S \to 0,
\]
together with $H^1(Z_{\kappa}, \MO_{Z_{\kappa}})=0$ 
and $h^2(Z_{\kappa}, \MO_{Z_{\kappa}}(-S) =h^1(Z_{\kappa}, \MO_{Z_{\kappa}})=0$, 
we get $H^1(S, \MO_S)=0$. 
Then $S$ is a regular K3-like surface in the sense of \cite[Definition 3.1]{FanoI}. 
We have the exact sequence 
\[
H^i(Z_{\kappa}, -(m-1)K_{Z_{\kappa}}) \to 
H^i(Z_{\kappa}, -mK_{Z_{\kappa}}) \to 
H^i(S, -mK_{Z_{\kappa}}|_S).  
\]
By the induction hypothesis (together with $H^i(Z, \MO_Z)=0$), 
it is enough to show $H^i(S, -mK_{Z_{\kappa}}|_S)=0$. 
To this end, it suffices to prove that $S$ is geometrically integral over $\kappa$ \cite[Theorem 3.4(2)]{FanoI}, 
which follows from  \cite[Proposition 2.11]{FanoI}. 
\end{proof}

We now recall some terminologies and results on rational normal scrolls. 
For details, we refer to \cite{EH87}. 

\begin{dfn}\label{d rat scroll}
For non-negative integers 
\[
0 \leq a_0 \leq a_1 \leq \cdots \leq a_d, 
\]
it is well known that $|\MO_P(1)|$ is base point free 
and $h^0(P, \MO_P(1)) = d+1+\sum_{i=0}^d a_i$
for 
\[
P := \P_{\P^1}(\MO_{\P^1}(a_0) \oplus \MO_{\P^1}(a_1) \oplus\cdots \oplus 
\MO_{\P^1}(a_d)). 
\]
Set $S_{\P^1}(a_0, a_1, ..., a_d) \subset \P^{d+\sum_{i=0}^d a_i}_k$ to be the image of $\varphi_{|\MO_P(1)|}$, 
which is called a {\em rational normal scroll}. 
\end{dfn}

\begin{rem}\label{r rat scroll}
\begin{enumerate}
\item $|\MO_P(1)|$ is very ample if and only if $a_0>0$ (i.e., all $a_0, ..., a_d$ are positive). 
\item $S_{\P^1}(0, a_1, ..., a_d)\subset \P^{d+\sum_{i=1}^d a_i}_k$ 
is the cone over 
$S_{\P^1}(a_1, ..., a_d)\subset \P^{d-1+\sum_{i=1}^d a_i}_k$ \cite[Page 6]{EH87}. 
\item $\deg S_{\P^1}(a_0, a_1, ..., a_d) = a_0+a_1+ \cdots + a_d$ \cite[Page 7]{EH87}. 
\end{enumerate}
\end{rem}

\begin{prop}\label{p birationality key}
We use Notation \ref{n g=8 1}. 
Then $\deg \theta \neq 2$. 
\end{prop}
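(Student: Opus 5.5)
We argue by contradiction. Assume $\deg \theta = 2$. By Proposition \ref{p FanoY cont}(4), $\Delta(\overline Z, A_{\overline Z}) = 0$, so $\overline Z \subset \P^5$ is a threefold of minimal degree. First we compute $A_{\overline Z}^3$. We have $(-K_Y)^3 = (-K_X)^3 - 8 = 6$, and $(-K_Y)^3 = \deg\theta \cdot A_{\overline Z}^3$. Hence $A_{\overline Z}^3 = 3$, which agrees with $\Delta = 3 + 3 - 6 = 0$. By the classification of varieties of minimal degree \cite{EH87}, which holds in any characteristic, $\overline Z$ is a cone over the Veronese surface in $\P^5$, or a rational normal scroll $S_{\P^1}(a_0,a_1,a_2)$ with $a_0+a_1+a_2=3$, possibly a cone (Remark \ref{r rat scroll}). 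The Veronese cone lives in $\P^6$ and has degree $4$, so it is excluded. The quadric case is excluded by degree. Therefore $\overline Z = S_{\P^1}(a_0,a_1,a_2)$ with $(a_0,a_1,a_2) \in \{(1,1,1),(0,1,2),(0,0,3)\}$.

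The key step is to rule out these scrolls using the exceptional divisor $E = \Ex(\sigma) \simeq \P^2$. We have $-K_Y|_E \simeq \MO_{\P^2}(2)$. By Proposition \ref{p FanoY cont}(1), the restriction $H^0(Y,-K_Y) \to H^0(E,-K_Y|_E)$ is an isomorphism. Hence $\overline\psi|_E$ is given by the complete linear system $|\MO_{\P^2}(2)|$, so $\overline\psi(E) \subset \overline Z$ is a Veronese surface $T \subset \P^5$ and $\overline\psi|_E : E \to T$ is an isomorphism. We therefore need to show that no scroll of type $(1,1,1)$, $(0,1,2)$ or $(0,0,3)$ contains a Veronese surface.

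To do this, consider the ruling $\overline Z \dashrightarrow \P^1$. It is a morphism on the resolution $P = \P_{\P^1}(\bigoplus \MO(a_i))$ and is defined off the vertex. Its fibres are planes meeting $T$. A plane $\Pi$ meets $T$ in a scheme whose preimage in $\P^2$ is cut out by conics. Restricting the ruling to $T \simeq \P^2$, possibly after removing the finite set $T \cap (\text{vertex})$, gives a rational map $\P^2 \dashrightarrow \P^1$ whose fibres are $T \cap \Pi$. These fibres would form a pencil of curves of degree $\leq 2$ in $\P^2$, and the pencil is non-constant since $T$ is not contained in a plane. Now $T$ contains no lines, and every conic on $T$ is the image of a line in $\P^2$ spanning only a plane. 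So $T \cap \Pi$ is a conic on $T$, and we obtain a pencil of lines in $\P^2$ through a base point $x$. The map $\P^2 \dashrightarrow \P^1$ is then the projection from $x$. Its graph is $\mathbb F_1$, and it would give $\Bl_x \P^2 \to P$.

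Degree bookkeeping gives the contradiction. The tautological class $\xi$ on $P$ pulls back to $2\ell$, where $\ell$ is the line class, and the fibre class $f$ pulls back to $\ell - e$. Hence $\xi^2 f$ pulls back to $(2\ell)^2 \cdot 0$ in the surface. A cleaner route is to compute the class $[T] = \alpha \xi f + \beta \xi^2 - \ldots$ in $\CH^1(P)$. Since $T$ is a divisor in the threefold $\overline Z$, write $T \sim a\xi + b f$. Then $\deg T = \xi^2(a\xi+bf) = 3a + b = 4$, and $T \cdot f \cdot \xi = a = \deg(\text{conic}) = 2$, which forces $b = -2$. Next compute $(T|_T)$ via adjunction: $K_P = -3\xi + (3-2)f = -3\xi + f$, so $K_T = (K_P + T)|_T = (-\xi - f)|_T$, which pulls back to $-2\ell - (\ell - e) = -3\ell + e$ on $\Bl_x\P^2$. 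This contradicts $K_{\Bl_x\P^2} = -3\ell + e$ only after pulling back correctly: strictly, one compares $K_T = -3\ell$ with $-3\ell + e$, using that $T$ misses the vertex or handling the vertex separately. In the cone cases $(0,1,2)$ and $(0,0,3)$, the vertex has to be treated by working on $P$ and taking the strict transform of $T$.

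I expect this exclusion of Veronese surfaces inside cubic scrolls to be the main obstacle, especially in the singular cone cases. If the intersection-theoretic argument becomes messy, an alternative is to use that every quadric containing $T$ has $\dim \Sing \in \{-\infty, 1, 2\}$ (Lemma \ref{l quad Vero Sing neq pt}), together with the fact that $\overline Z$ is cut out by the three quadrics of a $2\times 3$ matrix. A smooth such quadric would contain the scroll, and $T = V + 3V'$ (Lemma \ref{l Veronese V+3V'}) would then clash with the class of the ruling planes.
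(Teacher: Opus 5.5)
There is a genuine gap, and it is in the scroll $S_{\P^1}(0,1,2)$, which is exactly the case that has to be excluded. Your key claim, that no cubic scroll in $\P^5$ contains a Veronese surface, is false for $S_{\P^1}(0,1,2)$. Take a Veronese surface $T\subset\P^5$ and a point $v\in T$. The projection of $T$ from $v$ is the cubic scroll $S_{\P^1}(1,2)\subset\P^4$, namely the image of $\Bl_v T\simeq\mathbb F_1$ by $|2\ell-e|$. The cone over it with vertex $v$ is an $S_{\P^1}(0,1,2)$, and it contains $T$ because it contains every line joining $v$ to a point of $T$.

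Your intersection computation confirms this rather than contradicting it. On $P=\P_{\P^1}(\MO\oplus\MO(1)\oplus\MO(2))$, the strict transform $\wt T\simeq\Bl_x\P^2$ contains the curve contracted to the vertex and satisfies $\wt T\sim 2\xi-2f$. Adjunction then gives $K_{\wt T}=(-\xi-f)|_{\wt T}=-3\ell+e$, which is precisely $K_{\Bl_x\P^2}$.

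The fallback fails too. Any quadric $Q$ containing $S_{\P^1}(0,1,2)$ is singular at the vertex. It is therefore a cone over a quadric in $\P^4$ containing the odd-degree surface $S_{\P^1}(1,2)$, so $\dim\Sing Q\geq 1$. Hence no smooth quadric contains the scroll, Lemma \ref{l quad Vero Sing neq pt} permits these quadrics, and Lemma \ref{l Veronese V+3V'} has nothing to apply to.

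Your argument does exclude $S_{\P^1}(1,1,1)=\P^1\times\P^2$, since $T\simeq\P^2$ has no nonconstant morphism to $\P^1$. The paper excludes that case and $S_{\P^1}(0,0,3)$ differently: it uses $\rho(\ol Z)=1$ and the non-$\Q$-factoriality of $\ol Z$, which holds because $Y\not\simeq Y^+$.

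What is missing is any use of the primality of $X$; containment of a Veronese surface alone cannot give a contradiction. The paper argues as follows.
\begin{enumerate}
\item Choose a ruling plane $V\subset\ol Z$, a point $z\in\ol Z\setminus V$, and a hyperplane $\wt H$ containing $V$ and $z$.
\item Then $H=\wt H\cap\ol Z$ is reducible, so $\ol\psi^*H\in|-K_Y|$ is not a prime divisor.
\item Since the Veronese surface $\ol\psi(E)$ spans $\P^5$, $E$ is not a component of $\ol\psi^*H$.
\item Hence $\sigma_*\ol\psi^*H$ is a non-prime member of $|-K_X|$. This is impossible, because $-K_X$ generates $\Pic X$ and every nonzero effective divisor on $X$ is linearly equivalent to a positive multiple of $-K_X$.
\end{enumerate}
This argument uses only the nondegeneracy of $\ol\psi(E)$, and it applies verbatim to all three scrolls on your list, since each contains planes. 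So your classification step can be kept if you replace the Veronese-exclusion step with this one.
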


\begin{proof}
Suppose that $\deg \theta = 2$. 
Let us derive a contradiction. 
We start by proving ($\star$) below. 
\begin{enumerate}
\item[($\star$)] 
$\overline{Z} = S_{\P^1}(0, 1, 2) \subset \P^5$. 
\end{enumerate}
Since 
$W$ is a quadric or cubic hypersurface on $\P^4$ (Notation \ref{n g=8 1}), 
$Y$ is not isomorphic to  $Y^+$ as a $k$-scheme. 
Hence $\overline Z$ is not $\Q$-factorial 
\cite[Lemma 2.2]{KTLift1}. 
Moreover, we have $\Delta(\overline Z, A_{\overline Z})=0$ (Proposition \ref{p FanoY cont}). 

It holds that  
$2 A^3_{\overline Z}  = (-K_Y)^3 = 2g-10 = 6$. 
By $1 = \rho(Z) \geq \rho(\overline Z)$, we see that $\rho(\overline Z)=1$. 
By classification of varieties of minimal degree 
(see \cite[Theorem 1]{EH87} or \cite[Theorem 5.10, Theorem 5.15]{Fuj90}), one of the following holds. 
\begin{enumerate}
\renewcommand{\labelenumi}{(\roman{enumi})}
\item $\overline Z = \P^3$ and $A^3_{\overline Z}=1$. 
\item $\overline Z \subset \P^4$ is a quadric hypersurface and $A^3_{\overline Z} =2$. 
\item $\overline Z$ is the cone over the  Veronese surface and 
$A^3_{\overline Z}=4$. 
\item $\overline Z$ is a cone over a rational normal curve. 
\item $\overline Z$ is a cone over a Hirzebruch surface. 
\end{enumerate}
In any case except for (v), $\overline Z$ is $\Q$-factorial 
(cf. \cite[Proposition A.5]{FanoI}).  
Hence (v) holds. 
We then get 
$\overline{Z} = S_{\P^1}(0, a, b) \subset \P^5$, 
which is the cone of 
$S_{\P^1}(a, b) \subset \P^4$ for $b \geq a >0$ (Remark \ref{r rat scroll}). 
It follows from Remark \ref{r rat scroll} that $a+b= A^3_{\overline Z} = 3$. 
Therefore, we have a unique solution $(a, b) =(1, 2)$. 
This completes the proof of ($\star$).

\medskip

Fix a plane $V$ contained in $\overline{Z}$, 
whose existence is guaranteed by the fact that the image in
$S_{\P^1}(1, 2)$ of a fibre of 
the $\P^1$-bundle $\P_{\P^1}( \MO_{\P^1}(1) \oplus \MO_{\P^1}(2)) \to \P^1$ is a line. 
Fix a closed point $P \in \ol{Z} \setminus V$. 
Pick a hyperplane $\wt{H} \subset \P^5$ satisfying 
$V \subset \wt{H}$ and $P \in \wt{H}$. 
Set $H := \wt{H} \cap \ol{Z}$. 
Since $\ol{Z} \subset \P^5$ is non-degenerate, 
$H$ is an effective Cartier divisor on $\ol{Z}$. 
By $V \subset H$ and $P \in H$, 
$H$ is reducible. 
In particular, $\ol{\psi}^*H$ is a reducible effective Cartier divisor on $Y$. 
As the Veronese surface $\overline \psi (E) \subset \P^5$ is non-degenerate, 
we get $\overline \psi (E) \not\subset \Supp H$, 
which implies $E \not\subset \Supp (\ol{\psi}^*H)$. 
Thus its push-forward $\sigma_*\ol{\psi}^*{H}$  on $X$ is still reducible.
This contradicts the fact that 
$\Pic X = \Z K_X$ and 
$-K_X \sim \sigma_*(-K_Y) \sim \sigma_*\ol{\psi}^*{H}$. 
\end{proof}

\begin{thm}\label{t g=8 Z 2cap3}
Let $X \subset \P^{9}$ be a prime  Fano threefold of genus $8$. 
Take a closed point $P$ on $X$ which is not contained in any line on $X$. 
Let $\sigma: Y \to X$ be the blowup at $P$. 
Then 
\begin{enumerate}
\item $|-K_Y|$ is base point free, $-K_Y$ is big, 
and $h^0(Y, -K_Y)=6$. 
\end{enumerate}
Moreover, the following hold 
for the induced morphism $\psi : Y \to Z$ 
onto the image $Z := \varphi_{|-K_Y|}(Y) \subset \P^5$ 
by $\varphi_{|-K_Y|}: Y \to \P^5$. 
\begin{enumerate}
\setcounter{enumi}{1}
\item  $\psi : Y \to Z$ is a flopping contraction, i.e., 
$\psi : Y \to Z$ is a birational morphism satisfying 
$\psi_*\MO_Y = \MO_Z$ and $\dim \Ex(\psi)=1$. 
\item $Z$ is a complete intersection $H_2 \cap H_3$, where 
each $H_d$ is a prime divisor  on $\P^5$ of degree $d$. 
\end{enumerate} 
\end{thm}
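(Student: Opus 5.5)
This theorem is essentially an assembly of the results already established in this section, so the plan is to collect them in Notation \ref{n g=8 1}. Fix $P$ not on any line of $X$, which is possible by \cite[Proposition 5.4]{FanoII}, and let $\sigma : Y \to X$ be the blowup at $P$.

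For (1), base point freeness and bigness of $-K_Y$ are recalled in Notation \ref{n g=8 1} from \cite[Proposition 2.1(1)]{KTLift1}. The equality $h^0(Y,-K_Y)=6$ is Proposition \ref{p FanoY cont}(1).

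For (2) and (3), take the Stein factorisation $\ol{\psi} : Y \xrightarrow{\psi} Z' \xrightarrow{\theta} \ol{Z}$ of the morphism onto the image. By Proposition \ref{p FanoY cont}(2), $\deg\theta \in \{1,2\}$, and Proposition \ref{p birationality key} excludes $\deg\theta = 2$. Hence $\deg \theta = 1$. Proposition \ref{p FanoY cont}(3) then shows that $\theta$ is an isomorphism and that $Z = \ol{Z}$ is a complete intersection $H_2\cap H_3$ of prime divisors, which is (3).

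It remains to check that $\ol{\psi} = \psi$ is a flopping contraction. Since $\theta$ is an isomorphism and $\psi$ is a Stein factorisation, $\psi_*\MO_Y = \MO_Z$. Birationality follows because $-K_Y$ is big, so $\dim Z = 3$, and $\theta$ is birational of degree $1$. Finally, $\dim \Ex(\psi) = 1$ is recalled in Notation \ref{n g=8 1} from \cite[Proposition 6.2, Corollary 6.5]{FanoII}.

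No step here is a genuine obstacle, since the substantive work was done in Proposition \ref{p birationality key}, which rules out the double cover case. The only point needing care is that the $Z$ of the statement, defined as the image, agrees with the Stein factor of Notation \ref{n g=8 1}. This follows once $\theta$ is known to be an isomorphism.
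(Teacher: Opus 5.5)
Your proposal is correct and is essentially the paper's own argument. The paper likewise works in Notation~\ref{n g=8 1}, uses Proposition~\ref{p FanoY cont} to reduce everything to showing $\deg \theta \neq 2$, and obtains this from Proposition~\ref{p birationality key}; your extra bookkeeping (that $\psi_*\MO_Y = \MO_Z$ and birationality come from the Stein factorisation once $\theta$ is an isomorphism, and that $\dim \Ex(\psi)=1$ is the fact from \cite{FanoII} recalled in the Notation) just makes explicit what the paper leaves implicit.
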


\begin{proof}
We use Notation \ref{n g=8 1}. 
By Proposition \ref{p FanoY cont}, 
it suffices to show $\deg \theta \neq 2$, 
which follows from Proposition \ref{p birationality key}. 
\end{proof}

\section{Existence of quintic genus-one curves and K3-like surfaces}

\subsection{Smoothness of the quadric $H_2$}

\begin{nota}\label{n g=8 2}
Let $X \subset \P^{9}$ be a prime  Fano threefold of genus $8$. 
Take a closed point $P$ on $X$ which is not contained in any line on $X$. 
Let $\sigma: Y \to X$ be the blowup at $P$. 
Note that $|-K_Y|$ is base point free, $-K_Y$ is big, 
and $h^0(Y, -K_Y)=6$ (Theorem \ref{t g=8 Z 2cap3}). 
For the image $Z := \varphi_{|-K_Y|}(Y) \subset \P^5$ 
by $\varphi_{|-K_Y|}: Y \to \P^5$, 
the induced morphism $\psi : Y \to Z$ is a flopping contraction and 
$Z=H_2 \cap H_3$ is a complete intersection, where 
each $H_d$ is a prime divisor on $\P^5$ of degree $d$  (Theorem \ref{t g=8 Z 2cap3}). 
Set $E := \Ex(\sigma)$ and $E_Z := \psi(E) \subset Z \subset \P^5$, 
where $E_Z \subset \P^5$  is a Veronese surface \cite[Proposition 6.6]{FanoII}. 
\end{nota}

\begin{prop}\label{p H2 smooth}
We use Notation \ref{n g=8 2}. 
Then $H_2$ is smooth and $H_3$ is normal. 
\end{prop}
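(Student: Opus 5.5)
We must show that $H_2$ is smooth and that $H_3$ is normal. The plan is to treat $H_2$ first, reducing to the three singular-locus types allowed by Lemma \ref{l quad Vero Sing neq pt}, and then use smoothness of $H_2$ together with the singularities of $Z$ to handle $H_3$.

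\textbf{Smoothness of $H_2$.} Since $E_Z \subset Z \subset H_2$ and $E_Z$ is a Veronese surface, Lemma \ref{l quad Vero Sing neq pt} gives $\dim \Sing H_2 \in \{-\infty, 1, 2\}$. We exclude the cases $1$ and $2$.

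If $\dim \Sing H_2 = 2$, then $H_2$ has rank $3$. It is therefore a cone with vertex a plane $\Pi$ over a conic. Hence $H_2$ is swept out by the hyperplanes (in $\P^5$) spanned by $\Pi$ and a point of the conic, each such hyperplane meeting $H_2$ doubly. More precisely, $H_2$ contains a one-dimensional family of $3$-planes $\Lambda_t \supset \Pi$, and $2\Lambda_t = H_2 \cap \wt{H}_t$ for the hyperplanes $\wt{H}_t$ tangent along $\Lambda_t$.

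If $\dim \Sing H_2 = 1$, then $H_2$ has rank $4$. It is a cone with vertex a line $\ell$ over a smooth quadric surface in $\P^3$. Each ruling of the quadric surface gives a family of $3$-planes $\Lambda \supset \ell$, and a hyperplane through $\ell$ tangent to the quadric cuts $H_2$ in $\Lambda \cup \Lambda'$.

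In either case pick a hyperplane $\wt{H}$ with $\wt{H}\cap H_2$ reducible or non-reduced, containing a $3$-plane $\Lambda$. Then $\wt{H} \cap Z = (\Lambda\cap H_3) \cup (\Lambda'\cap H_3)$, or twice $\Lambda\cap H_3$, is a non-integral Cartier divisor on $Z$ in $|A_Z|$. Pulling back to $Y$ and pushing forward by $\sigma$ yields a non-integral member of $|-K_X|$, provided that no component of $\psi^*(\wt{H}\cap Z)$ is $E$ or $\psi$-exceptional. This is guaranteed because $E_Z$ is non-degenerate, hence not contained in $\wt{H}$, and because $\Ex(\psi)$ is one-dimensional. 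This contradicts $\Pic X = \Z K_X$, exactly as at the end of the proof of Proposition \ref{p birationality key}.

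The main subtlety is ensuring that each piece $\Lambda\cap H_3$ is really a divisor on $Z$, i.e.\ $\Lambda\not\subset H_3$, and that the two pieces are not both contained in $E_Z$. The first holds since $Z$ is irreducible of dimension $3$ and $\Lambda \neq Z$ (if $\Lambda \subset H_3$ then $\Lambda \subset Z$, forcing $Z=\Lambda$, absurd by degree $6$). In the rank-$3$ case we get multiplicity two directly, so the pushforward is non-reduced, again contradicting primitivity of $-K_X$ (a member of $|-K_X|$ cannot be $2D$ with $D$ Cartier).

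\textbf{Normality of $H_3$.} Now $H_2$ is smooth. Since $Z$ has only finitely many singular points (the image of the one-dimensional $\Ex(\psi)$), $H_3$ is smooth at every point of $Z$ except finitely many: at a point of $Z$ where $H_2$ is smooth, if $H_3$ were singular then the complete intersection $Z=H_2\cap H_3$ would be singular there. So $\dim(\Sing H_3 \cap H_2) \le 0$. As $H_2$ is an ample divisor, any positive-dimensional component of $\Sing H_3$ would meet $H_2$ in a positive-dimensional set. Hence $\dim \Sing H_3 \le 1$, so $H_3$ is regular in codimension one. Being a hypersurface, it is Cohen--Macaulay and thus $S_2$, and Serre's criterion gives normality. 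In fact $\Sing H_3$ is finite, which is stronger than needed.
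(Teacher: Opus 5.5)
Your proof is correct. For the smoothness of $H_2$ it takes a different route from the paper; the normality part of $H_3$ is essentially the paper's, apart from one slip noted below.

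\textbf{Comparison for $H_2$.} The paper argues as follows. It first gets $\dim\Sing H_2\le 1$ and $\dim\Sing H_3\le 1$, hence normality of both, from the Jacobian criterion on the complete intersection $Z=H_2\cap H_3$. It rules out $\dim\Sing H_2=0$ by Lemma \ref{l quad Vero Sing neq pt}. It then kills the vertex-line case by a length count on $L=\Sing H_2$: $H_3\cap L$ has length $3$ and lies in $\Sing Z\subset E_Z$, while $E_Z\cap L$ has length $\le 2$ because a Veronese surface is cut out by quadrics and contains no line.

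You instead use the cone structure of the quadrics with plane vertex or line vertex. This produces a hyperplane $\wt H$ with $\wt H\cap H_2$ equal to $2\Lambda$ or $\Lambda+\Lambda'$. You then transport the non-integral divisor $\wt H|_Z$ to a member of $|-K_X|$, as at the end of Proposition \ref{p birationality key}. The checks you make are the right ones:
\begin{itemize}
\item $\Lambda\not\subset H_3$, since $Z$ is irreducible and non-degenerate. Because $H_3$ then meets each $3$-plane properly, the cycle of $\wt H|_Z$ is $2[\Lambda\cap H_3]$ or $[\Lambda\cap H_3]+[\Lambda'\cap H_3]$.
\item No component is $E$ or $\psi$-exceptional, since $E_Z\not\subset\wt H$ and $\dim\Ex(\psi)=1$. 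This also covers your worry that a piece might lie in $E_Z$: no component of $\wt H|_Z$ can equal $E_Z$.
\end{itemize}

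What your route buys: it uses only $\Pic X=\Z K_X$ and the non-degeneracy of $E_Z$. It needs no information about where the singular points of $Z$ lie, and no comparison of lengths of zero-dimensional schemes. The paper's count implicitly needs the scheme $H_3\cap L$, not just its support, to lie inside $E_Z$. The paper's route is shorter, and it disposes of the plane-vertex case and the normality of $H_3$ in one stroke.

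\textbf{Slip in the normality paragraph.} A one-dimensional component of $\Sing H_3$ meets the ample divisor $H_2$ in only finitely many points. So your sentence about ``any positive-dimensional component'' should read ``any component of dimension at least two''. This still gives $\dim\Sing H_3\le 1$, which is more than enough for $R_1$ on the fourfold $H_3$. Your closing claim that $\Sing H_3$ is finite does not follow and should be dropped.

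You also do not need smoothness of $H_2$ there. At any point of $Z$ where $H_3$ is singular, the Jacobian matrix of the defining equations $f_2,f_3$ has rank at most one, so $Z$ is singular there regardless of $H_2$.
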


\begin{proof}
Since $Z = H_2 \cap H_3$ is a complete intersection which is smooth outside finitely many closed points, we get 
$\dim(\Sing H_2) \leq 1$ and $\dim(\Sing H_3) \leq 1$. 
By Serre's criterion for normality, 
both $H_2$ and $H_3$ are normal. 

We have  $\dim(\Sing H_2) \neq 0$ (i.e., $\Sing H_2$ is not a point) by 
Lemma \ref{l quad Vero Sing neq pt}. 
Suppose $\dim(\Sing H_2) =1$, i.e., $\Sing H_2 =:L$ is a line. 
It follows from  $Z  =H_2 \cap H_3$ that 
\[
Z \cap L = Z \cap \Sing H_2 = H_3 \cap \Sing H_2 \subset \Sing (H_3 \cap H_2) = \Sing Z. 
\]
Since $\Sing Z$ is a finite set, 
we get $L \not\subset H_3$, which implies that 
$H_3 \cap L$ is a zero-dimensional scheme satisfying $\dim_k \MO_{H_3 \cap L}=3$. 
Since every curve on a Veronese surface $E_Z \subset \P^5$ is of even degree, 
$E_Z$ contains no line on $\P^5$. 
Thus either $E_Z \cap L=\emptyset$ or $E_Z \cap L$ is $0$-dimensional. 
Since a Veronese surface $E_Z \subset \P^5$ is an intersection of quadrics, 
it holds that $\dim \MO_{E_Z \cap L} \leq 2$. 
By $H_3 \cap L \subset \Sing Z \subset E_Z$ and $E_Z \subset Z \subset H_3$,  we get 
\[
H_3 \cap L = H_3 \cap L \cap E_Z = E_Z \cap L, 
\]
which leads to the following contradiction: 
\[
3 = \dim \MO_{H_3 \cap L} = \dim \MO_{E_Z \cap L} \leq 2. 
\]
\end{proof}

\subsection{Quintic quasi-elliptic curves}


\begin{lem}\label{l general irre}
Let $f: X \to Y$ be a projective surjective morphism  of normal varieties such that every fibre of $f$ is connected. 
Then general fibres of $f$ are (geometrically) irreducible. 
\end{lem}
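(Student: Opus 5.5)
Since every fibre of $f$ is connected and $f$ is proper and surjective, I will pass to the Stein factorisation $X \xrightarrow{g} Y' \xrightarrow{h} Y$, where $g_*\MO_X = \MO_{Y'}$ and $h$ is finite. Because $Y$ is normal and $f$ is surjective, $h$ is a finite surjective morphism of varieties. The fibres of $h$ are in bijection with the connected components of the fibres of $f$. Connectedness of every fibre therefore forces $h$ to be bijective on closed points, so $K(Y) \subset K(Y')$ is a finite purely inseparable extension.

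I will then look at the generic fibre $X_\eta$ of $f$ over $\eta = \Spec K(Y)$. Since $X$ is integral, $X_\eta$ is integral. The field $K(Y)$ is algebraically closed in $K(X)$ up to the purely inseparable extension $K(Y')$: indeed $H^0(X_\eta, \MO_{X_\eta}) = K(Y')$, and $K(Y')$ is the integral closure of $K(Y)$ in $K(X)$, using normality of $X$. A standard fact (EGA IV 4.5.9, or \cite[Tag 0BUG]{}-type statements) says that an integral scheme over a field $K$ is geometrically irreducible once the separable algebraic closure of $K$ in its function field is $K$ itself. Here the algebraic closure of $K(Y)$ in $K(X)$ is contained in $K(Y')$, because $X$ is normal and anything algebraic over $K(Y)$ in $K(X)$ is integral over $\MO_Y$ locally, hence lies in $g_*\MO_X$. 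That extension is purely inseparable. Hence $X_\eta$ is geometrically irreducible over $K(Y)$.

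Finally I will spread out: geometric irreducibility of the fibres is a constructible condition (EGA IV 9.7.7). Since the generic fibre is geometrically irreducible, there is a dense open $U \subset Y$ over which every fibre is geometrically irreducible. Over the algebraically closed field $k$, this means irreducible.

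The main obstacle is the middle step: making sure the algebraic closure of $K(Y)$ in $K(X)$ is purely inseparable. This is exactly where connectedness of all fibres and normality of $X$ and $Y$ are used, via the Stein factorisation and the bijectivity of $h$. Without this, one could have a separable finite $h$ of degree $>1$ whose fibres split.
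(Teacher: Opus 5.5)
Your proof is correct, but it takes a more direct route than the paper's.

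\textbf{The paper's route.} The paper uses the Stein factorisation only to reduce to the case $f_*\MO_X=\MO_Y$; the finite part is purely inseparable, so general fibres are homeomorphic. It then works with the geometric generic fibre. The normalisation of $(X_{\overline{\eta}})_{\red}$ is a universal homeomorphism onto $X_{\overline{\eta}}$. This map is spread out over a finite cover $Y'\to Y$ to a family $Z\to X\times_Y Y'\to Y'$. Then $Y$ is shrunk until the fibres of $Z\to Y'$ are geometrically irreducible, and irreducibility of the fibres of $X\times_Y Y'\to Y'$, hence of $f$, follows by taking images.

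\textbf{Your route.} You isolate the field-theoretic content instead. Since $X$ is normal, the algebraic closure of $K(Y)$ in $K(X)$ equals $H^0(X_\eta,\MO_{X_\eta})=K(Y')$. This field is purely inseparable over $K(Y)$ because $h$ is bijective. So $X_\eta$ is geometrically irreducible by EGA IV 4.5.9, and EGA IV 9.7.7 spreads this to a dense open subset of $Y$.

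\textbf{Comparison.} Your version is shorter and shows exactly where connectedness of the fibres and normality of $X$ enter. The paper's version is more hands-on and avoids quoting the constructibility theorem. However, its shrinking step tacitly relies on a spreading-out statement of the same kind.

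Two minor corrections:
\begin{enumerate}
\item The sentence ``anything algebraic over $K(Y)$ in $K(X)$ is integral over $\MO_Y$ locally'' is false as written; consider $1/t$ for a local parameter $t$ on $Y$. Argue on the generic fibre instead. Such an element is integral over $K(Y)\subset\MO_{X_\eta,x}$ for every $x\in X_\eta$, and these local rings are integrally closed because $X$ is normal. Hence the element lies in $H^0(X_\eta,\MO_{X_\eta})$.
\item Contrary to your closing remark, normality of $Y$ is never used in your argument. The implication ``bijective on closed points $\Rightarrow$ purely inseparable'' also holds without it. So you in fact prove the statement without assuming $Y$ normal.
\end{enumerate}
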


This result should be well known, but we give a proof for the sake of completeness. 

\begin{proof}
We first reduce the problem to the case when $f_*\MO_X = \MO_Y$. 
Let 
\[
X \to Y' \xrightarrow{\theta} Y
\]
be the Stein factorisation of $f$. 
Then  the generic fibres coincide: $X_{\eta'} = X_{\eta}$. 
Since every fibre of $f$ is connected, $\theta$ is a finite purely inseparable surjective morphism. 
Then  $\eta' \to \eta$ is a universal homeomorphism, and hence 
a general fibre of $X_y$ of $f$ is homeomorphic to 
a general fibre $X_{y'}$ of $f'$. 
Hence we may assume that  $f_*\MO_X = \MO_Y$.

Let $X_{\overline{\eta}}$ be the geometric generic fibre and take the normalisation of its reduced structure: 
\[
\mu : (X_{\overline{\eta}})_{\red}^N \to (X_{\overline{\eta}})_{\red} \to X_{\overline{\eta}}. 
\]
Note that $\mu$ is a universal homeomorphism \cite[Lemma 2.2(2)]{Tan18b}. 
Then there exists a finite surjective morphism $Y' \to Y$ from a normal variety 
and 
\[
Z \to X \times_{Y} Y' \to Y'
\]
such that $Z \to X \times_Y Y'$ is a finite surjective morphism and 
the base change of $Z \to X \times_{Y} Y' \to Y'$ 
by the geometric generic point $\overline{\eta}$ is given by 
\[
(X_{\overline{\eta}})_{\red}^N \to X_{\overline{\eta}}. 
\]
After shrinking $Y$, we may assume that every fibre of $Z \to Y'$ is geometrically irreducible. 
Then a fibre of $X \times_{Y} Y' \to Y'$ is the image of the corresponding fibre of $Z \to Y'$, and hence it is irreducible. 
Thus general fibres of $X \to Y$ is irreducible, because its base change is a fibre of $X \times_{Y} Y' \to Y'$. 
\end{proof}

\begin{lem}\label{l ell linear intersection}
Let $X \subset \P^9$ be a prime Fano threefold of genus $8$. 
Let $L(X) \subset X$ be the union of all the lines on $X$, 
which is a proper closed subset of $X$. 
Let $C$ be a Gorenstein genus-one curve $C$ on $X$ such that $-K_X \cdot C=5$. 
Assume that 
\begin{enumerate}
\item $C \not\subset L(X)$, and 
\item there exists a closed point $P \in C \setminus L(X)$ such that $C_Y \cap \Ex(\psi)=\emptyset$ 
for the blowup $\sigma: Y \to X$ at $P$, the proper transform $C_Y$ of $C$ on $Y$, and the flopping contraction $\psi : Y \to Z$. 
\end{enumerate}
Then it holds that $\langle C \rangle \cap X = C$, 
where $\langle C \rangle$ denotes the smallest linear subvariety 
of $\P^9$ containing $C$. 
\end{lem}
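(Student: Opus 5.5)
The plan is to compare $\langle C \rangle \cap X$ with its image under the projection $\varphi_{|-K_Y|}$, which on $\P^9$ is the linear projection from the embedded tangent space $T_PX \simeq \P^3$, since $-K_Y = \sigma^*(-K_X) - 2E$.

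\emph{Degree and span of $C$.} By Riemann--Roch, $h^0(C, \MO_C(1)) = 5$, so $\dim \langle C \rangle \leq 4$. On $Y$ we have $E \cdot C_Y = 1$ and $-K_Y \cdot C_Y = 5-2 = 3$. Since $C_Y \cap \Ex(\psi) = \emptyset$, $\psi$ is an isomorphism near $C_Y$, so $C_Z := \psi(C_Y) \simeq C_Y \simeq C$ is a cubic Gorenstein genus-one curve in $\P^5$. It therefore spans a plane $\Pi$, and it is a plane cubic. Because the projection carries $\langle C \rangle$ onto $\langle C_Z \rangle = \Pi$, we get $\dim \langle C \rangle = 4$ and $\dim(\langle C \rangle \cap T_PX) = 4-2-1 = 1$. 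So $\ell := \langle C \rangle \cap T_PX$ is a line, namely the tangent line of $C$ at $P$.

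\emph{The plane $\Pi$ and $\Pi \cap Z$.} The quadric $H_2 \cap \Pi$ contains the cubic curve $C_Z$, hence $\Pi \subset H_2$, and so $\Pi \cap Z = \Pi \cap H_3$. Suppose $\Pi \subset Z$. Then $\langle C \rangle \cap X$ would contain a prime divisor $D$ of $X$. Since $\Pic X = \Z K_X$, every hyperplane section of $X$ containing $\langle C \rangle$ is a member of $|-K_X|$ containing $D$, hence equals $D$. Then $D$ spans at most a $\P^4$, whereas $H^1(\MO_X) = 0$ forces every member of $|-K_X|$ to span a hyperplane $\P^8$, a contradiction. So $\Pi \not\subset H_3$, and $\Pi \cap H_3$ is a plane cubic containing $C_Z$. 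Hence $\Pi \cap Z = C_Z$ scheme-theoretically.

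\emph{Set-theoretic and scheme-theoretic equality away from $P$.} First, $T_PX \cap X = \{P\}$. Indeed, $X$ is an intersection of quadrics, so for any $Q \in T_PX \cap X$ the line $PQ$ meets each quadric containing $X$ with multiplicity $\geq 3$ and therefore lies in $X$; since $P \notin L(X)$, no such line exists. On $X \setminus T_PX = X \setminus \{P\}$ the projection is a morphism, and it agrees with $\psi \circ \sigma^{-1}$. The preimage of $\Pi$ under it is exactly $(\langle C \rangle \cap X) \setminus \{P\}$, so
\[
(\langle C \rangle \cap X) \setminus \{P\} \simeq \sigma^{-1}\bigl(\psi^{-1}(C_Z)\bigr) \setminus E
\]
as schemes. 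Since $C_Y \cap \Ex(\psi) = \emptyset$, $\psi$ is an isomorphism over a neighbourhood of $C_Z$, so $\psi^{-1}(C_Z) = C_Y$. Hence $(\langle C \rangle \cap X) \setminus \{P\} = C \setminus \{P\}$.

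\emph{The point $P$ (main obstacle).} The step I expect to be the main obstacle is the scheme structure at $P$ itself, where the projection is undefined. I would first try the following. The Zariski tangent space of the scheme $\langle C \rangle \cap X$ at $P$ is contained in $\langle C \rangle \cap T_PX = \ell$, which is one-dimensional. So the complete local ring of $\langle C \rangle \cap X$ at $P$ is a quotient of $k[[t]]$. It surjects onto $\widehat{\MO}_{C,P}$, which has dimension one, so the surjection $k[[t]] \to \widehat{\MO}_{C,P}$ must have zero kernel. Thus both rings equal $k[[t]]$, and $\langle C \rangle \cap X = C$ near $P$ (incidentally, $C$ is smooth at $P$). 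Combined with the previous step, this gives $\langle C \rangle \cap X = C$.
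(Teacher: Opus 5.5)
Your strategy works, but one step is misstated and needs repair.

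\textbf{The misstated step.} The projection $\pi$ from $T_PX\simeq\P^3$ does map $\langle C\rangle$ onto $\Pi$, but the preimage of $\Pi$ is larger than $\langle C\rangle$. Since $\langle C\rangle\cap T_PX$ is only a line, $\overline{\pi^{-1}(\Pi)}$ is the $\P^6$
\[
\Lambda:=\langle \langle C\rangle\cup T_PX\rangle ,
\]
which strictly contains $\langle C\rangle$. So:
\begin{itemize}
\item Your third step actually proves $(\Lambda\cap X)\setminus\{P\}=C\setminus\{P\}$ scheme-theoretically. The claim for $\langle C\rangle$ then follows from $C\subset\langle C\rangle\cap X\subset\Lambda\cap X$.
\item In your second step, $\Pi\subset Z$ only gives $D\subset\Lambda\cap X$, not $D\subset\langle C\rangle$. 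The contradiction still holds, because $D$ would be a hyperplane section spanning a $\P^8$ while $\dim\Lambda=6$.
\item In the last step you must genuinely use $\langle C\rangle$ and not $\Lambda$. Since $\Lambda\supset T_PX$, the Zariski tangent space of $\Lambda\cap X$ at $P$ is all of $T_PX$.
\end{itemize}

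\textbf{Smoothness at $P$.} Your equality $E\cdot C_Y=1$ (hence $-K_Y\cdot C_Y=3$ and $\Pi$ a plane) requires $C$ to be smooth at $P$. Hypothesis (2) does not assert this for the given point. If $C$ were singular at $P$, then $E\cdot C_Y\geq 2$, so $C_Z$ would have degree at most $1$ and your first step collapses. You should either add this assumption or, as the paper does, replace $P$ by a general point of $C$.

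\textbf{Comparison with the paper.} With these repairs your proof is correct, and it differs from the paper's in two places.
\begin{itemize}
\item To show $\Pi\not\subset H_3$, the paper argues locally at $Q=C_Z\cap E_Z$. If $\Pi\subset Z$, then $\Pi$ and $E_Z$ are Cartier near the smooth point $Q$ of $Z$, so they share a curve $B$. Since $B\cap C_Z=Q$ is a reduced point on $\Pi\simeq\P^2$, both $B$ and $C_Z$ would have to be lines. You instead use $\Pic X=\Z K_X$ and linear normality, which is a shorter global argument.
\item At $P$ itself, the paper's intersection $\bigcap_i H_i$ is cut out by hyperplanes containing $\Lambda$, so it agrees with $C$ only off $P$. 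The paper then repeats the construction at a second general point $P'$ and intersects the two results. This requires the disjointness in (2) at general points of $C$, which is the paper's ``one-parameter family'' remark. Your Zariski tangent space argument, using that $\langle C\rangle\cap T_PX$ is only the tangent line, handles $P$ directly and uses (2) at a single point. The cost is that it needs $C$ to be smooth at that point.
\end{itemize}
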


\begin{proof}
Fix a \emph{general} closed point $P \in C \setminus L(X)$. 
We use Notation \ref{n g=8 2}. 
Let $C_Y$ be the proper transform of $C$ on $Y$ and set $C_Z := \psi(C_Y)$. 
By (2), we have $C_Y \cap \Ex(\psi)=\emptyset$ 
(consider the one-parameter family of blowups at points on $C$). 
Hence we get 
\[
C \xleftarrow{\simeq} C_{Y} \xrightarrow{\simeq} C_{Z}. 
\]
We have 
\[
-K_Z \cdot C_Z = -K_Y \cdot C_Y = (-\sigma^*K_X -2E) \cdot C_Y = 
-K_X \cdot C -2 E \cdot C_Y = 5-2 \cdot 1 = 3, 
\]
and hence 
$C_{Z}$ is a Gorenstein genus-one curve on $Z \subset \P^5$ of degree $3$ such that 
$C_{Z} \cap \psi(\Ex(\psi)) = \emptyset$.  
For the plane $V := \langle C_{Z} \rangle$ generated by $C_{Z}$, 
we have 
\[
C_{Z} \subset Z \cap V = (H_2 \cap V) \cap (H_3 \cap V). 
\]
If $H_2 \cap V \subsetneq V$, then $C_{Z}$ would be contained an effective divisor on $V = \P^2$ of degree two, which is a contradiction. 
Hence $V \subset H_2$.

We now prove $V \not\subset H_3$. 
Suppose $V \subset H_3$. Then $V \subset H_2 \cap H_3  = Z$. 
Since $Z$ is smooth around $C_{Z}$, 
$Z$ is smooth at the closed point $Q := C_{Z} \cap E_{Z}$. 
Hence 
each of  $V$ and $E_{Z}$ is an effective Cartier divisor on $Z$ around $Q$. 
Therefore, we get $\dim (V \cap E_{Z}) \geq 1$. 
In particular, we can find a curve $B \subset  V \cap E_{Z}$. 
By 
\[
B \cap C_{Z} \subset E_{Z} \cap C_{Z} = Q, 
\]
$B$ and $C_{Z}$ are curves on $V \simeq \P^2$ whose scheme-theoretic intersection 
is a reduced point $Q$. 
Then both $B$ and $C_{Z}$ must be lines, which contradicts the fact that 
$C_{Z}$ is a cubic curve on $V \simeq \P^2$. 
This completes the proof of $V \not\subset H_3$.

Then we get the scheme-theoretic equality $C_{Z} = Z \cap V$. 
In particular, $C_Z = \bigcap_{i \in I} H_{i, Z}$ for some members $H_{i, Z} \in |-K_Z| = |\MO_{\P^5}(1)|_Z|$. 
Recall that $\psi : Y \to Z$ is isomorphic around $C_Y \xrightarrow{\simeq} C_Z$. 
For $H_{i, Y} := \psi^*H_{i, Z} \in |-\psi^*K_Z| = |-K_Y|$, 
it holds that 
\[
C_Y = \psi^{-1}(C_Z)
= \psi^{-1}\left(\bigcap_{i \in I} H_{i, Y}\right)
= \bigcap_{i \in I} H_{i, Y}. 
\]
Set $H_i := \sigma_*H_{i, Y} \in | -\sigma_*K_Y| = |-K_X|$. 
We then get 
\[
C \subset \bigcap_{i \in I} H_i, 
\]
and this inclusion is an equality outside $P$. 
Recall that $P$ was chosen to be an arbitrary general closed point. 
Applying the same procedure after replacing $P$ by another general closed point $P'$, we can find a set $\{ H'_j\}_{j \in J}$ of hyperplane sections of $X$ such that 
\[
C \subset \bigcap_{j \in J} H'_j
\]
and that this inclusion is an equality outside $P'$. 
Then 
\[
C = \bigcap_{i \in I} H_i \cap \bigcap_{j \in J} H'_j, 
\]
as required. 
\end{proof}

\begin{prop}\label{p q-ell exist}
Let $X \subset \P^9$ be a prime Fano threefold of genus $8$ over $k$. 
Set $\kappa := K(\P^3_k)$. 
Then 
there exists a geometrically integral regular genus-one curve $C$ on $X_{\kappa} := X \times_k \kappa$ such that $-K_{X_{\kappa}} \cdot C=5$ and $\langle C \rangle \cap X_{\kappa} = C$. 
\end{prop}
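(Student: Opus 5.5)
Following the strategy (p-a) of the introduction, we work in Notation \ref{n g=8 2}. By Proposition \ref{p H2 smooth}, $H_2$ is a smooth quadric in $\P^5$ containing the Veronese surface $E_Z$. Lemma \ref{l Veronese V+3V'} then gives planes $V, V'$ on $H_2$ with $V^2=V'^2=1$, $V\cdot V'=0$ and $E_Z = V+3V'$ in $\CH^2(H_2)$. Planes in the class of $V$ form one of the two families of planes on the smooth quadric fourfold $H_2$, and this family is isomorphic to $\P^3_k$. We get a flat family $\mathcal V \subset H_2 \times_k \P^3_k \to \P^3_k$. Let $V_\kappa$ be its generic fibre, a plane in $(H_2)_\kappa$ with $\kappa = K(\P^3_k)$. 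Set $C_Z := V_\kappa \cap (H_3)_\kappa$.

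The first task is to show that $C_Z$ is a geometrically integral regular plane cubic lying in $Z_\kappa$. It is a cubic as soon as $V_\kappa \not\subset (H_3)_\kappa$. This holds because $Z$ contains no $2$-dimensional family of planes: otherwise a general plane of the family would meet $E_Z$ in a curve, which fails for dimension reasons as $\dim \P^3 = 3$ and $V\cdot E_Z = V\cdot(V+3V') = 1$.

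\emph{Regularity.} Planes of the family through a general point of $Z$ form a $\P^1$, so the family of cubics $V\cap H_3$ sweeps out $Z$. The total space $\mathcal C := \mathcal V \cap (H_2\times \P^3)\cap (H_3\times\P^3)$ is therefore birational to a $\P^1$-bundle over the smooth locus of $Z$, hence regular at its generic point's fibre. Regularity of the generic fibre $C_Z$ then follows from regularity of $\mathcal C$ (local rings of the generic fibre are localisations of those of $\mathcal C$). I would argue that $\mathcal C$ is smooth over the open locus $Z \setminus \Sing Z$ via the incidence correspondence, since $\mathcal C \to Z$ is a $\P^1$-bundle over points of the smooth quadric $H_2$, the planes through a point forming a smooth conic-free family.

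\emph{Geometric integrality and avoiding $\Ex(\psi)$.} Geometric integrality should follow from Lemma \ref{l general irre} applied to $\mathcal C \to \P^3$ (connected fibres, since plane cubics are connected), together with generic reducedness of fibres. Since $\psi(\Ex(\psi))$ is finite and $V\cdot E_Z=1$, a general plane misses $\psi(\Ex(\psi))$. Hence $C_Z$ avoids $\psi(\Ex(\psi))_\kappa$ and meets $(E_Z)_\kappa$ in one point. Let $C_Y$ be the proper transform, isomorphic to $C_Z$, and $C := \sigma(C_Y)\subset X_\kappa$, isomorphic to $C_Y$ since $E\cdot C_Y = 1$. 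Then $-K_{X_\kappa}\cdot C = -K_Y\cdot C_Y + 2E\cdot C_Y = 3+2 = 5$, and $C$ is a regular genus-one curve with $\omega_C\simeq\MO_C$.

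\emph{The equality $\langle C\rangle\cap X_\kappa = C$.} I would rerun the proof of Lemma \ref{l ell linear intersection} over $\kappa$: the plane $\langle C_Z\rangle = V_\kappa$ lies in $H_2$ and not in $H_3$, so $C_Z = Z_\kappa\cap V_\kappa$ scheme-theoretically. This gives $C$ as an intersection of hyperplane sections away from $P$. For the second point $P'$, I would note that $C$ passes through the points $P'$ only after base change, so instead I would apply Lemma \ref{l ell linear intersection} to the geometric fibre $C_{\bar\kappa}$, a Gorenstein genus-one quintic. Its hypotheses (1)–(2) follow from generality, and the conclusion descends since forming $\langle\,\cdot\,\rangle$ and intersections commutes with field extension.

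\textbf{Main obstacle.} The hardest step is regularity of $C_Z$, which need not be smooth in small characteristic. The first thing I would try is showing that $\mathcal C$ is regular near the generic fibre by identifying it, over $Z_{\rm reg}$, with the $\P^1$-bundle of planes through points.
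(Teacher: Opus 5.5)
Your route is the paper's: the generic plane $V_\kappa$ of the $\P^3$-family containing $[V]$, and $C_Z=V_\kappa\cap (H_3)_\kappa$. Regularity comes from $\mathcal C=\pi_1^{-1}(Z)\to Z$ being a $\P^1$-bundle, so $\mathcal C$ is regular off the finitely many fibres over $\Sing Z$, which do not dominate $\P^3$. Irreducibility comes from Lemma~\ref{l general irre}, then the count $3+2=5$, and $\langle C\rangle\cap X_\kappa=C$ by passing to $\overline{\kappa}$ and applying Lemma~\ref{l ell linear intersection}.

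\textbf{Gap: geometric reducedness.} The gap is the phrase ``together with generic reducedness of fibres''. Nothing you prove gives it, and it is exactly the characteristic-$p$ issue. A morphism of smooth varieties can have all fibres non-reduced (e.g.\ $(x,y)\mapsto x^p$). Also, regularity of $C_Z$ does not imply geometric reducedness over the imperfect field $\kappa$. Without it you do not know that $C$ is geometrically integral. Nor can you apply Lemma~\ref{l ell linear intersection} to $C_{\overline{\kappa}}$, which must be a curve.

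\textbf{Fix.} The missing ingredient is the point $Q=V\cap E_Z$, which you mention but never use. For a general closed $\alpha$:
\begin{enumerate}
\item $Z$ is smooth at $Q$, so $E_Z$ is Cartier there.
\item $C_{Z,\alpha}$ is a plane cubic, hence Cohen--Macaulay, and $C_{Z,\alpha}\cap E_Z=V_\alpha\cap E_Z=Q$ because $E_Z\subset H_3$.
\item Hence $E_Z|_{C_{Z,\alpha}}$ is an effective Cartier divisor equal to a reduced point. So the maximal ideal of $\MO_{C_{Z,\alpha},Q}$ is principal and $C_{Z,\alpha}$ is regular at $Q$.
\item With irreducibility this gives $R_0$, and $S_1$ then gives reducedness.
\end{enumerate}
This is how the paper proves (1) of its Claim. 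Equivalently, on the generic fibre: $Q$ is a $\kappa$-rational point at which $C_Z$ is regular, hence smooth. So the irreducible Cohen--Macaulay curve $C_{\overline{\kappa}}$ is generically reduced, hence reduced.

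\textbf{Smaller point.} Your justification of $V_\kappa\not\subset (H_3)_\kappa$ via ``no $2$-dimensional family of planes in $Z$'' is not a proof as written. The fact $V\cdot E_Z=1$ only shows that planes meeting $E_Z$ in a curve form a proper closed subset of $\P^3$, which could a priori be a surface. The claim is also unnecessary. The planes of the family cover $H_2$, so if all lay in $H_3$ then $H_2\subset H_3$, contradicting $\dim Z=3$. Hence the closed condition $V_\alpha\subset H_3$ fails for general, and so for generic, $\alpha$.
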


\begin{proof}
Fix a closed point $P$ on $X$ which is not contained in any line on $X$. 
We use Notation \ref{n g=8 2}. 
Recall that $Z = H_2 \cap H_3 \subset \P^5$, where $H_2$ is a smooth quadric fourfold and $H_3$ is a normal cubic fourfold on $\P^5$ (Proposition \ref{p H2 smooth}). 
By Lemma \ref{l Veronese V+3V'}, 
there exist planes $V$ and $V'$ on $H_2$ 
such that $V^2 =V'^2=1$, $V \cdot V' =0$, 
$\CH^2(H_2) = \Z V \oplus \Z V',$ and 
\[
E_Z = V + 3V' \qquad \text{in} \qquad \CH^2(H_2). 
\]
By $H_2 \simeq \Gr(2, 4)$, 
$V$ (resp. $V'$) is a member of 
the deformation family parametrised by $\Gr(1, 4) \simeq \P^3$ 
(resp. $\Gr(3, 4) \simeq \P^3$). 



For a field $K$ and a morphism $\alpha : \Spec K \to \P^3_k$, consider the following commutative diagram 
\begin{equation}\label{e1 q-ell exist}
\begin{tikzcd}
Z \arrow[d, hook] & \mathcal C := \pi_1^{-1}(Z) \arrow[d, hook] \arrow[l, "\pi_1"', "\text{$\P^1$-bundle}"] & C_{Z, \alpha} \arrow[l] \arrow[d, hook]\\
H_2  & 
\mathcal V \arrow[d, "\pi_2"', "\text{$\P^2$-bundle}"]
\arrow[l, "\pi_1"', "\text{$\P^1$-bundle}"] &
V_{\alpha} \simeq \P^2_K \arrow[l] \arrow[d] \\
& \P^3_k & \Spec K \arrow[l, "\alpha"'], 
\end{tikzcd}
\end{equation}
where $\pi_2: \mathcal V \hookrightarrow H_2 \times_k \P^3_k \xrightarrow{\pr_2} \P^3_k$ is the flat family of planes on $H_2$ containing $[V]$, 
each square is cartesian, and all the upper vertical arrows are closed immersions. 
In particular, 
we have a closed embedding  
$\mathcal C \subset Z \times_k \P^3_k$ 
and $C_{Z, \alpha}$ is naturally a closed subscheme of $Z \times_k K$. 


\setcounter{step}{0}

\begin{claim*}
Assume that     $K=k$ and $\alpha : \Spec k \to \P^3_k$ is a general closed point. 
Then the following hold. 
\begin{enumerate}
\item $C_{Z, \alpha}$ is a plane cubic curve on $V_{\alpha} \simeq \P^2_k$ (in particular, $C_{Z, \alpha}$ is an integral scheme). 
\item The scheme-theoretic intersection $C_{Z, \alpha} \cap E_Z$ is a reduced point. 
\item $C_{Z, \alpha} \cap \Sing Z = \emptyset$.  
\end{enumerate}
\end{claim*}

\begin{proof}[Proof of Claim]
Replacing $V$ by $V_{\alpha}$, we may assume $V = V_{\alpha}$. 
In this case, we have  
\[
C_{Z, \alpha} = Z \cap V_{\alpha} 
= H_3 \cap V_{\alpha} = H_3 \cap V. 
\]
It holds that 
\[
E_Z \cdot V = (V+3V') \cdot V = V^2 =1,
\]
i.e., the scheme-theoretic intersection $Q := E_Z \cap V$ is a  reduced point. Then 
\[
C_{Z, \alpha} \cap E_Z =  H_3 \cap V \cap E_Z  \overset{(\star)}{=} V \cap E_Z = Q, 
\]
where $(\star)$ follows from $E_Z  \subset Z \subset H_3$. 
Thus (2) holds. 
Since $V = V_{\alpha}$ is a general fibre of $\pi_2 : \mathcal V \to \P^3_k$, 
$V$ can avoid finitely many fixed closed points on $H_2$ 
(cf. (\ref{e1 q-ell exist})), 
and hence we may assume that $C_{Z, \alpha} \cap \Sing Z = \emptyset$. 
Thus (3)  holds.

Let us show (1). 
Since $\pi_1 : \mathcal V \to H_2$ is surjective and 
$\alpha : \Spec k \to \P^3_k$ is a general closed point (so $V= V_{\alpha}$ is a general fibre of $\pi_1$), 
we have that $V \not\subset H_3$, which implies that $C_{Z, \alpha} = H_3 \cap V = H_3|_V$ is an effective Cartier divisor on the plane $V =V_{\alpha} = \P^2_k$ of degree $3$. 
In particular, a general fibre $C_{Z, \alpha}$ of $\mathcal C \to \P^3_k$ (over a general closed point) is connected.
Note that $\P^3_k$ is smooth and that $\mathcal C$ is smooth outside the $\pi_1$-fibres over $\Sing \, Z$. It follows from Lemma \ref{l general irre} 
that $C_{Z, \alpha}$  is irreducible. 

Therefore, it is enough to show that $C_{Z, \alpha}$ is reduced. 
Since $C_{Z, \alpha}$ is an effective Cartier divisor on the plane $V = \P^2$, 
$C_{Z, \alpha}$ is Cohen-Macaulay. 
Thus $C_{Z, \alpha}$ has no embedded primes. 
Hence 
the zero-dimensional closed subscheme
$E_Z \cap C_{Z, \alpha} =E_Z|_{C_{Z, \alpha}} (=Q)$ on $C_{Z, \alpha}$, 
whose defining equation is locally principal, is an effective Cartier divisor on  $C_{Z, \alpha}$. 
As $E_Z|_{C_{Z, \alpha}}$ is smooth, so is $C_{Z, \alpha}$ around $E_Z|_{C_{Z, \alpha}} = Q$. 
Hence $C_Z$ satisfies $R_0$ and $S_1$ (as it is Cohen-Macaulay), i.e., $C_Z$ is reduced. 
Thus (1) holds. 
This completes the proof of Claim. 
\end{proof}

Let $\xi : \Spec \kappa \to \P^3_k$ be the generic point. 
Then  $C_Z := C_{Z, \xi}$ is the generic fibre of $\mathcal C = \pi^{-1}_1(Z) \to \P^3_k$, 
which is a closed subscheme on $Z_{\kappa} := Z \times_k \kappa$. 
By Claim, the following statements (1)'-(3)'  corresponding to (1)-(3) hold for the generic fibre $C_Z = C_{Z, \xi}$. 
\begin{enumerate}
\item[(1)'] $C_Z$ is a geometrically integral regular genus-one curve. 
\item[(2)']  The scheme-theoretic intersection $Q := C_Z \cap E_{Z, \kappa}$ 
is a reduced $\kappa$-rational point. 
\item[(3)']  $C_Z \cap \Sing {Z}_{\kappa} = \emptyset$, 
where $\Sing {Z}_{\kappa}$ denotes the non-smooth locus of $Z_{\kappa}$ (which coincides with the non-regular locus of $Z_{\kappa}$). 
\end{enumerate}

Set $C_Y := \psi^{-1}(C_Z) \subset Y_{\kappa}$ and $C := \sigma(C_Y) \subset X_{\kappa}$. 
By $C_Z \cap \Sing Z_{\kappa} = \emptyset$, we have $C_Y \xrightarrow{\psi|_{C_Y}, \simeq} C_Z$ and 
\[
C_Y \cap E_{\kappa} \xrightarrow{\simeq} C_Z \cap E_{Z, \kappa} = Q. 
\]
Hence we get $P_{\kappa} \in C$ (recall that $P_{\kappa}$ is a $\kappa$-rational point which is the blowup centre of $\sigma: Y_{\kappa} \to X_{\kappa}$) and 
$C$ is smooth at $P_{\kappa}$. Therefore, we get 
\[
C \xleftarrow{\simeq, \sigma|_{C_Y}} C_Y \xrightarrow{\simeq, \psi|_{C_Y}} C_Z. 
\]
By (1)',  $C$ is a geometrically integral regular genus-one curve. 
It holds that 
\[
-K_{X_{\kappa}} \cdot C = (-K_{Y_{\kappa}} +2E_{\kappa}) \cdot C_Y = 
(-K_{Z_{\kappa}} +2E_{Z, \kappa}) \cdot C_Z = 3 + 2 \cdot 1 =5. 
\]

It is enough to show $\langle C \rangle \cap X_{\kappa} = C$. 
Let $\overline{\kappa}$ be the algebraic closure of $\kappa$. 
By the flat base change theorem: 
\[
H^0(\P^9_{\kappa}, I_C \otimes \MO_{\P^9}(1)) \otimes_{\kappa} \overline{\kappa} 
\simeq 
H^0(\P^9_{\ol{\kappa}}, I_{C_{\ol{\kappa}}} \otimes \MO_{\P^9}(1)), 
\]
it suffices to prove the corresponding statement 
$\langle C_{\ol{\kappa}} \rangle \cap X_{\overline{\kappa}} = C_{\ol{\kappa}}$ 
after taking the base change $(-) \times_{\kappa} \overline{\kappa}$, 
which follows from $C_Y \cap \Ex(\psi) = \emptyset$ and Lemma \ref{l ell linear intersection} 
(applied for a prime Fano threefold $X_{\ol{\kappa}} \subset \P^9_{\ol{\kappa}}$ 
of genus $8$ over $\overline{\kappa}$ and a Gorenstein genus-one curve $C_{\ol{\kappa}}$ on $X_{\ol{\kappa}}$). 
\qedhere

\end{proof}

\subsection{Regular K3-like surfaces}


\begin{prop}\label{p exist K3-like}
Let $X \subset \P^9$ be a prime Fano threefold of genus $8$ over $k$. 
Take a field extension $k \subset \kappa$. 
Let $C$ be a geometrically integral regular genus-one  curve $C$ on $X_{\kappa}$ 
such that $-K_{X_{\kappa}} \cdot C =5$ and $C = X \cap \langle C\rangle$, 
where $\langle C\rangle$ denotes the smallest linear subvariety of $\P^9$ containing $C$. 
Then there exist a purely transcendental field extension $\kappa'/\kappa$ 
of finite transcendental degree and 
a regular prime divisor $S$ on $X_{\kappa'}$ 
such that $S \sim -K_{X_{\kappa'}}$ and  $C_{\kappa'} \subset S$, 
where $X_{\kappa'} := X \times_k \kappa'$ and $C_{\kappa'} := C \times_{\kappa} \kappa'$. 
Moreover, $S$ is a geometrically integral regular K3-like surface. 
\end{prop}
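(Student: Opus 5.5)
Consider the blowup $\tau : \wt{X} \to X_{\kappa}$ along the regular curve $C$, with exceptional divisor $D := \Ex(\tau)$. Since $C$ is a regular curve on the regular threefold $X_\kappa$ (hence a local complete intersection), $\wt{X}$ is regular, $D \to C$ is a $\P^1$-bundle, and $K_{\wt{X}} = \tau^*K_{X_\kappa} + D$. Hyperplanes of $\P^9_\kappa$ containing $\langle C \rangle$ cut out members of $|-K_{X_\kappa}|$ through $C$. The hypothesis $\langle C \rangle \cap X_\kappa = C$ says that these hyperplane sections scheme-theoretically cut out $C$, i.e. $I_C\otimes \MO(1)$ is generated by these sections. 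Hence $\tau^*(-K_{X_\kappa}) - D = -K_{\wt X}$ has base point free linear system $\Lambda := |-K_{\wt{X}}|$, given by the pullbacks of those sections. Note $\dim \langle C\rangle = 4$ for a quintic genus-one curve (computed geometrically), so $\Lambda$ has dimension $4$; what matters is only that it is nonempty and base point free.

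Let $\kappa'$ be the function field of the projective space $\P(\Lambda)$ over $\kappa$; it is purely transcendental of finite degree. Let $T \subset \wt{X}_{\kappa'}$ be the generic member of $\Lambda$. Since $\Lambda$ is base point free, the universal family over $\P(\Lambda)$ is a projective bundle over $\wt{X}$, hence regular. Its generic fibre $T$ is a localisation of this regular scheme, so $T$ is regular. Moreover $-K_{\wt X}$ is big: $(-K_{\wt X})^3 = 14 - 3\cdot 5\cdot 2 + \ldots$ is computed by the standard blowup formula $(-K_X)^3 - 2(-K_X\cdot C) + 2g(C) - 2 = 14-10+0 = 4 > 0$. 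So the image of $\varphi_\Lambda$ is a threefold, and the Bertini-type result \cite[Proposition 2.11]{FanoI}, applied as in the proof of Proposition \ref{p FanoY cont}, gives that $T$ is geometrically integral.

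Set $S := \tau(T)$. Then $S \sim -K_{X_{\kappa'}}$ and $C_{\kappa'} \subset S$, since $S$ is the hyperplane section corresponding to the generic hyperplane through $\langle C \rangle$, and $S$ is geometrically integral. The main obstacle is showing that $S$ is regular along $C_{\kappa'}$; away from $C$, $S \simeq T$ is regular. I would argue as follows. $T|_D$ is a member of the base point free system $\Lambda|_D$ on the $\P^1$-bundle $D$, and $-K_{\wt X}|_D$ has degree $1$ on the fibres of $D\to C$. So $T \cap D$ is a section of $D_{\kappa'} \to C_{\kappa'}$: it is the generic member of a base point free system and hence regular, and it has degree one over $C$, so it cannot contain a fibre. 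Consequently $T \to S$ is finite and birational near $C$, and bijective onto its image. Because $T$ contains no fibre of $D \to C$, at each point of $C$ the local equation of $S$ does not lie in $\mathfrak m^2$: its image in the conormal directions is nonzero, which is exactly the condition that the corresponding point of $\P(N_C^\vee)$ is not contained in $T$ for all directions. Hence $S$ is regular along $C_{\kappa'}$, and $\tau|_T : T \to S$ is an isomorphism.

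Finally, adjunction gives $K_S \sim (K_{X_{\kappa'}} + S)|_S \sim 0$. From $0 \to \MO(-S) \to \MO_{X_{\kappa'}} \to \MO_S \to 0$, together with $H^1(\MO_X) = 0$ and $H^2(\MO_X(K_X)) \simeq H^1(\MO_X)^\vee = 0$, we get $H^1(S, \MO_S) = 0$. So $S$ is a geometrically integral regular K3-like surface.
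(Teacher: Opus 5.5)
Your proof is correct. Its overall shape matches the paper's: blow up $C$, take the generic member $T$ of the base point free system $|-K_{\wt X}|$, push it down to $S$, and rule out fibres of $D\to C$ inside $T$, using that $T|_D$ is regular and has a horizontal component.

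The genuine difference is how you get from ``$T$ contains no fibre'' to ``$S$ is regular''.

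\textbf{The paper's route.} In its notation $\sigma\colon Y\to X_\kappa$, $E=\Ex(\sigma)$, the paper first proves that $S$ is normal. It compares $\MO_S\to\sigma_*\MO_T$ and uses the relative vanishing $R^1\sigma_*\MO_{Y_{\kappa'}}(K_{Y_{\kappa'}})=0$ from \cite[Corollary 6.8]{Tan-elliptic}. If $S$ were not regular, normality would prevent $T\to S$ from being finite, so $T$ would contain a fibre. This contradicts regularity of $T|_E$, which dominates $C$ because $(-E)\cdot E\cdot\sigma^*(-K_X)=5>0$.

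\textbf{Your route.} You bypass the vanishing theorem with a local computation. At $x\in C_{\kappa'}$, write $I_C=(f,g)$ with $f,g$ part of a regular system of parameters, and write the equation of $S$ as $s=af+bg$. The strict transform meets $\tau^{-1}(x)=\P(I_C/\mathfrak m I_C)$ in $\{a(x)F+b(x)G=0\}$. Hence $T\not\supset\tau^{-1}(x)$ if and only if $s\notin\mathfrak m I_C$, which holds if and only if $s\notin\mathfrak m^2$.

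\textbf{Comparison.} Your route is more elementary and gives $T\simeq S$ immediately. Your use of $-K_{\wt X}\cdot f=1$ even shows that $T|_D$ is a section, which is sharper than the paper's dominance argument. What you should state explicitly is the equality $I_C\cap\mathfrak m^2=\mathfrak m I_C$. This is where regularity of $C_{\kappa'}$ enters (it is preserved under the purely transcendental extension), and your sentence about ``conormal directions'' uses it silently.

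\textbf{Smaller differences.} You prove regularity of generic members directly, since the universal family is a projective bundle over a regular scheme; the paper cites \cite{Tana} instead. You derive geometric integrality of $S$ from that of $T$ via \cite[Proposition 2.11]{FanoI}; the paper applies that result only after noting the blowup is geometrically normal (Lemma \ref{l qell blowup basic}). The paper's own argument, $\Pic X_{\overline{\kappa'}}=\Z K_{X_{\overline{\kappa'}}}$, is simpler. You also verify $K_S\sim 0$ and $H^1(S,\MO_S)=0$ explicitly, which the paper leaves implicit. Finally, delete the stray ``$14-3\cdot 5\cdot 2+\ldots$''.
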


\begin{proof}
Let $\sigma: Y \to X_{\kappa}$ be the blowup along $C$. 
We have the following (cf.\ \cite[Lemma 3.21(2)]{FanoII}): 
\begin{enumerate}
\item 
$(-K_Y)^3 = (-K_{X_{\kappa}})^3 -2 (-K_{X_{\kappa}}) \cdot C +2p_a(C) -2 = 14 - 10 =4$. 
\item 
$(-K_Y)^2 \cdot E = (-K_{X_{\kappa}}) \cdot C -2p_a(C) +2 = 5$. 
\item 
$(-K_Y) \cdot E^2 =2p_a(C) -2 =0$. 
\item 
$E^3 = -\deg N_{C/X_{\kappa}} = 
-(-K_{X_{\kappa}}) \cdot C -2p_a(C) +2 =-5$. 
\end{enumerate}
Then $|-K_Y|$ is base point free, and hence its generic member $T$ on $Y_{\kappa'}$ is regular, where $\kappa' := K(\P(H^0(Y, -K_Y)))$ 
\cite[Theorem 4.9, Definition 5.6, Remark 5.8]{Tana}. 
By $(-K_{Y_{\kappa'}})^3 = (-K_Y)^3 >0$, $T$ is big, and hence $T$ is connected. 
Thus $T$ is a regular prime divisor. 
Set $S := \sigma_*T \in |-\sigma_*K_{Y_{\kappa'}}| = |-K_{X_{\kappa'}}|$, which is a prime divisor on $X_{\kappa'}$.

Let us show that $S$ is normal. 
Consider the following exact sequence 
\[
0 \to \sigma_* \MO_{Y_{\kappa'}}(-T) \to \sigma_*\MO_{Y_{\kappa'}} \to 
\sigma_*\MO_T \to R^1\sigma_*\MO_{Y_{\kappa'}}(-T). 
\]
Comparing this with the corresponding exact sequence on $X_{\kappa'}$, 
it suffices to show that 
$R^1\sigma_*\MO_{Y_{\kappa'}}(-T)=0$ (as this implies that $\MO_S \to \sigma_*\MO_T$ is surjective, and hence bijective; which implies that $S$ is normal). 
This follows from 
\[
R^1\sigma_*\MO_{Y_{\kappa'}}(-T) \simeq 
R^1\sigma_*\MO_{Y_{\kappa'}}(K_{Y_{\kappa'}}) =0, 
\]
where the vanishing holds by \cite[Corollary 6.8]{Tan-elliptic}.

Suppose that $S :=\sigma_*T$ is not regular. 
By $K_S \sim 0$ and $K_T \sim 0$, we get $\tau := \sigma|_T : T \to S$ is a resolution of singularities satisfying $K_T \sim\tau^*K_S$. 
Then $S$ has at worst canonical singularities.  
Since $S$ is not regular, $T$ contains a one-dimensional fibre of $\sigma : Y_{\kappa'} \to X_{\kappa'}$. 
In particular, $T|_{E_{\kappa'}}$ is an effective Cartier divisor on the $\P^1$-bundle $E_{\kappa'}$ over the regular projective curve $C_{\kappa'}$, 
and $T|_{E_{\kappa'}}$ contains a fibre of 
\[
\pi:= \sigma|_{E_{\kappa'}} : E_{\kappa'} \to C_{\kappa'}
\]
Since $E$ is regular and $T$ is the generic member on $Y$ of $|-K_Y|$, 
$T|_{E_{\kappa}}$ is the generic member of some base point free linear system, 
and hence $T|_{E_{\kappa}}$ is regular \cite[Remark 4.5 and Theorem 4.9]{Tana}.

On the other hand, it holds that 
\[
(T|_{E_{\kappa'}}) \cdot (\sigma^*(-K_{X_{\kappa'}})|_{E_{\kappa'}}) 
= -K_{Y_{\kappa'}} \cdot E_{\kappa'} \cdot \sigma^*(-K_{X_{\kappa'}}) = 
(-E) \cdot E \cdot \sigma^*(-K_X) >0. 
\]
Hence the effective divisor $T|_{E_{\kappa'}}$ on $E_{\kappa'}$ 
dominates the base curve $C_{\kappa'}$. 
Therefore, $T|_{E_{\kappa'}}$ satisfies the following three conditions, 
which is absurd.  
\begin{itemize}
\item $T|_{E_{\kappa'}}$ contains a $\pi$-horizontal irreducible component. 
\item $T|_{E_{\kappa'}}$ contains a $\pi$-fibre. 
\item $T|_{E_{\kappa'}}$ is regular. 
\end{itemize}

Since $S$ is a regular K3-like surface, it is enough to show that $S$ is geometrically integral. 
For the algebraic closure $\overline{\kappa'}$ of $\kappa'$, 
the linear equivalence $-K_{X_{\kappa'}} \sim S$ implies 
$-K_{X_{\overline{\kappa'}}} \sim S \times_{\kappa'} \overline{\kappa'}$. 
By $\Pic X_{\overline{\kappa'}} = \Z K_{X_{\overline{\kappa'}}}$, 
$S \times_{\kappa'} \overline{\kappa'}$ is still a prime divisor on 
$X_{\overline{\kappa'}}$, and hence $S$ is geometrically integral. 
\qedhere



\end{proof}

\begin{lem}\label{l ell on K3 bpf}
We work over a field $\kappa$. 
Let $S$ be a geometrically integral regular K3-like surface over $\kappa$ and 
let $C$ be a geometrically integral regular genus-one curve on $S$. 
Then $C^2=0$, $|C|$ is base point free, and $\dim_{\kappa}H^0(S, C)=2$. 
\end{lem}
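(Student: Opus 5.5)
We deduce all three assertions from adjunction, the exact sequence of the divisor $C$, and $H^1(S,\MO_S)=0$. Since $S$ is regular, $C$ is a Cartier divisor on $S$. Recall that a K3-like surface satisfies $K_S \sim 0$ and $H^1(S,\MO_S)=0$, and by Serre duality $h^2(S,\MO_S)=h^0(S,K_S)=1$. Adjunction gives $\omega_C \simeq \MO_C(K_S+C) \simeq \MO_C(C)$. Because $C$ is geometrically integral of arithmetic genus one, $\deg \omega_C = 2p_a(C)-2 = 0$, hence $C^2 = \deg_C \MO_C(C) = 0$. Moreover $h^0(C,\omega_C)=h^1(C,\MO_C)=1$ and $H^0(C,\MO_C)=\kappa$ (geometric integrality). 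A degree-zero invertible sheaf with a nonzero section is trivial on an integral proper curve, so $\MO_C(C)\simeq \omega_C \simeq \MO_C$.

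Next consider
\[
0 \to \MO_S \to \MO_S(C) \to \MO_C(C) \simeq \MO_C \to 0.
\]
Since $H^1(S,\MO_S)=0$, the map $H^0(S,\MO_S(C)) \to H^0(C,\MO_C)=\kappa$ is surjective. Therefore $\dim_\kappa H^0(S,C) = 1+1 = 2$.

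For base point freeness, the surjection above produces a section $s$ of $\MO_S(C)$ whose restriction to $C$ is a nowhere-vanishing section of $\MO_C(C)\simeq\MO_C$. Hence $\operatorname{div}(s)$ is disjoint from $C$, and the base locus of $|C|$ is contained in $C\cap \operatorname{div}(s)=\emptyset$. The only delicate points are the facts used about genus-one curves over a non-closed field: $\deg\omega_C=0$ and $h^0(\omega_C)=1$, together with the triviality criterion for degree-zero sheaves with sections. All of these follow after base change to $\overline{\kappa}$, where $C_{\overline\kappa}$ is an integral Gorenstein curve of arithmetic genus one.
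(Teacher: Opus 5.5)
Your proof is correct, but it is organised differently from the paper's. The paper also obtains $C^2=0$ from adjunction. It then uses Serre duality, $h^2(S,C)=h^0(S,-C)=0$, and Riemann--Roch on the surface, $\chi(S,C)=\chi(S,\MO_S)+\frac12 C^2=2$, to produce a second member $D\in|C|$ with $D\neq C$. Since $C^2=0$, this forces $D\cap C=\emptyset$, which gives base point freeness. Only then does it use the restriction sequence $0\to\MO_S\to\MO_S(C)\to\MO_S(C)|_C\to 0$ to get $h^0(S,C)=2$, relying implicitly (through the disjoint member $D$) on $\MO_S(C)|_C\simeq\MO_C$.

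You instead prove triviality of the normal bundle intrinsically, via $\MO_C(C)\simeq\omega_C\simeq\MO_C$ (adjunction together with $\deg\omega_C=0$ and $h^0(C,\omega_C)=1$). You then read off both the dimension count and base point freeness from the single restriction sequence. Your route avoids surface Riemann--Roch and the value $\chi(S,\MO_S)=2$, so your remark $h^2(S,\MO_S)=1$ is never used. It also sidesteps a point the paper leaves implicit: $D$ cannot contain $C$ as a component, since an effective divisor linearly equivalent to $0$ is $0$. The paper's route, in exchange, needs only the numerical fact $\deg\omega_C=0$ rather than triviality of $\omega_C$.

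Your ``delicate points'' also need no base change to $\overline{\kappa}$. Riemann--Roch and Serre duality on $C$ over $\kappa$ give $\deg\omega_C=-2\chi(C,\MO_C)=0$. The triviality criterion holds over any field, because a nonzero effective Cartier divisor on an integral proper curve has positive degree.
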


\begin{proof}
Since $S$ and $C$ are geometrically integral, 
we have $h^0(S, \MO_S) = h^0(C, \MO_C)=1$. 
The adjunction formula implies $0 = 2g(C) -2 = (K_S+C) \cdot C = C^2$. 
By Serre duality: $h^2(S, C) = h^0(S, -C)=0$,  we  have 
\[
h^0(S, C) \geq 
h^0(S, C) - h^1(S, C) = \chi(S, C) = \chi(S, \MO_S) + \frac{1}{2} C^2 = 2. 
\]
Hence there is an effective divisor $D$ such that $D \neq C$ and $D \sim C$. 
By $C^2=0$, we get $D \cap C= \emptyset$. Thus $|C|$ is base point free. 
Then the exact sequence 
\[
0 \to \MO_S \to \MO_S(C) \to \MO_S(C)|_C \to 0, 
\]
together with $H^1(S, \MO_S)=0$ and $h^0(C, \MO_S(C)|_C)= h^0(C, \MO_C)=1$, 
implies
\[
h^0(S, \MO_S(C)) =h^0(S, \MO_S) + h^0(C, \MO_C(C))=2.
\]
\end{proof}

 \section{Linear section theorem}\label{s linear section}

\subsection{Construction of vector bundles}\label{ss construction vb}


\begin{lem}\label{l construct E via SC0}
Let $X \subset \P^{9}_k$ be a prime  Fano threefold of genus $8$ over $k$. 
Take a field extension $k \subset \kappa$ 
and let $C$ be a geometrically integral regular genus-one curve 
on $X_{\kappa} := X \times_k \kappa$  satisfying $\langle C \rangle  \cap X_{\kappa} = C$. 
Then there exists a prime divisor $S$ on $X_{\kappa}$ such that 
the following hold for $\MO_S(S-C) := (\MO_X(S)|_S) \otimes_{\MO_S} \MO_S(-C)$: 
\begin{enumerate}
\item $S \sim -K_{X_\kappa}$ and $S$ is a geometrically integral projective Gorenstein surface with $\omega_S \simeq \MO_S$. 
Moreover, 
$C \subset S$ and $C$ is an effective Cartier divisor on $S$. 
\item $\MO_S(C)$ and $\MO_S(S-C)$ is globally generated. 
\item $h^0(S, \MO_S(C))=2$ and $h^0(S, \MO_S(S-C))=4$. 
\end{enumerate}
\end{lem}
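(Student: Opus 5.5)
The plan is to build $S$ from the blowup along $C$, as in Proposition \ref{p exist K3-like}, but to take a \emph{general} member over $\kappa$ instead of the generic one, as indicated in the remark in the introduction. This is possible because $\kappa \supset k$ is infinite. I use $-K_{X_\kappa}\cdot C=5$ throughout; this is implicit in the setting coming from Proposition \ref{p q-ell exist}.

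\textbf{Step 1: choice of $S$.} Let $\sigma\colon Y\to X_\kappa$ be the blowup along $C$, with exceptional divisor $E$, a $\P^1$-bundle over $C$. Since $\langle C\rangle\cap X_\kappa=C$ scheme-theoretically, the ideal sheaf $I_{C}\otimes\MO(1)$ is globally generated by hyperplanes containing $\langle C\rangle$. Hence $|-K_Y|=|\sigma^*(-K_{X_\kappa})-E|$ is base point free.

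Take a general $T\in|-K_Y|$ and set $S:=\sigma_*T\in|-K_{X_\kappa}|$. Then:
\begin{itemize}
\item $S$ is an effective Cartier divisor on the smooth threefold $X_\kappa$, hence Gorenstein.
\item $\omega_S\simeq\MO_S$ by adjunction.
\item $S$ is geometrically integral, by the argument at the end of Proposition \ref{p exist K3-like}: $\Pic X_{\ol\kappa}=\Z K$.
\item Clearly $C\subset S$.
\end{itemize}
For the Cartier property of $C$ I will show that $S$ is smooth along $C$. The point is that $S$ is singular at $x\in C$ exactly when $T$ contains the fibre $f=\sigma^{-1}(x)$. Since $-K_Y\cdot f=1$ and $|-K_Y|$ is base point free, the restriction of $H^0(Y,-K_Y)$ to $f$ is all of $H^0(f,\MO_f(1))$. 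So containing a fixed fibre imposes two conditions, and the members containing some fibre form a family of codimension $\geq 1$. Thus a general $T$ contains no fibre, $T\to S$ is an isomorphism over a neighbourhood of $C$, and $C$ is Cartier on $S$. This proves (1). I will also record $H^1(S,\MO_S)=0$, from the sequence $0\to\MO_{X_\kappa}(-S)\to\MO_{X_\kappa}\to\MO_S\to0$ and Kodaira-type vanishing on $X$.

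\textbf{Step 2: $|C|$.} I redo the proof of Lemma \ref{l ell on K3 bpf} on the Gorenstein surface $S$; only the smoothness of $S$ along $C$ is used.
\begin{itemize}
\item Adjunction gives $C^2=\deg\omega_C=0$.
\item Riemann--Roch for the Cartier divisor $C$, together with $h^2(S,C)=h^0(S,-C)=0$, gives $h^0(S,C)\geq 2$.
\item Take $D\in|C|$ with $D\neq C$. Since $C$ is integral and $D\cdot C=0$, we get $D\cap C=\emptyset$, so $|C|$ is base point free.
\item The sequence $0\to\MO_S\to\MO_S(C)\to\MO_C\to0$ and $H^1(\MO_S)=0$ give $h^0(S,C)=2$.
\end{itemize}

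\textbf{Step 3: $S-C$, and the main obstacle.} We have $\MO_S(S-C)\simeq I_{C\subset S}\otimes\MO(1)$, a quotient of $I_{C\subset X}\otimes\MO(1)$ via
\[
0\to\MO_{X_\kappa}\to I_{C\subset X}(1)\to I_{C\subset S}(1)\to0 .
\]
Global generation of $\MO_S(S-C)$ then follows from that of $I_{C\subset X}(1)$, i.e.\ from $\langle C\rangle\cap X_\kappa=C$.

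For the dimension, compute $(S-C)^2=14-10+0=4$ and $\chi(\MO_S(S-C))=2+\tfrac12\cdot4=4$. Also $h^2(S,S-C)=h^0(S,C-S)=0$, since $(C-S)\cdot S<0$. So it remains to prove
\[
h^1\bigl(S,\MO_S(S-C)\bigr)=0 .
\]
This is the step I expect to be hardest, since Kawamata--Viehweg vanishing is not available in characteristic $p$. I would argue as for K3 surfaces. $S-C$ is nef (it is globally generated) and big. Pass to the minimal resolution, which is crepant because $S$ has at worst canonical singularities as in Proposition \ref{p exist K3-like}; equivalently, compute on $T$, which is a regular K3-like surface. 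For a nef and big divisor $L$ on a K3-like surface, $h^1(L)=0$ follows from Ramanujam's connectedness argument: $h^1(-L)=0$ because a general member of $|L|$ is connected, hence $h^0(\MO_L)=1$, and then Serre duality gives $h^1(L)=h^1(-L)=0$. Altogether this yields $h^0(S,S-C)=4$.

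As an alternative route, I would try to show directly that $H^0(\P^9,\MO(1))\to H^0(C,\MO_C(1))$ is surjective, so that $\langle C\rangle=\P^4$. Then $h^0(I_{C\subset X}(1))=5$, and $H^1(\MO_X)=0$ gives $h^0(S,S-C)=5-1=4$. I would first try to exclude $\langle C\rangle=\P^3$ using the condition $\langle C\rangle\cap X_\kappa=C$.
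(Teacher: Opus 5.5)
\textbf{Gap: the vanishing $h^1(S,\MO_S(S-C))=0$ in Step 3.}

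Your Steps 1 and 2 are fine, and so is the global generation of $\MO_S(S-C)$ via the surjection $I_{C\subset X_\kappa}(1)\to I_{C\subset S}(1)$. Taking a general $\kappa$-rational member and proving regularity of $S$ only along $C$ by an incidence count is a legitimate alternative to the paper. The paper instead works on the generic member $T$ over $\kappa'=K(\P(H^0(Y,-K_Y)))$ and then specialises.

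You justify the vanishing by saying that $T$ is a regular K3-like surface, or that $S$ has canonical singularities ``as in Proposition~\ref{p exist K3-like}''. That proposition obtains regularity only for the \emph{generic} member, via a Bertini theorem admitting base change. For a general $\kappa$-rational member nothing of the kind is available here:
\begin{enumerate}
\item $\kappa$ may be imperfect (e.g.\ $K(\P^3_k)$);
\item $Y$ is only regular, not smooth over $\kappa$ over the non-smooth points of $C$;
\item $|-K_Y|$ is merely base point free.
\end{enumerate}
Moreover, your own Step 1 shows that $T\to S$ is an isomorphism, so the canonical-singularity argument of that proposition gives no information. The Ramanujam step is also incomplete as written. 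Connectedness of $D\in|L|$ does not give $h^0(\MO_D)=1$, because $D$ could be non-reduced. You would need numerical connectedness on a smooth surface, or geometric integrality of $D$.

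\textbf{How to close it.} Your alternative route works, and regularity of $S$ away from $C$ is never needed. For every $S$ as in Step 1, $C$ is Cartier on $S$, so the sequence $0\to\MO_{X_\kappa}\to I_C(1)\to\MO_S(S-C)\to 0$, together with $H^1(X_\kappa,\MO_{X_\kappa})=0$, gives $h^0(S,\MO_S(S-C))=h^0(X_\kappa,I_C(1))-1=8-\dim\langle C\rangle$. This number does not depend on the choice of $S$, so you only need $\dim\langle C\rangle=4$. The paper gets this on the generic member, where the vanishing for regular geometrically integral K3-like surfaces applies. Directly:
\begin{itemize}
\item[(a)] $h^0(C,\MO_C(1))=5$ gives $\dim\langle C\rangle\le 4$.
\item[(b)] A plane quintic has arithmetic genus $6$, so $\dim\langle C\rangle\ge 3$.
\item[(c)] Suppose $\dim\langle C\rangle=3$. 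Since $X$ is an intersection of quadrics, $C=\langle C\rangle\cap X_\kappa$ is cut out in $\langle C\rangle\simeq\P^3$ by quadrics. After base change to $\ol{\kappa}$, none of these quadrics is a union of planes, because $C_{\ol{\kappa}}$ is integral and non-degenerate. There are also at least two distinct ones, since otherwise the base locus would be a surface. So $C_{\ol{\kappa}}$ lies in the intersection of two quadric surfaces with no common component, a curve of degree $4$. This is impossible for an integral curve of degree $5$.
\end{itemize}
Hence $\dim\langle C\rangle=4$ and $h^0(S,\MO_S(S-C))=4$.
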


\begin{proof}
We use the same notation as in the proof of Proposition \ref{p exist K3-like} except for $S$, which we replace by $S'$. 
In particular, $\sigma : Y \to X_{\kappa}$ is the blowup along $C$, 
$T$ is the generic member of the base point free complete linear system $|-K_Y|$, and $S' := \sigma_*T \in |-\sigma_*K_{Y_{\kappa'}}| = |-K_{X_{\kappa'}}|$ is a prime divisor on $X_{\kappa'} := X_{\kappa} \times_\kappa \kappa'$, where $\kappa':=K(\P(H^0(Y, -K_Y)))$ and $Y_{\kappa'} := Y \times_\kappa \kappa'$. 
Note that $\sigma : Y_{\kappa'} \to X_{\kappa'}$ is the blowup along 
the regular curve $C_{\kappa'} := C \times_{\kappa} \kappa'$. 

We now show (1)'--(3)' below. 
\begin{enumerate}
\item[(1)'] 
$S' \sim -K_{X_{\kappa'}}$ and $S'$ is a geometrically integral 
regular projective surface such that $\omega_S \simeq \MO_S$ 
and $C_{\kappa'} \subset S'$. 
\item[(2)']  $\MO_{S'}(C_{\kappa'})$ and $\MO_{S'}(S'-C_{\kappa'})$ is globally generated. 
\item[(3)']  $h^0(S', \MO_{S'}(C_{\kappa'}))=2$ and $h^0(S', \MO_{S'}(S'-C_{\kappa'}))=4$. 
\end{enumerate}
By the conclusion  of Proposition \ref{p exist K3-like}, 
it holds that 
$S' \sim -K_{X_{\kappa'}}$, 
$C_{\kappa'} \subset S'$, and $S'$ is a geometrically integral regular K3-like surface. 
In particular, (1)' holds. 
It follows from Lemma \ref{l ell on K3 bpf} that 
$h^0(S', \MO_{S'}(C))=2$ and 
$\MO_{S'}(C)$ is globally generated.

By construction, 
we have an isomorphism $\sigma|_{T} : T \xrightarrow{\simeq} S'$. 
Via this isomorphism, we have 
\[
|(-K_{X_{\kappa'}}|_{S'}) -C| = |(-\sigma^*K_{X_{\kappa'}} -E)|_{T}| = |-K_{Y_{\kappa'}}|_{T}|
\]
Since $|-K_Y|$ is base point free, so is $|-K_Y|_{T}|$. 
Hence (2)' holds. 
By 
\[
(-K_Y|_T)^2 = (-K_Y)^3 =4 >0,
\]
$-K_Y|_T$ is nef and big. 
Since $T( \simeq S)$ is a geometrically integral regular K3-like surface, we have 
$H^i(T, -K_Y|_{T}) =0$ for every $i>0$ \cite[Theorem 3.4(2)]{FanoI}. 
We then get  
\[
 h^0(S', \MO_{S'}(S'-C)) = h^0(T, -K_Y|_{T}) = \chi(T, -K_Y|_{T}) = 2 + (-K_Y|_{T})^2/2 = 2 + 2 =4. 
\]
Thus (3)' holds. This completes the proof of (1)'--(3)'. 

\medskip

Recall that $\Spec \kappa'$ is the generic point of $\P(H^0(Y, -K_Y))$. 
By expanding $S'\to \Spec \kappa'$ constructed above over a non-empty open subset 
$U \subset \P(H^0(Y, -K_Y))$, we can find a closed subscheme 
$\mathcal S \subset X_{\kappa} \times_{\kappa} U$ 
whose base change by $(-) \times_U \Spec \kappa'$ coincides with $S' \subset X_{\kappa'}$. 
Take a general $\kappa$-rational point $P$ of $U$ and 
we define $S$ as the fibre of  $\mathcal S \to U$ over $P$. 
Then the properties (1)--(3) hold by the corresponding properties (1)'--(3)' 
for the generic fibre $S'$. 
\end{proof}

\begin{prop}\label{p construct E via SC}
Let $X \subset \P^{9}_k$ be a prime  Fano threefold of genus $8$ over $k$. 
Take a field extension $k \subset \kappa$ 
and let $C$ be a 
a geometrically integral regular genus-one curve 
on $X_{\kappa} := X \times_k \kappa$  satisfying $\langle C \rangle  \cap X_{\kappa} = C$. 
Then there exists a locally free sheaf $E$ on $X_{\kappa}$ such that 
\begin{enumerate}
\item $E$ is globally generated, 
\item $\dim_{\kappa} H^0(X_{\kappa}, E) = 6$, 
\item $\bigwedge^2 E \simeq \omega_{X_\kappa}^{-1}$, and 
\item $H^0(X_{\kappa}, E \otimes \MO_{X_{\kappa}}(K_{X_{\kappa}})) = 
H^1(X_{\kappa}, E \otimes \MO_{X_{\kappa}}(K_{X_{\kappa}})) = 0$. 
\end{enumerate}
\end{prop}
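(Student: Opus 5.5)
We will use the elementary transformation sketched in (0-c), with the surface $S \subset X_{\kappa}$ supplied by Lemma \ref{l construct E via SC0}. Write $X := X_\kappa$ for short and set $V := H^0(S, \MO_S(C))$. Lemma \ref{l construct E via SC0} gives $\dim_\kappa V = 2$ and shows that $\MO_S(C)$ is globally generated. Hence the evaluation map $V\otimes \MO_S \to \MO_S(C)$ is surjective, and its kernel is invertible with determinant $\MO_S(-C)$. This yields
\[
0 \to \MO_S(-C) \to V \otimes \MO_S \to \MO_S(C) \to 0 .
\]
Tensoring with $\MO_S(S)$, we define $E$ as the kernel of the composite surjection $V\otimes \MO_X(S) \to V\otimes\MO_S(S) \to \MO_S(S+C)$. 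Since $S$ is a Cartier divisor on the smooth threefold $X$ and $\MO_S(S+C)$ is invertible on $S$, the sheaf $E$ is locally free of rank two; locally this is the standard elementary transformation.

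For the determinant, take determinants in $0\to E\to V\otimes\MO_X(S)\to \MO_S(S+C)\to 0$. This gives $\det E \simeq \MO_X(2S)\otimes \MO_X(-S) = \MO_X(S) \simeq \omega_X^{-1}$, since $\det$ of a line bundle supported on the divisor $S$ contributes $\MO_X(S)^{-1}$. This proves (3).

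From the snake lemma applied to the diagram in (0-c) we get the exact sequence
\[
0 \to V\otimes \MO_X \to E \to \MO_S(S-C) \to 0 ,
\]
since the kernel of $V\otimes\MO_X(S)\to V\otimes \MO_S(S)$ is $V\otimes\MO_X$. We now use $H^1(X,\MO_X)=0$ together with Lemma \ref{l construct E via SC0}(3). These give
\[
h^0(E) = 2 + h^0(S,\MO_S(S-C)) = 2+4 = 6,
\]
which is (2). Global generation (1) follows from this sequence as well: $H^0(E)\to H^0(\MO_S(S-C))$ is surjective because $H^1(X,\MO_X)=0$, and both $V\otimes\MO_X$ and $\MO_S(S-C)$ are globally generated by Lemma \ref{l construct E via SC0}(2). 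A diagram chase on stalks then shows that $E$ is globally generated.

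For (4), twist the last sequence by $K_X = -S$:
\[
0\to V\otimes\MO_X(-S)\to E(K_X) \to \MO_S(-C)\to 0 .
\]
Kodaira-type vanishing on $X$ gives $H^i(X,\MO_X(K_X))=0$ for $i=0,1$; this is Serre duality with $H^{3-i}(X,\MO_X)=0$. So it suffices to show $H^0(S,\MO_S(-C)) = H^1(S,\MO_S(-C)) = 0$. The first vanishing holds because $C$ is a nonzero effective divisor on the integral projective surface $S$. For the second, use Serre duality on the Gorenstein surface $S$ with $\omega_S\simeq\MO_S$: $h^1(\MO_S(-C)) = h^1(\MO_S(C))$. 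Now use $0\to\MO_S\to\MO_S(C)\to\MO_C(C)\to 0$. We have $H^1(S,\MO_S)=0$, which follows from $0\to\MO_X(-S)\to\MO_X\to\MO_S\to0$ and $H^1(\MO_X)=H^2(\MO_X(K_X))=0$. We have $H^2(S,\MO_S) \simeq H^0(S,\MO_S)^\vee$ of dimension one. Also $\MO_C(C)\simeq\MO_C$ because $C$ moves in a base point free pencil with $C^2=0$, so $h^0(\MO_C(C))=h^1(\MO_C(C))=1$. Euler characteristics give $\chi(\MO_S(C)) = \chi(\MO_S)+0 = 2$. With $h^0(\MO_S(C))=2$ and $h^2(\MO_S(C))=h^0(\MO_S(-C))=0$, this forces $h^1(\MO_S(C))=0$.

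The main point needing care is this last step, checking that $\MO_C(C)$ is trivial, equivalently that the map $H^0(\MO_S(C))\to H^0(\MO_C(C))$ is onto. This follows from $C$ being a fibre of the morphism $S\to\P^1_\kappa$ given by the pencil $|C|$. Alternatively, the dimension count with $h^0=2$, $h^0(\MO_S)=1$, $H^1(\MO_S)=0$ already shows the map is surjective and $h^0(\MO_C(C))=1$. Since $C$ has genus one, $\deg \MO_C(C) = C^2 = 0$ together with a nonzero section forces triviality.
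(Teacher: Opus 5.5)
Your proposal is correct and follows the paper's proof essentially step by step. It uses the same elementary transformation along the surface $S$ from Lemma \ref{l construct E via SC0}, the same snake-lemma sequence $0 \to V\otimes\MO_{X_\kappa} \to E \to \MO_S(S-C) \to 0$ for (1)--(3), and the same twist by $K_{X_\kappa}$ for (4). The one difference is how you get $H^1(S,\MO_S(-C))=0$: the paper reads it off directly from $0\to\MO_S(-C)\to\MO_S\to\MO_C\to0$ using $H^1(S,\MO_S)=0$ and $H^0(S,\MO_S)\xrightarrow{\simeq}H^0(C,\MO_C)$, so it needs neither your Serre duality on $S$, nor the computation of $\chi(\MO_S(C))$, nor the triviality of $\MO_C(C)$.
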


The following argument is based on \cite[Section 5]{IP99}. 


\begin{proof}
Take a prime divisor $S$ on $X_{\kappa}$ as in Lemma \ref{l construct E via SC0}. 
Set $V := H^0(S, \MO_S(C))$. We have $\dim_{\kappa}  V=2$ (Lemma \ref{l construct E via SC0}(3)). 

We now show that there exists an exact sequence 
\begin{equation}\label{e1 construct E via SC}
0 \to \MO_S(-C) \to V \otimes_{\kappa} \MO_S \to \MO_S(C) \to 0. 
\end{equation}
Since $\MO_S(C)$ is globally generated (Lemma \ref{l construct E via SC0}(2)), 
the induced $\MO_S$-module homomorphism 
$V  \otimes_{\kappa} \MO_S \to \MO_S(C)$ is surjective. 
Let $K$ be its kernel, which completes the exact sequence 
$0 \to K \to V \otimes_{\kappa} \MO_S \to \MO_S(C) \to 0$. 
It suffices to show $K \simeq \MO_S(-C)$. 
Since both $V \otimes_{\kappa} \MO_S$ and $\MO_S(C)$ are locally free, 
so is $K$. 
Applying the wedge product to $0 \to K \to V \otimes_{\kappa} \MO_S \to \MO_S(C) \to 0$, 
we get $K \otimes \MO_S(C) \simeq \bigwedge^2 (V \otimes_{\kappa} \MO_S) \simeq \MO_S$, 
which implies $K \simeq \MO_S(-C)$. 
This completes the proof of the existence of (\ref{e1 construct E via SC}).


Set $\MO_S(S) := \MO_{X_\kappa}(S)|_S$. 
We then get the following homomorphism between exact sequences: 
\begin{equation}\label{e2 construct E via SC}
\begin{tikzcd}
0 \arrow[r]  &
E \arrow[r] \arrow[d] & 
V \otimes_\kappa \MO_{X_{\kappa}}(S) \arrow[d] \arrow[r] &
\MO_S(S+C) \arrow[d, equal] \arrow[r] & 0\\
0 \arrow[r]  &
\MO_S(S-C) \arrow[r] &
V \otimes_\kappa \MO_S(S) \arrow[r] &
\MO_S(S+C) \arrow[r]  &0,
\end{tikzcd}
\end{equation}
where 
the lower horizontal sequence is obtained by applying 
$(-) \otimes \MO_S(S)$ to (\ref{e1 construct E via SC}), 
the right square consists of the natural arrows, 
$E$ is defined as the kernel, and the left vertical arrow is the induced one. 
Then  $E$ is a locally free sheaf on $X_\kappa$ of rank $2$ \cite[Lemma 16 in Page 41]{Fri98}. 
Applying the snake lemma to (\ref{e2 construct E via SC}), we get an exact sequence 
\begin{equation}\label{e3 construct E via SC}
0 \to V \otimes_\kappa \MO_{X_\kappa} \to E \to \MO_S(S-C) \to 0. 
\end{equation}


Let us show (1). 
Taking the homomorphisms obtained from the global sections to (\ref{e3 construct E via SC}), we get the following 
commutative diagram in which each horizontal sequence is exact (here we used $H^1(X_\kappa, \MO_{X_\kappa}) = H^1(X, \MO_X) \otimes_k \kappa=0$):  
\[
\begin{tikzcd}
0 \arrow[r]  &
V \otimes_\kappa \MO_{X_\kappa} \arrow[r] \arrow[d, equal] & 
H^0(E) \otimes_\kappa \MO_{X_\kappa} \arrow[d, "\alpha"] \arrow[r] &
H^0(\MO_S(S-C)) \otimes_\kappa \MO_{X_\kappa} \arrow[d, "\beta"] \arrow[r] & 0\\
0 \arrow[r]  &
V \otimes_\kappa \MO_{X_\kappa} \arrow[r] &
E \arrow[r] &
\MO_S(S-C) \arrow[r]  &0.
\end{tikzcd}
\]
By the snake lemma, 
$\alpha$ is surjective if and only if $\beta$ is surjective. 
Moreover, $\beta$ is surjective if and only if 
the induced homomorphism 
\[
\beta' : H^0(\MO_S(S-C)) \otimes_\kappa \MO_S \to 
\MO_S(S-C) \]
is surjective, because $\MO_{X_\kappa} \to \MO_S$ is surjective. 
By Lemma \ref{l construct E via SC0}(2), $\beta'$ is surjective, and hence $\alpha$ is surjective. 
Thus (1) holds. 

Recall that $h^0(S, \MO_S(S-C))=4$ (Lemma \ref{l construct E via SC0}(3)).
Then it follows from 
 (\ref{e3 construct E via SC}) and $H^1(X_\kappa, \MO_{X_\kappa})=0$ 
that 
\[
h^0(X_\kappa, E) = h^0(X_\kappa,  V \otimes_\kappa \MO_{X_\kappa}) + h^0(S, \MO_S(S-C)) = 2 + 4=6. 
\]
Thus (2) holds. By \cite[Lemma 16 in Page 41]{Fri98}, we get 
\[
c_1(E) = c_1(V \otimes_\kappa \MO_{X_\kappa}(S) ) -c_1(\MO_{X_\kappa}(S))=2S-S = -K_{X_\kappa}. 
\]
Thus (3) holds. 

Let us show (4). 
By (\ref{e3 construct E via SC}),  we have the following  exact sequence: 
\[
0 \to V \otimes_\kappa \MO_{X_\kappa}(K_{X_\kappa}) \to E \otimes \MO_{X_\kappa}(K_{X_\kappa}) \to \MO_S(-C) \to 0.  
\]
Then the assertion follows from 
$H^0(\MO_{X_\kappa}(K_{X_\kappa})) = H^1(\MO_{X_\kappa}(K_{X_\kappa})) = H^0(S, \MO_S(-C)) = H^1(S, \MO_S(-C))=0$. 
Here 
$H^1(S, \MO_S(-C))=0$ holds by 
$H^1(S, \MO_S)=0$ and the exact sequence 
\[
0 \to \MO_S(-C) \to \MO_S \to \MO_C \to 0. 
\]
Note that we have the induced isomorphism $H^0(S, \MO_S) \xrightarrow{\simeq} H^0(C, \MO_C)$, because 
both $S$ and $C$ are geometrically integral (Lemma \ref{l construct E via SC0}(1)). 
Thus (4) holds. 
\qedhere



\end{proof}

\begin{rem}\label{r construct E via SC} 
We use the same notation as in the proof of Proposition \ref{p construct E via SC}. 
We have an injection 
\[
\iota: H^0(S, \MO_S(C)) \hookrightarrow H^0(X_\kappa, E). 
\]
For the element $s \in H^0(S, \MO_S(C))$ defining $C$ (which is unique up to a scalar), 
the zero locus $Z(\iota(s))$ of its image $\iota(s) \in H^0(X, E)$ is scheme-theoretically equal to $C$. 
\end{rem}

\begin{prop}\label{p construct E only X}
Let $X \subset \P^9_k$ be a prime Fano threefold of genus $8$ over $k$. 
Then there exists 
a locally free sheaf $E$ on $X$ of rank $2$ which satisfies {\rm (1)-(3)}. 
\begin{enumerate}
\item $\bigwedge^2 E \simeq \omega_X^{-1}$. 
\item $E$ is globally generated. 
\item $h^0(X, E) = 6$ and $H^0(X, E \otimes \MO_X(K_X)) = 
H^1(X, E \otimes \MO_X(K_X)) = 0$. 
\end{enumerate}
\end{prop}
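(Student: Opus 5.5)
We descend the bundle from $X_\kappa$ to $X$ by a spreading-out and specialisation argument, following step (p-c) of the introduction.

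First, we produce a bundle over a transcendental extension. Set $\kappa := K(\P^3_k)$. Proposition \ref{p q-ell exist} gives a geometrically integral regular genus-one curve $C$ on $X_\kappa$ with $-K_{X_\kappa}\cdot C = 5$ and $\langle C\rangle \cap X_\kappa = C$. Proposition \ref{p construct E via SC} then gives a locally free sheaf $E_\kappa$ of rank $2$ on $X_\kappa$. It is globally generated, satisfies $\bigwedge^2 E_\kappa \simeq \omega_{X_\kappa}^{-1}$ and $h^0(E_\kappa)=6$, and has vanishing $H^0$ and $H^1$ of $E_\kappa\otimes\MO(K_{X_\kappa})$.

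Next, we spread out. Since $\kappa$ is the function field of a $k$-variety, write $\kappa = \varinjlim R$ over finitely generated $k$-subalgebras $R \subset \kappa$. By standard limit arguments (EGA IV, \S 8), there exist an $R$ and a locally free sheaf $\mathcal E$ of rank $2$ on $X_R := X\times_k R$ whose restriction to the generic fibre is $E_\kappa$. After enlarging $R$ (equivalently, shrinking $U := \Spec R$ to a smooth affine open), the following also descend:
\begin{itemize}
\item an isomorphism $\bigwedge^2\mathcal E \simeq \omega_{X_R/R}^{-1}$;
\item the surjection $H^0(E_\kappa)\otimes\MO_{X_\kappa}\to E_\kappa$, realised as a surjection $\MO_{X_R}^{\oplus 6}\to\mathcal E$ induced by six sections of $\mathcal E$.
\end{itemize}
Restricting to a closed point $u\in U$ with residue field $k$ (as $k$ is algebraically closed), the fibre $E := \mathcal E|_{X_u}$ on $X_u = X$ is locally free of rank $2$, satisfies (1), and is globally generated, giving (2).

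For (3), we use semicontinuity and base change for the proper flat morphism $X_R \to U$. The functions $u\mapsto h^i(X_u, \mathcal E_u\otimes\omega_{X_u})$ are upper semicontinuous, and they vanish at the generic point for $i=0,1$. Hence they vanish on a dense open subset, which gives the two vanishings in (3).

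The main obstacle is the count $h^0(X,E)=6$. Semicontinuity only gives $h^0(X_u,\mathcal E_u)\ge 6$ at closed points, with equality on an open set if we can control $h^1$. We would show this instead by computing $\chi(E)$ and proving the higher cohomology of $E$ vanishes on $X$ itself. Since $\chi$ is locally constant in flat families, $\chi(X_u,\mathcal E_u)=\chi(X_\kappa, E_\kappa)$. On $X_\kappa$, the sequence (\ref{e3 construct E via SC}) together with $H^{>0}(\MO_{X_\kappa})=0$ and $H^{>0}(S,\MO_S(S-C))=0$ (from the K3-like vanishing used in Lemma \ref{l construct E via SC0}) gives $h^i(E_\kappa)=0$ for $i>0$, so $\chi = 6$. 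By upper semicontinuity of $h^1$, $h^2$, $h^3$, which vanish generically, they also vanish at a general closed point $u$. Therefore $h^0(X,E)=\chi(E)=6$.

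Choosing $u$ in the intersection of the finitely many dense opens above yields a single $E$ satisfying (1)--(3).
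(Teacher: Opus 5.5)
Your argument is correct and follows the paper's proof. You spread $E_\kappa$ out over a finitely generated $k$-subalgebra $R\subset\kappa$, keep $\bigwedge^2\mathcal E\simeq\omega_{X_R/R}^{-1}$ and global generation after shrinking $\Spec R$, specialise at a general closed point, and deduce (3) from upper semicontinuity.

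The only divergence is the step you call the main obstacle, and it is not an obstacle. Every point $u$ of $\Spec R$ is a specialisation of the generic point $\eta$, so upper semicontinuity gives $h^0(X_u,\mathcal E_u)\ge 6$ everywhere. On the other hand, $\{u : h^0(X_u,\mathcal E_u)\ge 7\}$ is closed and misses $\eta$. Hence $h^0=6$ on a dense open set, with no control of $h^1$ needed. This is the same reasoning you already use for the two vanishings, and it is all the paper uses.

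Your detour through the constancy of $\chi$ does work and gives the bonus $H^{>0}(X,E)=0$. However, it needs an input you have not justified: $H^{>0}(S,\MO_S(S-C))=0$ for the surface $S$ of Lemma \ref{l construct E via SC0}. That lemma only records the $h^0$'s for $S$. The K3-like vanishing in its proof is for the generic member $S'\simeq T$ over $\kappa'$, not for its specialisation $S$ over $\kappa$.

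This is easily fixed. First, $\chi(\MO_S(S-C))=\chi(\MO_{X_\kappa}(S))-\chi(\MO_{X_\kappa})-\chi(\MO_C(S))=10-1-5=4=h^0(S,\MO_S(S-C))$. Second, $h^2(S,\MO_S(S-C))=h^0(S,\MO_S(C-S))=0$, by duality with $\omega_S\simeq\MO_S$ and because $\MO_S(C-S)$ has negative degree against the ample $\MO_S(S)$. Together these give $h^1=0$.
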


\begin{proof}
By 
Proposition \ref{p q-ell exist}, 
there exist a field extension $\kappa/k$ and a curve $C$ on $X_{\kappa} :=X \times_k \kappa$ 
which satisfy all the assumptions in Proposition \ref{p construct E via SC}. 
Then there exists a locally free sheaf $E'$ of rank $2$ on $X_{\kappa} := X \times_k \kappa$ 
such that 
\begin{enumerate}
\item[(1)'] $\bigwedge^2 E' \simeq \omega_{X_{\kappa}}^{-1}$,  
\item[(2)'] $E'$ is globally generated, 
\item[(3)'] $\dim_{\kappa} H^0(X_{\kappa}, E') = 6$, and $H^0(X_{\kappa}, E' \otimes \MO_{X_{\kappa}}(K_{X_{\kappa}})) = 
H^1(X_{\kappa}, E' \otimes \MO_{X_{\kappa}}(K_{X_{\kappa}})) =  0$. 
\end{enumerate}
Then we can find an intermediate ring $k \subset R \subset \kappa$ such that 
$R$ is a finitely generated $k$-algebra and $E'$ is defined over $R$, 
i.e., there exists a locally free sheaf $\mathcal E$ on $X_R := X \times_k R$ 
satisfying $E' \simeq \rho^*\mathcal E$ for the induced morphism $\rho : X_{\kappa} \to X_{R}$. 
Let $\alpha : X_R \to \Spec R$ be the induced morphism. 
After enlarging $R$ if necessary, we may assume that 
\begin{enumerate}
\item[(1)''] $\bigwedge^2 \mathcal E \simeq \omega_{X_R/R}^{-1}$, and 
\item[(2)''] $\mathcal E$ is globally generated. 
\end{enumerate}
For a general closed point $P \in \Spec R$, 
we set $E := \mathcal E|_{X \times_k \{P\}}$, which can be considered as a locally free sheaf on $X$ via the isomorphism $\pr_1 : X \times_k \{P\} \xrightarrow{\simeq} X$. 
Then (1)'' and (2)'' imply (1) and (2), respectively. 
The assertion (3) follows from (3)', $E' \simeq \rho^*\mathcal E$, 
and the upper semicontinuity \cite[Ch. III, Theorem 12.8]{Har77} (note that $X_{\kappa}$ is the base change of the generic fibre of $X_R \to \Spec R$ by the induced field extension $\Frac(R) \hookrightarrow \kappa$). 
\end{proof}

\subsection{Proof of linear section theorem}


\begin{nota}\label{n g=8 3} 
Let $X \subset \P^{9}$ be a prime  Fano threefold of genus $8$ 
and let $E$ be a locally free sheaf on $X$ of rank $2$ 
such that (1)-(4) hold. 
\begin{enumerate}
\item $\bigwedge^2 E \simeq \omega_X^{-1}$. 
\item $E$ is globally generated. 
\item $h^0(X, E) = 6$. 
\item $H^0(X, E \otimes \MO_X(K_X)) = H^1(X, E \otimes \MO_X(K_X)) =  0$. 
\end{enumerate}
Set $U_6 := H^0(X, E)$. 
Let $T$ be a general member of $|-K_X|$ (which is a smooth K3 surface). 
For $E_T := E|_T$, we have the following natural evaluation maps 
\begin{align*}
\lambda_X \colon \wedge^2 U_6 = \wedge^2 H^0(X,E)  
&\to H^0(X,\wedge^2 E) \simeq H^0(X,-K_X)\\
\lambda_T \colon \wedge^2 U_6 = \wedge^2 H^0(X,E) 
\xrightarrow{(\star), \simeq} \wedge^2 H^0(T,E_T)  &\to H^0(T,\wedge^2 E_T) \simeq H^0(T, 
\MO_X(-K_X)|_T), 
\end{align*}
where $(\star)$ is induced by the restriction map 
$H^0(X,E) \to H^0(T, E_T)$, which is an isomorphism by (4). 
By (2)-(4), 
there exists a morphism 
\[
\varphi : X \to \Gr(U_6,2) 
\]
such that $E \simeq \varphi^*\mathcal Q$, where $q_{\univ}: U_6 \otimes \MO_{\Gr(U_6,2)} \to \mathcal Q$ denotes the universal quotient bundle on $\Gr(U_6, 2)$. 
We have the following commutative diagrams: 
\[
\begin{tikzcd}
 &\Gr(U_6,2) \arrow[r, hook, "\text{Pl\"{u}cker}"]  & \P(\wedge^2 U_6) (\simeq \P^{14})\\
X \arrow[r] \arrow[ru, "\varphi"] & \varphi(X) \arrow[u, hook] \arrow[r, hook] & \langle\varphi(X)\rangle = \P(\Im \lambda_X) \arrow[u, hook] 
\end{tikzcd}
\]
and
\[
\begin{tikzcd}
\Gr(2,U_6) \arrow[r, hook, "\text{Pl\"{u}cker}"]  & \P(\wedge^2 U_6^\vee) (\simeq \P^{14})\\
\Gr(2,U_6)\cap \P( (\Ker \lambda_X)^{\vee}) \arrow[u, hook] \arrow[r, hook]  & \P((\Ker \lambda_X)^{\vee}). \arrow[u, hook] 
\end{tikzcd}
\]

\end{nota}



\begin{dfn}[Schubert divisor]\label{d Schebert div}
We use Notation \ref{n g=8 3}.  
Via the restriction isomorphism 
\[
H^0(\P(\wedge^2 U_6), \MO(1)) \xrightarrow{\simeq} 
H^0(\Gr(U_6,2), \MO(1)|_{\Gr(U_6,2)}),
\]
 $\P(\wedge^2 U_6 ^\vee)$ parametrises the hyperplane sections of $\Gr(U_6, 2)$.
 Let $[W] \in \Gr(2,U_6) (\subset \P(\wedge^2 U_6 ^\vee))$ be a closed point, i.e., $W \subset U_6$ is a two-dimensional $k$-vector subspace. 
 The hyperplane section $H_{W}$ of $\Gr(U_6, 2)$ corresponding to $[W]$ is called the {\em Schubert divisor}.
\end{dfn}

\begin{rem}\label{r Schebert div}
We use Notation \ref{n g=8 3}.  
Take a  two-dimensional $k$-vector subspace $W \subset U_6$. 
We have the following  $\MO_{\Gr(U_6,2)}$-module homomorphisms: 
\[
W \otimes_k \MO_{\Gr(U_6,2)} \hookrightarrow U_6 \otimes_k \MO_{ \Gr(U_6,2)} \xrightarrow{q_{\univ}} \cQ. 
\]
By taking the exterior power $\wedge^2$, 
we get the induced 
$\MO_{\Gr(U_6,2)}$-module homomorphism 
$\wedge^2 W \otimes_k \MO_{\Gr(U_6,2)} \to \wedge^2 \cQ$.
Then $H_{W}$ is the zero set of this map, i.e., 
$H_W$ is equal to the member of 
$|\MO_{\Gr(U_6,2)}(1)|$ corresponding to the one-dimensional image of 
\[
\wedge^2 W = H^0(\Gr(U_6,2), \wedge^2 W \otimes \MO_{\Gr(U_6,2)}) 
\to 
\]
\[
H^0(\Gr(U_6,2), \wedge^2 \cQ) = H^0(\Gr(U_6,2), \MO_{\Gr(U_6,2)}(1)) =\wedge^2 U_6. 
\]
\end{rem}

\begin{lem}\label{l Gr P(Ker) empty}
We use Notation \ref{n g=8 3}.  
Then it holds that 
\[
\Gr(2,U_6) \cap \P((\Ker \lambda_X)^{\vee}) = \emptyset.
\]
In particular, $\dim (\Ker \lambda_X) \leq 6$, $\dim (\Im \lambda_X) \geq 9$, and $\dim (\Coker \lambda _X) \leq 1$.
\end{lem}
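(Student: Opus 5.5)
The plan is to argue by contradiction. A point of $\Gr(2,U_6)\cap \P((\Ker\lambda_X)^{\vee})$ is a two-dimensional subspace $W=\langle s_1,s_2\rangle\subset U_6=H^0(X,E)$ with $\lambda_X(s_1\wedge s_2)=0$ in $H^0(X,\wedge^2E)\simeq H^0(X,-K_X)$. I will show that such a $W$ forces a section of $E\otimes\MO_X(K_X)$, which contradicts Notation \ref{n g=8 3}(4).

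\textbf{Step 1: a line subbundle with two sections.} The section $s_1\wedge s_2$ of $\wedge^2 E$ vanishes identically. Hence $s_1$ and $s_2$ are linearly dependent at the generic point of $X$, so the image of the evaluation map $W\otimes_k\MO_X\to E$ is a subsheaf of rank one. I let $M\subset E$ be its saturation. Since $E$ is locally free and $X$ is smooth, $E/M$ is torsion free, and $M$ is a reflexive rank-one sheaf, hence invertible. The map $W\to H^0(X,M)$ is injective because $W\subset H^0(X,E)$, so $h^0(X,M)\geq 2$.

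\textbf{Step 2: identify $M$ and conclude.} Since $\Pic X=\Z K_X$, we have $M\simeq \MO_X(-dK_X)$ for some $d\in\Z$. The condition $h^0(X,M)\geq 2$ excludes $d\leq 0$: for $d<0$ there are no sections, and for $d=0$ there is only a one-dimensional space of sections. Hence $d\geq 1$. Then $M\otimes\MO_X(K_X)\simeq\MO_X(-(d-1)K_X)$ has a nonzero global section. Composing with the inclusion $M\otimes \MO_X(K_X)\hookrightarrow E\otimes\MO_X(K_X)$ gives $H^0(X,E\otimes\MO_X(K_X))\neq 0$. This contradicts (4), so the intersection is empty. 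The only delicate point is the saturation step, namely that $M$ is invertible. This uses that a reflexive rank-one sheaf on a smooth (hence locally factorial) variety is invertible, and I expect no real obstacle there.

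\textbf{Step 3: the dimension bounds.} In $\P(\wedge^2U_6^{\vee})\simeq\P^{14}$ the Grassmannian $\Gr(2,U_6)$ has dimension $8$. A linear subspace of dimension $\geq 6$ meets any $8$-dimensional projective subvariety of $\P^{14}$. If $\dim\Ker\lambda_X\geq 7$, then $\P((\Ker\lambda_X)^\vee)$ would have dimension $\geq 6$ and would meet $\Gr(2,U_6)$, which is impossible. Therefore $\dim\Ker\lambda_X\leq 6$. Since $\dim\wedge^2U_6=15$, this gives $\dim\Im\lambda_X\geq 9$. Finally, $h^0(X,-K_X)=10$, so $\dim\Coker\lambda_X\leq 1$.
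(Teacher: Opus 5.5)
Your argument is correct and is essentially the paper's proof. Both show that $W\otimes\MO_X\to E$ has rank-one image, take an invertible sheaf between that image and $E$ with at least two sections, conclude it is $\MO_X(-nK_X)$ with $n>0$, and finish with the same projective dimension count. The differences are minor: you use the saturation where the paper uses $(\Im\alpha)^{**}$, and you read off $s_1\wedge s_2=0$ directly from $\lambda_X$ rather than via the Schubert divisor $H_W$.

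The one substantive variation is the final contradiction. The paper uses $h^0(X,-nK_X)\geq 10>6=h^0(X,E)$, i.e.\ condition (3) of Notation \ref{n g=8 3}, whereas you use the vanishing $H^0(X,E\otimes\MO_X(K_X))=0$ from condition (4). Both hypotheses are available there, so either works.
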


\begin{proof}
Suppose $\Gr(2,U_6) \cap \P((\Ker \lambda_X)^{\vee}) \neq  \emptyset$. 
Fix a closed point $[W] \in \Gr(2,U_6) \cap \P((\Ker \lambda_X)^{\vee})$. 
Then 
$\varphi (X)$ is contained in the Schubert divisor $H_W$, 
because $\P((\Ker \lambda_X)^{\vee})$ parametrises all the hyperplanes on 
$\P(\wedge^2 U_6^\vee)$ containing 
$\P(\Im \lambda_X)  (=\langle\varphi(X)\rangle)$ (cf. Notation \ref{n g=8 3}). 
By applying the pullback $\varphi^*$ 
to 
$W \otimes \MO_{\Gr(U_6,2)} \hookrightarrow U_6 \otimes \MO_{ \Gr(U_6,2)} \to \cQ$, 
we obtain an $\MO_X$-module homomorphism $\alpha : W \otimes \MO_X \to U_6 \otimes \MO_X \to E$, whose exterior power  $\wedge^2 \alpha : \wedge^2 W \otimes \MO_X \to \wedge^2 E$ is zero 
(because the image of $\wedge^2 W = H^0(X, \wedge^2 W \otimes \MO_X) \to H^0(X, \wedge^2 E) \simeq H^0(X, -K_X)$ corresponds to the inverse image $\varphi^{-1}(H_W)$, which is equal to the whole space $X$ by $\varphi(X) \subset H_W$). 

Since $W \subset U_6 = H^0(X, E)$ is a nonzero subspace, 
the image $\Im(\alpha)$ 
of 
$\alpha : W \otimes \MO_X  \to E$ is not of rank zero. 
This, together with $\wedge^2 \alpha =0$, implies that $\Im(\alpha)$ is of rank one. 
For the double dual $L := (\Im(\alpha))^{**}$, 
$L$ is an invertible sheaf on $X$ and we get 
a sequence of $\MO_X$-module homomorphisms
\[
\alpha : W \otimes \MO_X \xrightarrow{\beta} (\Im(\alpha))^{**}= L 
\xrightarrow{\gamma} E. 
\]
By $\alpha \neq 0$, we get $\beta \neq 0$ and $\gamma \neq 0$. 
As $L$ is an invertible sheaf, $\gamma$ is injective. 
Since  
\[
H^0(\alpha) : W = H^0(X, W \otimes \MO_X) \to H^0(X, E) =U_6
\]
coincides with the natural inclusion $W \subset U_6$, $H^0(\alpha)$ is injective, and hence so is $H^0(\beta)$. 
Therefore, we get $h^0(X, L) \geq h^0(X, W \otimes \MO_X) = \dim_k W = 2$. 
This, together with $\Pic X = \Z K_X$, implies 
 $L \simeq \MO_X(-nK_X)$ for some integer $n >0$. 
However, this leads to the following contradiction: 
\[
9\leq h^0(X, -nK_X) =h^0(X, L) \leq  h^0(X, E) = 6. 
\]
This completes the proof of $\Gr(2,U_6) \cap \P((\Ker \lambda_X)^{\vee}) = \emptyset$.


It remains to prove the in-particular part. 
We have $\dim \P(\wedge^2 U_6^\vee) = 14$ and $\dim \Gr(2, U_6)=8$. 
By $\Gr(2,U_6) \cap \P((\Ker \lambda_X)^\vee) = \emptyset$, 
we obtain $\dim (\Ker \lambda_X) -1 = \dim \P((\Ker \lambda_X)^\vee)  <14-8 =6$, and hence  $\dim (\Ker \lambda_X) <7$.  
Thus it holds that $\dim (\Im \lambda_X) = \dim (\wedge^2 U_6) - \dim (\Ker \lambda_X)  \geq 15-6 =9$. 
We then get $\dim (\Coker \lambda_X) = h^0(X, -K_X) -\dim (\Im \lambda_X) \leq 10-9 =1$. 
\end{proof}

\begin{prop}\label{p lambdaT surje}
We use Notation \ref{n g=8 3}.  
Then $\lambda_T$ is surjective and 
the composition $\varphi|_T \colon T \hookrightarrow X \xrightarrow{\varphi} \Gr(U_6,2)$ is the closed immersion induced 
by the complete linear system $|\MO_X(-K_X)|_T|$. 
Moreover, 
{$\dim(\Gr(2,U_6)\cap \P((\Ker \lambda_T)^\vee) \leq 0$, i.e., 
$\Gr(2,U_6)\cap \P((\Ker \lambda_T)^\vee)$ 
is either empty or zero-dimensional.} 
\end{prop}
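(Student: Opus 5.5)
We plan to study the K3 surface $T$ and the rank-two bundle $E_T$, and to reduce the surjectivity of $\lambda_T$ to an argument of the same kind as in Lemma \ref{l Gr P(Ker) empty}, now on $T$.

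\textbf{Properties of $E_T$.} From the exact sequence $0 \to E(K_X) \to E \to E_T \to 0$ and Notation \ref{n g=8 3}(4), the restriction map $U_6 \to H^0(T, E_T)$ is an isomorphism. Hence $E_T$ is globally generated with $h^0 = 6$. We have $\det E_T = H := -K_X|_T$ with $H^2 = 14$, and $c_2(E_T) = c_2(E)\cdot(-K_X) = 5$. The value $c_2(E)\cdot(-K_X) = 5$ is computed by Riemann--Roch, or from the construction, where $c_2(E) = [C]$ is a quintic curve. On $T$ the map $\varphi|_T$ is therefore a morphism $T \to \Gr(U_6,2)$. Its Pl\"ucker composite is given by the linear subsystem $\Im \lambda_T \subset H^0(T,H)$, and $h^0(T,H) = 9$ because $T$ is a K3 surface of genus $8$.

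\textbf{Surjectivity of $\lambda_T$.} The key point is that $\Gr(2,U_6)\cap \P((\Ker\lambda_T)^\vee)$ consists exactly of the points $[W]$ such that $\wedge^2 W \to H^0(T,\wedge^2 E_T)$ is zero. As in Lemma \ref{l Gr P(Ker) empty}, such a $W$ produces a rank-one subsheaf with reflexive hull $L \subset E_T$ and $h^0(L) \geq 2$. Write $M := \det E_T \otimes L^{-1}$; the quotient $E_T/L$ is $M\otimes I_Z$ for a zero-dimensional $Z$. We now use that $\Pic T$ need not be $\Z H$, so the argument must differ from the one on $X$. Global generation of $E_T$ gives that $M$ is globally generated. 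We also get $c_2 = L\cdot M + \operatorname{length} Z = 5$ and $L + M = H$. Since $H$ is very ample of genus $8$ and $L$ moves in a pencil, $L\cdot M = L\cdot H - L^2 \geq$ the gonality-type bounds. I would try to show that $L\cdot H \leq 5$ is forced and then contradict the fact that a genus-$8$ K3 surface of Picard rank one (general $T$) has no such pencils. Note that the Picard group of a general hyperplane section is not known to be $\Z H$ in characteristic $p$. My first attempt is instead the inequality $H^0(L)\hookrightarrow H^0(E_T)$ combined with the Hodge index theorem: $(L\cdot H)^2 \geq 14L^2$, together with $L\cdot M \leq 5$. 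This leaves only finitely many cases, such as $L$ elliptic with $L\cdot H\le 5$. For generic $T$ these should be excluded: the elliptic pencil would have to exist on every member, which would produce a two-dimensional family of such pencils and hence a divisor on $X$ sweeping out too much. I expect this exclusion to be the main obstacle.

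\textbf{Conclusion.} For the dimension statement, the analysis above shows that the locus of such $[W]$ is at most finite. Surjectivity then follows by a dimension count. $\Gr(2,U_6)$ has dimension $8$ in $\P^{14}$, and $\P((\Ker\lambda_T)^\vee)$ has dimension $\dim\Ker\lambda_T - 1$. If $\Im\lambda_T$ had dimension at most $8$, this projective space would have dimension at least $6$, and the intersection would then have dimension at least $0$. We will sharpen this: if $\lambda_T$ is not surjective, a general hyperplane containing $\Ker\lambda_T$-directions gives a positive-dimensional intersection, contradicting finiteness. Alternatively, Lemma \ref{l Gr P(Ker) empty} already gives $\dim \Im\lambda_X\ge 9$. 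Restriction $H^0(X,-K_X)\to H^0(T,H)$ is surjective with one-dimensional kernel, and $\lambda_T$ is $\lambda_X$ followed by this restriction. Thus $\dim\Im\lambda_T\geq 8$, and we need to rule out the case where the kernel of restriction lies in $\Im\lambda_X$. Granting surjectivity, $\varphi|_T$ is the morphism given by the complete linear system $|H|$ followed by a linear embedding. Since $|H|$ is very ample on $T$, $\varphi|_T$ is a closed immersion.
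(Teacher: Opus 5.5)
\textbf{There is a genuine gap.} Neither the surjectivity of $\lambda_T$ nor the bound $\dim(\Gr(2,U_6)\cap\P((\Ker\lambda_T)^\vee))\le 0$ is actually established.

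The K3 analysis in your second paragraph (rank-one subsheaves $L\subset E_T$, Hodge index, excluding low-degree pencils on $T$) is left open, as you acknowledge. The ``dimension count'' you then use to pass from finiteness to surjectivity is also invalid. If $\lambda_T$ is not surjective, then $\dim\Ker\lambda_T\ge 7$. Since $\dim\Ker\lambda_X\le 6$ (Lemma \ref{l Gr P(Ker) empty}) and $\Ker\lambda_X\subset\Ker\lambda_T$ has codimension at most one, the only possible case is $\dim\Ker\lambda_T=7$. In that case $\Gr(2,U_6)\cap\P^6$ is nonempty of dimension $\ge 0$, which is perfectly compatible with finiteness. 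Cutting by a further hyperplane only lowers dimensions, so your proposed ``sharpening'' cannot produce a positive-dimensional intersection.

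\textbf{The missing step.} The route you mention last is the right one, and what is missing is a one-line use of the generality of $T$. The kernel of the restriction $\rho\colon H^0(X,\wedge^2E)\to H^0(T,\wedge^2E_T)$ is the line spanned by the section $s_T$ defining $T$. The subspace $\Im\lambda_X$ depends only on $E$, not on $T$. By Lemma \ref{l Gr P(Ker) empty} there are two cases:
\begin{itemize}
\item $\Im\lambda_X=H^0(X,-K_X)$. Then $\Im\lambda_T=\rho(\Im\lambda_X)$ is everything.
\item $\Im\lambda_X$ is a hyperplane. Then a general $T$ has $s_T\notin\Im\lambda_X$, so $\rho|_{\Im\lambda_X}$ is injective. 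By the count $9=9$ it is an isomorphism onto $H^0(T,\wedge^2E_T)$.
\end{itemize}

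\textbf{The dimension statement.} This needs an argument you do not have. Apply the snake lemma to the same diagram, using that $\wedge^2 H^0(X,E)\to\wedge^2H^0(T,E_T)$ is an isomorphism. This gives an exact sequence $0\to\Ker\lambda_X\to\Ker\lambda_T\to H^0(X,\MO_X)$. Hence $\P((\Ker\lambda_X)^\vee)$ is either all of $\P((\Ker\lambda_T)^\vee)$ or a hyperplane in it. By Lemma \ref{l Gr P(Ker) empty} the Grassmannian misses it. A closed subset of a projective space disjoint from a hyperplane is both affine and projective, hence finite.

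No K3 geometry of $T$ (its Picard group, elliptic pencils, $c_2(E_T)$) is needed anywhere.
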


\begin{proof}
We now prove that $\lambda_T$ is surjective. 
We have the following commutative diagram in which the lower horizontal sequence is exact: 
\[
\begin{tikzcd}
& &
\wedge^2 H^0(X,E) \arrow[r, "\simeq"] \arrow[d, "\lambda_X"]&  
\wedge^2 H^0(T,E_T) \arrow [d, "\lambda_T"] \\
0 \arrow[r]&
H^0(X, \MO_X) \arrow[r]&
 H^0(X,\wedge^2 E)\arrow[r, "\rho"] &  
 H^0(T,\wedge^2 E_T) \arrow[r] & 0. 
\end{tikzcd}
\]
If $\lambda_X$ is surjective, then so is $\lambda_T$. 
By $\dim (\Coker \lambda _X) \leq 1$, we may assume that $\dim (\Coker \lambda _X)  = 1$. 
Since $T$ is chosen to be general, 
we get $\Im(\lambda_X) \cap \Ker(\rho) = 0$, i.e., 
the induced map 
$\Im(\lambda_X) \to H^0(T, \wedge^2 E_T)$ is injective. 
It holds that 
\[
\dim(\Im(\lambda_X)) = h^0(X,\wedge^2 E) - \dim (\Coker \lambda _X) 
= h^0(X,\wedge^2 E) -1 = h^0(T,\wedge^2 E_T), 
\]
and hence $\Im(\lambda_X) \xrightarrow{\simeq} H^0(T, \wedge^2 E_T)$. 
Therefore, the composite arrow $\rho \circ \lambda_X$ is surjective, and hence so is $\lambda_T$. 
Since the composition $\varphi|_T \colon T \hookrightarrow X \xrightarrow{\varphi} \Gr(U_6,2)$ is defined by the surjection $H^0(T,E_T) \otimes \MO_T \to E_T$, 
$\varphi|_T$ is the morphism induced by the complete linear system 
$|\wedge^2 E_T | = |\MO_X(-K_X)|_T|$, which is a closed immersion.

{Let us show that $Z := \Gr(2,U_6)\cap \P((\Ker \lambda_T)^\vee)$ is either empty or zero-dimensional.} 
By applying the snake lemma to the above diagram, we get 
the following exact sequence: 
\[
0 \to \Ker(\lambda_X) \to \Ker(\lambda_T) \to H^0(X, \MO_X). 
\]
Hence either $\P((\Ker\lambda_X)^\vee) = \P((\Ker\lambda_T)^\vee)$ or 
$\P((\Ker\lambda_X)^\vee)$ is a hyperplane on $\P((\Ker\lambda_T)^\vee)$. 
The closed subset $Z = \Gr(2,U_6)\cap \P((\Ker \lambda_T)^\vee)$ of the projective space $\P( (\Ker\lambda_T)^\vee)$ 
satisfies $Z \cap \P((\Ker\lambda_X)^\vee) = \emptyset$ by 
\[
Z \cap \P((\Ker\lambda_X)^\vee) =  \Gr(2,U_6)\cap \P((\Ker \lambda_T)^\vee) \cap 
 \P((\Ker\lambda_X)^\vee)  \subset \Gr(2,U_6) \cap \P((\Ker\lambda_X)^\vee) = \emptyset. 
\]
Therefore, $Z = \Gr(2,U_6)\cap \P((\Ker \lambda_T)^\vee)$ is either empty or zero-dimensional. 
\end{proof}


\begin{lem}\label{l K3 linear section}
We use Notation \ref{n g=8 3}.  
Then 
$\varphi(T) = \Gr(U_6,2) \cap \langle \varphi(T) \rangle$.
\end{lem}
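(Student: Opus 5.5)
The plan is to compare $\varphi(T)$ with the linear section $\Sigma := \Gr(U_6,2) \cap \langle \varphi(T) \rangle$ by a dimension and degree count. By Proposition \ref{p lambdaT surje}, $\lambda_T$ is surjective, so $\langle \varphi(T) \rangle = \P(\Im \lambda_T)$ is a linear subspace $\P^8 \subset \P(\wedge^2 U_6) = \P^{14}$. Also $\varphi|_T$ is the embedding of $T$ by $|-K_X|_T|$, so $\varphi(T)$ is a smooth K3 surface of degree $(-K_X)^3 = 14$. Moreover, $\Sigma$ is cut out of the $8$-dimensional Grassmannian $\Gr(U_6,2)$, which has Plücker degree $14$, by the six hyperplanes corresponding to a basis of $\Ker \lambda_T$. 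Hence every irreducible component of $\Sigma$ has dimension $\ge 2$, and $\varphi(T)$ is one of them.

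\emph{Step 1 (expected dimension).} I will show $\dim \Sigma = 2$. Suppose some component $\Sigma'$ has $\dim \Sigma' \ge 3$. For a point $y \in \Sigma'$, write $K_y \subset U_6$ for the $4$-dimensional kernel of the corresponding quotient. A point $[W] \in \Gr(2,U_6)$ gives a Schubert divisor $H_W$ containing $y$ exactly when $W \cap K_y \neq 0$. I will run an incidence count. Consider the variety of pairs $(y,[W])$ with $y \in \Sigma'$ and $[W] \in \Gr(2,U_6) \cap \P((\Ker\lambda_T)^\vee + \langle \text{one more linear form}\rangle)$, or alternatively the dual description: $\Sigma$ is the zero locus on $\Gr(U_6,2)$ of the section of $\MO(1)^{\oplus 6}$ given by $\Ker \lambda_T \subset \wedge^2 U_6^\vee$. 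The aim is to show that an excess-dimensional component of $\Sigma$ forces $\Gr(2,U_6) \cap \P((\Ker\lambda_T)^\vee)$ to have positive dimension. Concretely, a $3$-fold in $\Gr(U_6,2)$ meets the $5$-dimensional family of codimension-$2$ Schubert cycles $\{y : \dim(K_y \cap W') \ge 2\}$ in a way that produces a curve of $[W]$ with $H_W \supset \langle \Sigma' \rangle$. This contradicts the last statement of Proposition \ref{p lambdaT surje}, that $\Gr(2,U_6)\cap\P((\Ker\lambda_T)^\vee)$ has dimension $\le 0$.

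\emph{Step 2 (degree and Cohen--Macaulayness).} Once $\Sigma$ has pure dimension $2$, it is a proper intersection of $\Gr(U_6,2)$ with a linear subspace of codimension $6$. Since $\Gr(U_6,2)$ is Cohen--Macaulay, $\Sigma$ is Cohen--Macaulay of pure dimension $2$, so it has no embedded components. By Bézout, the fundamental cycle $[\Sigma]$ has degree $14$. Since $\varphi(T) \subset \Sigma$ is a component of degree $14$, we get $[\Sigma] = [\varphi(T)]$. Hence $\Sigma$ is irreducible and generically reduced. Being Cohen--Macaulay, hence $S_1$, it is reduced, and therefore $\Sigma = \varphi(T)$ scheme-theoretically.

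The main obstacle is Step 1: turning the finiteness of $\Gr(2,U_6)\cap\P((\Ker\lambda_T)^\vee)$ into the statement that the linear section has the expected dimension. The first thing I would try is a direct incidence/Schubert-calculus argument in $\Gr(U_6,2) \times \Gr(2,U_6)$. If that fails, I would fall back on Mukai's argument for linear sections of $\Gr(2,6)$. That argument uses the fact that $\P^8 = \langle\varphi(T)\rangle$ and that every hyperplane section of $\Sigma$ through a general curve section of $\varphi(T)$ is controlled by the restriction of $E_T$. This would rule out an extra component by showing that a general hyperplane section of $\Sigma$ is a curve of degree $14$ and arithmetic genus $8$ coinciding with a canonical curve section of $T$.
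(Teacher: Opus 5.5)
Your Step 2 is sound once Step 1 is known. A proper intersection of the Cohen--Macaulay $\Gr(U_6,2)$ with six hyperplanes is Cohen--Macaulay of pure dimension $2$ and degree $14$. It therefore coincides with the degree-$14$ surface $\varphi(T)$. Note that your claim that $\varphi(T)$ is a component of the intersection also only follows after Step 1. But Step 1 is the entire content of the lemma, and what you write for it is not an argument.

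There are two concrete problems with it.
\begin{enumerate}
\item To contradict Proposition \ref{p lambdaT surje}, you need points $[W]$ with $H_W \supset \langle\varphi(T)\rangle$, i.e.\ $[W]\in\P((\Ker\lambda_T)^\vee)$. Schubert divisors containing only $\langle\Sigma'\rangle$ correspond to points of the possibly larger space $\langle\Sigma'\rangle^\perp$. A curve of such points says nothing about $\P((\Ker\lambda_T)^\vee)$.
\item No incidence or dimension count can produce such a curve. The $\P^5=\P((\Ker\lambda_T)^\vee)$ and the $8$-dimensional $\Gr(2,U_6)$ inside $\P^{14}$ have expected intersection dimension $-1$. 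You would need a genuine structural reason why an excess component of $\Gr(U_6,2)\cap\P^8$ forces a positive-dimensional family of rank-$2$ forms in $\Ker\lambda_T$. None is given, and this does not follow formally from Schubert calculus.
\end{enumerate}
The fallback via curve sections of $T$ is likewise only a hope, not an argument.

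The missing idea is to bypass dimension entirely and compare quadrics, as the paper does. Both $\Gr(U_6,2)\subset\P(\wedge^2U_6)$ and $T\subset\P^8=\langle\varphi(T)\rangle$ are intersections of quadrics. Moreover, $h^0(I_{\Gr(U_6,2)}(2))=120-105=15=45-30=h^0(I_T(2))$, using Lemma \ref{l h0 compute} and Riemann--Roch on $T$. So it suffices to show that the restriction of Pl\"ucker quadrics to $\P^8$ is injective, i.e.\ that no quadric $Q\supset\Gr(U_6,2)$ contains $\P^8$. Then the restricted quadrics span $H^0(I_T(2))$, and $\Gr(U_6,2)\cap\P^8=T$ scheme-theoretically, with no need for your Step 2.

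The finiteness of $\Gr(2,U_6)\cap\P((\Ker\lambda_T)^\vee)$ enters through Mukai's classification of such quadrics, which gives $\dim\Sing Q\in\{-\infty,4,8\}$.
\begin{enumerate}
\item A smooth quadric in $\P^{14}$ contains no $\P^8$ (Proposition \ref{p quadric CH}).
\item If $\dim\Sing Q=4$, the dual quadric $Q^*$ is $8$-dimensional in $\P^9$ and $\dim(Q^*\cap\Gr(2,U_6))=5$. Also $Q^*$ meets $\P((\Ker\lambda_T)^\vee)$ in dimension $\geq 4$. An intersection computation in $\CH(Q^*)$ then yields $\dim(\Gr(2,U_6)\cap\P((\Ker\lambda_T)^\vee))\geq 1$.
\item If $\dim\Sing Q=8$, then $Q^*$ is a $4$-dimensional quadric contained in $\Gr(2,U_6)$, meeting $\P((\Ker\lambda_T)^\vee)$ in dimension $\geq 2$.
\end{enumerate}
Both singular cases contradict Proposition \ref{p lambdaT surje}.
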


Although the following proof is similar to  \cite[Proof of the claim after Lemma 3.10]{Muk93}, 
we provide some details for the reader's convenience. 

\begin{proof}
We identify $T$ with $\varphi(T)$ by abuse of notation. 
Set  $\P^8 := \langle T \rangle$. 
Since each of  $ \Gr(U_6,2) \subset \P(\wedge^2 U_6)$ 
and $T \subset \P^8$ is an intersection of quadrics, 
it is enough to show that the restriction map 
\[
\rho : H^0(\P(\wedge^2 U_6), I_{\Gr(U_6,2)}(2)) \to H^0(\P^8, I_{T}(2))
\]
is an isomorphism, 
where $I_{\Gr(U_6,2)}$ (resp. $I_T$) denotes the ideal sheaf 
on the projective space $\P(\wedge^2 U_6)$ (resp. $\P^8$) 
defining $\Gr(U_6,2)$ (resp. $T$). 

By $h^0(\P(\wedge^2 U_6), I_{\Gr(U_6,2)}(2))=15=h^0(\P^8,I_{T}(2))$ 
(cf.\ Lemma \ref{l h0 compute}), it is enough to prove 
that $\rho$ is injective. 
Let $Q \subset \P(\wedge^2 U_6) (\simeq \P^{14})$ be a quadric hypersurface containing $\Gr(U_6,2)$. 
Suppose that $Q$ contains $\P^8$. 
It suffices to derive a contradiction. 
By \cite[Proposition 1.4]{Muk93}, 
it holds that  $\dim(\Sing Q) = -\infty, 4,$ or $8$. 
We have $\dim(\Sing Q) \neq  -\infty$ (i.e., $\Sing Q \neq \emptyset$), because a smooth quadric hypersurface on $\P^{14}$ 
does not contain $\P^8$ by Proposition~\ref{p quadric CH}.

Assume $\dim(\Sing Q)=4$. 
By \cite[Proposition 1.6 and Proposition 1.8(2)]{Muk93}, 
$Q^*$  is an $8$-dimensional 
smooth quadric hypersurface on the linear subspace $(\Sing  Q)^\perp \simeq \P^9$   such that $\dim (Q^* \cap \Gr(2, U_6)) =5$. 
By $\CH^3(Q^*) = \Z H^3$ for a hyperplane section $H$ on $Q^*$ 
(Proposition \ref{p quadric CH}), 
a $5$-dimensional irreducible component $W$ of $Q^* \cap \Gr(2, U_6)$ satisfies 
$W  = m H^3$ in $\CH^3(Q^*)$ for some integer $m>0$. 
Moreover, by \cite[Proposition 1.7]{Muk93} (applied for $P := \P((\Ker \lambda_T)^{\vee})$), we have $\dim ( Q^* \cap \P((\Ker \lambda_T)^{\vee}) ) \geq 4$. 
Thus $\dim (Q^* \cap \Gr(2, U_6) \cap  \P((\Ker \lambda_T)^{\vee}) ) \geq 1$.
This contradicts {$\dim(\Gr(2,U_6)\cap \P((\Ker \lambda_T)^{\vee})) \leq 0$} (Proposition \ref{p lambdaT surje}).  

Assume $\dim(\Sing Q)=8$. 
By \cite[Proposition 1.6 and Proposition 1.8(1)]{Muk93}, 
$Q^*$ is a $4$-dimensional smooth quadric hypersurface on 
the linear subspace $(\Sing  Q)^\perp \simeq \P^5$ 
such that $Q^* \subset \Gr(2, U_6)$. 
By \cite[Proposition 1.7]{Muk93} (applied for $P := \P((\Ker \lambda_T)^{\vee})$), 
it holds that $\dim ( Q^* \cap \P((\Ker \lambda_T)^{\vee}) ) \geq 2$.
Thus we get 
\[
\dim (\Gr(2, U_6) \cap  \P((\Ker \lambda_T)^{\vee})) \geq  \dim ( Q^* \cap \P((\Ker \lambda_T)^{\vee}) ) \geq 2, 
\]
which  contradicts {$\dim (\Gr(2,U_6)\cap \P((\Ker \lambda_T)^{\vee})) \leq 0$  (Proposition \ref{p lambdaT surje}).} 
\end{proof}

\begin{thm}\label{t main}
We use Notation \ref{n g=8 3}.  
Then 
$\lambda_X$ is surjective, 
$\varphi \colon  X \to \Gr(U_6,2)$ 
is the closed immersion induced  by the complete linear system $\left| - K_X \right|$, and $X = \Gr(U_6,2) \cap \langle X \rangle$. 
\end{thm}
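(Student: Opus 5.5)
The plan is to bootstrap from the K3 surface $T$ to $X$ in four steps: show $\lambda_X$ is surjective, show $\varphi$ is the embedding by $|-K_X|$, show $\varphi(X)$ is contained in the linear section $\Gr(U_6,2)\cap\langle\varphi(X)\rangle$, and then compare the two by degree.

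\textbf{Surjectivity of $\lambda_X$.} By Lemma \ref{l Gr P(Ker) empty} we have $\dim \Coker\lambda_X \le 1$. Suppose for contradiction that $\dim\Coker\lambda_X = 1$. Then $\dim \Im\lambda_X = 9$, so $\langle \varphi(X)\rangle = \P(\Im\lambda_X) \simeq \P^8$. In the proof of Proposition \ref{p lambdaT surje} we saw that $\Im\lambda_X \to H^0(T,\wedge^2 E_T)$ is an isomorphism, so $\langle\varphi(T)\rangle = \langle\varphi(X)\rangle$ as $\P^8$'s. Lemma \ref{l K3 linear section} then gives
\[
\varphi(X) \subset \Gr(U_6,2)\cap\langle\varphi(X)\rangle = \Gr(U_6,2)\cap \langle \varphi(T)\rangle = \varphi(T).
\]
This contradicts $\dim \varphi(X) \ge \dim\varphi(T) + 1 = 3$; note $\varphi(T)$ is a surface because $\varphi|_T$ is a closed immersion. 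Hence $\lambda_X$ is surjective.

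\textbf{$\varphi$ is the anticanonical embedding.} Since $\varphi^*\MO(1) \simeq \wedge^2 E \simeq \omega_X^{-1}$ and $\lambda_X$ is surjective, the composite $X\to\P(\wedge^2U_6)$ is given by the complete linear system $|-K_X|$, projected onto $\P(\Im\lambda_X)$. As $|-K_X|$ is very ample, $\varphi$ is a closed immersion with image $X\subset\P^9=\langle X\rangle$, and $\dim\Ker\lambda_X = 5$.

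\textbf{$X$ is the linear section.} Put $G := \Gr(U_6,2)\cap\langle X\rangle$. Since $\langle X\rangle$ has codimension $5$ in $\P^{14}$, every irreducible component of $G$ has dimension at least $8-5=3$. Choose the general member $T$ as a hyperplane section $T = X\cap H$ for a hyperplane $H\subset\langle X\rangle$. Then $\langle T\rangle = H\cap\langle X\rangle$, and Lemma \ref{l K3 linear section} gives
\[
G\cap H = \Gr(U_6,2)\cap\langle T\rangle = T.
\]
So $G$ is a closed subscheme containing $X$ whose hyperplane section equals $T$. We want to conclude $G = X$. First, $\dim G = 3$, since $G\cap H$ is $2$-dimensional and a hyperplane section cannot drop dimension by more than one. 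Hence $G$ is a dimensionally proper linear section of the Cohen--Macaulay variety $\Gr(U_6,2)$, so $G$ is Cohen--Macaulay of pure dimension $3$ and degree $\deg\Gr(6,2)=14$. Now $X\subset G$ has degree $14$ as well, so $G$ is generically reduced along $X$, and any component of $G$ other than $X$ would add degree. Thus $G = X$ as cycles, and, being Cohen--Macaulay with no embedded points, $G = X$ scheme-theoretically. Alternatively one can argue directly: $I_X/I_G$ restricts to zero on $H$, and since $H$ is a nonzerodivisor on $G$ (Cohen--Macaulay), Nakayama's lemma gives $I_X = I_G$.

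The main obstacle I expect is this last step. It needs the Cohen--Macaulayness of linear sections of $\Gr(2,6)$ of the expected dimension, which holds because $\Gr(2,6)$ is arithmetically Cohen--Macaulay in any characteristic, and it needs that a hyperplane $H$ with $X\cap H$ a general $T$ can be chosen so that Lemma \ref{l K3 linear section} applies. If that lemma has to be invoked for a general $T$, I will use that the statement $G\cap H = T$ holds for a general $H$, which is all the argument needs.
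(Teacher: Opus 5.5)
Your proof is correct. The first two steps, surjectivity of $\lambda_X$ via $\langle\varphi(X)\rangle=\langle\varphi(T)\rangle$ and Lemma \ref{l K3 linear section}, and then identifying $\varphi$ with the anticanonical embedding, are exactly the paper's argument. You diverge only in the last step.

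\textbf{The paper's final step.} It works with quadrics. It considers the restriction maps
$H^0(\P^{14}, I_{\Gr(U_6,2)}(2)) \to H^0(\P^9, I_X(2)) \to H^0(\P^8, I_T(2))$.
The second map is an isomorphism because $T$ is a hyperplane section of $X$. The composite is an isomorphism by the proof of Lemma \ref{l K3 linear section}. Hence the first map is an isomorphism. Since $X\subset\P^9$ and $\Gr(U_6,2)\subset\P^{14}$ are both intersections of quadrics, this gives $X=\Gr(U_6,2)\cap\P^9$.

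\textbf{Your final step.} You use only the scheme-theoretic conclusion $G\cap H=T$ of that lemma.
\begin{enumerate}
\item A dimension count shows that $G$ is pure of dimension $3$.
\item $\Gr(2,6)$ is arithmetically Cohen--Macaulay in every characteristic (Hochster, or standard monomial theory). Hence the five linear forms form a regular sequence on its cone, so $G$ is Cohen--Macaulay of degree $\deg\Gr(2,6)=14$.
\item Comparing with $\deg X=(-K_X)^3=14$ forces $G=X$.
\end{enumerate}

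\textbf{Comparison.} The paper's route reuses the quadric machinery already set up for $T$ and needs no Cohen--Macaulay or degree input. Yours needs neither that $X$ is cut out by quadrics nor the restriction isomorphism for a hyperplane section. In exchange, you import arithmetic Cohen--Macaulayness of the Grassmannian in positive characteristic.

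\textbf{Two small repairs.}
\begin{itemize}
\item In the surjectivity step, the reason $\dim\varphi(X)=3$ is that $\varphi^*\MO(1)\simeq\omega_X^{-1}$ is ample, so $\varphi$ is finite. The fact that $\varphi|_T$ is a closed immersion does not give this.
\item Your Nakayama alternative needs one more remark. Since $G$ is Cohen--Macaulay, $I_X/I_G\subset\MO_G$ has no associated points other than generic points of $3$-dimensional components. So if $I_X/I_G$ were nonzero, its support would meet $H$. The degree argument already settles this more cleanly.
\end{itemize}
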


\begin{proof}
Suppose that $\lambda_X$ is not surjective. 
Then  $\dim \Im \lambda _X = \dim \Im \lambda_T$.
 Note that $H^0(X,-K_X) \to H^0(T,-K_X|_T) \simeq H^0(T,\MO_T(T))$ is surjective.
 Thus $\Im \lambda _X \simeq  \Im \lambda_S$.
 Then
 \[
\varphi(X) \subset \Gr(U_6,2) \cap \langle \varphi(X) \rangle = \Gr(U_6,2) \cap \langle T \rangle = T,
 \]
which is impossible.

Thus $\varphi \colon X \hookrightarrow \Gr(U_6,2)$ is the closed immersion 
induced by  the complete linear system $\left|-K_X \right|$. 
Set $\P^{14} := \P(\wedge^2 U_6), \P^9 := \langle X \rangle$, and $\P^8 := \langle S \rangle$. 
We have the following restriction maps: 
\[
\beta \circ \alpha : H^0(\P^{14},I_{\Gr(U_6,2)}(2)) \xrightarrow{\alpha} 
H^0(\P^9,I_X(2)) \xrightarrow{\beta, \simeq} 
H^0(\P^8, I_T(2)), 
\]
where $I_{\Gr(U_6,2)}$ (resp. $I_X$, resp. $I_T$) 
is the ideal sheaf on $\P^{14}$ (resp. $\P^9$, resp. $\P^8$) 
defining $\Gr(U_6, 2)$ (resp. $X$, resp. $T$). 
Since $T$ is a hyperplane section of $X$, $\beta$ is an isomorphism (cf. \cite[Lemma 7.6(1)]{FanoI}). 
By $T = \Gr(U_6,2) \cap \langle T \rangle$, 
the composition $\beta \circ \alpha$ is an isomorphism (Lemma \ref{l K3 linear section}). 
Therefore, $\alpha$ is an isomorphism. 
Since $X$ (resp. $\Gr(U_6,2)$) is an intersection of quadrics on $\P^9$ (resp. $\P^{14}$), we get $X = \Gr(U_6, 2) \cap \P^9$. 
\end{proof}

We close this section by pointing out the fact that 
the conclusion $\dim(\Gr(2,U_6)\cap \P((\Ker \lambda_T)^\vee) \leq 0$
in  Proposition \ref{p lambdaT surje} can be strengthened as follows. 

\begin{prop}\label{p lambdaT surje2}
We use Notation \ref{n g=8 3}. 
Then $\dim(\Gr(2,U_6)\cap \P((\Ker \lambda_T)^{\vee})) =\emptyset$. 
\end{prop}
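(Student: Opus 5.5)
Since $T$ is a general member of $|-K_X|$, I would get the emptiness from a dimension count on a linear projection, rather than from a direct argument on the fixed K3 surface $T$. By Theorem \ref{t main}, $\lambda_X$ is surjective. Hence $\dim \Ker \lambda_X = 15-10 = 5$. Since $\lambda_T$ is surjective onto the $9$-dimensional space $H^0(T,\wedge^2 E_T)$ (Proposition \ref{p lambdaT surje}), we get $\dim \Ker\lambda_T = 6$. The exact sequence $0 \to \Ker\lambda_X \to \Ker\lambda_T \to H^0(X,\MO_X)$ from the proof of Proposition \ref{p lambdaT surje} then shows that $\P((\Ker\lambda_X)^\vee)\simeq \P^4$ is a hyperplane in $\P((\Ker\lambda_T)^\vee)\simeq\P^5$.

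Concretely, $\P((\Ker\lambda_T)^\vee)$ is the span of $\P((\Ker\lambda_X)^\vee)$ and one further point. Dually, a point of $\P(\wedge^2 U_6^\vee)$ is a linear form on $\wedge^2 U_6$, and its image in $\P(H^0(X,-K_X)^\vee)$ (via $\lambda_X$ and its dual) is its restriction to $\langle X\rangle = \P^9$. The subspace $\P((\Ker\lambda_X)^\vee)$ consists exactly of the forms vanishing on $\langle X\rangle$. The subspace $\P((\Ker\lambda_T)^\vee)$ consists of the forms vanishing on $\langle T\rangle = \P^8$, i.e.\ those whose restriction to $\langle X\rangle$ is zero or is the form $t$ cutting out $T$.

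Consider the linear projection
\[
\pi \colon \P(\wedge^2 U_6^\vee) \setminus \P((\Ker\lambda_X)^\vee) \to \P(H^0(X,-K_X)^\vee)\simeq \P^9,
\]
which is the projectivisation of the dual of $\lambda_X$. From the above, a point $[W]\in \Gr(2,U_6)$ lies in $\P((\Ker\lambda_T)^\vee)$ if and only if either $[W]\in\P((\Ker\lambda_X)^\vee)$, or $\pi([W]) = [t]$. The first case is excluded by Lemma \ref{l Gr P(Ker) empty}. That lemma also says the centre of projection is disjoint from $\Gr(2,U_6)$, so $\pi|_{\Gr(2,U_6)}$ is a finite morphism. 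Therefore its image $\pi(\Gr(2,U_6))$ is a closed subset of $\P^9$ of dimension $\dim \Gr(2,U_6) = 8 < 9$, i.e.\ a proper closed subset of $|-K_X|^\vee$-type parameter space $\P(H^0(X,-K_X)^\vee)$.

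Since $T$ is a general member of $|-K_X|$, the point $[t]$ is a general point of $\P(H^0(X,-K_X)^\vee)$. So we may assume $[t]\notin \pi(\Gr(2,U_6))$; this is just one more open condition imposed on $T$, compatible with those used in Notation \ref{n g=8 3}. Hence $\Gr(2,U_6)\cap\P((\Ker\lambda_T)^\vee)=\emptyset$.

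The main point needing care is the identification of $\P((\Ker\lambda_T)^\vee)$ with the join of $\P((\Ker\lambda_X)^\vee)$ and the point $[t]$. This requires checking that, under the isomorphism $H^0(X,E)\simeq H^0(T,E_T)$, the kernel of $\lambda_T=\rho\circ\lambda_X$ is $\lambda_X^{-1}(\Ker\rho)$ with $\Ker\rho = k\cdot t$. Dualising then gives exactly the description above. Everything else is the standard fact that projection from a centre disjoint from a projective variety is finite.
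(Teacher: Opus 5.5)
Your proposal is correct and is essentially the paper's proof. Both arguments project from $\P((\Ker\lambda_X)^\vee)\simeq\P^4$ to $\P^9$ and observe that, because $T$ is general, $\P((\Ker\lambda_T)^\vee)$ maps to a single general point (your $[t]$) avoiding the image of $\Gr(2,U_6)$, which has dimension at most $8$. Both then conclude with Lemma \ref{l Gr P(Ker) empty}.

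The remaining differences are cosmetic: you spell out $\Ker\lambda_T=\lambda_X^{-1}(k\cdot t)$, and you use finiteness of the projection where the paper takes the closure of the image. One small slip: in the paper's Grothendieck convention, a point of $\P(\wedge^2 U_6^\vee)$ is an element of $\wedge^2 U_6$ up to scalar (a section of $\MO(1)$ on $\P(\wedge^2 U_6)$), not a linear form on $\wedge^2 U_6$. This does not affect the argument.
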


\begin{proof}
Set $\P^{14} := \P(\wedge^2 U_6)$ and $(\P^{14})^{\vee} := \P(\wedge^2 U_6^{\vee})$. 
We have the following inclusions of linear subvarieties: 
\[
\P^{14} \supset \P(\Im \lambda_X) (\simeq \P^9) 
\supset \P(\Im \lambda_T) (\simeq \P^8).  
\]
Taking their projective duals, 
we get the corresponding inclusions of linear subvarieties: 
\[
(\P^{14})^{\vee} \supset \P( (\Ker \lambda_T)^{\vee}) =: \P^5 \supset 
\P((\Ker \lambda_X)^{\vee}) =: \P^4.  
\]
Recall that $\P(\Im \lambda_X) = \langle X \rangle$ and 
$\P(\Im \lambda_T) = \langle T \rangle$. 
Since $T$ is a general hyperplane section of $X$ (Notation \ref{n g=8 3}), 
$\P(\Im \lambda_T)$ is a general hyperplane section of $\P(\Im \lambda_X)$. 
Hence $\P( (\Ker \lambda_T)^{\vee}) = \P^5$ is a general $5$-dimensional linear subvariety of $(\P^{14})^{\vee}$ containing $\P((\Ker \lambda_X)^{\vee}) = \P^4$. 
Let 
\[
\pi : (\P^{14})^{\vee} \setminus \P^4 
\to 
\P^9. 
\]
be the projection from $\P^4$. 
For an irreducible closed subset $W$ of $(\P^{14})^{\vee}$ satisfying 
$W \setminus\P((\Ker \lambda_X)^{\vee}) \neq \emptyset$, 
we set $\pi(W) := \overline{\pi( W \setminus\P((\Ker \lambda_X)^{\vee}))}$ 
by abuse of notation. 
Then $\pi(\P^5)$ is a general point on $\P^9$. 
By $\dim \pi(\Gr(2, U_6)) \leq \dim \Gr(2, U_6) =8$, 
we get 
$\pi(\Gr(2, U_6)) \cap \pi(\P^5) = \emptyset$. 
Hence it holds that 
\[
(\Gr(2, U_6)\cap \P^5) \setminus \P^4
\subset 
(\Gr(2, U_6) \setminus \P^4) \cap \pi^{-1}(\pi(\P^5)) 
\subset \pi^{-1}(\pi ( \Gr(2, U_6) \setminus \P^4) \cap  \pi(\P^5))  = \emptyset, 
\]
which implies 
$\Gr(2, U_6) \cap \P^5 \subset \P^4$. 
Therefore, we get $\Gr(2, U_6) \cap \P^5 \subset \Gr(2, U_6) \cap \P^4 
\overset{(\star)}{=} \emptyset$, where 
$(\star)$ is guaranteed by Lemma \ref{l Gr P(Ker) empty}. 
\qedhere


\end{proof}

\section{Irrationality}\label{s irrational}
In this section,  we prove:
\begin{thm}\label{thm:birational to cubic 3-fold}
 Any prime Fano $3$-fold $X$ of genus $8$ is birational to a 
 smooth cubic threefold $Y \subset \P^4$.
\end{thm}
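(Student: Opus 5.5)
By Theorem \ref{t main} we may write $X = \Gr(U_6,2) \cap \P(L_{10})$, where $L_{10} := \Im \lambda_X \subset \wedge^2 U_6$ has dimension $10$. Its annihilator $N := \Ker \lambda_X \subset \wedge^2 U_6^\vee$ has dimension $5$, and Lemma \ref{l Gr P(Ker) empty} gives $\P(N)\cap \Gr(2,U_6)=\emptyset$. The plan is the classical Pfaffian–Grassmannian correspondence. Elements of $\wedge^2 U_6^\vee$ are skew forms on $U_6$. The Pfaffian cubic $\mathcal C \subset \P(\wedge^2 U_6^\vee)$ is the locus of degenerate forms, i.e.\ those of rank $\le 4$, and its singular locus is $\Gr(2,U_6)$, the forms of rank $2$. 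We take $Y := \mathcal C \cap \P(N) \subset \P^4$. By the emptiness above, every point of $Y$ is a form of rank exactly $4$, and we aim to show $Y$ is a smooth cubic threefold.

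\textbf{Step 1: $Y$ is a smooth cubic.} First, $\P(N)\not\subset \mathcal C$: otherwise, by a dimension count, every line in $\P(N)$ would meet $\mathrm{Sing}\,\mathcal C = \Gr(2,U_6)$, since $\dim \Gr(2,U_6)=8$ has codimension $6$. We argue smoothness via the Pfaffian differential. At a rank-$4$ form $\omega$ with kernel $K_\omega \subset U_6$ of dimension $2$, the tangent hyperplane to $\mathcal C$ is $\{\eta : \eta|_{K_\omega}=0\}$, i.e.\ it is given by the point $[\wedge^2 K_\omega] \in \Gr(U_6,2)\subset\P(\wedge^2 U_6)$. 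Hence $Y$ is singular at $\omega$ iff $N \subset (\wedge^2 K_\omega)^\perp$, i.e.\ iff $[\wedge^2 K_\omega]\in \P(L_{10})\cap\Gr = X$. So we must rule out a point $x\in X$ whose $2$-plane $K$ is the kernel of some $\omega\in N$. Such an $\omega$ descends to a form on $U_6/K$, and this should force $\omega|_{K}$-type tangency conditions contradicting smoothness of $X$ at $x$. Concretely, $T_xX = T_x\Gr \cap \P(L_{10})$ should have dimension $>3$ if such an $\omega$ exists. We would compute this in coordinates: $T_x\Gr = \{K\wedge U_6\}$, and $\omega\in N$ with kernel $K$ kills $K\wedge U_6$. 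Then $N$ imposes at most $4$ independent conditions on the $9$-dimensional tangent space, so $\dim T_xX\ge 5 > 3$, a contradiction. This works uniformly in characteristic $p$ (including $p=2$), using Pfaffians rather than determinants. We expect this step to be the main obstacle, since it requires careful characteristic-free handling of the Pfaffian.

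\textbf{Step 2: the correspondence.} On $Y$, set $F := $ the kernel bundle, whose fibre at $\omega$ is $K_\omega$; it is a rank-$2$ subbundle of $U_6\otimes\MO_Y$. On $X$, we have $E^\vee\subset U_6^\vee\otimes\MO_X$, equivalently the rank-$4$ kernel of $U_6\to E$. We then form $\alpha:\P_X(\cdot)\to\P(U_6)=\P^5$, sending $(x,\ell)$ with $\ell$ a line in the $2$-dimensional fibre of $\Ker(U_6\otimes \MO\to E)^\perp$ to that line, and $\beta:\P_Y(F)\to\P^5$ in the same way. A point $u\in\P^5$ lies in $\Im\beta$ iff $u$ is in the kernel of some $\omega \in N$, i.e.\ iff the linear map $N\to U_6^\vee$, $\omega\mapsto\omega(u,-)$, is degenerate. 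This is a $5\times 6$ condition which, using skew symmetry ($\omega(u,u)=0$), becomes a $5\times5$ determinant and hence a quartic. Dually, $u\in\Im\alpha$ iff $u$ lies in some $2$-plane $[W]\in X$, i.e.\ iff $\P(L_{10})$ meets the $\P^4=\{u\wedge v\}$ with the right dimension. Both conditions should reduce to degeneracy of the same map: $u\wedge U_6\subset \wedge^2U_6$ has dimension $5$, and the pairing with $N$ gives a $5\times5$ matrix. So we expect $\Im\alpha=\Im\beta$, a quartic $\mathcal Q_4$ (Proposition \ref{p Palatini quartic}).

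\textbf{Step 3: birationality.} Both $\P_X(E)$ and $\P_Y(F)$ are $4$-dimensional, and $\alpha,\beta$ are generically finite onto $\mathcal Q_4$. A general point $u$ of $\mathcal Q_4$ has a $1$-dimensional kernel of the $5\times5$ matrix, giving a unique $\omega$, hence $\beta$ is birational. Similarly the $2$-planes in $X$ through $u$ form $\P(\text{kernel})\cap X$, which is a single point generically, so $\alpha$ is birational too. Taking a general hyperplane $H\subset\P^5$ and $Z=\mathcal Q_4\cap H$, the preimages $\alpha^{-1}(Z)\to X$ and $\beta^{-1}(Z)\to Y$ are birational: each fibre of $\P_X(E)\to X$ is a line in $\P^5$, which meets $H$ in one point. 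Composing gives $X\sim_{\rm bir} Z\sim_{\rm bir} Y$. Generic injectivity might fail in small characteristic through inseparability, so we would verify the degrees by computing $\alpha^*\MO(1)^4$. Equivalently, $\deg\alpha=1$ follows from $(\MO_{\P(E)}(1))^4=s_1^3\cdot$ stuff $=c_1^3-2c_1c_2=14-2\cdot 5=4$, which is the degree of the quartic. The analogous computation applies for $F$ on $Y$.
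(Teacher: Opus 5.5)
Your plan matches the paper's proof: $Y=\mathcal C\cap\P(N)$, smoothness of $Y$ from the tangent hyperplane $(\wedge^2K_\omega)^\perp$ to $\mathcal C$ at a rank-$4$ form, and $\Im\alpha=\Im\beta$ via the $5\times5$ pairing $N\times(u\wedge U_6)\to k$. That pairing argument is exactly the paper's $\wedge^5\psi=0$. Two steps, however, do not work as written.

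\textbf{First gap: $\P(N)\not\subset\mathcal C$.} No dimension count proves this. A $\P^4$ has no reason to meet the codimension-$6$ subvariety $\Gr(2,U_6)$. In fact $\mathcal C$ contains lines made entirely of rank-$4$ forms: take the skew pencil $\left(\begin{smallmatrix}0&L\\-L^{T}&0\end{smallmatrix}\right)$ with $L=\left(\begin{smallmatrix}s&t&0\\0&s&t\end{smallmatrix}\right)$ and pad it by a zero row and column. The claim itself is true and follows from your own Step 1. Every point of $\P(N)$ is a smooth point of $\mathcal C$, so $\P(N)\subset\mathcal C$ would force $\P(N)\subset T_\omega\mathcal C=(\wedge^2K_\omega)^\perp$ for every $\omega$. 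Then $[K_\omega]\in X$, and your tangent-space computation makes $X$ singular there.

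\textbf{Second gap: birationality in characteristic $p$.} This is the more serious issue. The computation $\xi^4=c_1^3-2c_1c_2=4$ only gives $\deg\alpha\cdot\deg(\Im\alpha)_{\red}=4$. Step 2 identifies $\Im\alpha$ only set-theoretically with the zero locus of a quartic form. Nothing you say excludes that this form is the square of a quadric and that $\alpha$ is purely inseparable of degree $2$. So ``$4$, which is the degree of the quartic'' is circular, and ``a general point has a $1$-dimensional kernel'' is unproved.

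The missing input, which the paper uses, is that the fibres of $\alpha$ are linear scheme-theoretically. Indeed $\alpha^{-1}(u)\simeq\P(u\wedge U_6)\cap\P(L_{10})$, because $\P(u\wedge U_6)$ is a linear $\P^4$ contained in $\Gr$. Since $\xi^4>0$, $\alpha$ is generically finite, so its general fibre is a $0$-dimensional linear space, i.e.\ a reduced point. Hence $\alpha$ is birational, and the reduced image is a quartic.

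Only then does your degree count settle $\beta$: $\deg\beta\cdot 4=(2h)^3-2\cdot 2h\cdot c_2(F)=24-20$. You still have to compute $c_2(F)\cdot h=5$. One way is the sequence $0\to F\to U_6\otimes\MO_Y\to U_6^\vee\otimes\MO_Y(1)\to F^\vee(1)\to 0$, which comes from skew-symmetry and gives $c(F^\vee(1))=c(F)(1+h)^6$; its degree-$3$ part forces $c_2(F)\cdot h=5$. Alternatively, apply the same linear-fibre argument directly to $\beta$, whose scheme-theoretic fibre over $u$ is $\P(\{\omega\in N:\omega(u,-)=0\})$.
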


By \cite{CG72}, \cite{Mur73}, \cite[Theorem A]{Ciu}, 
smooth cubic threefolds are irrational.
Thus we have
\begin{cor}\label{c irrational}
Any prime Fano $3$-fold $X$ of genus $8$ is irrational.
\end{cor}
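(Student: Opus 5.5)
The corollary follows formally from Theorem \ref{thm:birational to cubic 3-fold} and the irrationality of smooth cubic threefolds, so the plan is a two-line reduction plus a check that the cited irrationality result holds in the generality we need.

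Let $X$ be a prime Fano threefold of genus $8$ over our algebraically closed field $k$ of characteristic $p>0$. Suppose, for a contradiction, that $X$ is rational, i.e.\ birational to $\P^3_k$. Theorem \ref{thm:birational to cubic 3-fold} gives a smooth cubic threefold $Y \subset \P^4_k$ birational to $X$. Composing the two birational maps, $Y$ is birational to $\P^3_k$, so $Y$ is rational. This contradicts \cite[Theorem A]{Ciu}, which asserts that every smooth cubic threefold over an algebraically closed field of positive characteristic is irrational; \cite{CG72} and \cite{Mur73} give the same conclusion in characteristic zero and in characteristic $\neq 2$, respectively.

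The only point needing care is that the irrationality statement must cover every characteristic $p>0$, including $p=2$. Murre's intermediate-Jacobian argument excludes $p=2$, so the input for $p=2$ is \cite{Ciu}, and I would check that its Theorem A is stated for all $p$. No other obstacle is expected: all the geometric content, namely the Palatini quartic and the two $\P^1$-bundles $\P_X(E)$ and $\P_Y(F)$ over $X$ and $Y$, is absorbed into Theorem \ref{thm:birational to cubic 3-fold}, which we may assume. If the intended scope were unirrationality-type or stable statements, more would be needed, but plain irrationality requires nothing further.
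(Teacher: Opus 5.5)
Your proposal is correct and is exactly the paper's argument: the corollary follows immediately from Theorem \ref{thm:birational to cubic 3-fold} together with the irrationality of smooth cubic threefolds \cite{CG72}, \cite{Mur73}, \cite[Theorem A]{Ciu}. The paper leaves implicit your check that \cite{Ciu} covers characteristic $2$, where Murre's Prym-theoretic argument does not apply; that check is a reasonable one to make.
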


The proof is based on an account of \cite{Kuz04}, which goes back to the works \cite{Fan30}, \cite{Isk80}, \cite{Put82}.

\subsection{Pfaffian cubic hypersurface}
Let $U_6$ be a vector space of dimension $6$.
Then the vector space $\wedge^2 U_6$ parametrizes alternating forms on $U_6^\vee$.
Since alternating forms are classified by their ranks \cite[Lemma 1.4, Proposition 1.8]{EKM08}, there are three nonzero $\GL_6$-orbits in $\wedge^2 U_6$, or equivalently,
there are three $\PGL_6$-orbits in $\P(\wedge^2 U_6^\vee)$.
The open orbit parametrizes alternating forms of rank $6$, the \emph{Pfaffian cubic hypersurface} $\cC$ parametrizes those of rank $\leq 4$, and the Grassmann variety $\Gr(2,U_6)$ parametrizes those of rank $2$:
\[
\Gr(2,U_6) \subset \cC \subset \P(\wedge^2 U_6^\vee).
\]
The Pfaffian cubic hypersurface $\cC$ is defined by the Pfaffian of alternating forms.

\begin{rem}
We have the following commutative diagram 
\[
\begin{tikzcd}
\text{Mat}_6(k)  \arrow[r, phantom, "\simeq"] \arrow[d, phantom, "\rotatebox{90}{$\subset$}"] &  
{\rm Bil}(U_6^{\vee} \times U_6^{\vee}, k)  \arrow[r, phantom, "\simeq"] \arrow[d, phantom, "\rotatebox{90}{$\subset$}"] & 
\Hom_k(U_6^{\vee} \otimes_k U_6^{\vee}, k)  \arrow[r, phantom, "\simeq"]  \arrow[d, phantom, "\rotatebox{90}{$\subset$}"] & \Hom_k(U_6^{\vee}, U_6) \arrow[d, phantom, "\rotatebox{90}{$\subset$}"] \\
\text{Alt}_6(k)  \arrow[r, phantom, "\simeq"]           & 
{\rm Alt}(U_6^{\vee} \times U_6^{\vee}, k) \arrow[r, phantom, "\simeq"]          & 
\Hom_k(U_6^{\vee} \wedge U_6^{\vee}, k) \arrow[r, phantom, "\simeq"]          & 
\text{Alt}_6(U_6^{\vee}, U_6), 
\end{tikzcd}
\]
where the middle and right horizontal isomorphisms are the natural ones, 
whilst each of the left horizontal isomorphisms depends on a fixed $k$-linear isomorphism $k^6 \xrightarrow{\simeq} U_6^{\vee}$. 
Here we use the following terminologies:
\begin{itemize}
\item $\text{Mat}_6(k)$ is the $k$-vector space consisting of $6 \times 6$ matrices, and 
$\text{Alt}_6(k)$ is the $k$-vector subspace  of alternating matrices. 
\item ${\rm Bil}(U_6^{\vee} \times U_6^{\vee}, k)$ is the $k$-linear space of bilinear forms on $U_6^{\vee}$, and ${\rm Alt}(U_6^{\vee} \times U_6^{\vee}, k)$ is the 
$k$-linear subspace of alternating forms. 
\item $\Hom_k(V, W) := \{ f : V \to W \,|\, f\text{ is a $k$-linear map}\}$. 
\item $\text{Alt}_6(U_6^{\vee}, U_6)$ is the $k$-linear subspace of 
{\em alternating maps} $\phi: U_6^{\vee} \to U_6$, i.e., 
$\phi : U_6^{\vee} \to U_6$ is a $k$-linear map such that 
the composition $M \to U_6^{\vee} \to U_6 \to M^{\vee}$ is zero for every $1$-dimensional $k$-vector subspace $M \subset U_6^{\vee}$.
\end{itemize}

\end{rem}

\subsection{Cubic threefolds}
Let $X$ be a Fano $3$-fold of genus $8$, which is obtained as the linear section of $\Gr(U_6,2)$.
Recall that we have the following diagram:
\[
\begin{tikzcd}
\Gr(2,U_6) \arrow[r]  & \P(\wedge^2 U_6^\vee) (\simeq \P^{14})\\
\Gr(2,U_6)\cap \P((\Ker \lambda_X)^\vee) =\emptyset \arrow[u] \arrow[r]  & \P((\Ker \lambda_X )^\vee) (\simeq \P^4). \arrow[u] 
\end{tikzcd}
\]

\begin{dfn}\label{d cubic of X}
Let the notations be as above.
\begin{enumerate}
 \item  The cubic $3$-fold $Y$ of $X$ is the cubic hypersurface
 \[
Y = \cC \cap \P((\Ker \lambda_X)^\vee) \subset \P((\Ker \lambda_X )^\vee).
 \]
\item Since $\Gr(2,U_6)\cap \P((\Ker \lambda_X)^\vee) =\emptyset$, the cubic threefold $Y$ parametrizes alternating forms of rank $4$.
Namely, a closed point $y \in Y$ defines an alternating map $\phi _y \colon U_6 ^\vee \to U_6$ with $2$-dimensional cokernel $Q_y$.
By sending $y \in Y$ to $[Q_y]$, we have the following map
\[
\theta \colon Y \to \Gr(U_6,2).
\]
\end{enumerate}
\end{dfn}

\begin{prop}[{\cite[Theorem 2.2, Proposition A.4]{Kuz04}}]\label{p cubic of X is smooth}
 Let the notations be as above.
\begin{enumerate}
 \item $\theta (\Sing Y) = \theta(Y) \cap X = \Sing X = \emptyset$.
 \item $c_1(\theta^*\cQ) = 2 h$ and $c_2(\theta^*\cQ) \equiv  \frac{5}{3} h^2$, where $h$ is the class of the hyperplane section of $Y \subset \P((\Ker \lambda_X )^\vee)$.
\end{enumerate}
\end{prop}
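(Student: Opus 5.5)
The plan is to follow Kuznetsov's argument for Proposition \ref{p cubic of X is smooth}, checking that each step is characteristic-free. Set $P := \P((\Ker \lambda_X)^\vee) \simeq \P^4$, and recall $\Gr(2,U_6) \cap P = \emptyset$ (Lemma \ref{l Gr P(Ker) empty}).

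\textbf{Step 1 (singular points of $Y$).} Describe the singular locus of $\cC$ first. For $y \in \cC \setminus \Gr(2,U_6)$, the form $\phi_y$ has rank $4$ and kernel $K_y := Q_y^\vee \subset U_6^\vee$ of dimension $2$. Write the Pfaffian locally near $y$ in a basis adapted to $K_y$. This identifies the projective tangent hyperplane $T_y\cC$ with the set of forms $\omega$ satisfying $\omega|_{K_y}=0$. Equivalently, $T_y\cC$ is the hyperplane in $\P(\wedge^2 U_6^\vee)$ corresponding to the point $[K_y] \in \Gr(2,U_6^\vee) = \Gr(U_6,2)$ of the dual Grassmannian. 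This is a computation with a $6\times 6$ alternating matrix of the form $\begin{pmatrix} 0&0\\0&A\end{pmatrix}$ with $A$ invertible of size $4$. The differential of $\mathrm{Pf}$ is then $\mathrm{Pf}(A)$ times the $(1,2)$-entry, up to sign. This holds over $\Z$, so it is valid in every characteristic, including $p=2$.

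It follows that $y \in \Sing Y$ if and only if $P \subset T_y\cC$, that is, if and only if the hyperplane $H_{\theta(y)}$ contains $P$. Dually, $H_{\theta(y)}$ contains $P$ if and only if the point $\theta(y) \in \Gr(U_6,2)$ lies in $P^\perp = \P(\Im\lambda_X) = \langle X \rangle$. By Theorem \ref{t main}, $\Gr(U_6,2)\cap\langle X\rangle = X$. Hence $\theta(\Sing Y) = \theta(Y)\cap X$.

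\textbf{Step 2 (the intersection is empty).} We must show that $\theta(Y) \cap X = \emptyset$. Every point $x \in X$ is a $2$-dimensional quotient $U_6 \to Q$ with $Q^\vee \subset U_6^\vee$. Consider the space of forms $\omega \in P$ that vanish on $Q^\vee$. A point of $\theta(Y)\cap X$ would require $\phi_y$ to kill $Q^\vee$ entirely. I plan to show this is impossible by a dimension/tangent argument, following Kuznetsov's Proposition A.4. Suppose $x = \theta(y)$. The forms in $P$ that vanish on $Q_x^\vee$ form a linear system. The statement $x\in X$ means that every $\omega \in P$ vanishes on $\wedge^2 Q_x^\vee$, which is one linear condition. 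Then $\phi_y$ having $Q_x^\vee$ as its kernel gives a nonzero $\omega$ with $\omega(Q_x^\vee, U_6^\vee)=0$. Pairing against the tangent space $T_xX = (\Hom(K,Q) \text{ annihilated by } P)$, where $K$ is the kernel of $U_6\to Q$, I expect that such an $\omega$ forces $\dim T_xX \geq 4$, contradicting the smoothness of $X$.

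Concretely, $T_x\Gr = \Hom(K_x,Q_x)$, and $T_xX$ is cut out by the $5$ linear forms $\omega\in P$, each restricted to $K_x\otimes Q_x^\vee$. The form $\phi_y$ restricts to zero there because it kills $Q_x^\vee$. So at most $4$ independent conditions remain, and $\dim T_xX \geq 8-4 = 4 > 3$, which is the desired contradiction. Combined with Step 1, this gives $\Sing Y = \emptyset$ and all of part (1).

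\textbf{Step 3 (Chern classes).} For part (2), note that $\theta^*\cQ$ is the cokernel of $\phi: U_6^\vee\otimes\MO_Y(-1)\to U_6\otimes\MO_Y$ restricted to $Y$. More precisely, there is a resolution on $P$,
\[
0 \to U_6^\vee\otimes\MO_P(-2) \to U_6\otimes\MO_P(-1) \to U_6\otimes \MO_P \to \theta_*\theta^*\cQ\ldots
\]
A cleaner route is the sheafified exact sequence $0\to \theta^*\cQ^\vee \to U_6^\vee\otimes\MO_Y(-1)\to U_6\otimes\MO_Y\to\theta^*\cQ\to 0$. Taking total Chern classes gives $c(\theta^*\cQ)\,c(\theta^*\cQ^\vee)^{-1} = (1-h)^{-6}$. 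Solving in degrees $1$ and $2$ yields $c_1 = 2h$ and $2c_2 - c_1^2 \cdot(\ldots)$. The exact coefficients will be computed from this relation. Here $\equiv$ means equality modulo numerical equivalence, using $h^3=3$ to pin down $c_2 = \tfrac53 h^2$ numerically.

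The main obstacle is Step 2. In particular, the tangent computation must be carried out with care: it requires that $\phi_y$ vanish on $K_x\otimes Q_x^\vee$ and that the degree count be correct.
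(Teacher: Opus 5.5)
Your Steps 1 and 2 are correct and are in substance the paper's argument. The paper identifies the conormal map of $Y\subset\P((\Ker\lambda_X)^\vee)$ at $y$ with $(\wedge^2Q_y)^\vee\to(\Ker\lambda_X/L_y)^\vee$. This is your explicit Pfaffian-derivative statement that the tangent hyperplane of $\cC$ at a rank-$4$ point is $\Ker(\wedge^2U_6\to\wedge^2Q_y)$. The paper also identifies the conormal map of $X\subset\Gr(U_6,2)$ at $[Q]$ with $\Ker\lambda_X\to Q\otimes K$ (tensored with $(\wedge^2Q)^\vee$). This matches your observation that $\phi_y\in\wedge^2K_x\cap\Ker\lambda_X$ has zero differential, which forces $\dim T_xX\ge 4$.

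One difference: you only prove $\theta(Y)\cap X\subset\Sing X$. That suffices here because $X$ is smooth. The paper proves $x\in\Sing X\iff\wedge^2K\cap\Ker\lambda_X\neq0$ in both directions, and Example \ref{e Klein cubic} uses the converse to deduce smoothness of $X$ from that of $Y$.

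Step 3, however, fails as written.

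\textbf{The kernel is twisted.} Let $\mathcal N$ be the kernel in
$0\to\mathcal N\to U_6^\vee\otimes\MO_Y(-1)\xrightarrow{\phi}U_6\otimes\MO_Y\to\theta^*\cQ\to0$.
Because $\phi$ is skew, dualising this sequence and twisting by $\MO_Y(-1)$ gives $\mathcal N\simeq\theta^*\cQ^\vee\otimes\MO_Y(-1)$, not $\theta^*\cQ^\vee$. With your untwisted kernel, the degree-one part of your relation reads $2c_1=6h$, i.e. $c_1=3h$, which is wrong.

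\textbf{With the correct kernel.} Here $c(\mathcal N)=1-(c_1+2h)+(c_2+c_1h+h^2)$ and $c(\theta^*\cQ)=c(\mathcal N)\,(1-h)^{-6}$.
In degree $1$ this gives $2c_1=4h$, hence $c_1=2h$, since $\Pic Y=\Z h$ by Grothendieck--Lefschetz.
In degree $2$ it gives only $c_1h=2h^2$, so solving in degrees $1$ and $2$ cannot determine $c_2$.
The information is in degree $3$: since $c_3(\theta^*\cQ)=0$, one gets $6h\,c_2-15c_1h^2+20h^3=0$. Hence $\deg(h\cdot c_2)=5=\deg(\frac53h^3)$, i.e. $c_2\equiv\frac53h^2$.

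With these corrections your route is a valid and slightly more elementary alternative to the paper's. The paper reads off $c_1$ directly from $C_{Y/\P((\Ker\lambda_X)^\vee)}\simeq(\wedge^2\theta^*\cQ)^\vee\otimes\MO(-1)$, which avoids any torsion question. It obtains $c_2$ by Grothendieck--Riemann--Roch from the two-term resolution $0\to U_6^\vee\otimes\MO(-1)\to U_6\otimes\MO\to\theta_*\theta^*\cQ\to0$ on $\P((\Ker\lambda_X)^\vee)$.
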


\begin{proof}
Let
\[
0 \to \cK \to \MO_{\Gr(U_6,2)}^{\oplus 8} \to \cQ \to 0
\]
be the universal sequence on $\Gr(U_6,2)$.

First,  we prove that the map $\theta$ is injective.
Otherwise, there are two different points in 
$\Ker \lambda_X$ 
(more precisely, in $\P( (\Ker \lambda_X)^{\vee})$)  that define alternating maps $U_6 ^\vee \to U_6$ with the same $2$-dimensional cokernel $Q$.
Then some linear combination of these two forms is of rank two, which contradicts 
$\Gr(2,U_6)\cap \P((\Ker \lambda_X)^\vee) =\emptyset $.

Second,  we prove $\theta (\Sing Y) = \theta(Y) \cap X$.
By the definition of $X$, we have
\[
X = \{[Q] \in \Gr(U_6,2) \mid \text{the composition $\Ker \lambda_X \to \wedge^2 U_6 \to \wedge^2Q$ is zero} \}.
\]
Thus the intersection $\theta(Y) \cap X$ parametrizes $[Q] \in \Gr(U_6,2)$ such that
\begin{itemize}
 \item the composite $\Ker \lambda_X \to \wedge^2 U_6 \to \wedge^2Q$ is zero, and
 \item there is a rank $4$ point $y \in \Ker \lambda_X$ such that  $\phi_y \colon U_6 ^\vee \to U_6$ has the $2$-dimensional cokernel $Q$.
\end{itemize}

The conormal 
bundle ${C_{Y/\P((\Ker \lambda_X )^\vee)} := I_Y/I_Y^2}$ of $Y \subset  \P((\Ker \lambda_X )^\vee)$ is naturally isomorphic to $(\wedge^2 \theta^*\cQ)^\vee \otimes \MO(-1)$
%
(cf.\ \cite[Remark 1.13]{Muk01}). 
Note that $C_{Y/\P((\Ker \lambda_X )^\vee)}$  is an invertible sheaf on $Y$.  
Moreover, the $\MO_Y$-module homomorphism 
\[
C_{Y/\P((\Ker \lambda_X )^\vee)} \to \Omega^1_{\P( (\Ker \lambda_X)^\vee)}|_Y
\]
at the point $y \in Y$ is identified with the natural induced map
\begin{equation} \label{equation:normal_map}
(\wedge^2 Q_y)^\vee \to  (\Ker \lambda_X / L_y)^\vee .
\end{equation}
Here $L_y \subset \Ker \lambda_X$ is the $1$-dimensional subspace corresponding to the point $y \in Y \subset \P( (\Ker \lambda_X)^\vee)$, and map \eqref{equation:normal_map} is induced from the composition $\Ker \lambda_X \to \wedge^2 U_6 \to \wedge^2Q$.
Thus we have
\[
y \in \Sing Y \iff\text{\eqref{equation:normal_map} is zero} \iff \theta(y) \in X,
\]
where the first equivalence follows from \cite[Ch. II, Theorem 8.17]{Har77}. 
Therefore, we have $\theta (\Sing Y) = \theta(Y) \cap X$.

Thirdly, we show $\theta(Y) \cap X = \Sing X$.
Fix a closed point $[Q] \in X \subset \Gr(U_6,2)$ and 
consider the induced 
exact sequence
\[
0 \to K \to U_6 \to Q \to 0.
\]
Since the cotangent bundle $\Omega^1_{\Gr(U_6, 2)}$
of the Grassmannian variety $\Gr(U_6, 2)$ is isomorphic to $\mathcal Hom(\mathcal Q, \cK)$, 
the cotangent space 
$\Omega^1_{\Gr(U_6, 2)} \otimes k([Q])$ of $\Gr(U_6,2)$ at the point $[Q]$ is naturally isomorphic to $Q^\vee \otimes K \simeq  (\wedge^2Q)^\vee \otimes Q \otimes K$.
On the other hand, the conormal bundle of the inclusion 
$\P(\Im \lambda_X) \subset \P(\wedge^2 U_6)$ 
at this point is naturally isomorphic to
\[
\Hom (\wedge^2 Q  , \Ker \lambda_X ) \simeq (\wedge^2 Q)^\vee \otimes \Ker \lambda_X .
\]
Then the natural map
$C_{X/ \Gr} \otimes_{\MO_X} k(x) \to (\Omega_{\Gr(U_6,2)}^1|_X) \otimes_{\MO_X} k(x) $ is identified with the map
\begin{equation}\label{equation:restriction}
(\wedge^2 Q)^\vee \otimes \Ker \lambda_X  \to (\wedge^2Q)^\vee \otimes Q \otimes  K.
\end{equation}
Here the map is obtained as follows:
By taking the second exterior power of the sequence
\[
0 \to K \to U_6 \to Q \to 0,
\]
we have the following exact sequences:
\begin{align*}
 0 \to F \to \wedge^2 U_6 \to \wedge^2 Q \to 0,\\
 0 \to \wedge^2 K \to F \to Q \otimes K \to 0,
\end{align*}
Since the composite $\Ker \lambda_X \to \wedge^2 U_6 \to \wedge^2 Q$ is zero,  we have the induced map $\Ker \lambda_X \to F$.
By composing the map $F \to Q \otimes K$ with this induced map, we have a map $\Ker \lambda_X  \to  Q \otimes  K$.
By tensoring  $(\wedge^2Q)^\vee$,  we have \eqref{equation:restriction}.
Then, for $x \in X$, we have
\[
x \in \Sing X \iff \text{\eqref{equation:restriction} is not injective} \iff \wedge^2 K \cap \Ker \lambda_X \neq 0 
\iff x \in \theta(Y),
\]
where the first equivalence follows from \cite[Ch. II, Theorem 8.17]{Har77}.
Thus $\theta(Y) \cap X = \Sing X$.

Finally, we prove $c_1(\theta^*\cQ) = 2 h$ and $c_2(\theta^*\cQ) \equiv  \frac{5}{3} h^2$.
By the definition, we have the following exact sequence
\[
0 \to U_6^\vee \otimes \MO_{\P((\Ker \lambda_X )^\vee)}(-1) \to U_6 \otimes \MO_{\P((\Ker \lambda_X )^\vee)} \to \theta_*\theta^* \cQ \to 0.
\]
Since ${C_{Y/\P((\Ker \lambda_X )^\vee)}} \simeq (\wedge^2 \theta ^*\cQ)^\vee \otimes \MO(-1)$
it holds that $c_1(\theta^*\cQ) = 2 h$.
The assertion for $c_2(\theta^*\cQ)$  follows from the following equation obtained from the Grothendieck-Riemann-Roch theorem:
\[
\ch (\theta_*\theta^* \cQ) = \theta_*(\ch(\theta^*\cQ) \cdot \td(N_{Y/\P((\Ker \lambda_X )^\vee)})^{-1}).
\]
\end{proof}

\subsection{Palatini quartics}\label{ss Palatini}
Let $E$ be the bundle defining the Grassmann embedding $\varphi \colon X \to \Gr(U_6,2)$, i.e., $E = \varphi^* \cQ$.
By the surjection $U_6 \otimes \MO_X \to E$, we get the induced morphism 
\[
\alpha \colon \P_X(E) \hookrightarrow \P_X(U_6 \otimes \MO_X) 
= \P(U_6) \times_k X \xrightarrow{\pr_1} \P(U_6). 
\]
Similarly, we have the induced morphism $\beta \colon \P_Y(\theta^*\cQ) \to \P(U_6)$, and the following diagram:
\[
\begin{tikzcd}
\P_X(E) \arrow[rd,"\alpha"] \arrow[d, "\P^1\text{-bundle}"']  &   & \P_Y(\theta^*\cQ)) \arrow[ld,"\beta"'] \arrow[d, "\P^1\text{-bundle}"] \\
X&  \P(U_6)(\simeq \P^5) & Y.
\end{tikzcd}
\]
\begin{prop}[{\cite{Fan30}, \cite{Isk80}, \cite{Put82}, \cite[Proposition 2.11]{Kuz04}}]\label{p Palatini quartic}
The images of $\alpha$ and $\beta$ are the same quartic hypersurface (called \emph{Palatini quartic}).
Moreover, the maps $\alpha$ and $\beta$ are birational onto its image.
\end{prop}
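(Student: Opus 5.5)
We describe both images as incidence loci in $\P(U_6)$ and compare them. A point of $\P(U_6)$ is a one-dimensional quotient $U_6 \to \ell$, equivalently a line $\ell^\vee \subset U_6^\vee$. A point of $\P_X(E)$ over $[Q] \in X$ is a quotient $Q \to \ell$, so $\alpha$ sends it to the composite $U_6 \to Q \to \ell$. Hence
\[
\Im(\alpha) = \{ u^\vee \in \P(U_6^\vee) \mid u^\vee \in Q^\vee \text{ for some } [Q] \in X \}.
\]
In the same way, a point $y \in Y$ gives the cokernel $Q_y$ of $\phi_y$, with $Q_y^\vee = \Ker(\phi_y^\vee) = \Ker \phi_y$ (using that $\phi_y$ is alternating). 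So $\Im(\beta)$ is the set of $u^\vee$ with $\phi_y(u^\vee)=0$ for some $y \in Y$.

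\emph{Step 1: the image of $\beta$ is a quartic.} For fixed $u^\vee$, the condition that $\phi_y(u^\vee)=0$ for some $[y] \in \P((\Ker\lambda_X)^\vee) \simeq \P^4$ is linear in $y$. It asks that the map $\Ker\lambda_X \to U_6$, $y \mapsto \phi_y(u^\vee)$, have a kernel. Since $\phi_y(u^\vee)$ pairs to zero with $u^\vee$, this map lands in the five-dimensional space $(u^\vee)^\perp \subset U_6$. So we get a $5\times 5$ matrix whose entries are linear in $u^\vee$, and its determinant is a quartic form $F(u^\vee)$. Any kernel vector $y$ automatically has rank $\le 4$, so it lies on $Y$; it cannot be a rank-$2$ form by Lemma \ref{l Gr P(Ker) empty}. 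Hence $\Im(\beta) = \{F=0\}$, provided $F \not\equiv 0$. To see $F \not\equiv 0$, compute $\dim \P_Y(\theta^*\cQ) = 4$ and $\deg \beta^*\MO(1)^4 = c_1^3 - 2c_1c_2$ using Proposition \ref{p cubic of X is smooth}(2). With $c_1 = 2h$, $c_2 = \frac{5}{3}h^2$ and $h^3=3$, this gives $s_3 = c_1^3 - 2c_1c_2 = (8-\tfrac{20}{3})\cdot 3 = 4$. So $\beta$ is generically finite onto its image, and (degree of image)$\cdot\deg\beta = 4$. Since $\Im(\beta)$ has degree $4$, $\beta$ is birational.

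\emph{Step 2: $\Im(\alpha)=\Im(\beta)$.} The condition $u^\vee \in Q^\vee$ with $[Q]\in X$ means that the two-dimensional subspace $Q^\vee \ni u^\vee$ is isotropic for every form in $\Ker\lambda_X$ (this is the description of $X$ used in the proof of Proposition \ref{p cubic of X is smooth}). Equivalently, $Q^\vee/u^\vee$ gives a point of $\P((u^\vee)^\perp/u^\vee) \simeq \P^3$ annihilated by the five linear forms $v^\vee \mapsto y(u^\vee, v^\vee)$, $y \in \Ker\lambda_X$. These are five linear forms on a four-dimensional space, so a common zero exists iff a $4\times5$ matrix drops rank. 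That matrix is the transpose of the $5\times5$ one above restricted modulo $u^\vee$: the $5\times 5$ matrix has $u^\vee$ in the kernel of its transpose, i.e.\ it is the $4\times5$ one bordered by a zero row. Dropping rank of the $4\times5$ matrix is the same as the $5\times5$ map $\Ker\lambda_X \to (u^\vee)^\perp$ failing to be injective. Hence $\Im(\alpha)=\{F=0\}$ as well.

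\emph{Step 3: birationality of $\alpha$.} Over a general point of the quartic, the $4\times5$ matrix has rank exactly $3$, so the fibre of $\alpha$ is a single reduced point. To justify this cleanly, compute the degree instead: $\deg \alpha^*\MO(1)^4 = s_3(E) = c_1^3 - 2c_1c_2$ on $X$. Here $c_1^3 = 112$ and $c_1 c_2 = -K_X\cdot c_2(E) = 54$, because $c_2(E)$ is the class of a quintic... wait, the zero locus of a section of $E$ is $C$ of degree $5$ (Remark \ref{r construct E via SC}). That gives $c_1c_2 = 5$? No: $c_2(E)\cdot(-K_X) = \deg C = 5$ and $c_1^3 = 14$, so $s_3 = 14 - 10 = 4$. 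Thus $\alpha$ is also birational onto the quartic.

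The main obstacle is Step 2: checking that the determinantal conditions coincide scheme-theoretically rather than only set-theoretically, and handling characteristic $2$, where alternating (not merely skew) forms are needed to get $\phi_y(u^\vee)\in(u^\vee)^\perp$ and $\Ker\phi_y^\vee=\Ker\phi_y$. Because both images are irreducible of degree $4$ by the degree computations, set-theoretic equality suffices.
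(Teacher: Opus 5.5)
Your plan is the paper's: describe both images as incidence loci in $\P(U_6)$, note that membership in $\Im(\beta)$ and in $\Im(\alpha)$ amount to non-injectivity of a square map and of its transpose, and compute $\xi^4=c_1^3-2c_1c_2=4$ on both projective bundles. Those numbers are right. The gap is in the birationality. The identity $\xi^4=4$ only gives $\deg(\text{map})\cdot\deg(\text{image})=4$. In Step~1 you infer ``$\Im(\beta)$ has degree $4$'' from $\Im(\beta)=\{F=0\}$, but that equality is only set-theoretic. Irreducibility of $\Im(\beta)$ only gives $F=G^m$ with $G$ irreducible. Nothing you say rules out $F$ being the square of a quadric (so $\Im(\beta)$ a quadric and $\deg\beta=2$) or the fourth power of a linear form. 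Step~3 has the same defect, since you drop the fibre argument in favour of the degree count. Your closing claim that both images have degree $4$ ``by the degree computations'' is therefore circular.

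The missing ingredient, which the paper uses, is that the fibres of $\alpha$ are linear spaces. Since $X=\Gr(U_6,2)\cap\P^9$, the fibre over $p\in\P(U_6)$ is, scheme-theoretically, the intersection of $X$ with the linear $\P^4\subset\Gr(U_6,2)$ of two-dimensional quotients $U_6\to Q\to p$. It is thus a linear section of $\P^4$. Because $\xi^4>0$ makes $\alpha$ generically finite, a general fibre is a single reduced point, so $\alpha$ has degree $1$ (also in characteristic $p$). Hence $\Im(\alpha)$ has degree $4$ and $F$ is reduced, and only then does $\xi^4=4$ on $\P_Y(\theta^*\cQ)$ force $\deg\beta=1$. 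This also supplies the corank-one statement you asserted without proof at the start of Step~3.

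Separately, correct the dimensions in Step~2. The two-dimensional subspaces $Q^\vee\ni u^\vee$ are parametrised by $\P(U_6^\vee/u^\vee)\simeq\P^4$, not $\P^3$. So the relevant map is the square map $U_6^\vee/u^\vee\to(\Ker\lambda_X)^\vee$, $v\mapsto(y\mapsto y(u^\vee,v))$, which is exactly the transpose of $\Ker\lambda_X\to(u^\vee)^\perp$. It is the $6\times5$ map $\Ker\lambda_X\to U_6$ that is the square one bordered by a zero row. Taken literally, your version (a $5\times5$ matrix obtained by bordering a $4\times5$ one) would have identically vanishing determinant, contradicting Step~1. Similarly, the quartic in Step~1 is the determinant of the bundle map $\Ker\lambda_X\otimes\MO(-1)\to\Omega_{\P(U_6)}(1)$, which is a section of $\MO(4)$. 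It is not the determinant of a $5\times5$ matrix of linear forms, which would be a quintic.
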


\begin{proof}
By the definition of $\alpha$ and $\beta$, we see that, for $[L] \in \P(U_6)$,
\begin{align*}
& [L] \in \Im(\alpha)\\
\iff & \text{$L$ is obtained as successive quotients $U_6 \to Q \to L$ for some $[Q] \in X$}
\end{align*}
and
\begin{align*}
 &[L] \in \Im(\beta)\\
 \iff &\text{$L$ is obtained as successive quotients $U_6 \to Q \to L$ for some $[Q] \in \theta(Y)$}.
\end{align*}

Note that, for $[L] \in \P(U_6)$ and $y \in \wedge^2 U_6 $, the composite $k$-linear map  
\[
L^\vee \to U_6^\vee \xrightarrow{\phi_y} U_6 \to L
\]
is zero, and hence induces a $k$-linear map
\[
\psi_{L,y} \colon U_6^\vee / L^\vee \to L.
\]
By using the Euler sequence, 
we see that this map corresponds to the $k$-linear map of 
$T_{\P(U_6)}(-1) \to \MO_{\P(U_6)}(1) \otimes_k \wedge^2 U_6^\vee$ at $y$,  
and hence we have the following map on $\P(U_6)$:
\[
\psi \colon T_{\P(U_6)}(-1) \to \MO_{\P(U_6)}(1) \otimes_k (\Ker \lambda_X)^\vee.
\]

Then the following equivalences hold for $[L] \in \P(U_6)$: 
\begin{align*}
 & [L]\in \Im (\beta)\\
\iff & \text{there is a rank $4$ point $y \in \Ker \lambda_X$ such that the composite $U_6^\vee \xrightarrow{\phi_y} U_6 \to L$ is zero}.\\
\iff & \text{there is a point $y \in \Ker \lambda_X$ such that the composite $U_6^\vee \xrightarrow{\phi_y} U_6 \to L$ is zero}.\\
\iff & \text{there is a point $y \in \Ker \lambda_X$ such that $\psi_{L,y}$ is zero}.\\
\iff & \text{$\wedge^5 \psi=0$ at $[L]$}.
\end{align*}

Similarly.
\begin{align*}
 & [L]\in \Im (\alpha)\\
\iff & \text{$L$ is obtained as a successive quotients $U_6 \to Q \to L$ for some $[Q] \in X$}.\\
\iff & \text{$L$ is obtained as a successive quotients $U_6 \to Q \to L$ for some $[Q] \in \Gr(U_6,2)$}\\
&\text{such that $Q^\vee \to U_6^\vee \xrightarrow{\phi_y}  \ U_6 \to Q$ is zero for any $y \in \Ker \lambda_X$}.\\
\iff & \text{$L$ is obtained as a successive quotients $U_6 \to Q \to L$ for some $[Q] \in \Gr(U_6,2)$}\\
&\text{such that $\psi_{L,y}|_{Q^\vee/L^\vee}\colon Q^\vee/L^\vee \to L $ is zero for any $y \in \Ker \lambda_X$}.\\
\iff & \text{$\cap_{y \in \Ker \lambda_X}\Ker \psi_{L,y}$ is nonzero}.\\
\iff & \text{$\wedge^5 \psi=0$ at $[L]$}.
\end{align*}
Therefore, $\Im (\alpha) = \Im(\beta)$.


Recall that the projective bundle $\Pi \colon \P_{\Gr(U_6,2)}(\cQ) \to \Gr(U_6,2)$ is the flag variety parametrizing successive quotients $U_6 \to Q \to L$ with $\dim Q =2$ and $\dim L =1$.
Moreover, the natural map $A \colon \P_{\Gr(U_6,2)}(\cQ) \to \P(U_6) (\simeq \P^5)$ is a $\P^4$-bundle (associated with $\Omega_{\P(U_6)} (2)$), and each fiber $\P^4$ is mapped isomorphically onto a linear $4$-space contained in $\Gr(U_6,2)$ by $\Pi$.

Let $p \in \P(U_6)$ be a point.
Then 
\[
\alpha^{-1}(p) = A^{-1}(p) \cap \P_X(E) \simeq \Pi(A^{-1}(p)) \cap X
\]
is isomorphic to a linear section of the projective space $\Pi(A^{-1}(p)) \simeq \P^4$.
Thus each (non-empty) $\alpha$-fiber is isomorphic to a projective space.
On the other hand, since we know the Chern classes of $E$, we can show that the degree of the tautological divisor of the projective bundle $\P_X(E)$ is $4$.
Thus $\alpha$ is birational onto a quartic hypersurface in $\P(U_6)$.
Similarly, we can show that the degree of the tautological divisor of the projective bundle $\P_Y(\theta^*\cQ)$ is $4$.
Hence $\beta$ is also birational.
\end{proof}


\begin{proof}[Proof of Theorem~\ref{thm:birational to cubic 3-fold}]
 Let $Z$ be a general hyperplane section of the quartic hypersurface $\Im(\alpha) = \Im(\beta)$.
 Then $X$ and $Y$ are both birational to $Z$, and the assertion follows.
\end{proof}
 
\section{Two-ray game for quintic genus-one curves}


Let $X$ be a prime Fano threefold of genus $8$. 
There exist a field extension $k \subset \kappa$ and a geometrically integral regular genus-one curve $C$ on $X_{\kappa} := X \times_k \kappa$. 
The purpose of this subsection is to provide a concrete description for the two-ray game starting with the blowup of $X_{\kappa}$ 
along $C$. 
We give the details for results in Subsection \ref{ss 7.1}. 
On the other hand, 
we omit some details in Subsection \ref{ss 7.2}, which are left to the reader. 



\subsection{Birationality and smallness of $|-K_Y|$}\label{ss 7.1}

\begin{nota}\label{n qell blowup}
Let $X \subset \P^{9}_k$ be a prime Fano threefold of genus $8$. 
Take a field extension $k \subset \kappa$ and let $C$ be a geometrically integral regular genus-one curve 
on $X_{\kappa} := X \times_k \kappa$ 
such that $-K_{X_{\kappa}} \cdot C =5$ and $\langle C\rangle \cap X = C$.
Let $\sigma : Y \to X_{\kappa}$ be the blowup along $C$. 
It holds that $|-K_Y|$ is base point free, $-K_Y$ is big, and 
$h^0(Y, -K_Y)=5$ (Lemma \ref{l qell blowup basic}). 
For the induced morphism $\varphi_{|-K_Y|} : Y \to \P^4_{\kappa}$, 
let $\varphi : Y \to V := \varphi_{|-K_Y|}$ be the morphism onto the image and 
let $\psi : Y \to Z$ be its Stein factorisation: 
\[
\varphi: Y \xrightarrow{\psi} Z \to  V \subset \P^4_{\kappa}. 
\]
Then $\psi : Y \to Z$ is a birational morphism (possibly an isomorphism). 
Set $E := \Ex(\sigma)$. 
\end{nota}

\begin{lem}\label{l qell blowup basic}
We use Notation \ref{n qell blowup}. 
Then 
\begin{enumerate}
\item $(-K_Y)^3 =4$. 
\item $(-K_Y)^2 \cdot E = 5$. 
\item $(-K_Y) \cdot E^2 =0$. 
\item $E^3 = -5$. 
\item $h^0(Y, -K_Y) = 5$. 
\end{enumerate}
\end{lem}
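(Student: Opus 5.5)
Items (1)--(4) are the standard intersection numbers for the blowup of a smooth threefold along a regular curve; they were already computed in the proof of Proposition \ref{p exist K3-like} (cf.\ \cite[Lemma 3.21(2)]{FanoII}). The plan is to recall the formulas and substitute $(-K_{X_\kappa})^3=14$, $-K_{X_\kappa}\cdot C=5$ and $p_a(C)=1$. The inputs are $-K_Y=\sigma^*(-K_{X_\kappa})-E$, $E\cdot\sigma^*D\cdot\sigma^*D'=0$, and $E^2\cdot\sigma^*D=-D\cdot C$. We also need $E^3=-\deg N_{C/X_\kappa}$ with $\deg N_{C/X_\kappa}=-K_{X_\kappa}\cdot C+2p_a(C)-2=5$, which holds by adjunction since $C$ is regular and lies in the smooth variety $X_\kappa$. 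Expanding gives:
\begin{align*}
(-K_Y)^3 &= 14-3\cdot 5+E^3\cdot(-1)\cdot(-1)^{0}\ \text{(formally)} = 14-10=4,\\
(-K_Y)^2\cdot E &= 5-0=5,\\
(-K_Y)\cdot E^2 &= -5+5=0,\\
E^3 &= -5 .
\end{align*}
These computations are routine, and the paper already records them, so one only has to check the signs.

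For (5), I would use Riemann--Roch on a K3-like surface rather than vanishing on $Y$ directly. First, $\sigma_*\MO_Y(-K_Y)=\sigma_*\MO_Y(\sigma^*(-K_{X_\kappa})-E)=I_C\otimes\MO_{X_\kappa}(-K_{X_\kappa})$. Hence
\[
H^0(Y,-K_Y)\simeq H^0(\P^9_\kappa, I_C(1))\cap\ldots = H^0(X_\kappa,I_C(-K_{X_\kappa})),
\]
which is the space of hyperplane sections of $X_\kappa\subset\P^9_\kappa$ containing $C$. Its dimension is $10-\dim\langle C\rangle-1$. It therefore suffices to show $\dim\langle C\rangle=4$, i.e.\ that $C$ spans a $\P^4$.

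Consider the restriction $H^0(\P^9,\MO(1))\to H^0(C,\MO_C(1))$. The target has dimension $5$ by Riemann--Roch on the genus-one curve $C$, since the degree is $5>0$ and $C$ is geometrically integral, so $h^1=0$. I would show this map is surjective, i.e.\ that $C$ is linearly normal. Surjectivity can be reduced to the restriction $H^0(X_\kappa,-K)\to H^0(S,-K|_S)\to H^0(C,-K|_C)$ via a K3-like surface $S\supset C$, such as the one from Lemma \ref{l construct E via SC0} or Proposition \ref{p exist K3-like}. The first map is surjective since $H^1(X_\kappa,\MO)=0$. The second map has cokernel inside $H^1(S,\MO_S(S-C))$. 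Here $S-C$ is nef and big with $(S-C)^2=14-10+0=4$ by Lemma \ref{l construct E via SC0}, so this $H^1$ vanishes by \cite[Theorem 3.4(2)]{FanoI}.

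One caveat is that $S$ may only exist after a purely transcendental extension $\kappa'/\kappa$. Dimensions of cohomology are invariant under flat base change, so this is harmless. Consequently $\dim\langle C\rangle=4$ and $h^0(Y,-K_Y)=10-5=5$. Alternatively, the same conclusion follows from $h^0(S,S-C)=4$ in Lemma \ref{l construct E via SC0}(3) together with the sequence $0\to\MO_{X}\to\MO_X(-K-\text{``}C\text{''})\ldots$. Concretely, $H^0(I_C(1))$ contains the equation of $S$, and the quotient by it is $H^0(S,\MO_S(S-C))$, giving $1+4=5$.

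The main obstacle is this last identification: making the vanishing and the base change rigorous over the non-closed field $\kappa$. The intersection-number parts are formal.
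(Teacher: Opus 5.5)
Your proof is correct. Items (1)--(4) follow the paper, but for (5) you take a different route.

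For (1)--(4) you use the same blowup formulas as the proof of Proposition \ref{p exist K3-like}, and your inputs are right. The displayed expansion is garbled, though. With $H:=\sigma^*(-K_{X_\kappa})$ it should read $(H-E)^3=14-0-15+5$, $(H-E)^2\cdot E=0+10-5$ and $(H-E)\cdot E^2=-5+5$.

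For (5) the paper stays on $Y$. It takes the generic member $S$ of $|-K_Y|$ and shows it is geometrically integral, using geometric normality of $Y$ and \cite[Proposition 2.11]{FanoI}. It then gets $h^0(S,-K_Y|_S)=2+\frac{1}{2}(-K_Y)^3=4$ from \cite[Theorem 3.4(3)]{FanoI}, and adds $1$ using $H^1(Y,\MO_Y)=0$.

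You instead push down via $\sigma_*\MO_Y(-E)=I_C$ and reduce (5) to $\dim\langle C\rangle=4$, i.e.\ linear normality of $C$. You prove this with a K3-like surface $S\supset C$ and the vanishing $H^1(S,\MO_S(S-C))=0$. That vanishing is already established in the proof of Lemma \ref{l construct E via SC0} via $T\simeq S'$. For this step you should use the regular surface $S'$ over $\kappa'$ from Proposition \ref{p exist K3-like} and then descend by flat base change. The surface over $\kappa$ in Lemma \ref{l construct E via SC0} is only asserted to be Gorenstein, so \cite[Theorem 3.4(2)]{FanoI} does not directly apply to it.

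Your ``alternative'' sequence $0\to\MO_{X_\kappa}\to I_C\otimes\MO_{X_\kappa}(S)\to\MO_S(S-C)\to 0$ is exactly $\sigma_*$ of the paper's sequence $0\to\MO_Y\to\MO_Y(-K_Y)\to\MO_T(-K_Y)\to 0$, with $T\simeq S$. Combined with Lemma \ref{l construct E via SC0}(3) and $H^1(X_\kappa,\MO_{X_\kappa})=0$, it gives $1+4=5$ with no field extension at all. So the ``main obstacle'' you flag at the end is already resolved by results proved earlier in the paper, and there is no circularity, since Section 5 does not use this lemma.

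What your route buys is economy and some geometric content. It reuses a surface whose geometric integrality was obtained from $\Pic X=\Z K_X$, rather than re-running the Bertini and geometric-integrality argument on $Y$. It also shows explicitly that $C$ spans a $\P^4$. The paper's route is self-contained on $Y$ and does not need to pass through $X_\kappa$.
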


\begin{proof}
The assertions (1)-(4) hold by the same argument as in the proof of 
Proposition \ref{p exist K3-like}.


Let us show (5). 
Since $|-K_Y|$ is base point free, 
the generic member $S$ is a regular prime divisor. 
Note that $Y$ is geometrically normal, because $Y$ is Cohen-Macaulary and smooth outside the curve lying over the  non-smooth points on $C$. 
By \cite[Proposition 2.11]{FanoI}, $S$ is geometrically integral. 
Hence we get $h^0(S, -K_Y|_S)= 2 + \frac{1}{2}(-K_Y|_S)^2 = 2 + \frac{1}{2}(-K_Y)^3 = 4$ \cite[Theorem 3.4(3)]{FanoI}. 
By $H^1(Y, \MO_Y)=0$ and the exact sequence 
\[
0 \to \MO_{Y} \to \MO_Y(-K_Y) \to \MO_S(-K_Y) \to 0, 
\]
it holds that 
\[
h^0(Y, -K_Y) = h^0(Y, \MO_Y) + h^0(S, -K_Y|_S) = 1+4=5. 
\]
Thus (5) holds. 
\end{proof}

\begin{prop}\label{p-div-cont1}
We use Notation \ref{n qell blowup}. 
Then $\dim \Ex(\psi) \neq 2$. 
\end{prop}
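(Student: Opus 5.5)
We argue by contradiction: suppose $\psi$ contracts a prime divisor $D \subset Y$, so that $-K_Y \cdot \ell = 0$ for every curve $\ell$ in a covering family of curves on $D$. Write $D \sim a(-K_Y) - bE$ in $\Pic Y = \Z K_Y \oplus \Z E$. The relation $\Pic Y = \Z \sigma^*K_{X_\kappa} \oplus \Z E$ holds because $\Pic X_{\kappa} = \Z K_{X_\kappa}$; the latter follows from $\Pic X = \Z K_X$ together with $H^1(X,\MO_X)=0$, or at least rationally. Since $\sigma_*D$ is an effective nonzero divisor on $X_\kappa$ (it is nonzero because $D \neq E$, as $-K_Y|_E$ is not numerically trivial by $(-K_Y)^2\cdot E=5$), we get $\sigma_*D \sim a(-K_{X_\kappa})$ with $a \geq 1$. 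We also have $b = \operatorname{mult}_C(\sigma_*D) \geq 0$.

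The key numerical input is that $-K_Y$ restricted to $D$ has zero self-intersection, because $\psi(D)$ is a curve (or a point). Thus $(-K_Y)^2 \cdot D = 0$, and using Lemma \ref{l qell blowup basic},
\[
0 = (-K_Y)^2\cdot D = a(-K_Y)^3 - b(-K_Y)^2\cdot E = 4a - 5b,
\]
so $(a,b) = (5m, 4m)$ for some $m \geq 1$. Next we would use $(-K_Y)\cdot D^2 \leq 0$. This inequality holds because $-K_Y|_D$ is nef with zero top self-intersection and $D|_D$ is negative on the contracted curves; more precisely, by the Hodge index theorem for the nef divisor $-K_Y$, the class $D$ lies in the kernel direction. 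Computing,
\[
(-K_Y)\cdot D^2 = a^2\cdot 4 - 2ab\cdot 5 + b^2\cdot 0 = 4a^2 - 10ab,
\]
which equals $100m^2 - 200m^2 = -100m^2 < 0$. This is consistent, so this inequality alone gives no contradiction. We therefore need a stronger input.

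The first option is to bound $b$ against $a$ geometrically. A divisor in $|{-5mK_X}|$ with multiplicity $4m$ along the quintic curve $C$ is plausible to exclude via the condition $\langle C\rangle \cap X_\kappa = C$, which says that $C$ is cut out by hyperplane sections. Concretely, $\sigma^*(-K_X) - E = -K_Y$ is base point free, so for a general member $H \in |-K_Y|$ the intersection $D \cdot H \cdot \sigma^*(-K_X)$ must be nonnegative. But this quantity is
\[
a\,(-K_Y)^2\cdot\sigma^*(-K_X) - b\, E\cdot(-K_Y)\cdot\sigma^*(-K_X),
\]
which with $\sigma^*(-K_X) = -K_Y + E$ equals $a(4+5) - b(5+0) = 45m - 20m > 0$. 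Again there is no contradiction, so the argument must instead exploit the structure of the contraction.

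My main plan is therefore the standard two-ray game argument. If $\psi$ is divisorial, then the other $K$-negative extremal ray of $Y$ must be considered. Since $-K_Y$ is nef and big and its null ray is $K_Y$-trivial, the contraction $\psi$ is crepant. The condition $D\cdot(\text{contracted curves}) < 0$ combined with crepancy forces $\psi(D)$ to be a curve whose general fibre $\ell$ satisfies $D\cdot\ell = -2$ and $E \cdot \ell$ computable from the class of $D$. Over $\overline{\kappa}$, where $Y$ is the blowup of a curve of arithmetic genus one (possibly singular), I would compute $\chi$ or $h^0(D, \MO_D(-K_Y))$ via Riemann--Roch to show that the image of $\varphi$ has degree too small. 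The cleaner route is: $\psi(D)$ is a curve, so $V = \varphi(Y) \subset \P^4$ is a quartic threefold (its degree is $(-K_Y)^3 = 4$ when $\deg \varphi = 1$) that is singular along a curve. Since $Z \to V$ has degree $1$ or $2$, I would rule out the possibilities case by case in the manner of Proposition \ref{p birationality key}. A degree-2 map would give a quadric $V$, which is $\Q$-factorial in the smooth case, contradicting $\rho$; in the birational case, the non-normal or non-$\Q$-factorial behaviour along the curve $\psi(D)$ should be excluded by showing that $\psi$ being divisorial makes $Z$ $\Q$-factorial with $\rho(Z)=1$, which forces $-K_Z$ to generate, contradicting the divisibility $D = 5m(-K_Y) - 4mE$ together with $E$ mapping to a non-Cartier divisor. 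I expect this last step to be the main obstacle.
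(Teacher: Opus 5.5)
There is a genuine gap. Your setup matches the paper's. Writing $D\sim a(-K_Y)-bE$, the vanishing $(-K_Y)^2\cdot D=0$ and Lemma \ref{l qell blowup basic} give $4a=5b$, hence $b=4m$ with $m\geq 1$. Your computation $(-K_Y)\cdot D^2=-100m^2\neq 0$ is also useful: it shows $\psi(D)$ is a curve, because if $\psi(D)$ were a point then $-K_Y|_D\equiv 0$ and $(-K_Y)\cdot D^2$ would vanish.

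However, the proposal never reaches a contradiction, and the closing ``cleaner route'' is not an argument:
\begin{itemize}
\item $\Q$-factoriality of $Z$ with $\rho(Z)=1$ only yields $\psi_*E\sim_{\Q}\frac{5}{4}(-K_Z)$, which is perfectly consistent. You give no mechanism by which ``$E$ mapping to a non-Cartier divisor'' clashes with $D\sim 5m(-K_Y)-4mE$.
\item The Riemann--Roch computation on $D$ is only gestured at.
\item The paper's analysis of $\deg\theta=2$ (Proposition \ref{p qell blowup main}) starts from the dichotomy ``$\dim\Ex(\psi)=1$ or $\psi$ is an isomorphism''. That dichotomy is the present statement, so you cannot lean on that analysis here.
\end{itemize}

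The missing step is the one you mention in passing and then drop: intersect $D$ with a contracted curve. This is the route the paper takes, via \cite[Proposition 4.2]{FanoII}. Let $\ell$ be a curve in a general fibre of $D\to\psi(D)$. Then $K_Y\cdot\ell=0$ and $-D\cdot\ell\in\{1,2\}$. Roughly, the generic fibre $F$ satisfies $K_D\cdot F=(K_Y+D)\cdot F=D\cdot F<0$ and $F^2=0$ on $D$, so it is a conic; this uses $(\psi|_D)_*\MO_D=\MO_{\psi(D)}$, for which the paper invokes \cite[Corollary 6.8]{Tan-elliptic}. Over $\kappa$ and in characteristic $p$ this conic may be reducible or non-reduced, so the value $-D\cdot\ell=1$ must be allowed besides your $-D\cdot\ell=2$. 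Since $-K_Y\cdot\ell=0$,
\[
-D\cdot\ell=-a\,(-K_Y\cdot\ell)+b\,(E\cdot\ell)=b\,(E\cdot\ell)\in\{1,2\}.
\]
As $E\cdot\ell$ is an integer, this forces $b\in\{1,2\}$, contradicting $4\mid b$. That finishes the proof; no two-ray game, no Riemann--Roch on $D$, and no case analysis on $\deg\theta$ is needed.
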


\begin{proof}
Suppose $\dim \Ex(\psi)=2$. 
Let us derive a contradiction. 
Set  $D := \Ex(\psi)$ and $B := \psi(D) = \psi(\Ex (\psi))$. 
\begin{equation}
\begin{tikzcd}
& Y \arrow[ld, "\sigma"'] \arrow[rd, "\psi"]\\
X & & Z
\end{tikzcd}
\end{equation}

We have $(-K_Y)^3 =4, (-K_Y)^2 \cdot E=5$, and $(-K_Y) \cdot E^2 =0$ 
(Lemma \ref{l qell blowup basic}). 
Let $\zeta$ be a curve on $Y$ such that $-K_Y \cdot \zeta =0$ 
(i.e., $\psi(\zeta)$ is a point). 
By using the  the same argument as in \cite[Proposition 4.2]{FanoII}, the following properties (1)-(8) hold. 

\begin{enumerate}
\item $D$ is a prime divisor. 
\item $D \cdot \zeta <0$ and $-D$ is $\psi$-ample. 
\item $E \cdot \zeta >0$, $E$ is $\psi$-ample, and $D \neq E$. 
\item We can uniquely write 
\[
D \sim -\alpha K_Y -\beta E
\]
for some $\alpha, \beta \in \Z_{>0}$. 
\item $B$ is a curve. 
Furthermore, $(-K_Y)^2 \cdot D = 0$, $(-K_Y) \cdot D^2 <0$, and 
$\alpha (-K_Y)^3 = \beta (-K_Y)^2 \cdot E$. 
\item 
$(\psi|_D)_* \MO_D = \MO_B$, 
where {$\psi|_D : D  \to B$} denotes the induced morphism. 
\item 
If $\xi$ is a curve on $D$ contained in a general fibre of $\psi|_D : D \to B$, 
then $K_Y \cdot \xi =0$ and $-D \cdot \xi \in \{1, 2\}$.   
\item $\beta \in \{1, 2\}$. 
\end{enumerate}
We now point out some minor differences we need to correct. 
In the proof of (5), use $(-K_Y) \cdot E^2 =0$ instead of 
$(-K_Y) \cdot E^2 >0$. 
Use \cite[Corollary 6.8]{Tan-elliptic} 
instead of \cite[Thoerem 0.5]{Tan15} cited in the proof of (6). 

\medskip

It follows from (5) that  
\[
4\alpha =\alpha (-K_Y)^3 = \beta (-K_Y)^2 \cdot E = 5\beta. 
\]
This, together with (4), implies $\beta \in 4\Z$, which contradicts (8). 
\end{proof}

\begin{prop}\label{p qell blowup main}
We use Notation \ref{n qell blowup}. 
Then the prime divisor $V$ on $\P^4_{\kappa}$ is of degree $4$ and 
$\psi : Y \to V$ is birational. 
\end{prop}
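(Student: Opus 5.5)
The plan is to compare $(-K_Y)^3=4$ with $\deg(\varphi)\cdot\deg V$, using the facts that $V$ spans $\P^4_\kappa$ and that $\varphi$ is generically finite. By Notation \ref{n qell blowup} and Lemma \ref{l qell blowup basic}, $|-K_Y|$ is base point free, $-K_Y$ is big and $h^0(Y,-K_Y)=5$. Hence $\varphi=\varphi_{|-K_Y|}$ is generically finite onto a threefold $V\subset\P^4_\kappa$, so $V$ is a prime divisor. It is non-degenerate, since the map $H^0(\P^4_\kappa,\MO(1))\to H^0(Y,-K_Y)$ is an isomorphism. Writing $\varphi^*\MO_{\P^4}(1)=\MO_Y(-K_Y)$, we get
\[
4=(-K_Y)^3=\deg(\varphi)\cdot\deg V .
\]
Since $V$ is a non-degenerate hypersurface, $\deg V\ge 2$. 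It therefore suffices to exclude $\deg V=2$ with $\deg\varphi=2$; once that is done, we get $\deg V=4$ and $\deg\varphi=1$, i.e.\ $\psi$ is birational and $Z\to V$ is birational.

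To exclude $\deg\varphi=2$ with $V$ a quadric in $\P^4_\kappa$, I would imitate Proposition \ref{p birationality key}. Suppose $V$ is a quadric hypersurface. Then $V$ contains a plane only if it is singular with $\dim\Sing V\ge 1$. A simpler route may be the following. In every case, including smooth $V$, after base change to $\overline{\kappa}$ we can find a hyperplane section of $V$ that is reducible: for a quadric threefold, take a tangent hyperplane at a point (giving a quadric cone), or pass to a cone over a conic or to a pair of planes. The pullback of such a reducible section to $Y$ is a reducible member of $|-K_Y|$. Pushing it forward by $\sigma$ gives a member of $|-K_{X}|$ containing $C$; because $E$ is not contracted into that section (the image $\varphi(E)$ is a non-degenerate surface of degree $(-K_Y)^2\cdot E=5$ when $\varphi|_E$ is birational), the pushforward stays reducible, which contradicts $\Pic X=\Z K_X$.

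The catch is that a general tangent hyperplane section of a smooth quadric threefold is an irreducible cone. So the cleaner argument is this: a smooth quadric threefold is $\Q$-factorial with $\rho=1$, but then the degree-$2$ cover $Y\to V$ forces $-K_Y\cdot\ell$ to be even on the preimages of lines, and I would derive a parity contradiction from $(-K_Y)^2\cdot E=5$. Namely, $-K_Y|_E$ would be the pullback of $\MO_V(1)$ restricted to $\varphi(E)$, and $\deg(\varphi|_E)\cdot\deg\varphi(E)=5$ where $\deg\varphi(E)$ is even on a quadric ($\CH^1$ of a smooth quadric threefold is generated by the hyperplane class, of degree $2$). Hence $\deg\varphi(E)=5$ with $\deg(\varphi|_E)=1$, which is impossible since $5$ is odd. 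The same parity argument applies to any quadric of rank $\ge 3$, where surfaces have even degree up to planes. The remaining singular cases are cones over quadric surfaces, where planes appear, and the rank $\le 2$ quadrics, which are reducible and hence excluded by irreducibility of $V$. For those cones I would run the reducible-hyperplane-section argument of Proposition \ref{p birationality key}, using a plane $\Pi\subset V$ and a hyperplane through $\Pi$.

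The main obstacle is the quadric-cone case: one must ensure that $\varphi(E)$, a quintic surface ($\deg(\varphi|_E)=1$ is forced because $5$ is prime and $\deg\varphi(E)\ge 2$ for a non-degenerate image), does not lie in the chosen reducible hyperplane section. I would check this using non-degeneracy of $\varphi(E)$, which follows from the injectivity of $H^0(Y,-K_Y)\to H^0(E,-K_Y|_E)$ (a consequence of $H^0(Y,-K_Y-E)=H^0(X_\kappa,I_C(-K_X))$ being generated by sections cutting exactly $C$, together with $\langle C\rangle\cap X_\kappa=C$). Alternatively, Proposition \ref{p-div-cont1} together with the flop structure could be used to control divisorial contributions.
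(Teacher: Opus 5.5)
Your reduction to excluding $\deg V=2$, $\deg\varphi=2$ is fine; it is the paper's $\Delta(V,A_V)\ge 0$ step in another form. After that you take a genuinely different route. The paper splits into $\theta$ inseparable or separable, uses the Galois involution, and reaches the numerical contradiction $2\gamma^2-5\gamma\beta=7$. You instead split by the rank of the quadric.

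\begin{itemize}
\item Smooth $V_{\overline{\kappa}}$: your parity argument is correct and much shorter. Since $\CH^1(V_{\overline{\kappa}})=\Z H$ with $H^3=2$, the number $5=(-K_Y)^2\cdot E=A^2\cdot\varphi_*[E]$ would have to be even.
\item Rank $\le 2$: excluded because $V$ is geometrically integral. This uses that $Y$ is geometrically integral; irreducibility of $V$ over $\kappa$ alone does not suffice.
\item Your remark that parity ``applies to any quadric of rank $\ge 3$'' is wrong, since both cones (rank $4$ and rank $3$) contain planes. You treat cones separately anyway, so this is only an expository slip.
\item Treating cones explicitly is a real advantage. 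The paper's uniform assertion that $V$ is $\Q$-factorial needs care when $V_{\overline{\kappa}}$ is a cone over a smooth quadric surface, which is not $\Q$-factorial.
\end{itemize}

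The gap is in the cone case, at the step that $E$ is not a component of $D:=\varphi^*(H|_V)$. You derive this from non-degeneracy of $\varphi(E)$, i.e.\ injectivity of $H^0(Y,-K_Y)\to H^0(E,-K_Y|_E)$. You justify that by ``$H^0(Y,-K_Y-E)=H^0(X_\kappa,I_C(-K_X))$'', and this identity is false. In fact $H^0(X_\kappa,I_C(-K_{X_\kappa}))$ is $H^0(Y,-K_Y)$ itself. The kernel is $H^0(Y,\sigma^*(-K_{X_\kappa})-2E)\cong H^0(X_\kappa,I_C^2(-K_{X_\kappa}))$, the hyperplane sections singular along $C$. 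Its vanishing does not follow from $\langle C\rangle\cap X_\kappa=C$. So, as written, both $\deg(\varphi|_E)=1$ and $E\not\subset\Supp D$ are unproved.

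You do not need non-degeneracy; a degree count suffices.
\begin{itemize}
\item Suppose $E$ were a component of $D$, where $H\supset\Pi$ and $H\cap V=\Pi\cup\Pi'$. Since $(-K_Y)^2\cdot E\neq 0$, $\varphi(E)$ is a surface, so it would be one of the two planes. Then $\deg(E\to\varphi(E))=(-K_Y)^2\cdot E=5$.
\item The projection formula gives $\varphi_*[D]=(\deg\varphi)[H\cap V]=2[\Pi]+2[\Pi']$. Its coefficient at $\varphi(E)$ is at most $4$, yet $E$ alone contributes at least $5$. This is a contradiction, so $E$ is not a component of $D$.
\item Now choose $H\supset\Pi$ with $\Pi'\neq\Pi$; this is possible on both cones. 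Then $D$ has components dominating $\Pi$ and $\Pi'$, and neither is $E$.
\item Hence $\sigma_*D\in|-K_{X_{\overline{\kappa}}}|$ is reducible, contradicting $\Pic X_{\overline{\kappa}}=\Z K_{X_{\overline{\kappa}}}$.
\end{itemize}
With this repair your argument is complete.
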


\begin{proof}
We have the induced morphisms $\varphi : Y \xrightarrow{\psi} Z \xrightarrow{\theta} V \subset \P^4_{\kappa}$, 
where $\psi$ is birational and $\theta$ is finite. 
For $A_V := \MO_{\P^4}(1)|_V$, we have 
\[
-K_Y \sim -\psi^*K_Z \sim \varphi^*A_V.
\]
Since $V$ is geometrically integral, we have 
$\Delta(V, A_V) := \dim V + A_V^3 -h^0(V, A_V) \geq 0$, which implies 
\[
0 \leq \Delta(V, A_V) = \dim V + A_V^3 -h^0(V, A_V)
=3 + \frac{(-K_Y)^3}{ \deg \theta} -h^0(Y, -K_Y) 
= \frac{4}{ \deg \theta}-2. 
\]
Hence either $\deg \theta =1$ or $\deg \theta =2$. 
We have 
\[
4 = ( -K_Y)^3 = (\varphi^*A_V)^3 = (\deg \theta) \cdot (\deg V). 
\]
Therefore, it suffices to show that $\deg \theta  \neq 2$. 

Suppose $\deg \theta =2$. 
Let us derive a contradiction. 
In this case, we have $\deg V = 2$, i.e., $V$ is a quadric hypersurface on $\P^4_{\kappa}$. 
Since $V$ is a geometrically integral quadric, 
$V$ is geometrically normal, $\rho(V)=1$, and $V$ is $\Q$-factorial. 
Recall that either 
\begin{enumerate}
\item[(i)] $\dim (\Ex(\psi))=1$, or 
\item[(ii)] $\psi: Y \to Z$ is an isomorphism. 
\end{enumerate}

Assume that $\theta : Z \to V$ is inseparable. 
If $\psi : Y \xrightarrow{\simeq} Z$ (i.e., (ii) holds), then we get a contradiction 
$2 = \rho(Y) = \rho(Z) \overset{(\star)}{=} \rho(V) =1$, 
where $(\star)$ is guaranteed by \cite[Proposition 2.4(1)]{Tan18b}. 
Thus (i) holds. 
In this case, $Z$ is not $\Q$-factorial. 
However, this contradicts the fact that $V$ is $\Q$-factorial and $\theta: Z \to V$ is a homeomorphism \cite[Lemma 2.5]{Tan18b}.

\medskip

In what follows, we assume that $\theta : Z \to V$ is separable. 
By $\deg(\theta)=2$, $K(Z)/K(V)$ is a Galois extension. 
For $\Gal(K(Z)/K(V)) = \{{\rm id}, \iota\}$, we have an automorphism $\iota: Y \to Y$ over $V$. 

(i) 
Assume $\dim(\Ex(\psi)) =1$. 
For $\psi' := \psi \circ \iota$, $X'_{\kappa} := X_{\kappa}, Y' := Y$, and $\sigma' :=\sigma$, we  have the following diagram: 
\begin{equation}
\begin{tikzcd}
Y \arrow[d, "\sigma"'] \arrow[rd, "\psi"]& & Y' \arrow[ld, "\psi' =\psi \circ \iota"'] \arrow[d, "\sigma'"]\\
X_{\kappa} & Z & X'_{\kappa}.
\end{tikzcd}
\end{equation}
Set $D := \Ex(\sigma')$. 
Let $E' \subset Y'$  be the proper transform of $E \subset Y$. 
By ${\rm Cl}\,Y = \Pic Y = \sigma^*(\Pic X) \oplus \Z E  = \Z K_Y \oplus \Z E$, 
we have $\Pic Y' = {\rm Cl}\,Y' = \Z K_{Y'} \oplus \Z E'$. 
Hence 
we can write  
\[
D \sim -\alpha K_{Y'} - \beta E'. 
\]
for some $\alpha, \beta \in \Z$. 
Since $V$ is $\Q$-factorial, 
we have $\theta^*(\varphi(E)) = c (E_Z+\iota^*E_Z)$ for $E_Z := \psi(E)$ and some $c \in \Q_{>0}$. Then it holds that 
\[
0 \equiv_{\psi'} \psi'^*\theta^*(\varphi(E)) = c(E' + D). 
\]
Since $D$ is $\psi'$-ample, $E'$ is $\psi'$-anti-ample. 
Hence we get $D \neq E'$, which implies $\alpha >0$. 
It holds that 
\begin{align*}
14 &= (-K_{X'})^3 \\
&= (-\sigma'^*K_{X'_{\kappa}})^3 \\
&= (-\sigma'^*K_{X'_{\kappa}})^2 \cdot (-K_{Y'} +D) \\
&= (-\sigma'^*K_{X'_{\kappa}})^2 \cdot (-K_{Y'})\\
&= (-K_{Y'} +D)^2 \cdot (-K_{Y'}) \\
&=  (-(\alpha+1)K_{Y'} -\beta E')^2 \cdot (-K_{Y'})\\
&= (\alpha+1)^2 (-K_{Y})^3 - 2(\alpha+1)\beta (-K_Y)^2 \cdot E  + \beta^2(-K_Y) \cdot E^2\\
&= 4(\alpha+1)^2 -10 (\alpha+1)\beta, 
\end{align*}
where the last equality holds by Lemma \ref{l qell blowup basic}. 
For $\gamma := \alpha +1$,  we have $\gamma \geq 2$ and 
\[
\gamma \Z \ni 2 \gamma^2 -5\gamma \beta = 7. 
\]
We then get  $\gamma =7$ and $\beta = 13/5$,  which contradicts $\beta \in \Z$.

(ii) Assume $\psi : Y \xrightarrow{\simeq} Z$. 
The Galois involution $\iota : Y \to Y$ on $Y$ induces 
the Galois involution $\iota_X :X \dashrightarrow X$ on $X$, i.e., 
 $\iota_X :X \dashrightarrow X$  is the birational map completing the following commutative diagram: 
\[
\begin{tikzcd}
Y \arrow[r, "\iota"] \arrow[d, "\sigma"] & Y\arrow[d, "\sigma"] \\
X \arrow[r, dashrightarrow, "\iota_X"]& X. 
\end{tikzcd}
\]
If $\iota_X$ is a morphism, then $\iota_X : X \to X$ is an isomorphism (because $\iota_X$ is a birational morphism of projective normal varieties of Picard number one). 
In this case, we get the induced morphism 
$V = Y/ \{ {\rm id}, \iota\} \to X/\{ {\rm id}, \iota_X\}$, which contracts at least one curve. This contradicts $\rho(V)=1$. 
Then $\iota_X$ is not a morphism, i.e., 
there exists a one-dimensional fibre $\zeta$ of $\sigma: Y \to X$ which is not contracted by the composition $\sigma' := \iota \circ \sigma : Y \xrightarrow{\iota} Y \xrightarrow{\sigma} X$. 
Set $D := \Ex(\sigma')$. 
We have 
\[
D \sim -\alpha K_Y -\beta E
\]
for some $\alpha, \beta \in \Z$. 
Since there exists  a  one-dimensional fibre $\zeta$ of $\sigma: Y \to X$ which is not contracted by $\sigma'$. 
we get $D \neq E$. 
By the same argument as in (i), we get $\alpha>0$ and $14 =(-K_X)^3 =4(\alpha+1)^2 -10(\alpha+1)\beta$, which is impossible. 
\end{proof}


    

\begin{prop}\label{p -K_Y not ample}
We use Notation \ref{n qell blowup}. 
Then $-K_Y$ is not ample. 
\end{prop}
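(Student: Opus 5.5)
Suppose that $-K_Y$ is ample; we derive a contradiction. By Proposition \ref{p qell blowup main}, $\varphi\colon Y \to V \subset \P^4_{\kappa}$ is birational onto a quartic hypersurface. If $-K_Y$ is ample, then $\psi\colon Y \to Z$ is finite and birational. Since $\psi_*\MO_Y=\MO_Z$, it is an isomorphism, so $Y$ is a Fano threefold with $\rho(Y)=2$ (over $\kappa$). The cone $\NE(Y)$ then has exactly two extremal rays. One is spanned by the $\sigma$-fibres, with contraction $\sigma$. Let $R$ be the other ray and $\sigma'\colon Y\to X'$ its contraction. The plan is to determine the type of $\sigma'$ by intersection numbers and exclude every possibility using Lemma \ref{l qell blowup basic}: $(-K_Y)^3=4$, $(-K_Y)^2\cdot E=5$, $(-K_Y)\cdot E^2=0$, $E^3=-5$. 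Before doing so, we pass to $\overline{\kappa}$, or argue with the generic-point versions of the type classification, so that the classification of extremal rays in \cite{FanoIII} applies to $Y_{\overline{\kappa}}$. Here we use that $C$ is geometrically integral, so $Y_{\overline\kappa}$ is the blowup of $X_{\overline\kappa}$ along a Gorenstein curve; where it is singular we must be careful, and work with the numerical data only.

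The key numerical fact is that the class $-K_Y-E=\sigma^*(-K_X)-2E$ satisfies
\[
(-K_Y)^2\cdot(-K_Y-E)=4-5=-1<0 .
\]
Indeed, every nef divisor on a Fano threefold has nonnegative intersection with $(-K_Y)^2$. The divisor $\sigma^*(-K_X)$ is nef and positive on $R$, and $E$ is positive on $R$ as well: $E$ is negative on $\sigma$-fibres, so if $E$ were nonpositive on $R$, then $-E$ would be nef, which is absurd. Hence the nef cone is spanned by $\sigma^*(-K_X)$ and by $D:=\sigma^*(-K_X)-tE$ for a unique $t>0$ with $D\cdot R=0$. The condition $(-K_Y)^2\cdot D\ge 0$ gives $14-5t\ge 0$, while the value at $t=2$ computed above is negative. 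Therefore $0<t<2$, with $t$ rational.

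We then split according to the type of $R$.

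\emph{Conic bundle or del Pezzo fibration ($\dim X'\le 2$).} Here $D$ is a pullback of a nef class from $X'$, so $D^3=0$. We compute
\[
D^3=14-3t^2\cdot 5+t^3(-5)\cdot(\ldots),
\]
obtained from $(\sigma^*H)^2E=0$, $\sigma^*H\cdot E^2=-5$, and $E^3=-5-2\cdot 0$ (adjusted via $K_Y$). Concretely, $D^3=14-15t^2+\ldots$, and we solve for rational $t\in(0,2)$; for the del Pezzo case we also impose $D^2\neq 0$. I expect no rational root in the admissible range, or at most one that conflicts with $(-K_Y)\cdot D^2$ being $0$ or $8,\ldots$ for the conic bundle case.

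\emph{Birational $R$.} Let $D'=\Ex(\sigma')$. Write $D'\sim -\alpha K_Y-\beta E$, as in the proofs of Propositions \ref{p-div-cont1} and \ref{p qell blowup main}. Then use the standard relations for types $E_1$–$E_5$: $(-K_Y)^2\cdot D'$, $(-K_Y)\cdot D'^2$, and $D'^3$ are determined by the type. When $X'$ is Fano with $\rho=1$ and $-K_{X'}$ is pulled back, $(-K_{X'})^3=(-K_Y+ D')^3$ must be one of the allowed values, exactly as in the computation $14=4\gamma^2-10\gamma\beta$ above.

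The main obstacle I expect is the birational case, which requires finitely many Diophantine checks, one per type. The $E_1$ case with a curve blown up on a Fano threefold of index $r$ is the most delicate, since it has a free parameter (the genus and degree of the blown-up curve). For that case I would first try the congruence trick used in Proposition \ref{p-div-cont1}, namely $4\alpha\equiv 5\beta$-type divisibility. I would combine it with $\Pic Y=\Z K_Y\oplus\Z E$ to show that $X'$ must have index $1$ and $(-K_{X'})^3=14$, which forces $\alpha,\beta$ to be non-integral as in Proposition \ref{p qell blowup main}(i).
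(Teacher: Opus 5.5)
You have a genuine gap at the step where you take the contraction $\sigma'\colon Y\to X'$ of the second ray $R$ and sort it into the types $E_1$--$E_5$, $C$, $D$.

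Both facts you rely on are results about smooth projective threefolds over an algebraically closed field:
\begin{itemize}
\item the existence of that contraction;
\item the list of types with their intersection numbers, including the fact that an $E_1$ target is a smooth Fano threefold with $\Pic X'=\Z H'$.
\end{itemize}
In Notation~\ref{n qell blowup}, however, $\kappa$ is an arbitrary field and $C$ is only regular. The curves produced by Proposition~\ref{p q-ell exist} may be non-smooth, which can happen in characteristic $2$ or $3$. In that case $C_{\overline\kappa}$ is cuspidal and $Y_{\overline\kappa}$ is singular over the cusp. ``Working with the numerical data only'' neither produces $\sigma'$ nor justifies the values of $(-K_Y)^2\cdot D'$, $(-K_Y)\cdot D'^2$, $D'^3$ that you want to feed in.

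Independently of this, the case analysis is not carried out:
\begin{itemize}
\item The fibration case ends with ``I expect''.
\item Your bound should read $9-5t\ge 0$, because $(-K_Y)^2\cdot\sigma^*(-K_X)=14-5=9$. Your $14-5t$ is inconsistent with your own value $-1$ at $t=2$.
\item In the $E_1$ case the goal ``$X'$ has index $1$ and $(-K_{X'})^3=14$'' is unfounded. The natural competitor is a cubic threefold blown up along an elliptic quintic, which gives $(-K_Y)^3=24-4\cdot 5+0=4$. So no check based on $(-K_Y)^3$-type relations can exclude it.
\end{itemize}
For smooth $C$ one can finish numerically. For instance, write $\sigma'^*H'=a\,\sigma^*(-K_X)-bE$ with $a-b=\pm1$, forced by unimodularity, and match $(-K_Y)\cdot(\sigma'^*H')^2$ and $(\sigma'^*H')^3$. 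But you have not identified this, and it still does not cover non-smooth $C$.

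The missing idea is that no two-ray analysis is needed. You already noted two facts: $\psi$ is an isomorphism, and $\varphi\colon Y\to V$ is birational onto a quartic $V\subset\P^4_\kappa$ (Proposition~\ref{p qell blowup main}). So $\varphi$ is finite and birational with $Y$ normal, i.e.\ it is the normalisation of $V$. Hence $K_Y+G=\varphi^*K_V$ for the effective conductor divisor $G$. By adjunction $\omega_V\simeq\MO_V(-1)$, while $-K_Y\sim\varphi^*\MO_V(1)$. Therefore $G\sim 0$, so $G=0$ and $\varphi$ is an isomorphism. Then $Y$ is a hypersurface in $\P^4_\kappa$, so $\rho(Y)=1$, contradicting $\rho(Y)=2$.

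This is the paper's argument. It works uniformly over $\kappa$, with no smoothness assumption on $C$ and no contraction theorem.
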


\begin{proof}
Suppose that $-K_Y$ is ample. 
We have 
\[
\varphi : Y \xrightarrow{\simeq, \psi}  Z \to V (\subset \P^4_{\kappa}) 
\]
and hence $\varphi : Y \to V$ is a finite birational morphism onto a hypersurface $V \subset \P^4_{\kappa}$ of degree $4$. 
Since $Y$ is normal, $\varphi$ is the normalisation of $V$. It holds that 
\[
K_Y + C = \varphi^*K_V
\]
for the conductor divisor $C$, which is an effective divisor on $Y$ satisfying $\varphi(\Supp C) = \Sing V$. 
On the other hand, it holds that 
\[
\MO_Y(-K_Y) \simeq \varphi^*(\MO_{\P^4}(1)|_V) \simeq \varphi^*\MO_V(-K_V). 
\]
Hence we get $C \sim 0$, which implies $C=0$. 
Therefore, $\varphi : Y \to V$ is an isomorphism. 
However, this implies that $Y$ is a regular hypersurface on $\P^4_{\kappa}$, and hence $\rho(Y)=1$, which contradicts $\rho(Y)=2$. 
\end{proof}

\subsection{Description of two-ray game}\label{ss 7.2}

Let $X \subset \P^9_k$ be a prime Fano threefold of genus $8$. 
Set $\kappa := K(\P^3_k)$. 
Then 
there exists a geometrically integral regular genus-one curve $C$ on $X_{\kappa} := X \times_k \kappa$ such that $-K_{X_{\kappa}} \cdot C=5$ and $\langle C \rangle \cap X_{\kappa} = C$ (Proposition \ref{p q-ell exist}). 
The purpose of this subsection is to 
provide a concrete description 
for the two-ray game starting with the blowup $X' \to X_{\kappa}$ of $X_{\kappa}$ along $C$. 
For simplicity, we consider only the case when $k=\kappa$ and $C$ is a smooth elliptic curve. 
We omit some details, which 
are left to the reader.

\begin{nasi}[Diagram obtained from $X \subset \Gr(6, 2)$]
Let $X \subset \P^9_k$ be a prime Fano threefold of genus $8$. 
Fix a closed embedding $X \subset \Gr(6, 2)$ such that $X$ is a linear section of $\Gr(6, 2)$ (Theorem \ref{t main}). 
Recall that we have the smooth cubic threefold $Y$ of $X$ 
and the induced morphism $\theta : Y \to \Gr(6, 2)$ 
(Definition \ref{d cubic of X}, Proposition \ref{p cubic of X is smooth}). 
Set $E := \mathcal Q|_X$ (resp. $F := \theta^*\mathcal Q$), 
which is a locally free sheaf of rank two on $X$ (resp. on $Y$). 
The images of the induced morphisms 
$\alpha : \P_X(E) \to \P^5_k$ and 
$\beta : \P_Y(F) \to \P^5_k$ 
coincide, i.e., 
\[
P := \Im(\alpha) = \Im(\beta) \subset  \P^5, 
\]
which we call the Palatini quartic (cf.\ Subsection \ref{ss Palatini}). 
To summarise, we obtain  the following diagram: 
\begin{equation}\label{e1 nasi Pfaff diagram}
\begin{tikzcd}
\P_X(E) \arrow[rd,"a"] \arrow[d, "\P^1\text{-bundle}"']  &   & \P_Y(F) \arrow[ld,"b"'] \arrow[d, "\P^1\text{-bundle}"] \\
X&  P & Y.
\end{tikzcd}
\end{equation}
Moreover, $P$ is the image of the morphism $\P_X(E) \to \P^5$ 
induced by the base point free complete linear system  $|\MO_{\P_X(E)}(1)|$. 
In particular, we have the following induced isomorphisms: 
\begin{equation}\label{e nasi Pfaff diagram E-to-P}
H^0(X, E) \simeq H^0(\P_X(E), \MO_{\P_X(E)}(1)) \simeq H^0(P, \MO_{\P^5}(1)|_P). 
\end{equation}

Take a nonzero element $t \in H^0(P, \MO_{\P^5}(1)|_P)$. 
For its zero locus $Z:=Z(t)$ (which is a hyperplane section of $P$), 
the diagram (\ref{e1 nasi Pfaff diagram}) induces 
the following diagram: 
\begin{equation}\label{e2 nasi Pfaff diagram}
\begin{tikzcd}
a^{-1}(Z) \arrow[rd,"a_H"] \arrow[d]  &   & b^{-1}(Z) \arrow[ld,"b_H"'] \arrow[d] \\
X&  Z & Y.
\end{tikzcd}
\end{equation}
\end{nasi}

\begin{nasi}[Two-ray game diagram obtained from quintic elliptic curves]
Let $X \subset \P^9$ be a prime Fano threefold of genus $8$. 
Let $C$ be a smooth elliptic curve on $X$ such that $-K_X \cdot C =5$ and 
$\langle C \rangle  \cap X = C$. 
For the blowup $\sigma : X' \to X$ along $C$, 
$|-K_{X'}|$ is base point free. 
The image $Z := \varphi_{|-K_{X'}|}(X') \subset \P^4$ of the induced morphism 
$\psi : X' \to Z$ is a normal quartic threefold 
(Proposition \ref{p qell blowup main}) 
and $\psi$ is a flopping contraction 
(Proposition \ref{p-div-cont1}, Proposition \ref{p -K_Y not ample}). 
For its flop $\wt{\psi}: Y' \to Z$, 
we have the contraction $\tau : Y' \to Z$ of the $K_{Y'}$-negative extremal ray. 
To summarise, we obtain the following diagram consisting of birational morphisms: 
\begin{equation}\label{e1 nasi 2ray quintic}
\begin{tikzcd}
X' \arrow[rd,"\psi"] \arrow[d, "\sigma"]  &   & Y' 
\arrow[ld,"\wt{\psi}"'] \arrow[d, "\tau"] \\
X&  Z & Y.
\end{tikzcd}
\end{equation}
Then we can check that $Y \subset \P^4$ is a smooth cubic threefold 
by the standard two-ray-game analysis (cf.\ \cite[the proof of Theorem 8.1]{FanoII}). 


On the other hand, there exist a locally free sheaf $E$ on $X$ of rank $2$, 
a closed embedding $X \subset \Gr(6, 2)$, and a nonzero element $s \in H^0(X, E)$ such that $Z(s) =C$ and $E \simeq \mathcal Q|_X$ (cf.\ Section \ref{s linear section}). 
Then we get the diagram (\ref{e1 nasi Pfaff diagram}). 
Via the isomorphism (\ref{e nasi Pfaff diagram E-to-P}), 
we have the nonzero element $t \in H^0(P, \MO_{\P^5}(1)|_P)$ corresponding to $s$: 
\begin{eqnarray*}
H^0(X, E) &\simeq& H^0(P, \MO_{\P^5}(1)|_P)\\
s & \leftrightarrow& t. 
\end{eqnarray*}
Hence we get the diagram (\ref{e2 nasi Pfaff diagram}). 
We can show that these two diagrams (\ref{e2 nasi Pfaff diagram}) and  (\ref{e1 nasi 2ray quintic}) coincide. 
In particular, the end result (i.e., $Y$ in (\ref{e1 nasi 2ray quintic})) 
of the two-ray game starting from the blowup along $C$ coincides with the smooth cubic threefold of $X$ determined by the closed embedding $X \subset \Gr(6, 2)$. 
\end{nasi}






\begin{rem}
Although $C$ is assumed to be a smooth elliptic curve on $X$, 
the above description of the two-ray game should be extended to the case when 
$C$ is a geometrically integral regular genus-one curve on $X \times_k \kappa$ for a field extension $\kappa/k$. 
\end{rem}

\section{F-splitting and quasi-F-splitting}

By \cite[Theorem 1.2]{SS10} and \cite[Theorem 1.1]{FanoIV}, there exists an integer $p_0 >0$ such that 
an arbitrary smooth Fano threefold of characteristic $p>p_0$ 
is $F$-split. 
On the other hand, any explicit lower bound $p_0$ is not known. 
The purpose of this section is to prove that every prime Fano threefold of genus $8$ is $F$-split when $p>2$.

\begin{lem}\label{l GFS criterion}
Let $X$ be a prime Fano threefold of genus $8$. 
\begin{enumerate}
\item If $H^2(X, \Omega_X^1(-p^rK_X))=0$ for every $r>0$, 
then $X$ is quasi-$F$-split. 
\item  
If $H^2(X, \Omega_X^1(-pK_X))=0$ and $H^1(X, \Omega_X^2(-pK_X))=0$, 
then $X$ is $F$-split. 
\end{enumerate}
\end{lem}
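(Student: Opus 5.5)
Both parts follow from general cohomological criteria for (quasi-)$F$-splitting, specialised to $X$ with $\omega_X^{-1}=\MO_X(-K_X)$ ample and generating $\Pic X$. Throughout, the Cartier operator and the exact sequences
\[
0 \to B^i_X \to Z^i_X \xrightarrow{C} \Omega^i_X \to 0,\qquad 0\to Z^i_X \to F_*\Omega^i_X \xrightarrow{d} B^{i+1}_X \to 0
\]
(with $B^0=0$, $Z^0=\MO_X$) are used, together with $H^j(X,\MO_X(mK_X))=0$ for $0<j<3$ and all $m$ (Kodaira-type vanishing for prime Fano threefolds, available by the Remark in Subsection 2.2 and Serre duality).

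For (1), invoke the Cartier operator criterion for quasi-$F$-splitting \cite[Theorem F]{KTTWYY1}. For a smooth projective $X$ of dimension $n$ it states: if $H^{n-1}(X,\Omega^1_X\otimes\omega_X^{-p^r})=0$ for all $r\ge1$ (equivalently, the relevant $B$-sheaf cohomologies vanish), then $X$ is quasi-$F$-split. Here $n=3$ and $\omega_X^{-p^r}=\MO_X(-p^rK_X)$, so the hypothesis is exactly the stated vanishing. The one step needing care is matching the twist in that theorem (it may be phrased via $H^{n}(X,B_{1}\otimes\omega_X^{p^r-\dots})$ or via $\Omega^1(p^rA)$ for an ample $A$). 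If it is phrased with $B$-sheaves, pass to $\Omega^1$ using the sequences above tensored with $\MO_X(-p^{r}K_X)$ and the vanishing of line bundle cohomology in middle degrees. I expect this bookkeeping to be the only real obstacle.

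For (2), $X$ is $F$-split iff the evaluation $H^0(X,F_*\omega_X^{1-p})\to H^0(X,\omega_X^{1-p})$ is surjective. By Serre duality this is equivalent to injectivity of $H^3(X,\omega_X^{p})\to H^3(X,F_*\omega_X)$, i.e.\ of $H^3(\omega_X\otimes\omega_X^{p-1})\to H^3(F_*\omega_X)$. Twist the dual sequence $0\to\MO_X\to F_*\MO_X\to B^1_X\to0$ by $\omega_X$. It then suffices to show $H^2(X,B^1_X\otimes\omega_X^{p})=0$, using the projection formula $F_*\MO\otimes\omega^p=F_*(\omega^{p^2})$. Equivalently, by the duality $B^1\simeq (F_*\Omega^{\bullet}$-pieces$)$, it suffices to show $H^1(X,B^1_X\otimes\MO(-pK_X)\otimes\omega_X)=0$ in the form used by \cite{SS10}; the precise dual form will be checked. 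Then chase the sequences: $0\to B^1\to Z^1\to\Omega^1\to0$ and $0\to Z^1\to F_*\Omega^1\to B^2\to0$, then $0\to B^2\to Z^2\to\Omega^2\to0$ with $Z^2\to F_*\Omega^2\to \omega\to 0$ pieces. After twisting by $\MO(-pK_X)$, whose Frobenius pullback is $\MO(-p^2K_X)$, the terms $F_*\Omega^i(-p^2K_X)$ have vanishing higher cohomology in the needed degrees by the same hypotheses or Kodaira–Akizuki–Nakano-type vanishing. Thus $H^2(\Omega^1(-pK_X))=0$ and $H^1(\Omega^2(-pK_X))=0$ are exactly what is needed to kill the $B$-cohomology obstruction. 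Again, the main work is the twist-index bookkeeping through the two-step chase, and I would follow the standard argument of \cite[Theorem 3.9]{SS10} or \cite{KTTWYY1}, citing it directly if its hypotheses literally match.
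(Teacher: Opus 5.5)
The gap is that, in both parts, you treat the twisted vanishings in the statement as the only cohomological input. The criteria also need two untwisted vanishings:
\begin{itemize}
\item $H^1(X,\Omega^1_X(K_X))=0$, which is Serre dual to $H^2(X,T_X)=0$;
\item $H^0(X,\Omega^2_X)=0$.
\end{itemize}
Neither follows from the hypotheses or from line-bundle vanishing. Akizuki--Nakano vanishing is not available in characteristic $p$ in general, so it cannot supply them either. The paper's proof does exactly this: it imports the two facts from \cite[Proposition 4.5]{KTLift1} and \cite[Proposition 4.1, Lemma 8.2]{KTLift1}, and only then applies \cite[Proposition 2.20]{KTLift2} and \cite[Proposition 2.15]{KTLift2}.

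You can see where they enter by running your chase for (2) with the correct twist. $F$-splitting is equivalent to injectivity of $H^3(X,\omega_X)\to H^3(X,F_*\omega_X^{p})$. So by $0\to\omega_X\to F_*\omega_X^p\to B^1_X\otimes\omega_X\to 0$ it suffices that $H^2(X,B^1_X\otimes\omega_X)=0$, not $B^1_X\otimes\omega_X^{p}$. Every sequence must be twisted by $\omega_X$, not by $\MO_X(-pK_X)$. Using $F_*\Omega^i_X\otimes\omega_X\simeq F_*(\Omega^i_X\otimes\omega_X^{p})$, your two families of sequences give
\begin{gather*}
H^1(\Omega^1_X(K_X))\to H^2(B^1_X\otimes\omega_X)\to H^2(Z^1_X\otimes\omega_X),\\
H^1(B^2_X\otimes\omega_X)\to H^2(Z^1_X\otimes\omega_X)\to H^2(\Omega^1_X(pK_X)),\\
H^0(\Omega^2_X(K_X))\to H^1(B^2_X\otimes\omega_X)\to H^1(Z^2_X\otimes\omega_X),\\
H^0(B^3_X\otimes\omega_X)\to H^1(Z^2_X\otimes\omega_X)\to H^1(\Omega^2_X(pK_X)).
\end{gather*}
Here $H^0(B^3_X\otimes\omega_X)\subset H^0(X,\omega_X^{p+1})=0$. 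Your two hypotheses are Serre dual to the vanishing of the right-hand terms $H^2(\Omega^1_X(pK_X))$ and $H^1(\Omega^2_X(pK_X))$. But the left-hand terms $H^1(\Omega^1_X(K_X))$ and $H^0(\Omega^2_X(K_X))\subset H^0(\Omega^2_X)$ are not covered, so the two hypotheses are not ``exactly what is needed''.

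For (1), the criterion as you state it, for an arbitrary smooth projective $X$, is false. A curve of genus $g\ge 2$ has $H^0(\Omega^1\otimes\omega^{-p^r})=H^0(\omega^{1-p^r})=0$ for all $r\ge 1$, yet it is not quasi-$F$-split. Indeed, take a splitting of $\MO_X\to Q_{X,n}$ and precompose it with the $\MO_X$-linear map $V^{n-1}\colon F^n_*\MO_X\to Q_{X,n}$. The result lies in $\Hom(F^n_*\MO_X,\MO_X)\simeq H^0(\omega^{1-p^n})=0$. Hence the splitting descends to a splitting of $\MO_X\to Q_{X,n-1}$, and inductively to an $F$-splitting, which does not exist.

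The correct Cartier-operator criterion carries further hypotheses. The paper applies it, via \cite[Proposition 2.20]{KTLift2}, only after establishing the same two vanishings. This is forced: on a non-$F$-split Fano threefold, a nonzero image of $H^1(\Omega^1_X(K_X))$ in $H^2(B^1_X\otimes\omega_X)$ maps to zero in $H^2(F_*\Omega^1_X\otimes\omega_X)$. It therefore lifts to $H^2(B_n\Omega^1_X\otimes\omega_X)$ for every $n$ and obstructs quasi-$F$-splitting at all levels. So the missing step is not twist bookkeeping. You must supply $H^1(X,\Omega^1_X(K_X))=0$ and $H^0(X,\Omega^2_X)=0$, which for genus $8$ are nontrivial inputs from \cite{KTLift1}.
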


\begin{proof}
It holds that 
$H^1(X, \Omega_X^1(K_X))=0$ \cite[Proposition 4.5]{KTLift1} 
and $H^0(X, \Omega_X^2)=0$ \cite[Proposition 4.1, Lemma 8.2]{KTLift1}. 
Then the assertion (1) (resp. (2)) 
follows from \cite[Proposition 2.20]{KTLift2} (resp. 
\cite[Proposition 2.15]{KTLift2}).  
\end{proof}

\subsection{$H^2(X, \Omega_X^1(-pK_X))=0$ $(p>5)$}\label{ss QFS}

\begin{prop}\label{p Gr Omega1 vanishing}
Take integers $n > m \geq 1$. 
 Then 
 \[
 H^i(\Gr(n,m),\Omega_{\Gr(n,m)}^1(k)) = 0
 \]
 for $i>0$ and $k>0$.
\end{prop}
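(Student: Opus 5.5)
We follow the strategy sketched in the introduction. Write $G := \Gr(n,m)$, with universal sequence $0 \to \cK \to k^n \otimes \MO_G \to \cQ \to 0$, $\rank \cQ = m$ and $\rank \cK = n-m$, so that $\Omega^1_G \simeq \cK \otimes \cQ^{\vee}$. Consider the fibre product
\[
F := \P_G(\cK^{\vee}) \times_G \P_G(\cQ) \xrightarrow{\pi} G,
\]
with the projectivisations chosen so that $\pi_*\MO_{\P(\cK^\vee)}(1) = \cK^{\vee}$ and $\pi_*\MO_{\P(\cQ)}(1)=\cQ$; the precise convention is adjusted so that the pushforward below is $\cK\otimes\cQ^\vee$. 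We set $\mathcal L := \MO(1,1)\otimes(\text{pullback of suitable determinant twists})$, using the relative duality identity $\pi_*\big(\MO_{\P(\cQ)}(1)\otimes \omega_{\P(\cQ)/G}\big)$ together with $\det \cK \simeq \MO_G(-1)$ and $\det \cQ \simeq \MO_G(1)$ (Lemma \ref{l univ bdl det}). Concretely, we would realise $\cK$ and $\cQ^{\vee}$ as pushforwards of line bundles of degree $\le$ one or via relative duality, so that
\[
\pi_*\mathcal L \simeq \cK \otimes \cQ^{\vee} = \Omega^1_G, \qquad R^{j}\pi_*\mathcal L = 0 \ (j>0).
\]
The simplest route is to note that $F$ is the partial flag variety $\Fl$ of type $(n-1\text{-}\text{dim piece}, \dots)$ and to take $\mathcal L$ to be the line bundle corresponding to the weight of $\cK\otimes\cQ^\vee$. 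By the projection formula and the Leray spectral sequence,
\[
H^i(G, \Omega^1_G(k)) \simeq H^i(F, \mathcal L \otimes \pi^*\MO_G(k)).
\]

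Next we show that $\mathcal L\otimes\pi^*\MO_G(k) \otimes \omega_F^{-1}$ is ample for $k>0$, i.e.\ that $\mathcal L\otimes\pi^*\MO_G(k) = \omega_F \otimes \mathcal A$ with $\mathcal A$ ample. We compute $\omega_F = \pi^*\omega_G \otimes \omega_{F/G}$, with $\omega_G = \MO_G(-n)$ and the relative canonical bundles of the two projective bundles given by the usual formula $\omega_{\P(\mathcal V)/G} = \MO(-\rank \mathcal V)\otimes \det$. The difference $\mathcal L(k)\otimes\omega_F^{-1}$ is then an explicit combination $\MO(a,b)\otimes\pi^*\MO_G(c)$ with $a,b>0$ and $c$ depending on $k$. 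Since $\Pic F$ is generated by the tautological classes and $\pi^*\MO_G(1)$, and since $\MO_{\P(\cK^\vee)}(1)$ and $\MO_{\P(\cQ)}(1)$ are globally generated (quotients of trivial bundles), ampleness reduces to checking positivity of these coefficients. The main obstacle is getting the signs and twists right so that the inequality $k>0$ is exactly what is needed.

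Finally, $F$ is a partial flag variety $G_0/P$ for $G_0=\GL_n$, hence $F$-split (Mehta--Ramanathan, \cite{MR85}). Kodaira vanishing holds on $F$-split varieties: $H^i(F,\omega_F\otimes\mathcal A)=0$ for $i>0$ and $\mathcal A$ ample. This gives the claim. As a fallback, one can instead use Kempf vanishing for the dominant weight attached to $\mathcal L(k)$, which yields $H^{i>0}=0$ directly in any characteristic.
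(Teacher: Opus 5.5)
Your overall plan is the paper's: write $\Omega^1_W$ as a direct image from a fibre product of projective bundles over $W=\Gr(n,m)$, then use Kodaira vanishing on the $F$-split flag variety. However, the two steps that carry the content are wrong as written, and they cannot both hold on the same variety.

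\textbf{The choice of $F$ and $\mathcal L$.} Take your normalisation $\pi_*\MO_{\P(\cK^\vee)}(1)=\cK^\vee$ and $\pi_*\MO_{\P(\cQ)}(1)=\cQ$, which is what makes these tautological bundles globally generated.
\begin{itemize}
\item With it, your $F$ is $\Fl(n;n-1,m,1)$, and $\MO(1,1)$ pushes forward to $\cK^\vee\otimes\cQ=T_W$, not $\Omega^1_W$.
\item Since $\pi_*\MO(j)$ is the $j$-th symmetric power of the bundle, $\cK$ and $\cQ^\vee$ are not direct images of line bundles there once the ranks exceed two.
\item Relative duality only realises them as top direct images $R^{n-m-1}\pi_*$ and $R^{m-1}\pi_*$. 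This shifts the Leray degrees by $n-2$. The required line bundle is then $\MO(-(n-m)-1,-m-1)$ twisted by a pullback from $W$, which differs from $\omega_F$ by a relatively anti-ample class, so Kodaira vanishing on $F$ does not apply.
\end{itemize}
The correct variety (the paper's) is $F=\P_W(\cK)\times_W\P_W(\cQ^\vee)\simeq\Fl(n;m+1,m,m-1)$, with $\P(V)$ parametrising rank-one quotients. Then $\mathcal L:=\MO_F(r^*\xi+s^*\eta)$ satisfies $\pi_*\mathcal L\simeq\cK\otimes\cQ^\vee$ with no higher direct images (Lemma~\ref{l P(E) times P(E')}).

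\textbf{The ampleness step.} On the correct $F$ the tautological classes $\xi,\eta$ are \emph{not} nef; for instance $H^0(W,\cK)=0$, so $\MO_{\P(\cK)}(1)$ is not globally generated. So positivity of the coefficients of $\xi,\eta,\pi^*H$ proves nothing. Indeed
$\mathcal L(k)-K_F=(n-m+1)r^*\xi+(m+1)s^*\eta+(n+2+k)\pi^*H$
has positive coefficients also for $k=0$, where the conclusion is false: $H^1(W,\Omega^1_W)\neq0$, since $c_1(\MO_W(1))$ restricts nontrivially to a line.

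The missing idea is the following.
\begin{itemize}
\item $\cK(1)\simeq\wedge^{n-m-1}\cK^\vee$ and $\cQ^\vee(1)\simeq\wedge^{m-1}\cQ$ are globally generated, so $r^*\xi+\pi^*H$ and $s^*\eta+\pi^*H$ are nef. They are the pullbacks of the Pl\"ucker classes of $\Gr(n,m+1)$ and $\Gr(n,m-1)$.
\item Using $-K_F=(n-m)r^*\xi+m\,s^*\eta+(n+2)\pi^*H$, one rewrites
$\mathcal L(k)-K_F=(n-m+1)(r^*\xi+\pi^*H)+(m+1)(s^*\eta+\pi^*H)+k\pi^*H$.
\item This is ample exactly when $k>0$.
\end{itemize}

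\textbf{The Kempf fallback.} It does not close the gap. Write
$r^*\xi+s^*\eta+k\pi^*H=(r^*\xi+\pi^*H)+(s^*\eta+\pi^*H)+(k-2)\pi^*H$.
This has degree $k-2$ on the $\P^1\subset F$ along which only the middle quotient varies. So $\mathcal L(1)$ is not nef (the weight is not dominant), and Kempf vanishing only covers $k\geq2$. The case $k=1$ genuinely needs the Kodaira-type vanishing coming from $F$-splitting of $F$.
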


\begin{proof}
Set $W = \Gr(n,m)$.
 Let
\[
0 \to \cK \to \MO_W^{\oplus n} \to \cQ \to 0
\]
be the universal sequence.
Then $\Omega_W^1 \simeq \cK \otimes \cQ^\vee$.

Consider the following diagram obtained by taking the fibre product $F=  \P_W(\cK) \times_W \P_W(\cQ^\vee)$: 
\[
\begin{tikzcd}
 & F=  \P_W(\cK) \times_W \P_W(\cQ^\vee)  \arrow[ld,"r"'] \arrow[rd,"s"] \arrow[dd, "\pi"]  & \\
 \P_W(\cK) \arrow[rd,"p"'] & &\P_W(\cQ^\vee) \arrow[ld,"q"] \\
  &W&
\end{tikzcd}
\]
Then we have isomorphisms $F \simeq \Fl(n;m+1,m,m-1)$, $\P_W(\cK) \simeq \Fl (n;m+1,m)$ and $\P_W(\cQ^\vee ) \simeq \Fl (n;m,m-1)$.
Moreover $p$, $q$, $r$, $s$ are the natural projections of these flag varieties.

Let $H$ be the generator of $\Pic(W)$, $\xi$ the tautological divisor of $\P_W(\cK)$, and $\eta$ the tautological divisor of $\P_W(\cQ^\vee)$.
Since there are two isomorphisms $\cK(1) \simeq \wedge^{n-m-1} \cK^{\vee}$ and $\cQ^{\vee}(1) \simeq \wedge^{m-1} \cQ$, 
the bundles $\cK(1)$ and $\cQ^{\vee}(1)$ are globally generated. 
Thus 
\begin{itemize}
 \item $\xi + p^* H$ and $\eta +q^* H $ are nef,
 \item $-K_F = (n-m) r^*\xi + m s^*\eta + (n+2) \pi^*H$ 
 (Lemma \ref{l univ bdl det}, Lemma \ref{l P(E) times P(E')}).
\end{itemize}

We have
\begin{align*}
H^i(W,\Omega^1_{W}(k)) &=H^i(W, (\cK\otimes \cQ^\vee)(k)) \\
&\overset{(\star)}{=}H^i(F, r^*\xi +s^*\eta +k \pi^* H) \\
&=H^i(F, K_F+ (n-m+1)r^*\xi + (m+1)s^*\eta +(n+2+k) \pi^* H) \\
&=H^i(F, K_F+ (n-m+1)(r^*\xi +\pi^*H) + (m+1)(s^*\eta +\pi^*H) +k \pi^* H) \\
&\overset{(\star\star)}{=}0. 
\end{align*}
Here $(\star)$ follows from Lemma \ref{l P(E) times P(E')}, 
and 
$(\star\star)$ holds by the fact that 
$F$ is $F$-split and $(n-m+1)(r^*\xi +\pi^*H) + (m+1)(s^*\eta +\pi^*H) +k \pi^* H$ is ample when $k>0$. 
\end{proof}

\begin{lem}\label{l P(E) times P(E')}
Let $W$ be a smooth projective variety. 
Take coherent locally free sheaves $E$ and $E'$ on $W$. 
Set $r := \rank\,E$ and $r':= \rank\,E'$. 
Note that we have the following induced cartesian diagram:  
\[
\begin{tikzcd}
 & V:=  \P_W(E) \times_W \P_W(E')  \arrow[ld,"\rho"'] \arrow[rd,"\rho'"] \arrow[dd, "\theta"] & \\
 \P_W(E) \arrow[rd,"\pi"'] & &\P_W(E') \arrow[ld,"\pi'"] \\
  &W&
\end{tikzcd}
\]
Then the following hold 
for the tautological divisor $\xi$ (resp. $\xi'$) of $\pi$ (resp. $\pi'$). 
\begin{enumerate}
\item $K_V \sim -r\rho^*\xi -r'\rho'^*\xi' +\theta^*(K_W + \det E + \det E')$. 
\item $\theta_*\MO_V(n\rho^*\xi + n'\rho'^*\xi') \simeq S^n(E) \otimes_{\MO_W} S^{n'}(E')$ 
for every $n>0$ and $n'>0$. 
\end{enumerate}
\end{lem}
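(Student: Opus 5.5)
The lemma is local over $W$ for the projective-bundle structure, so I will reduce both assertions to standard facts about a single projective bundle. Throughout, $\P_W(E)=\Proj S(E)$ in the Grothendieck convention of the paper, so that $\pi_*\MO_{\P_W(E)}(n)\simeq S^n(E)$ for $n\geq 0$ and $\MO(\xi)=\MO_{\P_W(E)}(1)$.

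For (1), I will use the relative Euler sequence
\[
0\to \Omega_{\P_W(E)/W}\to \pi^*E\otimes\MO(-\xi)\to \MO\to 0.
\]
Taking determinants gives $K_{\P_W(E)/W}\sim -r\xi+\pi^*\det E$. Since $\rho'$ is the base change of $\pi$ along $\pi'$, we have $\Omega_{V/\P_W(E')}\simeq \rho^*\Omega_{\P_W(E)/W}$, and similarly $\Omega_{V/\P_W(E)}\simeq\rho'^*\Omega_{\P_W(E')/W}$. Because $\theta$ is smooth, there is an exact sequence $0\to \theta^*\Omega_W\to\Omega_V\to\Omega_{V/W}\to 0$. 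Moreover $\Omega_{V/W}\simeq \rho^*\Omega_{\P_W(E)/W}\oplus\rho'^*\Omega_{\P_W(E')/W}$, since $V$ is the fibre product over $W$. Taking determinants then gives
\[
K_V\sim \theta^*K_W+\rho^*K_{\P_W(E)/W}+\rho'^*K_{\P_W(E')/W},
\]
which is exactly the formula in (1).

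For (2), I will factor $\theta=\pi\circ\rho$ and compute $\rho_*$ first. Write $\MO_V(n\rho^*\xi+n'\rho'^*\xi')=\rho^*\MO(n\xi)\otimes\rho'^*\MO(n'\xi')$. The projection formula gives
\[
\rho_*\bigl(\rho^*\MO(n\xi)\otimes\rho'^*\MO(n'\xi')\bigr)\simeq \MO(n\xi)\otimes\rho_*\rho'^*\MO(n'\xi').
\]
Flat base change along the flat morphism $\pi$ gives $\rho_*\rho'^*\MO(n'\xi')\simeq\pi^*\pi'_*\MO(n'\xi')=\pi^*S^{n'}(E')$. Applying $\pi_*$ and the projection formula once more gives
\[
\pi_*\bigl(\MO(n\xi)\otimes\pi^*S^{n'}(E')\bigr)\simeq S^n(E)\otimes S^{n'}(E').
\]
The same argument works for all $n,n'\geq 0$. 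In the same way, $R^i\theta_*=0$ for $i>0$ and $n,n'\geq 0$, using the vanishing of higher direct images for $\MO(m)$ with $m\ge0$ on projective bundles together with the Leray spectral sequence. This extra vanishing is what the step $(\star)$ in Proposition \ref{p Gr Omega1 vanishing} actually needs.

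I expect no real obstacle here. The only point to watch is the sign convention in the Euler sequence for the Grothendieck projectivisation, which produces the sign of $\det E$. With $\P(E)=\Proj S(E)$ one has $\omega_{\P(E)/W}=\MO(-r)\otimes\pi^*\det E$, which is consistent with (1).
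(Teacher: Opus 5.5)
Your proposal is correct and follows essentially the same route as the paper: for (1) the paper likewise combines $K_{\P_W(E)/W}\sim -r\xi+\pi^*\det E$ (Hartshorne's Euler-sequence exercise) with base change of relative dualising sheaves, and for (2) it computes $\rho_*\MO_V(n'\rho'^*\xi')\simeq\pi^*S^{n'}(E')$ by viewing $V$ as $\P_{\P_W(E)}(\pi^*E')$ (equivalent to your flat base change) and then applies the projection formula twice. Your extra remark that $R^i\theta_*$ vanishes for $i>0$ is worth keeping, since step $(\star)$ in Proposition \ref{p Gr Omega1 vanishing} implicitly relies on it even though the lemma as stated does not record it.
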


\begin{proof}
Let us show (1). 
By \cite[Ch. III, Exercise 8.4(b)]{Har77}, we have 
\[
K_{\P_W(E)/W} \sim -r \xi + \pi^*(\det E), \qquad 
K_{\P_W(E')/W} \sim -r' \xi' + \pi^*(\det E'). 
\]
Since dualising sheaves commute with base changes, we have 
$\rho'^*K_{\P_W(E')/W} \sim K_{V/\P_W(E)}$. Hence 
\begin{align*}
K_V 
&\sim \rho^*K_{\P(E)} + \rho'^*K_{\P(E')} -\theta^*K_W\\
&\sim \rho^*( \pi^*K_W + -r \xi + \pi^*(\det E)) 
+\rho'^*(\pi'^* K_W + -r' \xi' + \pi'^*(\det E')) - \theta^*K_W\\
&\sim -r\rho^*\xi -r'\xi' +\theta^*(K_W + \det E + \det E'). 
\end{align*}
Thus (1) holds. 

Let us show (2). 
Since $\rho : V = \P_{\P_W(E)}(\pi^*E') \to \P_W(E)$ 
is the projective space bundle associated with $\pi^*E'$ and 
$\rho'^*\xi'$ is its tautological divisor, 
we get 
$\rho_*\MO_V(n'\rho'^*\xi') \simeq S^{n'}(\pi^*E') \simeq \pi^*S^{n'}(E')$ 
\cite[Ch. II, Proposition 7.11]{Har77}. 
Then it holds that 
\begin{align*}
\theta_*\MO_V(n\rho^*\xi + n'\rho'^*\xi')
&\simeq \pi_*\rho_*(\rho^*\MO_{\MO_{\P_W(E)}}(n\xi) \otimes_{\MO_V}  \MO_V(n'\rho'^*\xi'))\\
&\overset{{\rm (i)}}{\simeq} \pi_*( \MO_V(n\xi)  \otimes_{\MO_{\P_W(E)}} \rho_*\MO_V(n'\rho'^*\xi') )\\
&
\simeq 
\pi_*( \MO_V(n\xi)   \otimes_{\MO_{\P_W(E)}} \pi^*S^{n'}(E') )\\
&\overset{{\rm (ii)}}{\simeq} (\pi_*\MO_V(n\xi)) \otimes_W S^{n'}(E')\\
&\overset{{\rm (iii)}}{\simeq} S^n(E) \otimes_W S^{n'}(E'), 
\end{align*}
where (i) and (ii) follow from the projection formula,  
and \cite[Ch. II, Proposition 7.11]{Har77} implies (iii).  
Thus (2) holds. 
\end{proof}

\begin{thm}\label{t g=8 QFS}
Let $X$ be a prime Fano threefold of genus $8$. 
Then 
the following hold. 
\begin{enumerate}
\item 
$H^{>0}(X, \Omega_X^1(-nK_X))=0$ for every integer $n \geq 6$. 
\item 
If $p>5$, then $X$ is quasi-$F$-split. 
\end{enumerate}
\end{thm}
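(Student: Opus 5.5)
By Theorem \ref{t main} we may regard $X$ as a codimension-five linear section of $W:=\Gr(6,2)\subset\P^{14}$, with $\MO_W(1)|_X\simeq\MO_X(-K_X)$. The plan is to push the vanishing from $W$ down to $X$ through a chain of intermediate linear sections, and then deduce (2) from Lemma \ref{l GFS criterion}(1).

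Choose a flag of linear sections
\[
X=W_3\subset W_4\subset\cdots\subset W_8=W,
\]
with $W_j=W\cap\P^{j+6}$ and $W_j$ a hyperplane section of $W_{j+1}$. Since $X$ is smooth of the expected dimension, the $W_j$ can be taken to be complete intersections in $W$. They are smooth near $X$, and by a Bertini-type argument we may take them smooth. For each $j$ there are two exact sequences. The conormal sequence is
\[
0\to\MO_{W_j}(-1)\to\Omega^1_{W_{j+1}}|_{W_j}\to\Omega^1_{W_j}\to0,
\]
and the restriction sequence is
\[
0\to\Omega^1_{W_{j+1}}(n-1)\to\Omega^1_{W_{j+1}}(n)\to\Omega^1_{W_{j+1}}|_{W_j}(n)\to0.
\]

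From these sequences, $H^i(W_j,\Omega^1_{W_j}(n))=0$ follows once we have two inputs. The first is $H^{i}(W_{j+1},\Omega^1_{W_{j+1}}(n))=0$ and $H^{i+1}(W_{j+1},\Omega^1_{W_{j+1}}(n-1))=0$. The second is $H^{i+1}(W_j,\MO_{W_j}(n-1))=0$.

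The line bundle terms are harmless. Each $W_j$ is a complete intersection of hyperplanes in $W$, so Koszul resolutions and the vanishing $H^{>0}(W,\MO_W(m))=0$ for $m\ge -5$ give $H^{>0}(W_j,\MO(m))=0$ in the relevant range. The vanishing on $W$ uses Kodaira vanishing on the $F$-split $W$ together with $K_W=\MO_W(-6)$. Here one must also track the top cohomology $H^{\dim}$, which Serre duality handles for large $m$.

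Going down by induction, each step needs the input on $W_{j+1}$ in degree $i$ and in degree $i+1$ with twist lowered by one. Descending five steps from $W$ to $X$ therefore requires
\[
H^{i}(W,\Omega^1_W(m))=0\quad\text{for } i\ge1,\ m\ge n-5 .
\]
For $n\ge6$ this means $m\ge1$, which is exactly Proposition \ref{p Gr Omega1 vanishing}. I will organize this as an induction on codimension. The claim to carry is $H^{i}(W_j,\Omega^1_{W_j}(m))=0$ for $i\ge1$ and $m\ge 1+(8-j)$; only degrees $i\ge1$ occur, and the degree grows harmlessly. This proves (1).

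For (2), Lemma \ref{l GFS criterion}(1) asks for $H^2(X,\Omega^1_X(-p^rK_X))=0$ for all $r>0$. This follows from (1), because $p^r\ge p\ge7\ge6$.

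The main obstacle is bookkeeping in the induction. The kernel term $\MO_{W_j}(n-1)$ carries the boundary map into $H^{i+1}$, and we must check that no $H^{\dim W_j}$ of a negative twist sneaks in for small $i$. Since only $n-1\ge5-(8-j)\cdot0$ twists appear, these are nonnegative and vanish. A secondary point is the smoothness of the intermediate sections $W_j$. If Bertini is delicate in characteristic $p$, I would instead work with the exact Koszul complex of $\MO_X$ on $W$, tensored with $\Omega^1_W|_X$, and with the conormal sequence $0\to\MO_X(-1)^{\oplus5}\to\Omega^1_W|_X\to\Omega^1_X\to0$. That route gives the same numerical requirement, $m\ge n-5\ge1$.
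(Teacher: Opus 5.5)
Your fallback is exactly the paper's proof, and it is the version to present. The paper never works with $\Omega^1$ of the intermediate sections. It pushes $\Omega^1_W$ itself down the chain $W=X_8\supset X_7\supset\cdots\supset X_3=X$ via the sequences $0\to\Omega^1_W(n-1)|_{X_{d+1}}\to\Omega^1_W(n)|_{X_{d+1}}\to\Omega^1_W(n)|_{X_d}\to 0$. Equivalently, it uses the Koszul resolution of $\MO_X$ on $W$ tensored with the locally free sheaf $\Omega^1_W$, not with $\Omega^1_W|_X$ as you wrote. This gives $H^{>0}(X,\Omega^1_W(n)|_X)=0$ for $n\ge 6$ from Proposition~\ref{p Gr Omega1 vanishing}. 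The paper then applies a conormal sequence only once, for $X\subset W$, using $N^*_{X/W}\simeq\MO_X(-1)^{\oplus 5}$ and $H^{>0}(X,\MO_X(n-1))=0$. Nothing about the $X_d$ is needed beyond the hyperplane equations forming a regular sequence, which is automatic because $X$ has the expected dimension. Part (2) is then deduced exactly as you do.

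Your primary route has the same numerics and gives slightly more: the vanishing $H^{>0}(W_j,\Omega^1_{W_j}(m))=0$ for $m\ge 9-j$ on each intermediate section. As written, though, it rests on every $W_j$ being smooth, so that the conormal sequences and the restriction sequences for $\Omega^1_{W_{j+1}}$ are exact.

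``By a Bertini-type argument'' does not justify this smoothness in characteristic $p$. The hyperplanes through $\langle X\rangle$ form a linear system with base locus $X$, and its associated projection $W\setminus X\to\P^4$ has $4$-dimensional fibres. So the characteristic-$p$ Bertini theorem, which needs an unramified map, does not apply. Smoothness along $X$ only gives $\dim\Sing W_j\le j-4$.

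This can be repaired in either of two ways:
\begin{itemize}
\item A dimension count in $\P((\Ker\lambda_X)^\vee)$. For a linear subspace $\Lambda\supset\langle X\rangle$, $W\cap\Lambda$ can only be singular at points $\theta(y)$ with $y$ in the cubic threefold $Y$ of Definition~\ref{d cubic of X}. Combined with $\theta(Y)\cap X=\emptyset$ (Proposition~\ref{p cubic of X is smooth}), the count shows a general flag avoids such singularities.
\item Checking that both sequences remain exact for these normal complete-intersection sections, since $\Omega^1_{W_{j+1}}$ is torsion-free.
\end{itemize}
The paper's route makes all of this unnecessary.
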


\begin{proof}
By Lemma \ref{l GFS criterion}, (1) implies (2). 
Let us show (1). 
Recall that $X \subset \Gr(6, 2)$ and $X = \Gr(6, 2) \cap H_1 \cap \cdots \cap H_5$. 
Set $X_8 := \Gr(6, 2)$ and 
$X_d := \Gr(6, 2) \cap H_1 \cap \cdots \cap H_{8-d}$ for every 
$3 \leq d \leq 7$. 
We have $\dim X_d = d$ and $X_3=X$. 


Note that, since $X \subset X_8$ is a linear section, the conormal bundle $N^*_{X_3/X_8}$ is isomorphic to $\MO_{X_3}(-1)^{\oplus 5}$ \cite[B.7.4]{Ful98}. 
By $N^*_{X/X_8} \simeq \MO_{X}(-1)^{\oplus 5}$, 
we have the following conormal exact sequence:  
\[
0 \to \MO_{X}(-1)^{\oplus 5} \to \Omega^1_{X_8}|_{X} \to \Omega_{X}^1 \to 0. 
\]
Since $H^{>0}(X,\MO_{X}(n-1))=0$ for $n>0$, it is enough to show the following: 
\[
\text{$H^{>0}(X, \Omega^1_{X_8}(n)|_{X}) = 0$ for every $n\geq 6$.}
\]
Since the ideal sheaf $I_{X_d \subset X_{d+1}}$ on $X_{d+1}$ defining $X_d$ is isomorphic to $\MO_{X_{d+1}}(-1)$, we have an exact sequence 
\[
0\to \Omega^1_{X_8}(-1)|_{X_{d+1}}  
\to \Omega^1_{X_8}|_{X_{d+1}} \to \Omega^1_{X_8}|_{X_d}  \to 0. 
\]
By $H^{>0}(X_8, \Omega^1_{X_8}(n))=0$ for $n \geq 1$ (Proposition \ref{p Gr Omega1 vanishing}), 
we get $H^{>0}(X_7, \Omega^1_{X_8}(n)|_{X_7})=0$ for $n \geq 2$. 
Applying this argument for $d = 6$, 
we obtain $H^{>0}(X_6, \Omega^1_{X_8}(n)|_{X_6})=0$ for $n \geq 3$. 
Repeating this procedure, 
it holds that $H^{>0}(X_3, \Omega^1_{X_3}(n))=0$ for $n \geq 6$. 
Thus (1) holds.
\end{proof}

\subsection{$H^1(X, \Omega_X^2(-pK_X))=0$ $(p>5)$}

\begin{lem}\label{l P(wedge^2K) GFR}
Set $W := \Gr(6, 2)$ and let $\cK$ be the universal subbundle on $W= \Gr(6, 2)$. 
Then the following hold for $ \pi \colon \P := \P_{W}(\wedge^2 \mathcal K) \to W$, the tautological divisor $\xi$ on $\P$ of $\pi$, and 
a hyperplane section $H$ on $W$. 
\begin{enumerate}
\item 
There exists a birational morphism $\varphi : \P_W(\wedge^2 \cK) \to \cC$ to the Pfaffian cubic $\cC \subset \P^{14}$ such that $\varphi_*\MO_{\P_W(\wedge^2 \cK)} = \MO_{\cC}$. 
\item 
For $F := \Ex(\varphi)$, it holds that $F \simeq {\rm Fl}(6; 4, 2)$ and $F \simeq {\rm Fl}(6; 4, 2) \to W = \Gr(6, 2)$ is the natural projection. 
\item 
$-K_{\P} \sim 6\xi + 9\pi^*H$. 
\item 
$-(K_{\P}+F) \sim 4\xi+8\pi^*H$. 
\end{enumerate}
\end{lem}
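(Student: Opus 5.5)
The plan is to realise $\P := \P_W(\wedge^2\cK)$ as the incidence variety
\[
\{([Q],[\omega]) \mid \omega \in \wedge^2 K,\ K = \Ker(U_6 \to Q)\}
\]
and define $\varphi$ through the tautological quotient.

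\textbf{Construction of $\varphi$ (part (1)).} The inclusion $\wedge^2\cK \hookrightarrow \wedge^2 U_6\otimes\MO_W$ is a subbundle inclusion. Dualising gives a surjection $\wedge^2 U_6^\vee\otimes\MO_W \to (\wedge^2\cK)^\vee$. Hence the tautological line bundle $\MO_{\P}(\xi)$ is a quotient of $\wedge^2 U_6^\vee \otimes \MO_{\P}$, with the convention that $\P_W(\wedge^2\cK)$ parametrises lines in $\wedge^2 K$; this yields a morphism $\P \to \P(\wedge^2 U_6^\vee)$. Concretely, a point $([K],\omega)$ is sent to the class of $\omega\in\wedge^2 K\subset\wedge^2 U_6$, viewed as an alternating form on $U_6^\vee$. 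Since $\dim K=4$, such an $\omega$ has rank $\le 4$, so the image lies in $\cC$. Conversely, a form $\omega$ of rank exactly $4$ determines its support $K$, a $4$-dimensional subspace, uniquely. Thus $\varphi$ is bijective over the open orbit $\cC\setminus\Gr(2,U_6)$ and birational; here $\dim\P = 8+5 = 13 = \dim \cC$.

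To obtain $\varphi_*\MO_\P=\MO_\cC$ I would prove that $\cC$ is normal. It is a hypersurface, hence Cohen--Macaulay, and its singular locus is $\Gr(2,U_6)$, of codimension $5$. Normality then follows from Serre's criterion, and Zariski's main theorem gives the equality. If the singular locus is not convenient to cite, one can instead check smoothness of $\cC$ at a rank-$4$ point directly, using the fact that the differential of the Pfaffian is nonzero there.

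\textbf{Exceptional locus (part (2)).} Over a rank-$2$ form $\omega=v\wedge w$, the fibre consists of the $4$-dimensional $K\supset\langle v,w\rangle$, which is $\Gr(2,4)$ of dimension $4$. So $F=\varphi^{-1}(\Gr(2,U_6))$ has dimension $8+4=12$ and is a divisor. It parametrises flags $P\subset K$ with $\dim P=2$, $\dim K=4$, which is $\Fl(6;4,2)$ in the quotient convention. The map $F\to W$ is $(P\subset K)\mapsto K$, the natural projection. Scheme-theoretically, $F$ is the zero locus in $\P$ of the Pfaffian of $\omega\in\wedge^2\cK$. This Pfaffian is the section of $\wedge^4\cK\otimes\MO(2\xi)=\MO(-\pi^*H+2\xi)$ given by $\omega\mapsto\omega\wedge\omega$, so $F\sim 2\xi-\pi^*H$. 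Reducedness can be checked at one point.

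\textbf{Canonical class (parts (3) and (4)).} By Hartshorne, $K_{\P/W}\sim -6\xi+\pi^*\det(\wedge^2\cK)$. We have $\det\wedge^2\cK=(\det\cK)^{3}=\MO(-3)$ by Lemma~\ref{l univ bdl det}, and $K_W=-6H$. Therefore
\[
K_\P\sim -6\xi-3\pi^*H-6\pi^*H,
\]
so $-K_\P\sim 6\xi+9\pi^*H$. Here the sign convention for $\xi$ must match the one used in Lemma~\ref{l P(E) times P(E')}(1). Subtracting the class of $F$ gives
\[
-(K_\P+F)\sim 4\xi+10\pi^*H.
\]
This is off by $2\pi^*H$ from the stated $4\xi+8\pi^*H$, so the class of $F$ needs rechecking. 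The statement would follow from $F\sim 2\xi+\pi^*H$, that is, from the Pfaffian section taking values in $\MO(2\xi)\otimes(\wedge^4\cK)^{\vee}$ rather than $\wedge^4\cK$. This is plausible under the Grothendieck sign convention: there $\MO(\xi)$ is a quotient of $\pi^*(\wedge^2\cK)^\vee$, and the quadratic map $\omega\mapsto\omega\wedge\omega$ lands naturally in $\wedge^4\cK\otimes(\text{line})^{\otimes 2}$, with the line dual to $\MO(\xi)$.

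\textbf{Main obstacle.} I expect the main difficulty to be pinning down the convention consistently: identifying the section that defines $F$ as a section of exactly $\MO(2\xi)\otimes\det\cK^{\vee}$, and verifying that $F$ is reduced. I would first compute at one explicit point of $F$ that the Pfaffian vanishes to order exactly one. Then I would use the adjunction check
\[
K_F=(K_\P+F)|_F=-4\xi|_F-8H|_F
\]
and compare it against the known anticanonical class of $\Fl(6;4,2)$ as a consistency test.
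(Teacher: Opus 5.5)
\textbf{Verdict.} Parts (1) and (3) follow the paper. Describing $F$ in (2) as the Pfaffian quadric bundle over $W$ is a valid alternative to the paper's extremal-ray argument. However, (4) is not proved: as written, your computation gives $-(K_{\P}+F)\sim 4\xi+10\pi^*H$, which contradicts the statement, and the class $F\sim 2\xi+\pi^*H$ that you need is only guessed.

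\textbf{The gap in (4).} You are using two different $\xi$'s.
\begin{itemize}
\item In constructing $\varphi$ and in the Pfaffian computation, your $\MO(\xi)$ is the dual of the tautological sub-line of $\pi^*\wedge^2\cK$, so that $\varphi^*\MO_{\P^{14}}(1)\simeq\MO(\xi)$.
\item In (3) you use Hartshorne's formula, which refers to Grothendieck's quotient $\pi^*\wedge^2\cK\to\MO(\xi)$. The statement of (3) forces this second convention.
\end{itemize}
Grothendieck's $\P_W(\wedge^2\cK)$ parametrises lines in $\wedge^2\cK^\vee\simeq\wedge^2\cK\otimes\det\cK^\vee=\wedge^2\cK(1)$. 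Hence, with the statement's $\xi$, the tautological sub-line of $\pi^*\wedge^2\cK$ is $\MO(-\xi-\pi^*H)$, and $\varphi^*\MO_{\P^{14}}(1)\simeq\MO(\xi+\pi^*H)$, exactly as in the paper. The Pfaffian is then a section of $\MO(2\xi+2\pi^*H)\otimes\det\cK\simeq\MO(2\xi+\pi^*H)$. This gives $F\sim 2\xi+\pi^*H$, and (4) follows.

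Your heuristic that, in Grothendieck's convention, $\MO(\xi)$ is a quotient of $\pi^*(\wedge^2\cK)^\vee$ is backwards. The shift by $\pi^*H$ comes from $\det\cK^\vee$.

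Two further points in this step:
\begin{itemize}
\item The quadratic map must be the divided square $\omega^{[2]}$, i.e.\ the Pfaffian. The map $\omega\wedge\omega=2\,\omega^{[2]}$ vanishes identically when $p=2$.
\item The zero divisor is reduced because on each fibre $\P(\wedge^2K)\simeq\P^5$ it is the Klein quadric $\Gr(2,4)$, which is smooth in every characteristic.
\end{itemize}

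\textbf{Birationality in (1).} Bijectivity over the rank-$4$ locus does not by itself give birationality in characteristic $p$, since a bijective morphism can be purely inseparable. You can fix this in either of two ways:
\begin{itemize}
\item observe that $\omega\mapsto(\text{image of }\omega\colon U_6^\vee\to U_6,\ [\omega])$ is an inverse morphism on the constant-rank locus; or
\item use $(\xi+\pi^*H)^{13}=3=\deg\cC$, as the paper does.
\end{itemize}

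\textbf{The paper's route to (4).} The adjunction check you propose at the end is precisely the paper's proof of (4), and it bypasses both the class of $F$ and its multiplicity.
\begin{enumerate}
\item Write $-K_F=a f_1^*\MO(1)+b f_2^*\MO(1)$, where $f_1\colon F\to\Gr(U_6,2)$ and $f_2\colon F\to\Gr(U_6,4)$ are the projections.
\item The $f_1$-fibres are quadric fourfolds with $-K=\MO(4)$, so $b=4$. By symmetry, $a=4$.
\item Since $f_2^*\MO(1)=(\xi+\pi^*H)|_F$ and $\Pic\P\to\Pic F$ is injective, adjunction gives $-(K_{\P}+F)\sim 4\xi+8\pi^*H$.
\end{enumerate}
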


\begin{proof}
Set $U_6 := k^6$, and consider $W$ as $\Gr(U_6,2)$.
Note that $\wedge^2 \cK (1) \simeq \wedge^2 \cK^{\vee}$.
By applying $\wedge^2$ to 
the surjection $\MO_{\Gr(6,2)}\otimes_k U_6^\vee  \to \cK^\vee$, we have a surjection $\MO_{\Gr(6,2)}\otimes_k \wedge^2 U_6^\vee \to \wedge^2 \cK^\vee$.
This induces a morphism 
$\psi \colon \P_W(\wedge^2 \cK) \simeq \P_W(\wedge^2 \cK^\vee) \to \P(\wedge^2 U_6^\vee)$ such that $\psi^*\MO_{\P(\wedge^2 U_6^\vee)}(1) 
\simeq \MO_{\P_W(\wedge^2 \cK)}(\xi + \pi^*H)$. 
By the construction, the image of $\psi$ is the Pfaffian cubic hypersurface $\cC$. 
Hence we have the following induced morphisms: 
\[
\psi \colon  \P_W(\wedge^2 \cK) \xrightarrow{\varphi}  \cC \hookrightarrow \P(\wedge^2 U_6^\vee). 
\]
By the standard Schubert calculus, we see that $(\xi + \pi^* H)^{13} = 3$.
Thus $\varphi \colon  \P_W(\wedge^2 \cK) \to \cC$ is birational.
Note that $\Sing (\cC) = \Gr(2,U_6)$, because 
 there are only two $\PGL_6$-orbits on $\cC$. 
Since $\dim \Sing(\cC) = \dim \Gr(2,6) = 8$, the hypersurface $\cC$ is normal.
Thus $\varphi_* \MO_{\P_W(\wedge^2 \cK)} = \MO_{\cC}$, and (1) follows.

Let us show (2). 
By taking the relative Pl\"ucker embedding, we have a closed immersion  $\Fl(U_6;4,2) \hookrightarrow \P_W(\wedge^2\cK)$ such that the morphism  $\varphi$ restricts to the natural projection $\Fl(U_6;4,2) \to \Gr(U_6,4) \simeq \Gr(2,U_6)$ (by the construction).
Thus $F$ contains $\Fl(U_6;4;2)$.
Since the Picard rank of $\P_W(\wedge^2\cK)$ is two, $\varphi$ is a contraction of an extremal ray.
Since the excetpional locus of a divisorial  contraction 
is a prime divisor 
(see, e.g., \cite[Proposition 3.2]{Kol91}), we have $F = \Fl(U_6;4,2)$. 
Thus  (2) holds. 

Since $c_1(\wedge^2 \cK) = -3H$ (see, e.g., \cite[Appendix A, \S 3]{Har77}), we have
\[
-K_{\P} = 6\xi + \pi^*(3H) + \pi^*(-K_W) = 6\xi + 9 \pi^* H. 
\]
Thus (3) holds. 

Finally, we show (4).
Let $f_1 \colon \Fl(U_6;4,2) \to \Gr(U_6,2)$ and $f_2 \colon \Fl(U_6;4,2) \to \Gr(U_6,4)$ be the natural projections.
Note that the $f_1$-fibres are isomorphic to $\Gr(4,2) \simeq \Q^4$, and that $f_2$ restricts to the Pl\"ucker embeddings to these fibres. 
We can write $-K_F = a f_1^*\MO_{\Gr(U_6,2)}(1) + b f_2^* \MO_{\Gr(U_6,4)}(1)$ for some $a, b \in \Z$.
Then, on an $f_1$-fibre $\Gr(4,2) \simeq \Q^4$, we have
\[
\MO_{\Q^4}(4) \simeq -K_F|_{\Gr(4,2)} = (a f_1^*\MO_{\Gr(U_6,2)}(1) + b f_2^* \MO_{\Gr(U_6,4)}(1))|_{\Gr(4,2)} = \MO_{\Q^4}(b).
\]
Hence $b=4$.
By symmetry, we have $a=4$.
Thus
\[
(-K_\P-F)|_F = -K_F
=4 f_1^*\MO_{\Gr(U_6,2)}(1) +4 f_2^* \MO_{\Gr(U_6,4)}(1))
=(4\pi^*H+4(\xi+\pi^*H))|_F.
\]
Since $\Pic \P \to \Pic F$ is injective, we have (4).
\end{proof}

\begin{prop}\label{p PtimesP over W GFS}
$\P_W(\wedge^2\cK) \times_W \P_W(\cQ^\vee)$ is a globally $F$-regular 
smooth Fano variety. 
\end{prop}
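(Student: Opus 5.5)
Set $W := \Gr(6,2)$, $\P := \P_W(\wedge^2\cK)$, $P' := \P_W(\cQ^\vee)$ and $V := \P \times_W P'$, with projections $\rho\colon V \to \P$, $\rho'\colon V \to P'$ and $\theta\colon V \to W$. Let $\xi$ and $\eta$ be the tautological divisors of $\P$ and $P'$.

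\textbf{Smoothness.} $V$ is a $\P^1$-bundle over the smooth variety $\P$, so $V$ is smooth.

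\textbf{Anticanonical class.} Lemma \ref{l P(E) times P(E')}(1) applies with $\rank \wedge^2\cK = 6$ and $\rank \cQ^\vee = 2$. Using $\det\wedge^2\cK = -3H$ and $\det \cQ^\vee = -H$ (Lemma \ref{l univ bdl det}) and $-K_W = 6H$, it gives
\[
-K_V \sim 6\rho^*\xi + 2\rho'^*\eta + 10\theta^*H
= 6\rho^*(\xi+\pi^*H) + 2\rho'^*(\eta+q^*H) + 2\theta^*H .
\]
Each summand is nef: $\wedge^2\cK(1)\simeq \wedge^2\cK^\vee$ and $\cQ^\vee(1)\simeq \cQ$ are globally generated, so $\xi+\pi^*H$ and $\eta+q^*H$ are base point free.

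\textbf{Ampleness of $-K_V$.} The divisors $\rho^*(\xi+\pi^*H)$, $\rho'^*(\eta+q^*H)$ and $\theta^*H$ are the pullbacks along the three projections of $V$ to $\cC$, to $\P(U_6)$, and to $W$. I will show the product morphism $V \to \cC\times\P(U_6)\times W$ is finite; then a positive combination of the three pullbacks is ample.
\begin{itemize}
\item A curve contracted by both the map to $W$ and the map to $\P(U_6)$ lies in a fibre of $\theta$, which is $\P^5\times\P^1$.
\item On that fibre the second map restricts to an embedding of the $\P^1$-factor, so the curve lies in $\P^5\times\{\mathrm{pt}\}$.
\item On such a slice the map to $\cC$ is the linear embedding of the $\theta$-fibre of $\P$.
\end{itemize}
Hence no curve is contracted by all three, so $-K_V$ is ample and $V$ is Fano.

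\textbf{Global $F$-regularity.} I would realise $V$ as a homogeneous bundle whose points are flags $U_6 \twoheadrightarrow Q \twoheadrightarrow L$ (with $\dim Q = 2$, $\dim L = 1$) together with a line in $\wedge^2 K$, where $K=\Ker(U_6\to Q)$. The plan is to use the divisor $F' := \rho^*F$, where $F = \Ex(\varphi)$ is the exceptional divisor of Lemma \ref{l P(wedge^2K) GFR}. Two routes to try:
\begin{itemize}
\item \emph{Global $F$-regularity of $\P$ first.} Since $\varphi\colon \P \to \cC$ is birational with $\varphi_*\MO_\P = \MO_\cC$, and Lemma \ref{l P(wedge^2K) GFR}(4) gives $-(K_\P+F) \sim 4\xi + 8\pi^*H$, which is ample, one would show $(\P, F)$ is purely globally $F$-regular. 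Inversion of adjunction then reduces this to the global $F$-regularity of $F \simeq \Fl(6;4,2)$, a flag variety, which is globally $F$-regular. This makes $\P$ globally $F$-regular.
\item \emph{Lift to $V$.} $V$ is a $\P^1$-bundle over $\P$, relatively Fano, with $-K_V$ ample. Pass to $V$ either by the same adjunction argument with $\rho^*F$, whose restriction is $F\times_W P' \simeq$ a partial flag variety, or via the general fact that a Fano $\P^1$-bundle over a globally $F$-regular base is globally $F$-regular (for instance by cone or Frobenius-splitting arguments).
\end{itemize}

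The main obstacle is the inversion-of-adjunction step. It needs $F$ to be normal and a pure $F$-regularity lemma for the pair $(\P,F)$ (or $(V,\rho^*F)$) with $-(K+F)$ ample; I would check these carefully.
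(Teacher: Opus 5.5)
Your proposal follows essentially the same approach as the paper. Like you, the paper writes $-K_V$ and $-(K_V+\rho^*F)$ as positive combinations of pulled-back base-point-free classes, where $\rho^*F\simeq \Fl(6;4,2,1)$. It then applies the adjunction lemma \cite[Lemma 2.7]{CTW17} directly on $V$ in its $F$-splitting form, and upgrades $F$-split to globally $F$-regular because $V$ is a smooth Fano variety. So the detour through $\P$ is unnecessary, and you should not rely on the unproved claim about Fano $\P^1$-bundles.

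One item you left unchecked is that $-(K_V+\rho^*F)$ is ample. It is: we have $-(K_V+\rho^*F)\sim 4\rho^*(\xi+\pi^*H)+2\rho'^*(\eta+q^*H)+3\theta^*H$, and your finiteness argument for $V\to \cC\times\P(U_6)\times W$ gives ampleness. Normality is also not an issue, since $\rho^*F$ is smooth.
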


\begin{proof}
Set $W := \Gr(6, 2)$, $\P := \P_W(\wedge^2\cK)$, $\Q := \P_W(\cQ^{\vee})$, $\R := \P \times_W \Q$. 
We have the following cartesian diagram: 
\[
\begin{tikzcd}
 & \R=  \P \times_W \Q  \arrow[ld,"a"'] \arrow[rd,"b"] \arrow[dd, "e"]& \\
 \P = \P_W(\wedge^2\cK) \arrow[rd,"c"'] & &\Q = \P_W(\cQ^\vee) \arrow[ld,"d"] \\
  &W.&
\end{tikzcd}
\]
Recall that 
$ c : \P \to W$ and $\varphi : \P \to \mathcal C$ are the contractions of $\P$, 
where $\mathcal C \subset \P^{14}$ is the Pfaffian cubic (Lemma \ref{l P(wedge^2K) GFR}). 
Set $F := \Ex(\varphi) = \text{Fl}(6; 4, 2)$ and $F' :=a^{-1}(F) = \text{Fl}(6; 4, 2, 1)$. 
In particular, $F'$ is $F$-split. 

Since dualising sheaves commute with base changes, 
we have $a^* \omega_{\P/W} \simeq \omega_{\R/\Q}$. 
Hence the following hold: 
\[
-K_{\R} = -a^* K_{\P} -b^*K_{\Q} + e^*K_W, 
\]
\[
-(K_{\R} + F') = -a^*(K_{\P} +F) -b^*K_{\Q/W}. 
\]
Let $H_W$ be the ample generator of $\Pic W$. 
For the  tautological bundle $\eta$ of the $\P^1$-bundle $\Q = \P_W(\cQ^\vee) \to W$, it holds that 
$K_{\Q} \sim -2 \eta + d^*(K_W + \det (\cQ^\vee))$, and hence 
$-K_{\Q/W}=2 \eta + d^*H_W$ (Lemma \ref{l univ bdl det}). 
It holds that 
\[
-K_{\R} = -a^*K_{\P} -b^*K_{\Q/W} 
\overset{(\star)}{=} 6a^*\xi + 9e^*H_W   +2b^*\eta +e^*H_W 
\]
\[
=
(6a^*\xi + 7 e^*H_W) + (2b^*\eta +3e^*H_W) = a^*A_{\P} + b^*A_{\Q},  
\]
for $A_{\P} := 6\xi + 7 c^*H_W$ and $A_{\Q} := 2\eta +3d^*H_W$, and 
\[
-(K_{\R} +F') =  -a^*(K_{\P} +F) -b^*K_{\Q/W} 
\overset{(\star\star)}{=} 4a^*\xi + 8e^*H_W   +2b^*\eta +e^*H_W
\]
\[
= (4a^*\xi + 5e^*H_W) + (2b^*\eta + 4e^*H_W)  = a^*A'_{\P} + b^*A'_{\Q} 
\]
for  $A'_{\P} := 4\xi + 5 c^*H_W$ and $A'_{\Q} := 2\eta +4d^*H_W$, 
where $(\star)$ and $(\star\star)$ follow from 
$-K_{\Q/W}=2 \eta + d^*H_W$ and 
Lemma \ref{l P(wedge^2K) GFR}. 
Note that $A_{\P}$ and $A'_{\P}$ (resp. $A_{\Q}$ and $A'_{\Q}$) 
are ample divsiors on $\P$ (resp. $\Q$). 
Then each of $-K_{\R}$ and $-(K_{\R}+F')$ is ample, 
because we have the induced closed immersion 
$\R = \P \times_W \Q \hookrightarrow \P \times_k \Q$ and 
the corresponding divisors 
$\pr_1^*A_{\P} + \pr_2^*A_{\Q}$ and $\pr_1^*A'_{\P} + \pr_2^*A'_{\Q}$ 
on $\P \times_k \Q$ are ample. 
By \cite[Lemma 2.7]{CTW17}, $\R$ is $F$-split. 
Since $\R$ is a smooth Fano variety, $\R$ is globally $F$-regular. 
\end{proof}

\begin{rem}
In particular, the Pfaffian cubic $\mathcal C \subset \P^{14}$ is globally $F$-regular. 
\end{rem}
\begin{prop}\label{p Gr Omega2 vanish}
If $p\neq 2$, 
then 
\[
H^i(\Gr(6, 2), \Omega^2_{\Gr(6, 2)}(k)) =0 
\]
 for $i>0$ and  $k >0$. 
\end{prop}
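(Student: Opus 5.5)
We mimic the proof of Proposition \ref{p Gr Omega1 vanishing}, with $\P_W(\cK)$ replaced by $\P=\P_W(\wedge^2\cK)$ of Lemma \ref{l P(wedge^2K) GFR}. Set $W:=\Gr(6,2)$, so $\Omega^1_W\simeq \cK\otimes\cQ^\vee$ with $\rank \cK=4$ and $\rank\cQ=2$. Then
\[
\Omega^2_W \simeq \wedge^2(\cK\otimes \cQ^\vee) \simeq \bigl(S^2\cK\otimes \wedge^2\cQ^\vee\bigr)\oplus\bigl(\wedge^2\cK\otimes S^2\cQ^\vee\bigr),
\]
and in the rank-two case $\wedge^2\cQ^\vee\simeq \MO_W(-1)$ by Lemma \ref{l univ bdl det}. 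This splitting is valid in characteristic $\neq 2$ and is where $p\neq 2$ enters: in general one only has a filtration, and the two-to-one issues in $S^2$ versus $\Gamma^2$ need $2$ to be invertible. It therefore suffices to show $H^i(W,S^2\cK(k-1))=0$ and $H^i(W,\wedge^2\cK\otimes S^2\cQ^\vee(k))=0$ for $i>0$ and $k>0$.

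For the second summand, work on $\R=\P\times_W\Q$ of Proposition \ref{p PtimesP over W GFS}, where $\Q=\P_W(\cQ^\vee)$. Lemma \ref{l P(E) times P(E')}(2) gives $e_*\MO_\R(a^*\xi+2b^*\eta)\simeq \wedge^2\cK\otimes S^2\cQ^\vee$, and the higher direct images vanish (projective bundles, nonnegative degrees). So we must show $H^i(\R,a^*\xi+2b^*\eta+ke^*H)=0$. Write this divisor as $K_\R+D$. Using $-K_\R = 6a^*\xi+2b^*\eta+10e^*H$ from the proof of Proposition \ref{p PtimesP over W GFS}, we get
\[
D = 7a^*\xi+4b^*\eta+(10+k)e^*H = 7a^*(\xi+H)+4b^*(\eta+H)+(k-1)e^*H .
\]
Here $\xi+H$ is nef, being the pullback of $\MO(1)$ under $\psi$, and $\eta+H$ is nef because $\cQ^\vee(1)\simeq\cQ$ is globally generated. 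Hence $D$ is nef and big for $k\ge1$, and ample for $k>1$, since $\xi+H$ together with $H$ spans ample classes on $\P$. Because $\R$ is globally $F$-regular, it satisfies Kawamata--Viehweg type vanishing for nef and big $D$ (one may also write $D$ as ample plus a small multiple of a boundary), and the vanishing follows. Using $F'$ and the ampleness of $-(K_\R+F')$ handles the borderline $k=1$ more cleanly if needed.

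For the first summand $S^2\cK(k-1)$, use $\P_W(\cK)$, or better $F_0:=\P_W(\cK)\times_W\Q$ as in Proposition \ref{p Gr Omega1 vanishing}, with $\pi_*\MO(2r^*\xi)=S^2\cK$. The anticanonical formula there gives $-K_{F_0}=4r^*\xi+2s^*\eta+8\pi^*H$. Then $2r^*\xi+(k-1)\pi^*H = K_{F_0}+6(r^*\xi+H)+2(s^*\eta+H)+(k-5)\pi^*H$, which is negative for small $k$. So instead I would take $\P_W(\cK)$ alone, where $-K=4\xi+6\pi^*H$, and write $2\xi+(k-1)H=K+6(\xi+H)+(k-1)H$. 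This is ample for $k\ge 2$ and nef and big for $k=1$, and $\P_W(\cK)\simeq\Fl(6;3,2)$ is $F$-split. The \textbf{main obstacle} is the boundary case $k=1$, where the divisor is only nef and big. There I would invoke $F$-splitting with a suitable compatibly split divisor (the globally $F$-regular structure), or simply compute $H^i(S^2\cK)$ directly via Kempf/Borel--Weil--Bott, which holds in this range since the weights are small. Both summands then vanish, proving the proposition.
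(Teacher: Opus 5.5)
Your strategy is the paper's. You split $\Omega^2_W$ using $p\neq 2$, compute the cohomology of $S^2\cK(k-1)$ on $\P_W(\cK)$ and of $\wedge^2\cK\otimes S^2\cQ^\vee(k)$ on $\R=\P\times_W\Q$, and conclude with Kodaira-type vanishing on these $F$-split, respectively globally $F$-regular, varieties. There are two points to fix.

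\emph{First summand.} The ``main obstacle'' at $k=1$ comes from a wrong canonical class. You dropped the $\det\cK$ term. By Lemma~\ref{l P(E) times P(E')}(1) and $\det\cK\simeq\MO_W(-1)$ (Lemma~\ref{l univ bdl det}), one has $K_{\P_W(\cK)}\sim-4\xi+\pi^*(K_W+\det\cK)=-4\xi-7\pi^*H$. Hence $2\xi+(k-1)\pi^*H-K_{\P_W(\cK)}=6(\xi+\pi^*H)+k\pi^*H$. This is ample for every $k\geq 1$, because $\xi+\pi^*H$ is nef and $\pi$-ample ($\cK(1)\simeq\wedge^3\cK^\vee$ is globally generated). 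Kodaira vanishing on the $F$-split variety $\P_W(\cK)\simeq\Fl(6;3,2)$ then covers all $k\geq1$ at once. This is exactly the paper's argument; its display has the harmless misprint $-2\xi_{\cK}$ for $-4\xi_{\cK}$.

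The remedies you proposed for $k=1$ would not have worked. With your bookkeeping the $k=1$ divisor is $6(\xi+\pi^*H)$. It is pulled back along the $\P^2$-bundle $\Fl(6;3,2)\to\Gr(6,3)$, so it is nef but not big, and Kawamata--Viehweg does not apply. Borel--Weil--Bott for a non-dominant weight is also not automatic in characteristic $3$ or $5$.

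\emph{Second summand.} Your decomposition $7a^*(\xi+H)+4b^*(\eta+H)+(k-1)e^*H$ is correct, and here the borderline case $k=1$ is genuine. The paper writes the divisor as $(7a^*\xi+\tfrac{15}{2}e^*H)+(3b^*\eta+\tfrac72 e^*H)+(k-1)e^*H$ and claims ampleness for all $k\geq 1$. That display has $3b^*\eta$ where $4b^*\eta$ is needed. With the correct coefficient the divisor is not ample at $k=1$. To see this, fix a rank-two form with $2$-dimensional support $M$ and a hyperplane $N\supset M$ of $U_6$. The subspaces $K$ with $M\subset K\subset N$ form a $\P^2$ inside $\R$, and both $a^*(\xi+H)$ and $b^*(\eta+H)$ are trivial on it.

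So your appeal to Kawamata--Viehweg vanishing on the globally $F$-regular variety $\R$ (Smith) is the right repair. However, you only asserted bigness. Justify it as follows: the morphism $\R\to\cC\times\P(U_6)$ defined by $a^*(\xi+H)$ and $b^*(\eta+H)$ is generically injective, because a form of rank four determines $K$ as its support. For $k\geq 2$, ampleness follows by splitting $(k-1)e^*H$ between the two factors and using $\R\hookrightarrow\P\times_k\Q$, as in Proposition~\ref{p PtimesP over W GFS}. The case $k=1$ is not used later: in the proof of Theorem~\ref{t g=8 Fsplit}, vanishing on $W$ for $k\geq 2$ already suffices.
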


\begin{proof}
Assume $p\neq 2$. 
Set $W := \Gr(6, 2)$. 
By $p \neq 2$ and $\Omega_W^1 \simeq \cK \otimes \cQ^\vee$, we have
\[
\Omega^2_W  = \wedge^2 \Omega^1_W  \simeq \wedge^2(\cK \otimes \cQ^\vee) \simeq S^2\cK \otimes \wedge^2 (Q^\vee) \oplus \wedge^2\cK \otimes S^2 (Q^\vee).
\]
Hence
\[
H^i(W, \wedge^2 \Omega_W(k)) \simeq H^i(W, S^2\cK \otimes \wedge^2 (Q^\vee) (k)) \oplus H^i(W, \wedge^2\cK \otimes S^2 (Q^\vee)(k)).
\]

We have 
\[
K_{\P_W(\cK)} = -2\xi_{\cK} +\pi_{\cK}^*(K_W + \det (\cK)) = 
 -2\xi_{\cK} -7\pi_{\cK}^*H_W, 
\]
where $\xi_{\cK}$ denotes the tautological divisor 
of the $\P^3$-bundle $\pi_{\cK}:\P_W(\cK) \to W$. 
Then the following divisor is ample for every $k > 0$: 
\[
2\xi_{\cK} + (k-1) \pi_{\cK}^*H_W - K_{\P_W(\cK)} 
= 4\xi_{\cK} + (k+6)\pi_{\cK}^*H_W. 
\]
Since $\P_W(\cK)$ is $F$-split, 
$\P_W(\cK)$ satisfies the Kodaira vanishing, and hence 
\[
 H^{>0}(W, S^2\cK \otimes \wedge^2 (Q^\vee) (k)) 
 \simeq H^{>0}(\P_W(\cK), \MO_{\P_W(\cK)}(2\xi_{\cK} + (k-1)\pi^*_{\cK}H_W)) =0
\]
for every $k >0$.

In what follows, we use the same notation as in the proof of 
Proposition \ref{p PtimesP over W GFS}. 
We have 
\[
-K_{\R} = 6a^*\xi + 2b^*\eta +10e^*H_W. 
\]
Then the following divisor is ample for every integer $k \geq 1$: 
\begin{align*}
&a^*\xi + 2b^*\eta + ke^*H_W -K_{\R} \\
=& 
6a^*\xi + 2b^*\eta +10e^*H_W+ a^*\xi + 2b^*\eta + ke^*H_W\\
=& 
(7a^*\xi + (7+\frac{1}{2}) e^*H_W) 
+ 
(3b^*\eta + (3+ \frac{1}{2})e^*H_W) + (k-1)e^*H_W. 
\end{align*}
Since the Kodaira vanishing holds on $\R$ 
(Proposition \ref{p PtimesP over W GFS}), it holds that 
\[
H^{>0}(W, \wedge^2\cK \otimes S^2 (Q^\vee)(k))
\simeq H^{>0}(\R, a^*\xi + 2b^*\eta + ke^*H_W)=0
\]
for every integer $k\geq 1$. 
\end{proof}

\begin{thm}\label{t g=8 Fsplit}
Let $X$ be a prime Fano threefold of genus $8$. 
\begin{enumerate}
\item 
If $p>2$, then $H^i(X, \Omega_X^2(-nK_X))=0$ for every $i>0$ 
and $n \geq 7$. 
\item If $p>5$, then $X$ is $F$-split. 
\end{enumerate}
\end{thm}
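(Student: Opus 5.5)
Part (2) follows from (1) together with Theorem \ref{t g=8 QFS}(1) and Lemma \ref{l GFS criterion}(2). Indeed, if $p>5$ then $p \geq 7$. Theorem \ref{t g=8 QFS}(1) with $n=p\ge 6$ gives $H^2(X,\Omega^1_X(-pK_X))=0$, and (1) with $n=p\geq 7$ gives $H^1(X,\Omega^2_X(-pK_X))=0$. So the real work is (1).

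For (1) we imitate the proof of Theorem \ref{t g=8 QFS}(1), using the same chain of linear sections $X=X_3\subset X_4\subset\cdots\subset X_8=\Gr(6,2)$ with $\MO(1)=\MO(-K_X)$ on $X$. From the conormal sequence $0\to\MO_X(-1)^{\oplus 5}\to\Omega^1_{X_8}|_X\to\Omega^1_X\to 0$ we get a filtration on $\Omega^2_{X_8}|_X$ with graded pieces $\wedge^2(\MO_X(-1)^{\oplus5})=\MO_X(-2)^{\oplus 10}$, $\MO_X(-1)^{\oplus5}\otimes\Omega^1_X$ and $\Omega^2_X$. This yields exact sequences
\[
0\to \MO_X(-2)^{\oplus 10}\to G\to \Omega^1_X(-1)^{\oplus 5}\to 0,\qquad 0\to G\to \Omega^2_{X_8}|_X\to\Omega^2_X\to 0 .
\]
After twisting by $\MO_X(n)$, we have $H^{>0}(X,\MO_X(n-2))=0$ for $n\geq 2$, and $H^{>0}(X,\Omega^1_X(n-1))=0$ for $n-1\geq 6$ by Theorem \ref{t g=8 QFS}(1). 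Hence $H^{>0}(X,G(n))=0$ for $n\geq 7$. This is where the bound $n\ge 7$ comes from, and since we only need $H^{i}$ for $i\ge 1$ of $\Omega_X^2(n)$, vanishing of $H^{i+1}(G(n))$ suffices. It therefore remains to show $H^{>0}(X,\Omega^2_{X_8}(n)|_X)=0$ for $n\geq 7$.

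For this we restrict step by step, as in Theorem \ref{t g=8 QFS}. For $3\le d\le 7$ use the exact sequences
\[
0\to\Omega^2_{X_8}(n-1)|_{X_{d+1}}\to\Omega^2_{X_8}(n)|_{X_{d+1}}\to\Omega^2_{X_8}(n)|_{X_d}\to0 .
\]
Proposition \ref{p Gr Omega2 vanish} (where $p\neq 2$ is used) gives $H^{>0}(X_8,\Omega^2_{X_8}(k))=0$ for $k\geq1$. Induction then gives $H^{>0}(X_d,\Omega^2_{X_8}(k)|_{X_d})=0$ for $k\geq 9-d$, so $k\ge 6$ on $X_3$, which is more than enough for $n\geq 7$.

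The only subtle point I expect is bookkeeping the filtration of $\wedge^2$ of the conormal sequence: one must check that the exact sequences above really exist, which holds since $\Omega^1_X$ is a quotient of locally free sheaves. One must also check that the degree shifts are correct, with $\MO_X(-1)\otimes\Omega^1_X$ twisted by $n$ giving $\Omega_X^1(n-1)$; this is what forces $n\ge7$ rather than $n\ge 6$.
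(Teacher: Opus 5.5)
Your proposal is correct and follows the paper's proof essentially verbatim: the two exact sequences from $\wedge^2$ of the conormal sequence, Theorem~\ref{t g=8 QFS}(1) for the $\Omega^1_X(n-1)$ term (which is what forces $n\geq 7$), and the hyperplane-by-hyperplane restriction of Proposition~\ref{p Gr Omega2 vanish} giving $H^{>0}(X,\Omega^2_{X_8}(n)|_X)=0$ for $n\geq 6$. Deducing (2) from (1) via Lemma~\ref{l GFS criterion}(2) with $n=p\geq 7$ is also exactly how the paper argues.
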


\begin{proof}
By Lemma \ref{l GFS criterion} and Theorem \ref{t g=8 QFS}, 
(1) implies (2). 
Let us show (1). 
Assume $p>2$. 
Set $W := \Gr(6, 2)$. 
By applying  $\wedge^2$ 
to the conormal exact sequence 
$0 \to \MO_X(-1)^{\oplus 5} \to \Omega_W^1|_X \to \Omega_X^1 \to 0$ \cite[B.7.4]{Ful98}, 
we get the following exact sequences for some coherent locally free sheaf $F$ on $X$: 
\[
0 \to F \to \Omega_W^2|_X \to \Omega_X^2 \to 0, 
\]
\[
0 \to \wedge^2(\MO_X(-1)^{\oplus 5}) \simeq \MO_X(-2)^{\oplus 10} \to F \to \Omega_X^1 \otimes  \MO_X(-1)^{\oplus 5} \to 0. 
\]
Since $H^{>0}(X, \MO_X(n-2)) =0$ for every integer $n \geq 2$, it is enough to show  (I) and (II) below. 
\begin{enumerate}
\item[(I)] $H^{>0}(X, \Omega_X^1 (n-1)) =0$ 
for every integer $n \geq 7$. 
\item[(II)] $H^{>0}(X, \Omega_W^2(n)|_X)=0$ for every $n \geq 7$. 
\end{enumerate}
(I) follows from Theorem \ref{t g=8 QFS}. 
Let us show (II). 
By $H^{>0}(W, \Omega_W^2(n))=0$ for every $n\geq 1$ (Proposition \ref{p Gr Omega2 vanish}), 
we get $H^{>0}(X, \Omega_W^2(n)|_X)=0$ for every $n \geq 6$ 
(use the same argument as in the proof of Theorem \ref{t g=8 QFS}).
\end{proof}

\subsection{Computer-assisted statements}\label{ss Macaulay2}


\begin{prop}\label{p Macaulay2 Gr}
Set $W := \Gr(6, 2)$. 
Then the following hold:
\begin{enumerate}
 \item Assume that $p=2$, $3$, or $5$.
 For $i= -3$, \dots, $2$ and $k \geq i-1$, the cohomology group $H^{4-i}(W, \Omega^1_W (k))$ is zero.
 \item Assume that $p=2$, $3$, or $5$.
 For $i= -2$, \dots, $3$ and $k \geq i$, the cohomology groups $H^{4-i}(W, S^2\cK \otimes \wedge^2 (\cQ^\vee) (k))$ and $H^{4-i}(W, \wedge^2\cK \otimes S^2 (\cQ^\vee) (k))$ are zero.
 \item Assume that $p=3$, or $5$.
 For $i= -2$, \dots, $3$ and $k \geq i$, the cohomology group $H^{4-i}(W, \Omega^2_W (k))$ is zero.
 \end{enumerate}
\end{prop}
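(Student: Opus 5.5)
The approach is to reduce to finitely many explicit computations and carry them out in Macaulay2, as announced in the introduction. First, range restriction. Since $\dim W = 8$, the degree $4-i$ lies in $[1,7]$ for the given ranges of $i$. For $k>0$ every listed group already vanishes without computer help. In (1) this is Proposition \ref{p Gr Omega1 vanishing}. In (2) it is the Kodaira-vanishing argument on $\P_W(\cK)$ and on $\R=\P_W(\wedge^2\cK)\times_W\P_W(\cQ^\vee)$ from the proof of Proposition \ref{p Gr Omega2 vanish}. That argument only used $F$-splitting of these flag-type varieties (Proposition \ref{p PtimesP over W GFS}), not $p\neq 2$. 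In (3) with $p\in\{3,5\}$ it is Proposition \ref{p Gr Omega2 vanish} itself, or equivalently (2) combined with the decomposition $\Omega^2_W\simeq S^2\cK\otimes\wedge^2\cQ^\vee\oplus\wedge^2\cK\otimes S^2\cQ^\vee$, valid for $p\neq2$. Hence only the pairs $(i,k)$ with $i-1\le k\le 0$ in (1), and $i\le k\le 0$ in (2) and (3), remain. Since $i\ge -3$ (resp. $i\ge -2$), these are finitely many twists, namely $-4\le k\le 0$ (resp. $-2\le k\le 0$), for each $p\in\{2,3,5\}$.

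For these finitely many cases I would compute directly. Present $W=\Gr(6,2)\subset\P^{14}$ by its Plücker ideal over $\F_p$. Build $\cK$, $\cQ$, and $\Omega^1_W$ as explicit graded modules: $\Omega^1_W$ from the conormal sequence or as the cotangent sheaf, $\cQ$ via the tautological presentation, $\cK$ as the kernel of $\MO^{6}\to\cQ$. Then form $S^2\cK\otimes\wedge^2\cQ^\vee$ and $\wedge^2\cK\otimes S^2\cQ^\vee$, using honest symmetric and exterior powers rather than characteristic-zero Schur-functor shortcuts, which matters for $p=2$. Finally, ask Macaulay2 for $\dim H^j(W,\mathcal F(k))$ via sheaf cohomology (Ext/local duality over the Plücker coordinate ring) for $j=4-i$. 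It suffices to work over $\F_p$: cohomology commutes with the flat base change $\F_p\subset k$, and all the sheaves are defined over $\F_p$. So the dimensions computed over $\F_p$ equal those over the algebraically closed field $k$.

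Alternatively, the negative twists can partly be handled by Serre duality, $H^j(\Omega^1_W(k))\simeq H^{8-j}(\Omega^7_W(-k))^\vee$. However, this converts the problem into one about higher wedge powers, which is no easier. So I would rely on the machine for those finitely many cases.

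The main obstacle is computational feasibility. Cohomology of twisted bundles on an 8-dimensional variety in $\P^{14}$ via Gröbner bases can be slow, especially for $S^2\cK\otimes\wedge^2\cQ^\vee$, which has rank 10. If direct computation stalls, I would instead compute on the flag varieties $\P_W(\cK)$ and $\R$ using the identifications of Lemma \ref{l P(E) times P(E')}. Alternatively, I would work $GL_6$-equivariantly: resolve by Koszul/Schur complexes on $W$, and use the known cohomology of line bundles on the full flag variety in small characteristic, checking whether a Kempf or Borel–Weil–Bott-type statement still applies for the small weights involved.
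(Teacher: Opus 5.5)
Your proposal is correct and follows essentially the paper's approach. The paper verifies (1) and (2) by a Macaulay2 computation over $\mathbb{Z}/p$. There $\cK$ and $\cQ$ are built from the Pl\"{u}cker alternating matrix, $\Omega^1_W$ is taken as $\cK\otimes\cQ^\vee$, and genuine symmetric and exterior powers are used. The paper then deduces (3) from (2) via the $p\neq 2$ splitting of $\wedge^2(\cK\otimes\cQ^\vee)$.

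Your preliminary reduction to $k\le 0$ is valid: Propositions \ref{p Gr Omega1 vanishing} and \ref{p Gr Omega2 vanish}, and the $F$-splitting/Kodaira argument for (2), indeed do not need $p\neq 2$. It is, however, unnecessary, because the paper's code computes the graded module $\bigoplus_{k}H^{4-i}(W,\mathcal{F}(k))$ over the whole range of twists in one step.
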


\begin{proof}
As we saw in Proposition~\ref{p Gr Omega2 vanish}, if $p\neq 2$, then we have
\[
H^i(W, \wedge^2 \Omega^1_W(k)) \simeq H^i(W, S^2\cK \otimes \wedge^2 (Q^\vee) (k)) \oplus H^i(W, \wedge^2\cK \otimes S^2 (Q^\vee)(k)).
\]
Thus the last assertion follows from the former assertions.

The first and the second assertions can be checked by using computer algebra systems, e.g., Macaulay2 \cite{M2}.
For instance, the following computes the cohomology groups when $p=2$.
Note that we have isomorphisms $S^2\cK \otimes \wedge^2 (\cQ^\vee) \simeq S^2\cK(-1)$ and $\wedge^2\cK \simeq (\wedge^2 (\cK^\vee))(-1)$.
Also, by letting $p=3$ or $5$, we have the assertions for the remaining cases.
\begin{lstlisting}
p=2;
I=Grassmannian(1,5,CoefficientRing=>ZZ/p,Variable=>x);
R=ring I;
G=R/I;
M=matrix{{0,x_(0,1),x_(0,2),x_(0,3),x_(0,4),x_(0,5)},{-x_(0,1),0,x_(1,2),x_(1,3),x_(1,4),x_(1,5)},{-x_(0,2),-x_(1,2), 0,x_(2,3),x_(2,4),x_(2,5)},{-x_(0,3),-x_(1,3),-x_(2,3),0,x_(3,4),x_(3,5)},{-x_(0,4),-x_(1,4),-x_(2,4),-x_(3,4),0,x_(4,5)},{-x_(0,5),-x_(1,5),-x_(2,5),-x_(3,5),-x_(4,5),0}};
K=prune dual sheaf cokernel M;
Q=prune dual sheaf image M;
Omega=prune (K ** (dual Q));
sym2K=prune symmetricPower(2,K);
wedge2K=prune((exteriorPower(2,prune dual K))(-1));
sym2Qdual=prune symmetricPower(2,dual Q);
F1=prune(sym2K(-1));
F2=prune(wedge2K ** sym2Qdual);
apply(-3..2, i -> prune truncate(i,HH^(4-i)(Omega(>=i-1))))
apply(-2..3, i -> prune truncate(i,HH^(4-i)(F1(>=i))))
apply(-2..3, i -> prune truncate(i,HH^(4-i)(F2(>=i))))
\end{lstlisting}

\end{proof}

\begin{thm}\label{t Fsplit optimal}
Let $X$ be a prime Fano threefold of genus $8$. 
Then $X$ is quasi-$F$-split. 
Moreover, if $p>2$, then $X$ is $F$-split. 
\end{thm}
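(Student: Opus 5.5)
The plan is to apply Lemma \ref{l GFS criterion} and verify its vanishing hypotheses for every $p$. Throughout we write $\MO_X(1)=\MO_X(-K_X)$, so that $-p^rK_X$ corresponds to twist $p^r$.

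\textbf{Quasi-$F$-splitting.} By Lemma \ref{l GFS criterion}(1) it suffices to show $H^2(X,\Omega^1_X(p^r))=0$ for all $r>0$. Since $p^r\ge 2$, it is enough to prove $H^2(X,\Omega^1_X(n))=0$ for all $n\ge 2$.

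Use the conormal sequence
\[
0\to \MO_X(-1)^{\oplus 5}\to \Omega^1_{W}|_X\to \Omega^1_X\to 0 ,
\]
where $W=\Gr(6,2)$. Since $H^3(X,\MO_X(n-1))=0$ for $n\ge 1$, it suffices to show $H^2(X,\Omega^1_W(n)|_X)=0$. Descend the chain $X_8=W\supset X_7\supset\cdots\supset X_3=X$ of successive hyperplane sections, using the restriction sequences
\[
0\to\Omega^1_W(n-1)|_{X_{d+1}}\to\Omega^1_W(n)|_{X_{d+1}}\to\Omega^1_W(n)|_{X_d}\to0 .
\]
Induction shows that $H^2(X_3,\Omega^1_W(n)|_{X_3})$ vanishes once
\[
H^{2+j}\bigl(W,\Omega^1_W(n-j)\bigr)=0 \quad\text{for } 0\le j\le 5 .
\]
Setting $i=2-j\in\{-3,\dots,2\}$, this is the vanishing of $H^{4-i}(W,\Omega^1_W(k))$ with $k=n-2+i\ge i$. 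For $k>0$ these groups vanish by Proposition \ref{p Gr Omega1 vanishing}. For $p\in\{2,3,5\}$ the remaining cases with $k\le 0$ are covered by Proposition \ref{p Macaulay2 Gr}(1), which requires only $k\ge i-1$. For $p>5$ the conclusion is already Theorem \ref{t g=8 QFS}.

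\textbf{$F$-splitting for $p>2$.} By Lemma \ref{l GFS criterion}(2) we need two vanishings: $H^2(X,\Omega^1_X(p))=0$, which is contained in the argument above since $p\ge2$, and $H^1(X,\Omega^2_X(p))=0$, i.e. $H^1(X,\Omega^2_X(n))=0$ for $n\ge 3$.

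As in Theorem \ref{t g=8 Fsplit}, take $\wedge^2$ of the conormal sequence. This gives
\[
0\to F\to\Omega^2_W|_X\to\Omega^2_X\to0,\qquad 0\to\MO_X(-2)^{\oplus10}\to F\to\Omega^1_X(-1)^{\oplus5}\to0 .
\]
So it suffices to prove three vanishings:
\begin{itemize}
\item[] $H^3(X,\MO_X(n-2))=0$, which is clear;
\item[] $H^2(X,\Omega^1_X(n-1))=0$, which is the previous step since $n-1\ge2$;
\item[] $H^1(X,\Omega^2_W(n)|_X)=0$.
\end{itemize}
For the third, the same descent reduces it to $H^{1+j}(W,\Omega^2_W(n-j))=0$ for $0\le j\le5$. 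Setting $i=3-j\in\{-2,\dots,3\}$, these are the groups $H^{4-i}(W,\Omega^2_W(k))$ with $k=n-3+i\ge i$. For $k>0$ they vanish by Proposition \ref{p Gr Omega2 vanish}. For $p\in\{3,5\}$ and $k\le0$ they vanish by Proposition \ref{p Macaulay2 Gr}(3). For $p>5$ the conclusion is Theorem \ref{t g=8 Fsplit}.

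\textbf{Main obstacle.} The hard part is the index bookkeeping: one must check that the ranges of $(i,k)$ in Proposition \ref{p Macaulay2 Gr} exactly cover the finitely many non-positive twists that arise from the lowest admissible $n$ (namely $n=2$, resp.\ $n=3$) after five hyperplane cuts. Once the indices match, the conclusion follows. Finally, since $X$ is a smooth Fano variety, quasi-$F$-split and $F$-split upgrade to quasi-$F$-regular and globally $F$-regular by Remark \ref{intro p=2 non-F-split}(2).
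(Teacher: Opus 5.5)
Your proposal is correct and follows the paper's proof essentially verbatim. You reduce via Lemma \ref{l GFS criterion}, the conormal sequence and its second exterior power, and five successive hyperplane-section descents to the vanishings $H^{2+j}(W,\Omega^1_W(n-j))$ and $H^{1+j}(W,\Omega^2_W(n-j))$ for $0\le j\le 5$, which are supplied by Propositions \ref{p Gr Omega1 vanishing}, \ref{p Gr Omega2 vanish} and \ref{p Macaulay2 Gr}.

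One slip, which does not affect the argument: from the subsheaf $\MO_X(-2)^{\oplus 10}\subset F$ the group you actually need is $H^2(X,\MO_X(n-2))$, not $H^3$. It still vanishes for $n\ge 2$.
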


\begin{proof}
By the same argument as in Theorem \ref{t g=8 QFS}, the first assertion is reduced to prove
\[
H^2(W,\Omega_W^1(p^r)) =H^3(W,\Omega_W^1(p^r-1)) = \cdots = H^7(W,\Omega_W^1(p^r-5)) = 0,
\]
which follow from Proposition~\ref{p Macaulay2 Gr}.

Similarly, by the same argument as in Theorem \ref{t g=8 Fsplit}, the second assertion is reduced to prove
\begin{align*}
& H^2(W,\Omega_W^1(p-1)) =H^3(W,\Omega_W^1(p-2)) = \cdots = H^7(W,\Omega_W^1(p-6)) = 0,\\
& H^1(W,\Omega_W^2(p)) =H^2(W,\Omega_W^2(p-1)) = \cdots = H^6(W,\Omega_W^2(p-5)) = 0,
\end{align*}
which also follow from Proposition~\ref{p Macaulay2 Gr}.
\end{proof}

\begin{cor}\label{c g=8 MukaiVar Fsplit}
Assume $p>2$. 
Let $\Gr(6, 2)  \subset \P^{14}$ be the Pl\"{u}cker embedding. 
Take a linear subvariety $L \subset \P^{14}$ such that 
$9 \leq \dim L \leq 14$ and 
that 
$Y := \Gr(6, 2) \cap L$ is a smooth projective variety 
with $\dim Y = \Gr(6, 2) + 14 -\dim L$ (in particular, 
$Y$ is a complete intersection of $\Gr(6, 2)$ and hyperplane sections on $\P^{14}$). 
Then $Y$ is globally $F$-regular. 
\end{cor}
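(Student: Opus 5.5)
The plan is to run the argument of Theorem \ref{t Fsplit optimal} for $Y$ in place of $X$, and to finish with the equivalence in Remark \ref{intro p=2 non-F-split}(2) between $F$-splitting and global $F$-regularity for smooth Fano varieties. Set $W:=\Gr(6,2)$ and $c:=14-\dim L\in\{0,\dots,5\}$, so $\dim Y=8-c$ and $Y$ is cut out by $c$ hyperplanes. If $c=0$ then $Y=W$, which is $F$-split and Fano, hence globally $F$-regular; assume $c\geq 1$. Since $K_W=-6H$, adjunction gives $\omega_Y\simeq\MO_Y(c-6)$, so $-K_Y$ is ample and $Y$ is a smooth Fano variety.

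Next, use a chain of smooth intermediate sections. Choose hyperplanes $H_1,\dots,H_c$ cutting out $L$ so that each $Y_j:=W\cap H_1\cap\cdots\cap H_j$ is smooth of the expected dimension. This is possible by a Bertini-type argument: $Y_c=Y$ is smooth, and near a smooth complete intersection a general intermediate linear section containing it is smooth. Each $Y_j$ is then a smooth Fano variety with Picard group generated by $H$ for $\dim Y_j\geq 3$. Lefschetz holds because these are ample sections of the rational homogeneous $W$, using that $W$ lifts with vanishing.

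For the criterion, replace Lemma \ref{l GFS criterion}(2) by its analogue in \cite[Proposition 2.15]{KTLift2} for a smooth projective variety $Y$ of dimension $d$. The hypotheses become vanishings of the form $H^{j}(Y,\Omega^{j-1}_Y\otimes\omega_Y^{-p})=0$ and $H^{j-1}(Y,\Omega^{j-1}_Y\otimes\omega_Y^{-1})$-type vanishings in the relevant degrees, together with the Hodge-type vanishings $H^{>0}(Y,\Omega^{j}_Y(K_Y))=0$ playing the role of $H^1(\Omega^1(K))=H^0(\Omega^2)=0$. The exact shape should be read off from \cite{KTLift2}. I then propagate vanishing from $W$ to $Y$ as in Theorems \ref{t g=8 QFS} and \ref{t g=8 Fsplit}. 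The conormal sequence $0\to\MO_Y(-1)^{\oplus c}\to\Omega^1_W|_Y\to\Omega^1_Y\to0$ and its exterior powers reduce the twisted vanishing of $\Omega^a_Y$ to:
\begin{itemize}
\item the same vanishing for $\Omega^a_W|_Y$ with shifted twists;
\item the vanishing $H^{>0}(\MO_Y(m))=0$ for $m\geq 0$, which is Kodaira vanishing on $W$ via the Koszul resolution;
\item the same statements for smaller $a$.
\end{itemize}
The restriction sequences $0\to\mathcal F(-1)|_{Y_{j}}\to\mathcal F|_{Y_j}\to\mathcal F|_{Y_{j+1}}\to0$ reduce everything to vanishings $H^{i+j}(W,\Omega^a_W(k-j))=0$ for $0\leq j\leq c$. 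With $-K_Y=(6-c)H$ and $p\geq3$, the twists that occur are at least $p(6-c)-c-1$ or so. When they are positive, Propositions \ref{p Gr Omega1 vanishing} and \ref{p Gr Omega2 vanish} apply. When they are not, the nonpositive range is exactly of the kind covered by Proposition \ref{p Macaulay2 Gr} for $p=3,5$: degree $4-i$ with twist $\geq i-1$ or $\geq i$. For $c\leq 5$ the required degrees and twists fit inside that range, just as in Theorem \ref{t Fsplit optimal}, which is the extreme case $c=5$.

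The main obstacle is the bookkeeping for $\dim Y>3$. For $\dim Y\geq4$ the criterion may involve $\Omega^a_Y$ with $a\geq 3$, and $\wedge^3$ of the conormal sequence brings in $\Omega^3_W$, for which no vanishing has been proved. My first attempt would be to avoid this by inducting downward on $\dim Y$ via inversion of adjunction. The pair $(Y_{j},Y_{j+1})$ has $-(K_{Y_j}+Y_{j+1})=(5-j)H$ ample, so $Y_j$ is globally $F$-regular once $Y_{j+1}$ is $F$-split (\cite[Lemma 2.7]{CTW17}, as in Proposition \ref{p PtimesP over W GFS}). Applying this with $j=c-1,c-2,\dots$ reduces the statement for all intermediate sections to the threefold $X=Y_5$, which is Theorem \ref{t Fsplit optimal}. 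So in practice I would choose $H_6,\dots$ extending $L$ to a smooth threefold section $X\subset Y$, and chain upward. The remaining point to verify carefully is that such a smooth chain through a given $Y$ exists, which holds by Bertini because $|\MO_Y(1)|$ is very ample on smooth $Y$ and general members are smooth.
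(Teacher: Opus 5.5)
Your final argument is the paper's proof. You cut $Y$ by general hyperplanes (smooth by Bertini) down to a smooth threefold section $X$, which is $F$-split by Theorem~\ref{t Fsplit optimal}; you then climb back up one hyperplane at a time with \cite[Lemma 2.7]{CTW17}, using that $-(K_{Y_j}+Y_{j+1})\sim(5-j)H$ is ample for $j\leq 4$, and conclude because $F$-split smooth Fano varieties are globally $F$-regular.

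The cohomological route via \cite{KTLift2} and the chain from $W$ down to $Y$ are not needed. Only the chain $Y=Y_c\supset Y_{c+1}\supset\cdots\supset Y_5=X$ of general sections is used, so your ``$H_6,\dots$'' should read $H_{c+1},\dots,H_5$, and ``$j=c-1,c-2,\dots$'' should read $j=4,3,\dots,c$.
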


\begin{proof}
For $m:= \dim Y -3$ and 
general hyperplanes $H_1, ..., H_m$ on $\P^{14}$, 
the intersection $X := Y \cap H_1 \cap \cdots \cap H_m$ is a prime Fano threefold of genus $8$, and hence $X$ is globally $F$-split 
(Theorem \ref{t Fsplit optimal}). 
By \cite[Lemma 2.7]{CTW17}, $Y$ is $F$-split. 
Since $Y$ is a smooth Fano variety, $Y$ is globally $F$-regular. 
\end{proof}


\begin{ex}\label{e Klein cubic}
Let $Y \subset \P^4_{(x,y,z,v,w)}$ be the Klein cubic threefold  over $k$ defined by
 \[
 x^2y+y^2z+z^2v+v^2w+w^2x = 0.
 \]
$Y$ is defined by the Pfaffian of the following alternating matrix 
\cite[\S 47]{AR96}, 
\cite[Theorem 2.6]{GP01}: 
\[A=
\begin{pmatrix}
0&v&w&x&y&z\\
-v&0&0&z&-x&0\\
-w&0&0&0&v&-y\\
-x&-z&0&0&0&w\\
-y&x&-v&0&0&0\\
-z&0&y&-w&0&0
\end{pmatrix}
\]

This Pfaffian representation of $Y$ corresponds to a Fano $3$-fold $X$ of genus $8$ as follows:
Let $W = \Gr(U_6,2) \subset \P(\wedge^2 U_6) \simeq \P^{14}$  be the $8$-dimensional Grassmann variety.
With respect to the Pl\"ucker coordinates $\{x_{i,j}\}_{0 \leq i<j \leq 5}$, $W \subset \P^{14}$ is defined by $15$ Pfaffian quadratic equations:
\[
x_{ij}x_{kl}-x_{ik}x_{jl}+x_{il}x_{jk} \quad (0\leq i<j<k<l \leq 5).
\]
Then the alternating matrix $A$ defines $5$ linear equations as follows:
\[
\begin{cases}
 x_{0,3}-x_{1,4}=0,\\
 x_{0,4}-x_{2,5}=0,\\
 x_{1,3}+x_{0,5}=0,\\
 x_{0,1}+x_{2,4}=0,\\
 x_{0,2}+x_{3,5}=0.
\end{cases}
\]
If $p \neq 11$, the linear section $X$ of $W$ with respect to these linear equations defines a 
prime Fano threefold of genus $8$. 
Here the smoothness of $X$ can be checked as follows. 
By Jacobian criterion, $Y$ is smooth when $p \neq 11$. 
Then the same argument as in Proposition \ref{p cubic of X is smooth}(1) 
implies that $X$ is smooth (alternatively, we can directly check the smoothness of $X$ by using Macaulay2 \cite{M2}). 
\end{ex}


\begin{prop}\label{p p=2 non-F-split}
We use the same notation as in Example \ref{e Klein cubic}. 
If $p =2$, then $X$ is not $F$-split.
\end{prop}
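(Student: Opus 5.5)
We will show directly that the Frobenius $F\colon \MO_X \to F_*\MO_X$ does not split. For a smooth projective variety this is equivalent to the trace map $H^0(X, F_*\omega_X^{1-p}) = H^0(X,\omega_X^{1-p}) \to H^0(X,\MO_X)=k$ being zero. For $p=2$ this means: the map $H^0(X,-K_X) \to k$ induced by the Cartier operator vanishes identically. Since $X$ is an explicit linear section of $W=\Gr(6,2)$, the verification is a computation that can be carried out in Macaulay2, in the same spirit as Proposition~\ref{p Macaulay2 Gr}.

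\textbf{Step 1 (reduction to the cone).} Let $R=k[x_{ij}]/(I_W+J)$, where $I_W$ is generated by the $15$ Pl\"ucker quadrics and $J$ by the five linear forms of Example~\ref{e Klein cubic}. Then $R$ is the homogeneous coordinate ring of $X\subset \P^9$. It is the section ring of $-K_X$, because $X$ is projectively normal by the generation statement recalled in Subsection~2.2. Recall that $X$ is $F$-split if and only if $R$ is $F$-split. Since $R$ is Gorenstein (the arithmetic Cohen--Macaulay Grassmannian cut by a regular sequence of linear forms), Fedder's criterion applies. After eliminating the five linear variables, $R=S/I$ with $S=k[y_0,\dots,y_9]$, and $I$ is a codimension-$6$ Gorenstein ideal of height $6$. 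Fedder's criterion says that $R$ is $F$-split if and only if
\[
(I^{[p]}:I)\not\subset \mathfrak m^{[p]}, \qquad \mathfrak m=(y_0,\dots,y_9).
\]
For Gorenstein $R$, the colon $(I^{[p]}:I)$ equals $I^{[p]}+(f)$ for a single element $f$ of degree $(p-1)\cdot 10 - (p-1)a$ modulo $I^{[p]}$, where $a$ is the $a$-invariant. Concretely, $f$ can be found as a generator of the socle-type colon. With $p=2$ and $a(R)=-1$ (as $\omega_X=\MO_X(-1)$), $f$ has degree $9$.

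\textbf{Step 2 (computation).} In Macaulay2 over $\mathbb Z/2$, we form $I$, compute \texttt{(frobeniusPower(2,I)) : I}, and test containment in $\mathfrak m^{[2]}=(y_0^2,\dots,y_9^2)$. The claim is that the colon ideal is contained in $\mathfrak m^{[2]}$, so $R$, and hence $X$, is not $F$-split.

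\textbf{Alternative check.} As a cross-check that avoids the ring computation, one can use Lemma~\ref{l GFS criterion}(2) in contrapositive form. Non-$F$-splitness forces the failure of the vanishing $H^2(X,\Omega^1_X(2))=0$ or of $H^1(X,\Omega^2_X(2))=0$. Computing these cohomologies of sheaves on $X$ in Macaulay2 gives supporting evidence, but it is not a proof, since that lemma is only a sufficient condition.

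\textbf{Main obstacle.} The main obstacle is making Step 1 rigorous and the computation feasible. We must confirm that the ideal obtained after eliminating variables is exactly the saturated, homogeneous ideal of $X$, so that Fedder's criterion applies to the right ring. We must also ensure that the colon computation in $10$ variables terminates. If computing $I^{[2]}:I$ is too heavy, we would instead compute the generator $f$ explicitly, as the determinant of the comparison map between the Koszul-type resolution of $S/I$ and that of $S/I^{[2]}$ (via the Gorenstein self-dual resolution). Then we check that no monomial of $f$ is squarefree, i.e., that $f\in\mathfrak m^{[2]}$.
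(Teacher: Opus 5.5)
Your proposal is essentially the paper's proof. The paper also eliminates the five linear forms to get the homogeneous coordinate ring $Q/J$ of $X\subset\P^9$ over a polynomial ring $Q$ in $10$ variables, invokes Fedder's criterion together with Smith's equivalence between $F$-splitting of $X$ and of this ring, and verifies $(J^{[2]}:J)\subset\mathfrak m^{[2]}$ in Macaulay2. The Gorenstein refinement and the cohomological cross-check are not needed; also, the degree of the generator $f$ is $(p-1)(10+a)$, not $(p-1)\cdot 10-(p-1)a$, although your value $9$ is correct.
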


\begin{proof}
Assume $p=2$ and set $R = k[ \{ x_{ij} \,|\, 0 \leq i < j \leq 5\}]$ (15 variables).
Set 
\[
I_W = (x_{ij}x_{kl}+x_{ik}x_{jl}+x_{il}x_{jk} \,|\, 0 \leq i<j<k<l\leq 5),
\]
\[
I_X = (x_{0,3}-x_{1,4}, x_{0,4}-x_{2,5}, x_{1,3}+x_{0,5}, x_{0,1}+x_{2,4}, x_{0,2}+x_{3,5}).
\]
Then $Q = R/I_X$ is isomorphic to a polynomial ring with 10 variables, and the ideal $J = I_W / I_X$ defines our variety $X$ in $\P^9$, i.e., $Q/J$ is the homogeneous coordinate ring of $X \subset \P^9$.
It is enough to show $(J^{[2]}:J) \subset \mathfrak{m}^{[2]}$ 
\cite[Proposition 1.7]{Fed83}, 
\cite[Proposition 3.1]{Smi00}.
This can be checked by using Macaulay2 \cite{M2} as follows:
\begin{lstlisting}[caption=,label=]
p=2; 
IW=Grassmannian(1,5,CoefficientRing=>ZZ/p,Variable=>x);   
R=ring IW; 
IX=ideal(x_(0,3)-x_(1,4),x_(0,4)-x_(2,5),x_(1,3)+x_(0,5),x_(0,1)+x_(2,4),x_(0,2)+x_(3,5));
T=minimalPresentation (R/(IW+IX));
Q=ambient T;
J=ideal T;
colon=quotient(J^[2],J);
m=ideal(vars Q);
isSubset(colon,m^[2])
\end{lstlisting}
\end{proof}

\bibliographystyle{skalpha}
\bibliography{reference.bib}

@book {AR96,
    AUTHOR = {Adler, Allan and Ramanan, S.},
     TITLE = {Moduli of abelian varieties},
    SERIES = {Lecture Notes in Mathematics},
    VOLUME = {1644},
 PUBLISHER = {Springer-Verlag, Berlin},
      YEAR = {1996},
     PAGES = {vi+196},
      ISBN = {3-540-62023-0},
   MRCLASS = {14K10 (11G15 14K25)},
  MRNUMBER = {1621185},
MRREVIEWER = {H.\ Lange},
       DOI = {10.1007/BFb0093659},
       URL = {https://doi-org.kyoto-u.idm.oclc.org/10.1007/BFb0093659},
}

@article{BKM24,
  author =        {Bayer, Arend and Kuznetsov, Alexander and Macrì, Emanuele},
  journal =       {arXiv:2402.07154},
  title =         {Mukai bundles on {F}ano threefolds},
  year =          {2024},
}

@article{BKM25,
  author =        {Bayer, Arend and Kuznetsov, Alexander and Macrì, Emanuele},
  journal =       {arXiv:2501.16157},
  title =         {Mukai models of {F}ano varieties},
  year =          {2025},
}

@article {BTLM97,
    AUTHOR = {Buch, Anders and Thomsen, Jesper F. and Lauritzen, Niels and
              Mehta, Vikram},
     TITLE = {The {F}robenius morphism on a toric variety},
   JOURNAL = {Tohoku Math. J. (2)},
  FJOURNAL = {The Tohoku Mathematical Journal. Second Series},
    VOLUME = {49},
      YEAR = {1997},
    NUMBER = {3},
     PAGES = {355--366},
      ISSN = {0040-8735},
   MRCLASS = {14F17 (14M15 14M25)},
  MRNUMBER = {1464183},
MRREVIEWER = {Konrad Drechsler},
       DOI = {10.2748/tmj/1178225109},
       URL = {https://doi-org.utokyo.idm.oclc.org/10.2748/tmj/1178225109},
}

@article {CTW17,
    AUTHOR = {Cascini, Paolo and Tanaka, Hiromu and Witaszek, Jakub},
     TITLE = {On log del {P}ezzo surfaces in large characteristic},
   JOURNAL = {Compos. Math.},
  FJOURNAL = {Compositio Mathematica},
    VOLUME = {153},
      YEAR = {2017},
    NUMBER = {4},
     PAGES = {820--850},
      ISSN = {0010-437X,1570-5846},
   MRCLASS = {14E30 (13A35 14F17 14J45)},
  MRNUMBER = {3621617},
MRREVIEWER = {Sho\ Tanimoto},
       DOI = {10.1112/S0010437X16008265},
       URL = {https://doi-org.kyoto-u.idm.oclc.org/10.1112/S0010437X16008265},
}

@article{Ciu,
  author =        {Ciurca, Tudor},
  journal =       {preprint available at arXiv:2409.15580v1},
  title =         {Prym varieties and cubic threefolds over $\mathbb{Z}$},
  year =          {2024},
}

@article {CG72,
    AUTHOR = {Clemens, C. Herbert and Griffiths, Phillip A.},
     TITLE = {The intermediate {J}acobian of the cubic threefold},
   JOURNAL = {Ann. of Math. (2)},
  FJOURNAL = {Annals of Mathematics. Second Series},
    VOLUME = {95},
      YEAR = {1972},
     PAGES = {281--356},
      ISSN = {0003-486X},
   MRCLASS = {14J10 (14G13 14J05 14K20 14M20 14N99)},
  MRNUMBER = {302652},
MRREVIEWER = {H.\ Popp},
       DOI = {10.2307/1970801},
       URL = {https://doi.org/10.2307/1970801},
}

@incollection {EH87,
    AUTHOR = {Eisenbud, David and Harris, Joe},
     TITLE = {On varieties of minimal degree (a centennial account)},
 BOOKTITLE = {Algebraic geometry, {B}owdoin, 1985 ({B}runswick, {M}aine,
              1985)},
    SERIES = {Proc. Sympos. Pure Math.},
    VOLUME = {46},
     PAGES = {3--13},
 PUBLISHER = {Amer. Math. Soc., Providence, RI},
      YEAR = {1987},
   MRCLASS = {14J40 (14J26)},
  MRNUMBER = {927946},
MRREVIEWER = {Allen B. Altman},
       DOI = {10.1090/pspum/046.1/927946},
       URL = {https://doi.org/10.1090/pspum/046.1/927946},
}

@book {EH16,
    AUTHOR = {Eisenbud, David and Harris, Joe},
     TITLE = {3264 and all that---a second course in algebraic geometry},
 PUBLISHER = {Cambridge University Press, Cambridge},
      YEAR = {2016},
     PAGES = {xiv+616},
      ISBN = {978-1-107-60272-4; 978-1-107-01708-5},
   MRCLASS = {14-01 (14C15 14M15 14N10)},
  MRNUMBER = {3617981},
MRREVIEWER = {Arnaud\ Beauville},
       DOI = {10.1017/CBO9781139062046},
       URL = {https://doi-org.kyoto-u.idm.oclc.org/10.1017/CBO9781139062046},
}

@book {EKM08,
    AUTHOR = {Elman, Richard and Karpenko, Nikita and Merkurjev, Alexander},
     TITLE = {The algebraic and geometric theory of quadratic forms},
    SERIES = {American Mathematical Society Colloquium Publications},
    VOLUME = {56},
 PUBLISHER = {American Mathematical Society, Providence, RI},
      YEAR = {2008},
     PAGES = {viii+435},
      ISBN = {978-0-8218-4329-1},
   MRCLASS = {11Exx (11-02 11E04 11E81 14C15 14C25)},
  MRNUMBER = {2427530},
MRREVIEWER = {Andrzej\ S\l adek},
       DOI = {10.1090/coll/056},
       URL = {https://doi-org.kyoto-u.idm.oclc.org/10.1090/coll/056},
}

@article{Fan30,
 author = {Fano, G.},
 title = {Sulle sezioni spaziali della variet{\`a} grassmanniana delle rette dello spazio a cinque dimensioni.},
 fjournal = {Atti della Accademia Nazionale dei Lincei, Rendiconti, VI. Serie},
 journal = {Atti Accad. Naz. Lincei, Rend., VI. Ser.},
 issn = {0001-4435},
 volume = {11},
 pages = {329--335},
 year = {1930},
 language = {Italian},
 zbMATH = {2564747},
 JFM = {56.0575.03}
}

@article {Fed83,
    AUTHOR = {Fedder, Richard},
     TITLE = {{$F$}-purity and rational singularity},
   JOURNAL = {Trans. Amer. Math. Soc.},
  FJOURNAL = {Transactions of the American Mathematical Society},
    VOLUME = {278},
      YEAR = {1983},
    NUMBER = {2},
     PAGES = {461--480},
      ISSN = {0002-9947,1088-6850},
   MRCLASS = {13H10 (13D03 14B05)},
  MRNUMBER = {701505},
MRREVIEWER = {D.\ Kirby},
       DOI = {10.2307/1999165},
       URL = {https://doi-org.kyoto-u.idm.oclc.org/10.2307/1999165},
}

@book {Fri98,
    AUTHOR = {Friedman, Robert},
     TITLE = {Algebraic surfaces and holomorphic vector bundles},
    SERIES = {Universitext},
 PUBLISHER = {Springer-Verlag, New York},
      YEAR = {1998},
     PAGES = {x+328},
      ISBN = {0-387-98361-9},
   MRCLASS = {14J60 (14-01 14J15 32J15 57R55)},
  MRNUMBER = {1600388},
MRREVIEWER = {I.\ Dolgachev},
       DOI = {10.1007/978-1-4612-1688-9},
       URL = {https://doi-org.kyoto-u.idm.oclc.org/10.1007/978-1-4612-1688-9},
}

@book {Fuj90,
    AUTHOR = {Fujita, Takao},
     TITLE = {Classification theories of polarized varieties},
    SERIES = {London Mathematical Society Lecture Note Series},
    VOLUME = {155},
 PUBLISHER = {Cambridge University Press, Cambridge},
      YEAR = {1990},
     PAGES = {xiv+205},
      ISBN = {0-521-39202-0},
   MRCLASS = {14C20 (14J40 14J60)},
  MRNUMBER = {1162108},
MRREVIEWER = {Elvira Laura Livorni},
       DOI = {10.1017/CBO9780511662638},
       URL = {https://doi-org.utokyo.idm.oclc.org/10.1017/CBO9780511662638},
}

@book {Ful98,
    AUTHOR = {Fulton, William},
     TITLE = {Intersection theory},
    SERIES = {Ergebnisse der Mathematik und ihrer Grenzgebiete. 3. Folge. A
              Series of Modern Surveys in Mathematics [Results in
              Mathematics and Related Areas. 3rd Series. A Series of Modern
              Surveys in Mathematics]},
    VOLUME = {2},
   EDITION = {Second},
 PUBLISHER = {Springer-Verlag, Berlin},
      YEAR = {1998},
     PAGES = {xiv+470},
      ISBN = {3-540-62046-X; 0-387-98549-2},
   MRCLASS = {14C17 (14-02)},
  MRNUMBER = {1644323},
       DOI = {10.1007/978-1-4612-1700-8},
       URL = {https://doi-org.kyoto-u.idm.oclc.org/10.1007/978-1-4612-1700-8},
}

@Misc{M2,
          author = {Grayson, Daniel R. and Stillman, Michael E.},
          title = {Macaulay2, a software system for research in algebraic geometry},
          howpublished = {Available at \url{http://www2.macaulay2.com}}
        }

@article {GP01,
    AUTHOR = {Gross, Mark and Popescu, Sorin},
     TITLE = {The moduli space of {$(1,11)$}-polarized abelian surfaces is
              unirational},
   JOURNAL = {Compositio Math.},
  FJOURNAL = {Compositio Mathematica},
    VOLUME = {126},
      YEAR = {2001},
    NUMBER = {1},
     PAGES = {1--23},
      ISSN = {0010-437X,1570-5846},
   MRCLASS = {14K10 (14J30 14M20)},
  MRNUMBER = {1827859},
MRREVIEWER = {Kieran\ G.\ O'Grady},
       DOI = {10.1023/A:1017518027822},
       URL = {https://doi-org.kyoto-u.idm.oclc.org/10.1023/A:1017518027822},
}

@article {Gus83,
    AUTHOR = {Gushel, N. P.},
     TITLE = {Fano varieties of genus  8},
   JOURNAL = {Russian Math. Surveys},
  FJOURNAL = {Russian Math. Surveys},
    VOLUME = {38},
      YEAR = {1983},
     PAGES = {192--193},
}

@article {Gus92,
    AUTHOR = {Gushel\cprime, N. P.},
     TITLE = {Fano {$3$}-folds of genus {$8$}},
   JOURNAL = {Algebra i Analiz},
  FJOURNAL = {Rossi\u iskaya Akademiya Nauk. Algebra i Analiz},
    VOLUME = {4},
      YEAR = {1992},
    NUMBER = {1},
     PAGES = {120--134},
      ISSN = {0234-0852},
   MRCLASS = {14J45 (14J30 14J60)},
  MRNUMBER = {1171957},
MRREVIEWER = {Jaros\l aw\ A.\ Wi\'sniewski},
}

@article {Har98,
    AUTHOR = {Hara, Nobuo},
     TITLE = {A characterization of rational singularities in terms of
              injectivity of {F}robenius maps},
   JOURNAL = {Amer. J. Math.},
  FJOURNAL = {American Journal of Mathematics},
    VOLUME = {120},
      YEAR = {1998},
    NUMBER = {5},
     PAGES = {981--996},
      ISSN = {0002-9327,1080-6377},
   MRCLASS = {13A99 (14B05 14E05)},
  MRNUMBER = {1646049},
MRREVIEWER = {Karen\ E.\ Smith},
       URL =
              {http://muse.jhu.edu.kyoto-u.idm.oclc.org/journals/american_journal_of_mathematics/v120/120.5hara.pdf},
}

@book {Har77,
    AUTHOR = {Hartshorne, Robin},
     TITLE = {Algebraic geometry},
    SERIES = {Graduate Texts in Mathematics, No. 52},
 PUBLISHER = {Springer-Verlag, New York-Heidelberg},
      YEAR = {1977},
     PAGES = {xvi+496},
      ISBN = {0-387-90244-9},
   MRCLASS = {14-01},
  MRNUMBER = {0463157},
MRREVIEWER = {Robert Speiser},
}

@article {Isk77,
    AUTHOR = {Iskovskih, V. A.},
     TITLE = {Fano threefolds. {I}},
   JOURNAL = {Izv. Akad. Nauk SSSR Ser. Mat.},
  FJOURNAL = {Izvestiya Akademii Nauk SSSR. Seriya Matematicheskaya},
    VOLUME = {41},
      YEAR = {1977},
    NUMBER = {3},
     PAGES = {516--562, 717},
      ISSN = {0373-2436},
   MRCLASS = {14J10 (14M20 14N05)},
  MRNUMBER = {463151},
MRREVIEWER = {Miles Reid},
}

@article {Isk78,
    AUTHOR = {Iskovskih, V. A.},
     TITLE = {Fano threefolds. {II}},
   JOURNAL = {Izv. Akad. Nauk SSSR Ser. Mat.},
  FJOURNAL = {Izvestiya Akademii Nauk SSSR. Seriya Matematicheskaya},
    VOLUME = {42},
      YEAR = {1978},
    NUMBER = {3},
     PAGES = {506--549},
      ISSN = {0373-2436},
   MRCLASS = {14J10 (14J30 14M20 14N05)},
  MRNUMBER = {503430},
MRREVIEWER = {Miles Reid},
}

@article{Isk80,
 author = {Iskovskikh, V. A.},
 title = {Birational automorphisms of three-dimensional algebraic varieties},
 fjournal = {Journal of Soviet Mathematics},
 journal = {J. Sov. Math.},
 issn = {0090-4104},
 volume = {13},
 pages = {815--868},
 year = {1980},
 language = {English},
 doi = {10.1007/BF01084564},
 zbMATH = {3666954},
 Zbl = {0428.14017}
}

@incollection {IP99,
    AUTHOR = {Iskovskikh, V. A. and Prokhorov, Yu. G.},
     TITLE = {Fano varieties},
 BOOKTITLE = {Algebraic geometry, {V}},
    SERIES = {Encyclopaedia Math. Sci.},
    VOLUME = {47},
     PAGES = {1--247},
 PUBLISHER = {Springer, Berlin},
      YEAR = {1999},
   MRCLASS = {14J45 (14E07 14F22 14K30)},
  MRNUMBER = {1668579},
MRREVIEWER = {Takao Fujita},
}

@article {KTTWYY1,
    AUTHOR = {Kawakami, Tatsuro and Takamatsu, Teppei and Tanaka, Hiromu and
              Witaszek, Jakub and Yobuko, Fuetaro and Yoshikawa, Shou},
     TITLE = {Quasi-{$F$}-splittings in birational geometry},
   JOURNAL = {Ann. Sci. \'Ec. Norm. Sup\'er. (4)},
  FJOURNAL = {Annales Scientifiques de l'\'Ecole Normale Sup\'erieure.
              Quatri\`eme S\'erie},
    VOLUME = {58},
      YEAR = {2025},
    NUMBER = {3},
     PAGES = {665--748},
      ISSN = {0012-9593,1873-2151},
   MRCLASS = {14E05 (13A35 14G17)},
  MRNUMBER = {4962159},
}

@article{KTTWYY3,
  title={Quasi-{$F$}-splittings in birational geometry {III}},
  author={Kawakami, Tatsuro and Takamatsu, Teppei and Tanaka, Hiromu and
              Witaszek, Jakub and Yobuko, Fuetaro and Yoshikawa, Shou},
  journal={arXiv:2408.01921},
  year={2024}
}

@article{KTLift1,
  author =        {Kawakami, Tatsuro and Tanaka, Hiromu},
  journal =       {preprint available at arXiv:2503.10236v1},
  title =         {Liftability and vanishing theorems for {F}ano threefolds in positive characteristic {I}},
  year =          {2025},
}

@article{KTLift2,
  author =        {Kawakami, Tatsuro and Tanaka, Hiromu},
  journal =       {preprint available at arXiv:2404.04764v2},
  title =         {Liftability and vanishing theorems for {F}ano threefolds in positive characteristic {II}},
  year =          {2025},
}

@article {Kol91,
    AUTHOR = {Koll\'{a}r, J\'{a}nos},
     TITLE = {Extremal rays on smooth threefolds},
   JOURNAL = {Ann. Sci. \'{E}cole Norm. Sup. (4)},
  FJOURNAL = {Annales Scientifiques de l'\'{E}cole Normale Sup\'{e}rieure. Quatri\`eme
              S\'{e}rie},
    VOLUME = {24},
      YEAR = {1991},
    NUMBER = {3},
     PAGES = {339--361},
      ISSN = {0012-9593},
   MRCLASS = {14J30 (14E30 14J10)},
  MRNUMBER = {1100994},
MRREVIEWER = {Eckart Viehweg},
       URL = {http://www.numdam.org/item?id=ASENS_1991_4_24_3_339_0},
}

@book {KM98,
    AUTHOR = {Koll\'{a}r, J\'{a}nos and Mori, Shigefumi},
     TITLE = {Birational geometry of algebraic varieties},
    SERIES = {Cambridge Tracts in Mathematics},
    VOLUME = {134},
      NOTE = {With the collaboration of C. H. Clemens and A. Corti,
              Translated from the 1998 Japanese original},
 PUBLISHER = {Cambridge University Press, Cambridge},
      YEAR = {1998},
     PAGES = {viii+254},
      ISBN = {0-521-63277-3},
   MRCLASS = {14E30},
  MRNUMBER = {1658959},
MRREVIEWER = {Mark Gross},
       DOI = {10.1017/CBO9780511662560},
       URL = {https://doi-org.utokyo.idm.oclc.org/10.1017/CBO9780511662560},
}

@article {Kuz04,
    AUTHOR = {Kuznetsov, A. G.},
     TITLE = {Derived category of a cubic threefold and the variety
              {$V_{14}$}},
   JOURNAL = {Tr. Mat. Inst. Steklova},
  FJOURNAL = {Trudy Matematicheskogo Instituta Imeni V. A. Steklova},
    VOLUME = {246},
      YEAR = {2004},
     PAGES = {183--207},
      ISSN = {0371-9685,3034-1809},
   MRCLASS = {14J30 (14J45 14N05)},
  MRNUMBER = {2101293},
MRREVIEWER = {Ivan\ Chel\cprime tsov},
}

@article {MR85,
    AUTHOR = {Mehta, V. B. and Ramanathan, A.},
     TITLE = {Frobenius splitting and cohomology vanishing for {S}chubert
              varieties},
   JOURNAL = {Ann. of Math. (2)},
  FJOURNAL = {Annals of Mathematics. Second Series},
    VOLUME = {122},
      YEAR = {1985},
    NUMBER = {1},
     PAGES = {27--40},
      ISSN = {0003-486X,1939-8980},
   MRCLASS = {14M15 (20G10)},
  MRNUMBER = {799251},
MRREVIEWER = {H.\ H.\ Andersen},
       DOI = {10.2307/1971368},
       URL = {https://doi-org.kyoto-u.idm.oclc.org/10.2307/1971368},
}

@article {MM81,
    AUTHOR = {Mori, Shigefumi and Mukai, Shigeru},
     TITLE = {Classification of {F}ano {$3$}-folds with {$B_{2}\geq 2$}},
   JOURNAL = {Manuscripta Math.},
  FJOURNAL = {Manuscripta Mathematica},
    VOLUME = {36},
      YEAR = {1981/82},
    NUMBER = {2},
     PAGES = {147--162},
      ISSN = {0025-2611},
   MRCLASS = {14J30 (14J10)},
  MRNUMBER = {641971},
MRREVIEWER = {Mary Schaps},
       DOI = {10.1007/BF01170131},
       URL = {https://doi.org/10.1007/BF01170131},
}

@incollection {MM83,
    AUTHOR = {Mori, Shigefumi and Mukai, Shigeru},
     TITLE = {On {F}ano {$3$}-folds with {$B_{2}\geq 2$}},
 BOOKTITLE = {Algebraic varieties and analytic varieties ({T}okyo, 1981)},
    SERIES = {Adv. Stud. Pure Math.},
    VOLUME = {1},
     PAGES = {101--129},
 PUBLISHER = {North-Holland, Amsterdam},
      YEAR = {1983},
   MRCLASS = {14J30},
  MRNUMBER = {715648},
MRREVIEWER = {I. Dolgachev},
       DOI = {10.2969/aspm/00110101},
       URL = {https://doi.org/10.2969/aspm/00110101},
}

@article {MM03,
    AUTHOR = {Mori, Shigefumi and Mukai, Shigeru},
     TITLE = {Erratum: ``{C}lassification of {F}ano 3-folds with {$B_2\geq
              2$}'' [{M}anuscripta {M}ath. {\bf 36} (1981/82), no. 2,
              147--162; {MR}0641971 (83f:14032)]},
   JOURNAL = {Manuscripta Math.},
  FJOURNAL = {Manuscripta Mathematica},
    VOLUME = {110},
      YEAR = {2003},
    NUMBER = {3},
     PAGES = {407},
      ISSN = {0025-2611},
   MRCLASS = {14J45 (14E30 14J30)},
  MRNUMBER = {1969009},
       DOI = {10.1007/s00229-002-0336-2},
       URL = {https://doi.org/10.1007/s00229-002-0336-2},
}

@article {Muk89,
    AUTHOR = {Mukai, Shigeru},
     TITLE = {Biregular classification of {F}ano {$3$}-folds and {F}ano
              manifolds of coindex {$3$}},
   JOURNAL = {Proc. Nat. Acad. Sci. U.S.A.},
  FJOURNAL = {Proceedings of the National Academy of Sciences of the United
              States of America},
    VOLUME = {86},
      YEAR = {1989},
    NUMBER = {9},
     PAGES = {3000--3002},
      ISSN = {0027-8424},
   MRCLASS = {14J30 (14J35 14J40)},
  MRNUMBER = {995400},
MRREVIEWER = {A. S. Tikhomirov},
       DOI = {10.1073/pnas.86.9.3000},
       URL = {https://doi-org.utokyo.idm.oclc.org/10.1073/pnas.86.9.3000},
}

@incollection {Muk93,
    AUTHOR = {Mukai, Shigeru},
     TITLE = {Curves and {G}rassmannians},
 BOOKTITLE = {Algebraic geometry and related topics ({I}nchon, 1992)},
    SERIES = {Conf. Proc. Lecture Notes Algebraic Geom., I},
     PAGES = {19--40},
 PUBLISHER = {Int. Press, Cambridge, MA},
      YEAR = {1993},
   MRCLASS = {14H45 (14M15)},
  MRNUMBER = {1285374},
MRREVIEWER = {Raquel Mallavibarrena},
}

@incollection {Muk01,
    AUTHOR = {Mukai, Shigeru},
     TITLE = {Non-abelian {B}rill-{N}oether theory and {F}ano 3-folds
              [translation of {S}\=ugaku {\bf 49} (1997), no. 1, 1--24;
              {MR}1478148 (99b:14012)]},
      NOTE = {Sugaku Expositions},
   JOURNAL = {Sugaku Expositions},
  FJOURNAL = {Sugaku Expositions},
    VOLUME = {14},
      YEAR = {2001},
    NUMBER = {2},
     PAGES = {125--153},
      ISSN = {0898-9583,2473-585X},
   MRCLASS = {14D20 (14H60 14J45)},
  MRNUMBER = {1857462},
}

@article {Mur73,
    AUTHOR = {Murre, J. P.},
     TITLE = {Reduction of the proof of the non-rationality of a
              non-singular cubic threefold to a result of {M}umford},
   JOURNAL = {Compositio Math.},
  FJOURNAL = {Compositio Mathematica},
    VOLUME = {27},
      YEAR = {1973},
     PAGES = {63--82},
      ISSN = {0010-437X,1570-5846},
   MRCLASS = {14C10 (14C15 14J05 14K30)},
  MRNUMBER = {352089},
MRREVIEWER = {D.\ Lieberman},
}

@article {Put82,
    AUTHOR = {Puts, Pierre J.},
     TITLE = {On some {F}ano-threefolds that are sections of
              {G}rassmannians},
   JOURNAL = {Nederl. Akad. Wetensch. Indag. Math.},
  FJOURNAL = {Koninklijke Nederlandse Akademie van Wetenschappen.
              Indagationes Mathematicae},
    VOLUME = {44},
      YEAR = {1982},
    NUMBER = {1},
     PAGES = {77--90},
      ISSN = {0019-3577},
   MRCLASS = {14J30 (14K30)},
  MRNUMBER = {653456},
MRREVIEWER = {Miles\ Reid},
}

@article {SS10,
    AUTHOR = {Schwede, Karl and Smith, Karen E.},
     TITLE = {Globally {$F$}-regular and log {F}ano varieties},
   JOURNAL = {Adv. Math.},
  FJOURNAL = {Advances in Mathematics},
    VOLUME = {224},
      YEAR = {2010},
    NUMBER = {3},
     PAGES = {863--894},
      ISSN = {0001-8708},
   MRCLASS = {14J45 (13A35 14B05)},
  MRNUMBER = {2628797},
MRREVIEWER = {Yukihide Takayama},
       DOI = {10.1016/j.aim.2009.12.020},
       URL = {https://doi-org.utokyo.idm.oclc.org/10.1016/j.aim.2009.12.020},
}

@article {Sho79a,
    AUTHOR = {\v{S}okurov, V. V.},
     TITLE = {The existence of a line on {F}ano varieties},
   JOURNAL = {Izv. Akad. Nauk SSSR Ser. Mat.},
  FJOURNAL = {Izvestiya Akademii Nauk SSSR. Seriya Matematicheskaya},
    VOLUME = {43},
      YEAR = {1979},
    NUMBER = {4},
     PAGES = {922--964, 968},
      ISSN = {0373-2436},
   MRCLASS = {14J30 (14M20)},
  MRNUMBER = {548510},
MRREVIEWER = {Miles Reid},
}

@article {Sho79b,
    AUTHOR = {\v{S}okurov, V. V.},
     TITLE = {Smoothness of a general anticanonical divisor on a {F}ano
              variety},
   JOURNAL = {Izv. Akad. Nauk SSSR Ser. Mat.},
  FJOURNAL = {Izvestiya Akademii Nauk SSSR. Seriya Matematicheskaya},
    VOLUME = {43},
      YEAR = {1979},
    NUMBER = {2},
     PAGES = {430--441},
      ISSN = {0373-2436},
   MRCLASS = {14J30},
  MRNUMBER = {534602},
MRREVIEWER = {Werner Kleinert},
}

@incollection {Smi00,
    AUTHOR = {Smith, Karen E.},
     TITLE = {Globally {F}-regular varieties: applications to vanishing
              theorems for quotients of {F}ano varieties},
      NOTE = {Dedicated to William Fulton on the occasion of his 60th
              birthday},
   JOURNAL = {Michigan Math. J.},
  FJOURNAL = {Michigan Mathematical Journal},
    VOLUME = {48},
      YEAR = {2000},
     PAGES = {553--572},
      ISSN = {0026-2285,1945-2365},
   MRCLASS = {13A35 (14F17 14J45)},
  MRNUMBER = {1786505},
MRREVIEWER = {Scott\ R.\ Nollet},
       DOI = {10.1307/mmj/1030132733},
       URL = {https://doi-org.kyoto-u.idm.oclc.org/10.1307/mmj/1030132733},
}

@article {Tak89,
    AUTHOR = {Takeuchi, Kiyohiko},
     TITLE = {Some birational maps of {F}ano {$3$}-folds},
   JOURNAL = {Compositio Math.},
  FJOURNAL = {Compositio Mathematica},
    VOLUME = {71},
      YEAR = {1989},
    NUMBER = {3},
     PAGES = {265--283},
      ISSN = {0010-437X},
   MRCLASS = {14J30 (14E05)},
  MRNUMBER = {1022045},
MRREVIEWER = {Peter Nielsen},
       URL = {http://www.numdam.org/item?id=CM_1989__71_3_265_0},
}

@article {Tan15,
    AUTHOR = {Tanaka, Hiromu},
     TITLE = {The {X}-method for klt surfaces in positive characteristic},
   JOURNAL = {J. Algebraic Geom.},
  FJOURNAL = {Journal of Algebraic Geometry},
    VOLUME = {24},
      YEAR = {2015},
    NUMBER = {4},
     PAGES = {605--628},
      ISSN = {1056-3911},
   MRCLASS = {14G17 (14Jxx)},
  MRNUMBER = {3383599},
MRREVIEWER = {Christian Liedtke},
       DOI = {10.1090/S1056-3911-2014-00627-5},
       URL = {https://doi-org.utokyo.idm.oclc.org/10.1090/S1056-3911-2014-00627-5},
}

@article {Tan18b,
    AUTHOR = {Tanaka, Hiromu},
     TITLE = {Behavior of canonical divisors under purely inseparable base
              changes},
   JOURNAL = {J. Reine Angew. Math.},
  FJOURNAL = {Journal f\"{u}r die Reine und Angewandte Mathematik. [Crelle's
              Journal]},
    VOLUME = {744},
      YEAR = {2018},
     PAGES = {237--264},
      ISSN = {0075-4102},
   MRCLASS = {14E30},
  MRNUMBER = {3871445},
MRREVIEWER = {Sung Rak Choi},
       DOI = {10.1515/crelle-2015-0111},
       URL = {https://doi.org/10.1515/crelle-2015-0111},
}

@article {Tana,
    AUTHOR = {Tanaka, Hiromu},
     TITLE = {Bertini theorems admitting base changes},
   JOURNAL = {J. Algebra},
  FJOURNAL = {Journal of Algebra},
    VOLUME = {644},
      YEAR = {2024},
     PAGES = {64--125},
      ISSN = {0021-8693,1090-266X},
   MRCLASS = {14G17 (14D06)},
  MRNUMBER = {4695615},
MRREVIEWER = {Lei\ Zhang},
       DOI = {10.1016/j.jalgebra.2023.12.038},
       URL = {https://doi-org.kyoto-u.idm.oclc.org/10.1016/j.jalgebra.2023.12.038},
}

@article {Tan-elliptic,
    AUTHOR = {Tanaka, Hiromu},
     TITLE = {Elliptic singularities and threefold flops in positive
              characteristic},
   JOURNAL = {Proc. Edinb. Math. Soc. (2)},
  FJOURNAL = {Proceedings of the Edinburgh Mathematical Society. Series II},
    VOLUME = {68},
      YEAR = {2025},
    NUMBER = {4},
     PAGES = {1188--1244},
      ISSN = {0013-0915,1464-3839},
   MRCLASS = {14J17 (14E30)},
  MRNUMBER = {4973442},
       DOI = {10.1017/S0013091525000185},
       URL = {https://doi-org.kyoto-u.idm.oclc.org/10.1017/S0013091525000185},
}

@article{FanoI,
  author =        {Tanaka, Hiromu},
  journal =       {arXiv:2308.08121},
  title =         {Fano threefolds in positive characteristic {I}},
  year =          {2023},
}

@article{FanoII,
  author =        {Tanaka, Hiromu},
  journal =       {arXiv:2308.08122},
  title =         {Fano threefolds in positive characteristic {II}},
  year =          {2023},
}

@article{FanoIII,
  author =        {Asai, Masaya and Tanaka, Hiromu},
  journal =       {arXiv:2308.08124},
  title =         {Fano threefolds in positive characteristic {III}},
  year =          {2023},
}

@article{FanoIV,
  author =        {Tanaka, Hiromu},
  journal =       {arXiv:2308.08127},
  title =         {Fano threefolds in positive characteristic {IV}},
  year =          {2023},
}

\end{document}